\newtheorem{theorem}{Theorem} [section]
\newtheorem{proposition}[theorem]{Proposition}	
\newtheorem{corollary}[theorem]{Corollary}
\newtheorem{lemma}[theorem]{Lemma}
\newtheorem{remark}[theorem]{Remark}
\theoremstyle{definition}
\newcommand{\C}{\mathbb{C}}
\newcommand{\R}{\mathbb{R}}
\newcommand{\N}{\mathbb{N}}
\newcommand{\re}{\text{\upshape Re\,}}
\newcommand{\im}{\text{\upshape Im\,}}
\newcommand{\CC}[1]{{\color{red}{#1}}}
\tikzset{
	master/.style={
		execute at end picture={
			\coordinate (lower right) at (current bounding box.south east);
			\coordinate (upper left) at (current bounding box.north west);
		}
	},
	slave/.style={
		execute at end picture={
			\pgfresetboundingbox
			\path (upper left) rectangle (lower right);
		}
	}
}
\let\oldbibliography\thebibliography
\renewcommand{\thebibliography}[1]{\oldbibliography{#1}
\setlength{\itemsep}{-0.5pt}}
\def\XXint#1#2#3{{\setbox0=\hbox{$#1{#2#3}{\int}$}
\vcenter{\hbox{$#2#3$}}\kern-.5\wd0}}
\tikzset{->-/.style={decoration={
				markings,
				mark=at position #1 with {\arrow{latex}}},postaction={decorate}}}
	\tikzset{-<-/.style={decoration={
				markings,
				mark=at position #1 with {\arrowreversed{latex}}},postaction={decorate}}}
\tikzset{cross/.style={cross out, draw, 
         minimum size=2*(#1-\pgflinewidth), 
         inner sep=0pt, outer sep=0pt}}
\numberwithin{equation}{section}
\def\ds{\displaystyle}
\def\bigO{{\cal O}}
\newcommand{\oset}[3][0ex]{%
  \mathrel{\mathop{#3}\limits^{
    \vbox to#1{\kern-2\ex@
    \hbox{$\scriptstyle#2$}\vss}}}}
\begin{document}
\title{Asymptotics for the number of domino tilings \\ of L-shaped Aztec domains}
\author{Christophe Charlier and Tom Claeys}

\maketitle

\begin{abstract}
We obtain precise asymptotics for the weighted number of domino tilings of an L-shaped subset of the Aztec diamond, obtained by removing an approximate rectangle in a corner of the Aztec diamond. 
By tuning the size of the removed corner, we observe different types of asymptotics. For a small removed corner, the number of tilings is close to that of the full Aztec diamond. Enlarging the removed corner to a critical size, a phase transition described in terms of the Tracy-Widom distribution occurs. Further increasing the size of the removed region, we observe a sharp decrease of the number of tilings, until it is finally approximated by the number of tilings of two smaller disjoint Aztec diamonds. We obtain uniform asymptotics for the number of domino tilings which fully describe these transitions.
\end{abstract}
\noindent
{\small{\sc AMS Subject Classification (2020)}: 41A60, 60F10, 60G55.}

\noindent
{\small{\sc Keywords}: Random tilings, determinantal point processes, asymptotic analysis, Riemann-Hilbert problems.}


\section{Introduction and main results}

\paragraph{Domino tilings of L-shaped domains.}
The Aztec diamond $A_N$ of order $N$ is a two-dimensional region consisting of the union of all $2N(N+1)$ squares $S_{j,k}:=[j,j+1]\times [k,k+1]\subset \mathbb R^2$ for which $j,k\in\mathbb Z$ and $S_{j,k}\subset\{(x,y)\in\mathbb R^2: |x|+|y|\leq N+1\}$. A domino is a rectangle covering two adjacent unit squares: vertical dominoes have the form $[j,j+1]\times [k,k+2]$, while horizontal dominoes have the form $[j,j+2]\times [k,k+1]$. A horizontal domino is called a north or south domino depending on whether $j+k+N$ is even or odd, respectively. Similarly, a vertical domino is called an east or west domino depending on whether $j+k+N$ is even or odd, respectively. A domino tiling $T$ of $A_N$ consists of a collection of dominoes covering exactly the region $A_N$ while the interiors of the dominoes are disjoint, see Figure \ref{fig:Aztec}.

Domino tilings of the Aztec diamond as well as lozenge tilings of hexagons have been investigated intensively over the past decades because of their intriguing large $N$ asymptotic behavior and their connections with statistical physics models and random matrix theory, see e.g.\ \cite{Aggarwal, BB, DK2021, Jptrf, GorinSurvey, HYZ2024}. Determinantal structures underlying the tiling models have provided a powerful framework to study their asymptotics and to identify phase transitions.

\begin{figure}
\begin{center}
\begin{tikzpicture}[master]
\node at (0,0) {\includegraphics[width=6cm]{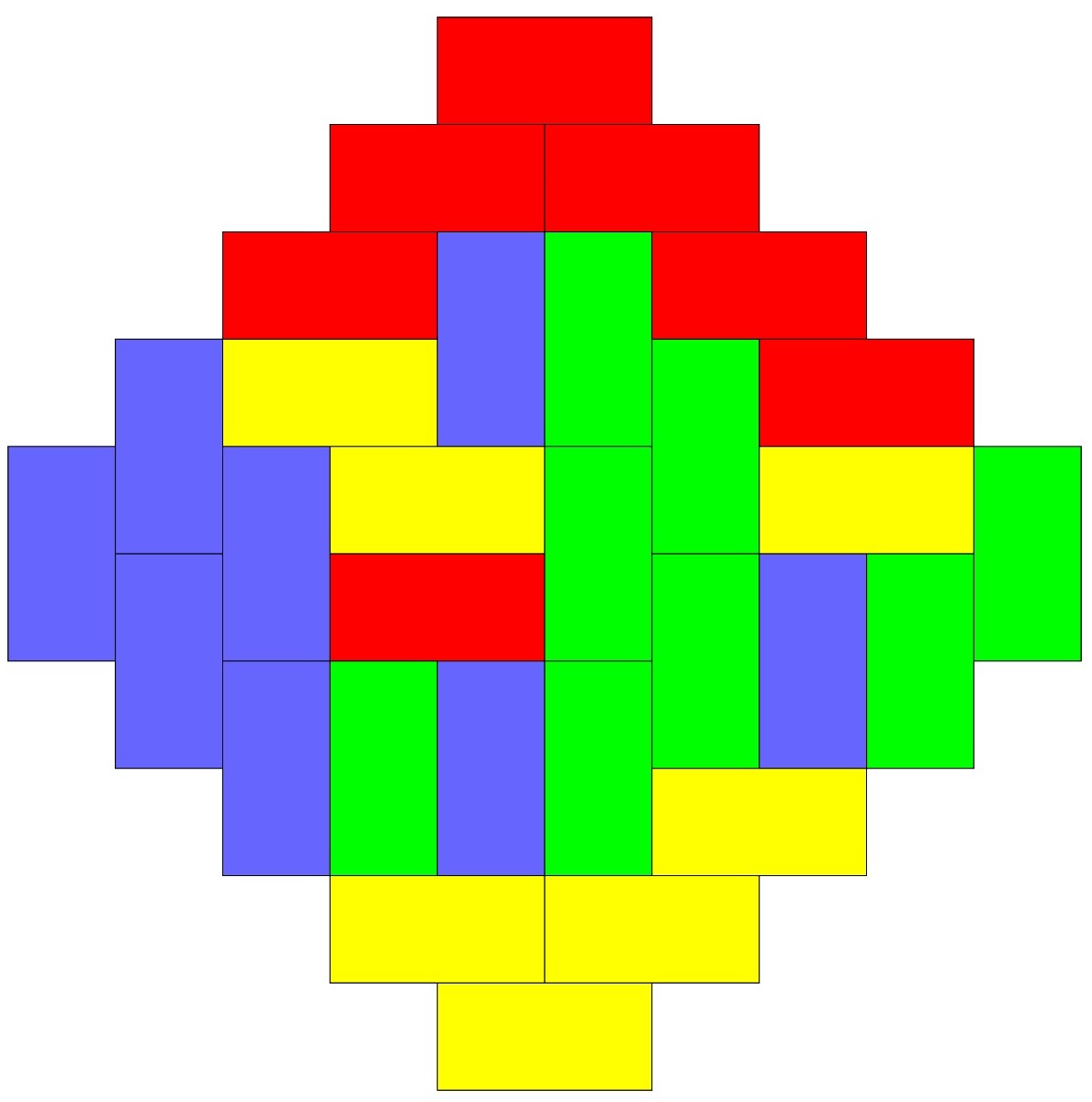}};
\end{tikzpicture}
\begin{tikzpicture}[slave]
\node at (0,0) {\includegraphics[width=6cm]{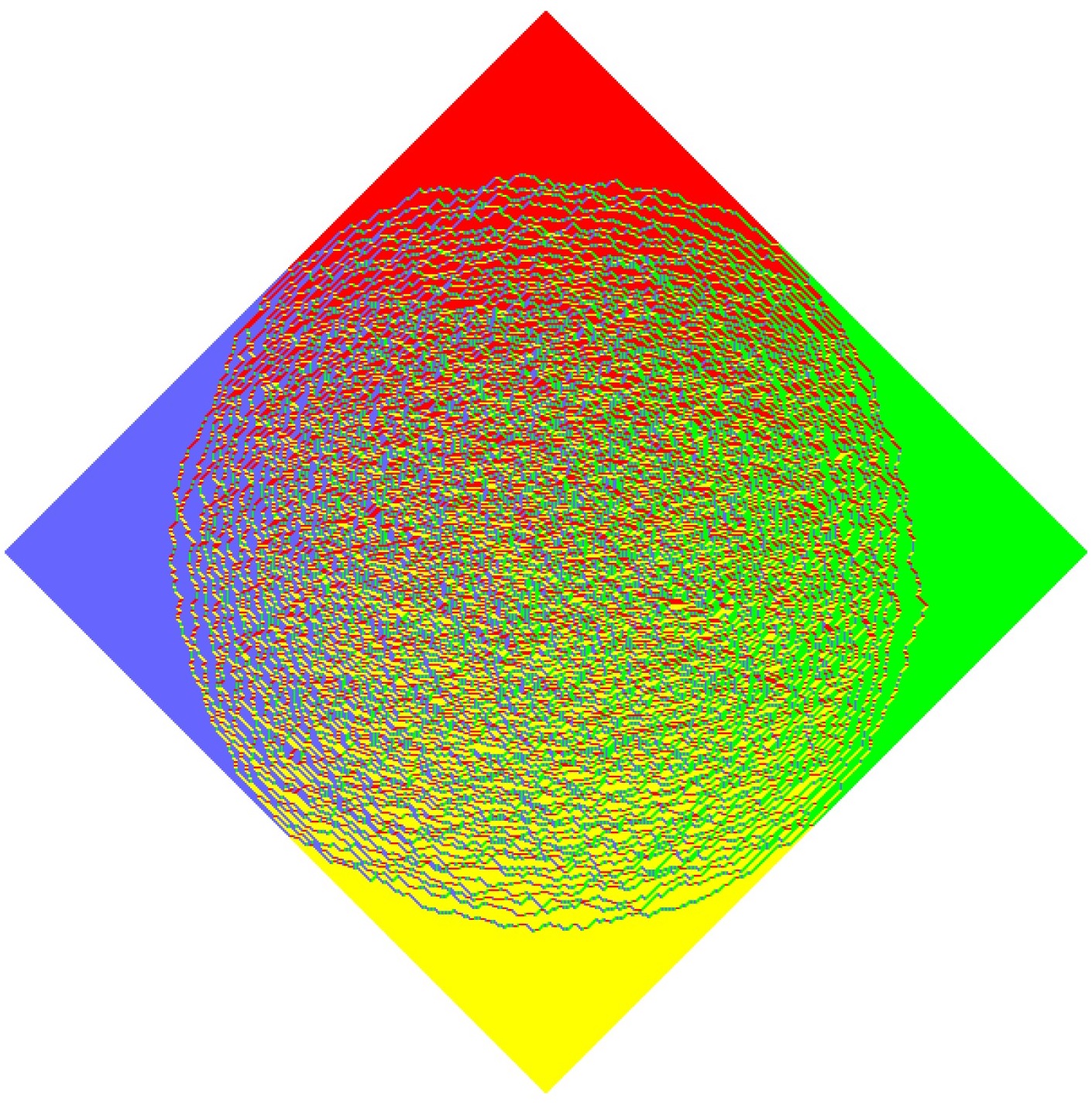}};
\end{tikzpicture}
\end{center}
\caption{\label{fig:Aztec}Left: a domino tiling of $A_{5}$. The north, south, east and west dominoes are shown in red, yellow, green and blue, respectively. Right: a domino tiling of $A_{300}$ chosen uniformly at random.}
\end{figure}

In the present work, we consider domino tilings of reduced, approximately L-shaped, Aztec diamonds, of the form illustrated in Figure \ref{fig:ADreduced2}. 
More precisely, for $m=1,\ldots, {N}$ and $k=1,\ldots, m+1$, $A_N^{m,k}$ is the union of all unit squares $S_{i,j}$ for which $i,j\in\mathbb Z$ and 
\[S_{i,j}\subset\{(x,y)\in\mathbb R^2: |x|+|y|\leq N+1\ \mbox{and}\ y\leq \max\{2m-1-N-x,x-2m-1+N+2k\}\}.\] 
We also consider a slightly modified domain $\tilde{A}_{N}^{m,k}$ as
\begin{align*}
\tilde{A}_{N}^{m,k}  = \begin{cases}
A_{N}^{m+1,k+1}\setminus v^{m,k}, & \mbox{if } m\in \{0,1,\ldots,N-1\}, \\
A_{N}, & \mbox{if } m = N,
\end{cases}
\end{align*}
where $v^{m,k}$ is the vertical segment from $(2m+k-N-1,k-1)$ to $(2m+k-N-1,k)$. 

A domino tiling of $A_N^{m,k}$ or $\tilde A_N^{m,k}$, i.e., a collection of dominoes covering exactly the domain while the interiors of the dominoes are disjoint, can be canonically extended to a domino tiling of $A_N$ by completing it with north dominoes.

Colomo and Pronko \cite{CP2013, CP2015} initiated the study of this model in order to compute the free energy of the fermionic six-vertex model on an L-shaped domain. Domino tilings of a different type of perturbations of Aztec diamonds have been studied in \cite{CHK,DF,Nico}.
Our objective is to understand the large $N$ asymptotics for 
\begin{equation}
F_N^{m,k}(a;\epsilon)
:=
\begin{cases}
\sum_{T\in\mathcal T(A_N^{m,k})}a^{v(T)},&\mbox{ if $\epsilon=1$,}\\
\sum_{T\in\mathcal T(\tilde A_N^{m,k})}a^{v(T)},&\mbox{ if $\epsilon=0$},\end{cases}\qquad 0<a\leq 1,
\end{equation}
when $m$ is proportional to $N$, uniformly for $k\in\{1,\ldots, m\}$. (For $k\leq 0$, the domains $A_N^{m,k}$ and $\tilde A_{N}^{m,k}$ are not tileable \cite[Remark 2.2]{CC2025}, whereas for $k\geq m+1$ the removed region is empty.)

For $a=1$, $F_N^{m,k}(1;\epsilon)$ is equal to the number of domino tilings of the domain $A_N^{m,k}$ if $\epsilon=1$ and of the domain $\tilde A_N^{m,k}$ if $\epsilon=0$. For $0<a<1$, it can be seen as the generating function for the number of domino tilings with a prescribed number of vertical dominoes.
For the full Aztec diamond $A_N$, a celebrated result of Elkies, Kuperberg, Larsen, and Propp \cite{EKLP} states that this generating function is explicitly given by
\begin{equation}\label{eq:ADT}F_N(a):=\sum_{T\in\mathcal T(A_N)}a^{v(T)}=(1+a^2)^{\frac{N(N+1)}{2}},\qquad 0<a\leq 1,\end{equation}
such that 
in particular, the number of domino tilings of $A_N$ is $2^{N(N+1)/2}$. Observe for $k=m+1$ that $F_N(a;\epsilon)=F_N^{m,m+1}(a;\epsilon)$ for any $m\in \{1,\ldots, N\}$ and $\epsilon\in\{0,1\}$.

Another degenerate situation arises when $k=1$:  as shown in \cite[Equation (1.3) and Remark 2.2]{CC2025}, we have the explicit expression
\begin{equation}\label{eq:formulamirror}
F_N^{m,1}(a;\epsilon)=F_{m-\epsilon}(a;\epsilon)F_{N-m}(a;\epsilon)=\left(1+a^2\right)^{\frac{N(N+1)}{2}-m(N+\epsilon-m)},
\end{equation}
which reveals that the weighted number of domino tilings is equal to that of two smaller disjoint Aztec diamonds.
It is worth recalling that the domains $A_N^{m,k}$ and $\tilde A_{N}^{m,k}$ corresponding to non-positive integers $k\leq 0$ cannot be tiled with dominoes, see also \cite[Remark 2.2]{CC2025}.

\begin{figure}
\begin{center}
\begin{tikzpicture}[master]
\node at (0,0) {\includegraphics[width=6cm]{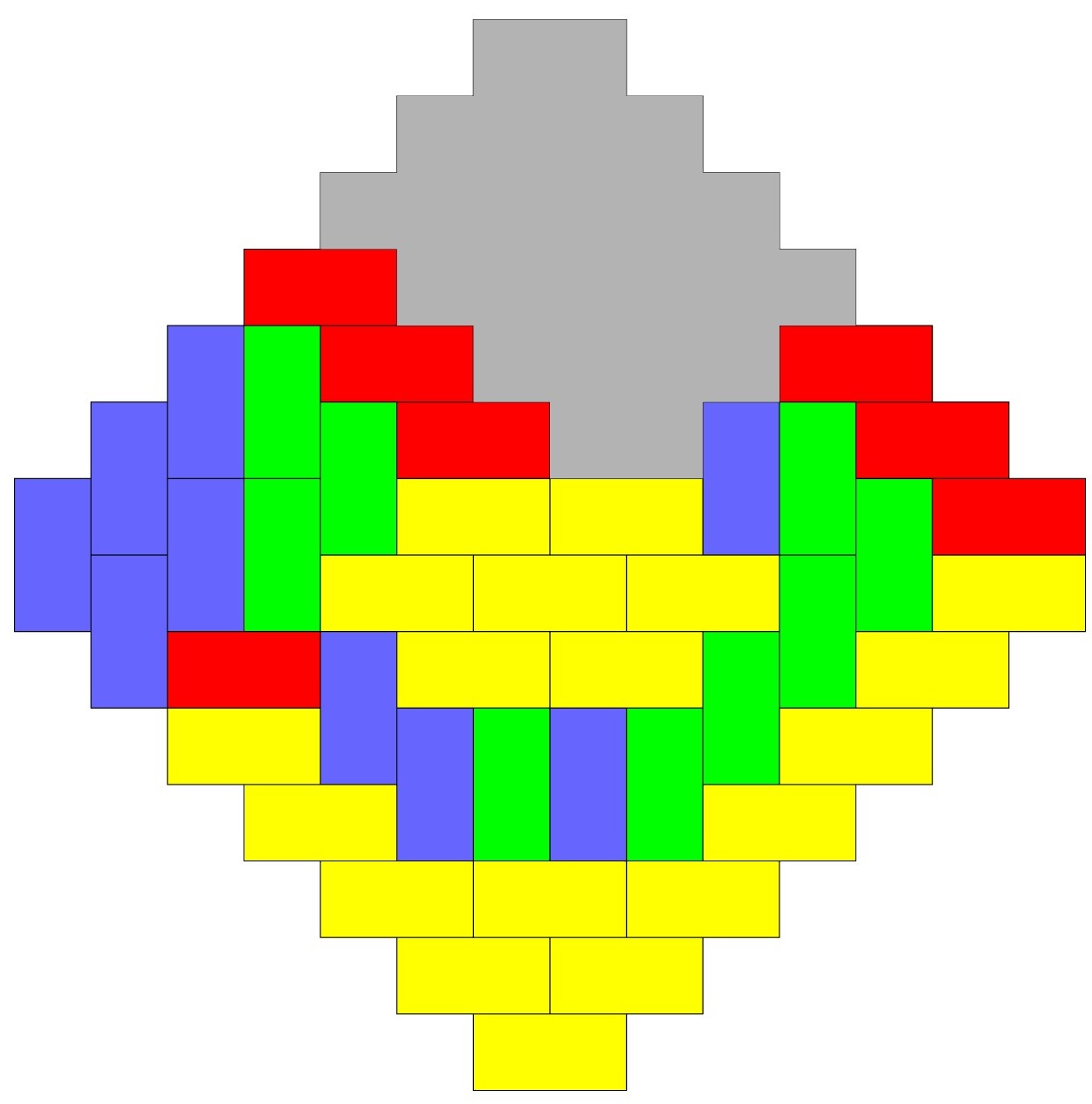}};
\draw[->-=0.9999, -<-=0] (-1.43,2.03)--(-0.43,3.03);
\node[rotate=45] at (-1.1,2.7) {\footnotesize $N+1-m$};
\draw[->-=0.9999, -<-=0] (0.5,3)--(1.8,1.7);
\node[rotate=-45] at (1.1,2.7) {\footnotesize $m-k+1$};
\end{tikzpicture}
\begin{tikzpicture}[slave]
\node at (0,0) {\includegraphics[width=6cm]{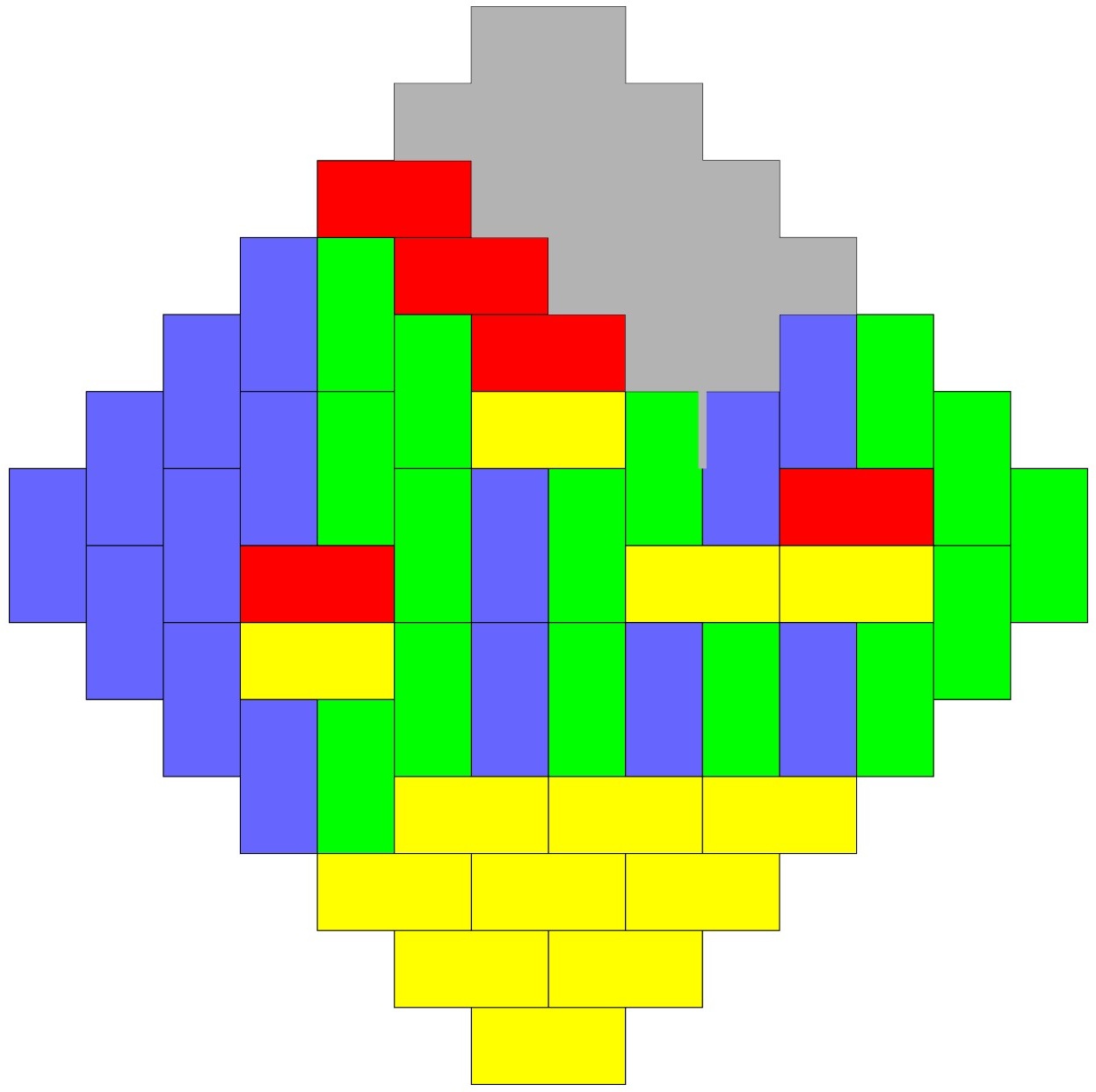}};
\draw[->-=0.9999, -<-=0] (-0.93,2.53)--(-0.43,3.03);
\node[rotate=45] at (-0.93,2.93) {\footnotesize $N-m$};
\draw[->-=0.9999, -<-=0] (0.5,3)--(1.8,1.7);
\node[rotate=-45] at (1.1,2.7) {\footnotesize $m-k+1$};
\end{tikzpicture}
\end{center}
\caption{\label{fig:ADreduced2}Left: a domino tiling of $A_{7}^{5,2}$.  Right: a domino tiling of $\tilde A_{7}^{5,2}$.}
\end{figure}

\paragraph{Probability of a large frozen region.}
Let us introduce a probability measure on the set of domino tilings of $A_N$: to a domino tiling $T\in \mathcal T(A_N)$, we assign the probability
\begin{align}\label{prob measure}
\mathbb P(T)=\frac{a^{v(T)}}{(1+a^2)^{\frac{N(N+1)}{2}}},\qquad 0<a\leq 1,
\end{align}
where $v(T)$ is the number of vertical dominoes in $T$. 
For $a=1$, this is the uniform measure, while for $a<1$, domino tilings with fewer vertical dominoes are more likely to occur than tilings with more vertical dominoes.
For large $N$, it is well-known that the vast majority of domino tilings of $A_N$ exhibits regions with qualitatively different behavior: near the four corners of the Aztec diamond, we observe a frozen region where only one type of dominoes is present, while a rough region containing a mixture of all types of dominoes is present in the middle of the Aztec diamond. 
For all values of $a$, the rough and frozen regions are separated by the so-called \textit{arctic curve}. When $a=1$, this curve is a circle (see Figure \ref{fig:Aztec} on the right), while for $a<1$, it is an ellipse \cite{JPS}.

For $\epsilon=1$, the quantity
\begin{equation}P_N^{m,k}(a;\epsilon):=\frac{F_N^{m,k}(a;\epsilon)}{F_N(a)},\qquad
m=1,\ldots, {N},\qquad k=1,\ldots, m+1,\end{equation}
is the probability that a random domino tiling of  the full Aztec diamond $A_N$ contains only north dominoes in the region $A_N\setminus A_N^{m,k}$, or equivalently that it has a frozen region containing $A_N\setminus A_N^{m,k}$.
For $\epsilon=0$, it is the probability that the region $A_N\setminus \tilde A_N^{m,k}$ contains only north dominoes, but with the additional constraint that the segment $v^{m,k}$ cannot be covered by a south domino. Henceforth, we will refer to $A_N\setminus A_N^{m,k}$ (for $\epsilon=1$) or $A_N\setminus \tilde A_N^{m,k}$ (for $\epsilon=0$) as the {\em removed corner}.

Naturally, for large $N$, one expects that $P_N^{m,k}(a;\epsilon)$ will be close to $1$ if the removed corner lies entirely outside the arctic curve. When the removed corner intrudes the arctic curve, one can expect however a dramatic decrease in the number of domino tilings of the domain, such that $P_N^{m,k}(a;\epsilon)$ will decay rapidly as $N\to\infty$. 
In order to capture this transition, we need to scale $m,k$ with $N$. We therefore introduce the variables
\begin{align}\label{def of mu and kappa}
\mu = \frac{m}{N} \in [0,1], \qquad \kappa = \frac{k}{N} \in [0,1].
\end{align}

Given $m\in\mathbb N\cap\big(\frac{a^2}{1+a^2}N,N\big)$, this transition where the removed region crosses the arctic curve will take place for $k\in\mathbb N$ around the value $\kappa_2 N$, with 
\begin{equation}\label{def:kappa2}
\kappa_2=\kappa_{2}(\mu) = \frac{a^{2} (2\mu-1) + 2a \sqrt{\mu(1-\mu)}}{1+a^{2}},\qquad \mu=m/N.
\end{equation}
Observe that for $\mu=\frac{a^2}{1+a^2}$, we have $\kappa_2=\mu$; this corresponds to the point where the arctic curve touches the boundary of the Aztec diamond. Accordingly, for $m\in\mathbb N \cap \big(0,\frac{a^2}{1+a^2}N\big)$, a removed corner would immediately cross the rough region, such that the transition described above does not take place. As $k/N$ decreases further, the number of domino tilings continues to decrease smoothly.
Another phase transition, caused by an almost frozen  region in the south corner, occurs when $k/N$ approaches $0$, or in other words when the removed corner hits the horizontal line dividing the Aztec diamond into halves.

\paragraph{Statement of results.}
 
As explained before, the L-shaped Aztec regions $A_N^{m,k+1}$ and $\tilde A_{N}^{m,k+1}$ can only be tiled by dominoes if $k$ is non-negative. For $k=0$, we have the largest possible removed corner such that our domain is still tileable. 
If $k$ is a fixed non-negative integer (independent of $N$), we will say that the removed corner is {\em almost maximal}. Our first result decribes the large $N$ asymptotics for $F_N^{m,k+1}(a;\epsilon)$ (or equivalently of $P_N^{m,k+1}(a;\epsilon)$) in such a situation.
 
\begin{theorem}{\bf (Almost maximal removed corner).}
\label{thm:birth of a cut}
Let $\epsilon\in\{0,1\}$, $a\in (0,1]$ be fixed.
As in \eqref{def of mu and kappa}, we write $\mu=m/N$ with $m\in\{1,\ldots, N\}$ and $N\in\mathbb N$.
Let $k$ be a non-negative integer. As $N\to\infty$, we have
\begin{align}\label{lol35}
& \log F_N^{m,k+1}(a;\epsilon) = F_{2}(\mu) N^{2} + F_{1}(k,\mu) N  - \frac{k^{2}}{2} \log N + F_{0}(k,\mu) + \bigO(N^{-1/2}),
\end{align}
uniformly
in $\delta \leq \mu \leq 1-\delta$ and in $0 \leq k\leq M$, for any fixed $M\in\mathbb N$ and $\delta>0$,
 where
\begin{align}
F_{2}(\mu) & = \bigg( \frac{1}{2} - \mu + \mu^{2} \bigg) \log(1+a^{2}), \label{def:F1}\\
F_{1}(k,\mu) & = k \varphi_{0}(x_{0}) + \bigg( \frac{1}{2} - \epsilon \mu \bigg) \log(1+a^{2}),\label{def:F2} \\
F_{0}(k,\mu) & = \log \mathcal G(k+1) - \frac{k^{2}}{2}\log\big(  |x_{0}|^{2}\varphi_{0}''(x_{0}) \big) +  \epsilon k\log(1-ax_{0}) - \frac{k}{2}\log (2\pi),\label{def:F4}
\end{align}
where $\mathcal G$ is Barnes' $G$-function and
\begin{align}\label{def:phi0x0}
x_0=\frac{-a-\sqrt{a^2+4\mu(1-\mu)}}{2(1-\mu)}<-a,\quad \varphi_{0}(z) = (1-\mu)\log(1-az)+\mu \log\Big(\frac{z}{z+a}\Big).
\end{align}
\end{theorem}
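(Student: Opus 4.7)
The starting point will be a finite determinantal identity for the ratio $F_N^{m,k+1}(a;\epsilon)/F_N^{m,1}(a;\epsilon)$, which I expect to follow from the determinantal structure of L-shaped Aztec tilings established in the companion paper \cite{CC2025}. I anticipate writing this ratio as a Hankel determinant of size $k$,
\[
\frac{F_N^{m,k+1}(a;\epsilon)}{F_N^{m,1}(a;\epsilon)} = C_N^{(k,\epsilon)} \det\Bigl[\tfrac{1}{2\pi\mathrm{i}}\oint_\Gamma z^{i+j}\, w_\epsilon(z)\, e^{N\varphi_0(z)}\,dz\Bigr]_{i,j=0}^{k-1},
\]
where $\Gamma$ is a contour separating the singularities of $\varphi_0$ appropriately, $w_\epsilon$ is a simple weight with $w_\epsilon(x_0)$ proportional to $(1-ax_0)^\epsilon$, and $C_N^{(k,\epsilon)}$ is an explicit normalization. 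Combined with the closed-form expression \eqref{eq:formulamirror} for $F_N^{m,1}(a;\epsilon)$, this reduces the asymptotics to that of the Hankel determinant; the base case already accounts for the $F_2(\mu) N^2$ term and the $(\tfrac12 - \epsilon\mu)\log(1+a^2)$ piece of $F_1(k,\mu)$.

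The Hankel determinant is then converted by the Heine--Andr\'eief identity into a $k$-fold Coulomb-gas integral,
\[
\det[\,\cdot\,]_{i,j=0}^{k-1} = \frac{1}{k!}\oint_\Gamma\!\cdots\!\oint_\Gamma \prod_{1\le i<j\le k}(z_i-z_j)^2 \prod_{i=1}^{k} w_\epsilon(z_i) e^{N\varphi_0(z_i)}\, \frac{dz_i}{2\pi\mathrm{i}},
\]
to which I will apply a uniform saddle-point analysis. Since $\varphi_0$ has a nondegenerate simple saddle $x_0\in(-\infty,-a)$ as in \eqref{def:phi0x0}, I will deform $\Gamma$ to the associated steepest-descent path and rescale $z_i = x_0 \exp(y_i/\sqrt N)$; the multiplicative rescaling is natural in view of the $\mu\log(z/(z+a))$ structure of $\varphi_0$. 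A standard Gaussian expansion then reduces the integrand to $w_\epsilon(x_0)^k e^{N k\varphi_0(x_0)}(x_0/\sqrt N)^k$ times the Mehta/Selberg Gaussian integral
\[
\frac{1}{k!}\int_{\R^k}\prod_{i<j}(y_i-y_j)^2\, e^{-\lambda\sum_i y_i^2/2}\, dy_1\cdots dy_k = \mathcal G(k+1)(2\pi)^{k/2}\lambda^{-k^2/2},
\]
with $\lambda = x_0^2\varphi_0''(x_0)$. This single computation produces simultaneously the scale $N^{-k^2/2}$, the Barnes $G$-function $\mathcal G(k+1)$, the term $-(k^2/2)\log(|x_0|^2\varphi_0''(x_0))$, the $\epsilon k\log(1-ax_0)$ contribution from $w_\epsilon(x_0)^k$, and---after combination with $C_N^{(k,\epsilon)}$---the $-(k/2)\log(2\pi)$ piece of $F_0(k,\mu)$.

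Uniformity of the expansion in $\mu\in[\delta,1-\delta]$ follows from the smooth dependence of $x_0$ on $\mu$ and the uniform separation of $x_0$ from the singularities $\{-a,0,1/a\}$ of $\varphi_0$ on this compact interval, which keeps the steepest-descent tail estimates uniform and yields the error $O(N^{-1/2})$; uniformity in $k\in\{0,\dots,M\}$ is automatic since the Coulomb-gas integral involves only finitely many variables. The main obstacle, besides extracting the precise form of the Hankel identity from \cite{CC2025}, will be constant-tracking: the combinatorial prefactors in the determinantal identity, the contour orientation, the Jacobian of the multiplicative rescaling, and the $(2\pi)^{k/2}$ from the Mehta integral must combine consistently to reproduce the exact expression \eqref{def:F4}. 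A useful sanity check is the case $k=0$, where \eqref{lol35} must and does reduce to \eqref{eq:formulamirror}.
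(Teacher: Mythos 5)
Your proposal follows a genuinely different route from the paper's. The paper analyzes, for each $j\in\{1,\ldots,k\}$, the $2\times 2$ Riemann--Hilbert problem characterizing the \emph{single-step} ratio $F_N^{m,j+1}(a;\epsilon)/F_N^{m,j}(a;\epsilon)$; it constructs a global parametrix $\widehat P^{(\infty)}$ carrying $(z-x_0)^{\pm(j-1)}$ factors and a local Hermite-polynomial parametrix $\Upsilon$ near $x_0$ (Section~\ref{sec:max}), and the Barnes function arises only after summing the resulting ratio asymptotics of Proposition~\ref{prop:ratio asymp in max region} over $j$, via $\prod_{j=1}^{k}(j-1)!=\mathcal G(k+1)$. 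You instead aggregate $F_N^{m,k+1}/F_N^{m,1}$ into a single $k\times k$ Hankel determinant, convert to a $k$-fold Coulomb-gas integral by Heine--Andr\'eief, and localize at $x_0$, landing directly on the Mehta--Selberg integral that produces $\mathcal G(k+1)$, $(2\pi)^{k/2}$, $N^{-k^2/2}$, and $(|x_0|^2\varphi_0''(x_0))^{-k^2/2}$ in one stroke. Conceptually this is very transparent: the Vandermonde squared forces the $k$ variables to spread at scale $N^{-1/2}$ around the simple saddle, and the Barnes $G$-function is the Gaussian-ensemble volume. What the paper's per-$j$ RH route buys in exchange is machinery reuse and built-in uniform error control: the same $Y\mapsto T\mapsto S\mapsto R$ chain with different local parametrices treats all four regimes of the paper, and small-norm theory delivers the $\bigO(N^{-1/2})$ error uniformly in $(k,\mu)$ essentially for free.

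The main gap is the starting identity. What is available from \cite{CC2025} (recalled in \eqref{eq:ratioid}--\eqref{eq:ratioid2}) is a $2\times2$ RH characterization of the single ratio $F_N^{m,k+1}/F_N^{m,k}$, and the paper explicitly cautions (Remark after the RH problem for $U$) that this RH problem is \emph{not} directly the orthogonal-polynomial one; it is a ``Fourier-space'' problem solved by Pad\'e approximants. Pad\'e approximants are classically ratios of Hankel determinants, and a telescoping product of $U_{11}(0)$ over $k$ can plausibly collapse to a single determinant of the type you posit, but the precise weight $w_\epsilon$, normalization $C_N^{(k,\epsilon)}$, and contour are not given anywhere and must be derived before the rest of your argument can start. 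Two further issues: your rescaling $z_i=x_0\exp(y_i/\sqrt N)$ should carry a factor $i$, since $\varphi_0''(x_0)>0$ makes $x_0$ a local \emph{minimum} of $\Re\varphi_0$ along $\R$, so the steepest-descent direction is imaginary (consistent with $\Sigma_1$ crossing $x_0$ roughly vertically, cf.\ Figure~\ref{fig:level curve varphi0}); and $\varphi_0$ has a second saddle $\gamma_0\in(0,1/a)$ which you do not mention --- you need the level-set analysis of $\Re\varphi_0$ (Figure~\ref{fig:level curve varphi0}) to justify that the contour can pass through $x_0$ without a competing contribution from $\gamma_0$. Finally, the uniformity of the $\bigO(N^{-1/2})$ error in $\mu\in[\delta,1-\delta]$ is asserted via compactness, but you would still need uniform tail estimates for the $k$-fold Laplace integral away from the saddle and uniform control of the contour deformation; these are precisely the places where the small-norm RH framework saves the paper from having to argue by hand.
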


\begin{remark}
Observe that for $k=0$, we have $F_0({0},\mu)=0$ and $F_1({0},\mu)=( \frac{1}{2} - \epsilon \mu ) \log(1+a^{2})$, such that we have
\[F_N^{m,1}(a;\epsilon)=(1+a^2)^{\left( \frac{1}{2} - \mu + \mu^{2} \right)N^2+\left( \frac{1}{2} - \epsilon \mu \right) N}(1+\bigO(N^{-1/2})),\]
as $N\to\infty$,
which is in agreement with  \eqref{eq:formulamirror}.
For $k\geq 0$ fixed, we can write the above result alternatively as
\begin{multline*}F_N^{m,k+1}(a;\epsilon)=F_{m-\epsilon}(a;\epsilon)F_{N-m}(a;\epsilon)e^{k \varphi_{0}(x_{0}) N } N^{-\frac{k^2}{2}}\\\times \ \mathcal G(k+1) \big(  |x_{0}|^{2}\varphi_{0}''(x_{0}) \big)^{- \frac{k^{2}}{2}} (1-ax_0)^{ \epsilon k}(2\pi)^{- \frac{k}{2}}(1+\bigO(N^{-1/2})),\end{multline*}
as $N\to\infty$. In this expression, we see how the number of tilings, equal to the number of tilings of two disjoint Aztec diamonds for $k=0$, rapidly increases with $k$, since $\varphi_0(x_0)>0$.
The simulation in Figure \ref{fig:tilings large and almost maximal frozen regions} (left) suggests that the increasing number of tilings as $k$ grows, corresponds to the presence of $k$ apparent paths of unfrozen dominoes connecting the two smaller Aztec diamonds. To the best of our knowledge, it is the first time that such a phenomenon of unfreezing, via a small number of paths over a long distance, is observed and quantified, although there are some similarities with a transition in a model of overlapping Aztec diamonds studied in \cite{AJvM, AvM}. It would be interesting to understand not only the number of tilings, but also the locations and correlations of the unfrozen paths.
\end{remark}
\begin{remark}\label{remark:numerical check}
We verified Theorem \ref{thm:birth of a cut}  numerically for some choices of the parameters. In fact, the numerical results suggest that the $\bigO(N^{-1/2})$-term in \eqref{lol35} can be improved to $\bigO(N^{-1})$, see Figure \ref{fig:numerical confirmation} (left and middle).
\end{remark}

\begin{figure}
\begin{center}
\begin{tikzpicture}[master]
\node at (0,0) {\includegraphics[width=6cm]{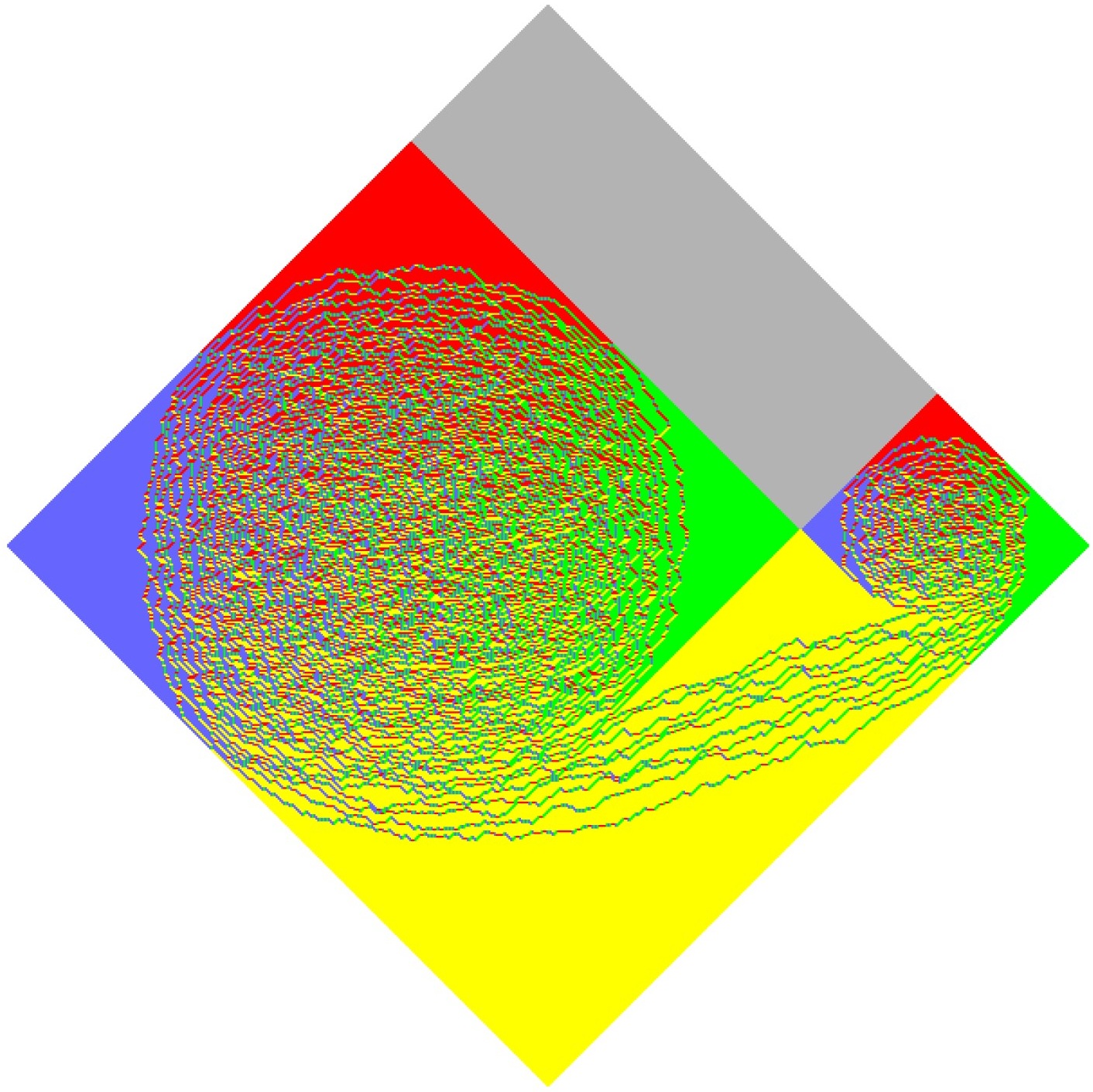}};
\node at (0,3.5) {\footnotesize Almost maximal {removed corner}};
\node at (0,3.2) {\footnotesize Theorem \ref{thm:birth of a cut}};
\end{tikzpicture}
\begin{tikzpicture}[slave]
\node at (0,0) {\includegraphics[width=6cm]{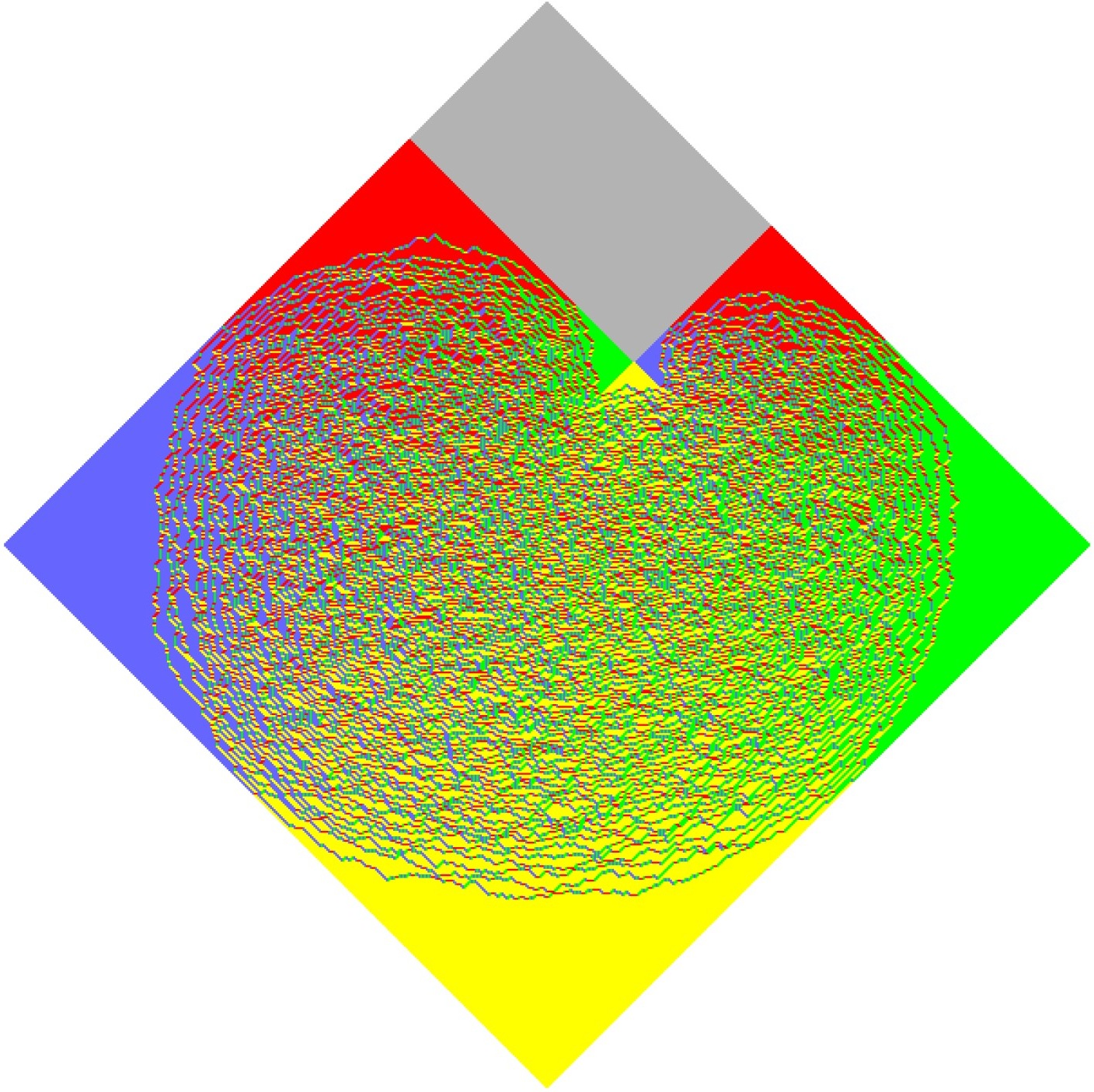}};
\node at (0,3.5) {\footnotesize Large {removed corner}};
\node at (0,3.2) {\footnotesize Theorem \ref{thm:rough}};
\end{tikzpicture}
\end{center}
\caption{\label{fig:tilings large and almost maximal frozen regions}Left: a domino tiling of $A_{300}^{225,10}$ taken uniformly at random. Right: a domino tiling of $A_{300}^{225,102}$ taken uniformly at random. }
\end{figure}

Our next result is concerned with a {removed} corner which is slightly smaller, but still has a large overlap with the rough region. {(This case is the most involved out of the four regimes considered in this paper.)} The number of tilings is then bigger than in the regime described above, but still much smaller than the number of tilings of the full Aztec diamond. This corresponds to $\mu=m/N\in (0,1)$ and $\kappa=k/N\in (0,\kappa_2)$, with $\kappa_2=\kappa_2(\mu)\in (0,\mu)$ given by \eqref{def:kappa2}. In this regime, Colomo and Pronko established the leading order term of order $N^2$ (the free energy) in the large $N$ asymptotic expansion of $\log F_N^{m,k+1}(a;\epsilon)$ \cite{CP2013, CP2015}. Moreover, when $\epsilon=0$ and the removed region satisfies $N-m=m-k+1$ (that is, the removed region has the same number of corners along each side; see Figure \ref{fig:ADreduced2}, right), the full expansion of $\log F_N^{m,2m-N}(a;0)$ was previously known \cite{KP2016}, see also Remark \ref{remark:comparison with Kitaev Pronko} below. We confirm and strengthen their result by computing the subleading terms proportional to $N$ and to $\log N$ as well as the term of order $1$, and by extending the uniformity range.

{The constants $C_2(\kappa,\mu), C_1(\kappa,\mu),C_0(\kappa,\mu)$ appearing in Theorem \ref{thm:rough} below are very involved, but in essence they only depend on $\kappa,\mu$ and on the unique $z_0=z_0(\kappa;\mu)$ in the complex upper half plane which satisfies
\begin{align}\label{def of z0 intro-0}
\begin{cases}
\ds \frac{1-\mu}{|z_{0}-\frac{1}{a}|} + \frac{\mu-\kappa}{|z_{0}|} - \frac{\mu}{|z_{0}+a|} = 0, \\
\ds \kappa+1-\mu - \frac{a\mu}{|z_{0}+a|} - \frac{1-\mu}{a |z_{0}-\frac{1}{a}|} = 0.
\end{cases}
\end{align}
This quantity is analyzed in Section \ref{sec:large} and Proposition \ref{prop:z0} below; in particular we will show that $z_0$ can be expressed in terms of a root of a degree four polynomial equation.}

\begin{figure}
\begin{center}
\begin{tikzpicture}[master]
\node at (0,0) {\includegraphics[width=4.5cm]{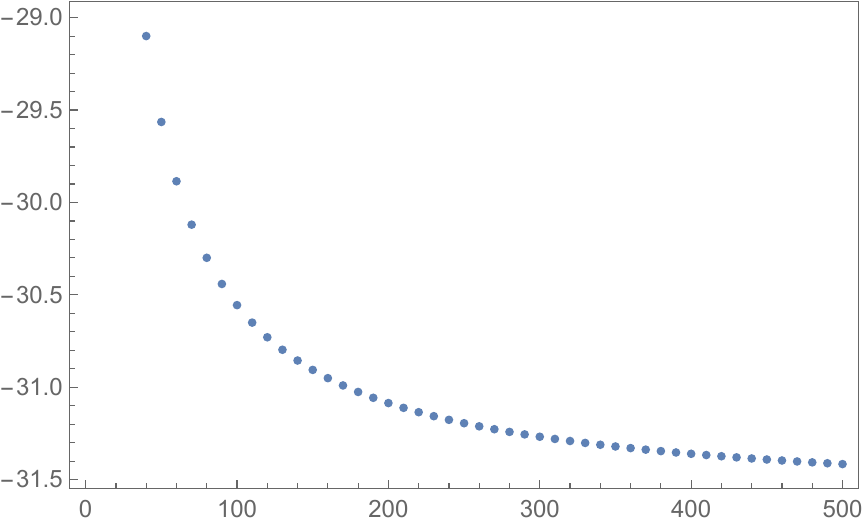}};
\node at (0,2.3) {\footnotesize Numerical confirmation of};
\node at (0,2.0) {\footnotesize Theorem \ref{thm:birth of a cut}};
\end{tikzpicture}
\begin{tikzpicture}[slave]
\node at (0,0) {\includegraphics[width=4.5cm]{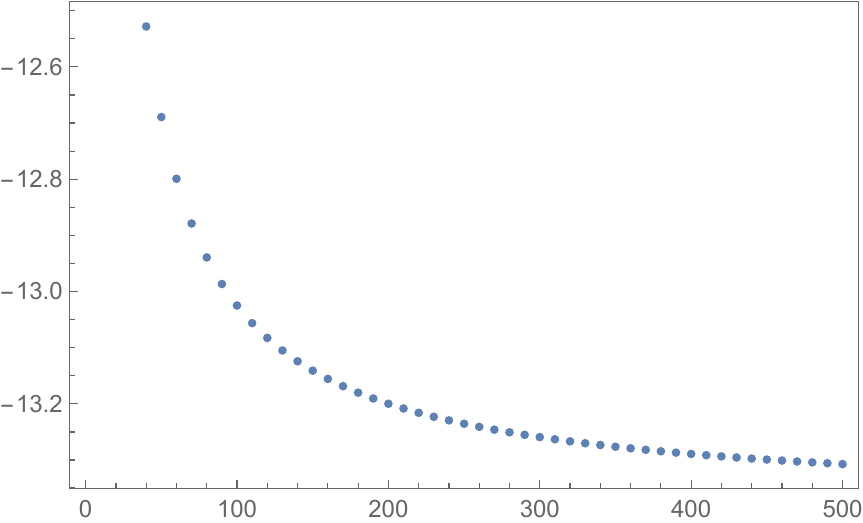}};
\node at (0,2.3) {\footnotesize Numerical confirmation of};
\node at (0,2.0) {\footnotesize Theorem \ref{thm:birth of a cut}};
\end{tikzpicture}
\begin{tikzpicture}[slave]
\node at (0,0) {\includegraphics[width=4.5cm]{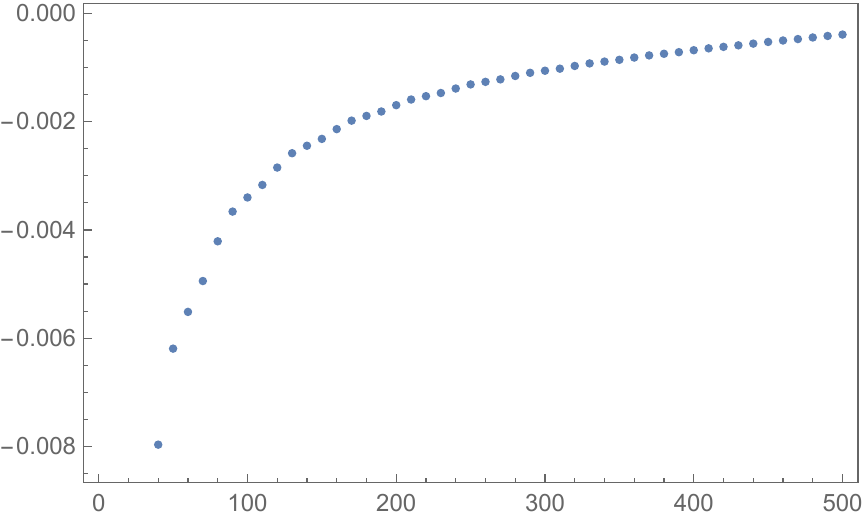}};
\node at (0,2.3) {\footnotesize Numerical confirmation of};
\node at (0,2.0) {\footnotesize Theorem \ref{thm:rough}};
\end{tikzpicture}
\end{center}
\caption{\label{fig:numerical confirmation} Let $\log \mathcal{F}_{N}^{1}$ be the right-hand side of \eqref{lol35} without the $\bigO$-term. The left and middle figures represent $N \mapsto N(\log F_N^{m,k+1}(a;\epsilon)-\log \mathcal{F}_{N}^{1})$ with $a=0.7845$, $k=3$, $m=\{0.7N\}$, $\mu = \mu_{N} = m/N\approx 0.7$, and with $\epsilon=1$ (left) and $\epsilon=0$ (middle). Here $\{x\}$ denotes the nearest integer to $x$ (in case $x$ is exactly halfway between two integers, then $\{x\}$ is the nearest even integer to $x$). \newline
Right: Let $\log \mathcal{F}_{N}^{2}$ be the right-hand side of \eqref{asymp main thm} without the $\bigO$-term and without the $o(1)$-term. The right figure represents $N \mapsto \log F_N^{m,k+1}(a;\epsilon)-\log \mathcal{F}_{N}^{2}$ with $a=0.7845$, $m=\{0.7N\}$, $\mu = \mu_{N} = m/N\approx 0.7$, $k=\{0.25N\}$, $\kappa=\kappa_{N} = k/N \approx 0.25$, and $\epsilon=1$.
}
\end{figure}
\begin{theorem}{\bf (Large removed corner).}
\label{thm:rough}
Let $\epsilon\in\{0,1\}$, $a\in (0,1]$ be fixed.
We write $\mu=m/N$ and $\kappa_+=\frac{2k+1}{2N}$ with $k\leq m\in\{1,\ldots, N\}$ and $N\in\mathbb N$.
There exists $M>0$ such that for any $\delta>0$, we have\begin{multline}\label{asymp main thm}
\log F_N^{m,k+1}(a;\epsilon) =  C_{2}(\kappa_+,\mu)N^{2}+C_{1}(\kappa_+,\mu) N -\frac{1}{12} \log N + C_{0}(\kappa_+,\mu)\\
  +  \bigO\bigg({\frac{1}{N(\kappa_2-\kappa)^2}}\bigg) +o(1) ,
\end{multline}
as $N\to\infty$, uniformly for
${\delta}\leq \mu \leq 1-\delta$ and for
 ${\frac{M}{N}\leq \kappa} \leq {\min\{\kappa_2 -MN^{-2/3},\mu-\delta\}}$.
The functions $C_2(\kappa_+,\mu)$, $C_1(\kappa_+,\mu)$ and $C_0(\kappa_+,\mu)$ are smooth for $\mu\in(0,1)$ and $\kappa_+\in\left(0,\kappa_2\right)$. {Their explicit forms are given in equations \eqref{def:C2}, \eqref{def:C1} and \eqref{def:C0} below, and depend on the functions $G(\kappa,\mu), H(\kappa,\mu), F(\kappa,\mu)$ given by \eqref{def:G}, \eqref{def:H} and \eqref{def:F}, respectively.}

\medskip As $\kappa_+\to 0$, they satisfy the asymptotics
\begin{subequations}\label{asympCj0}
\begin{align}
C_{2}(\kappa_+,\mu) &=\left(\frac{1}{2}-\mu+\mu^2\right)\log(1+a^2)
+ \kappa_+\varphi_0(x_0)+\frac{\kappa_+^2}{2}\log\kappa_+ \nonumber \\ 
& \quad -\frac{\kappa_+^2}{4}\left(3+2\log(|x_0|^2{\varphi_{0}''}(x_0))\right) +\bigO(\kappa_+^3),\\
C_{1}(\kappa_+,\mu) &= -\frac{-\varphi_0(x_0)}{2}+\left(\frac{1}{2}-\epsilon\mu\right)\log(1+a^2)-\frac{\kappa_+}{2}\log\kappa_+ \nonumber \\
& \quad +\frac{\kappa_+}{2}\left(1+\log(|x_0|^2{\varphi_{0}''}(x_{0}))+2\epsilon \log(1-ax_0)\right) {+\bigO(\kappa_{+}^{2})},\\
C_{0}(\kappa_+,\mu) &=  \frac{1}{24}\log\kappa_+-\frac{3}{24}\log(|x_0|^2{\varphi_{0}''}(x_0)) -\frac{\epsilon}{2}\log(1-ax_0)+\zeta'(-1)+\bigO(\kappa_+),
\end{align}
\end{subequations}
{uniformly in 
${\delta}\leq \mu \leq 1-\delta$}, where $x_{0}$ and $\varphi_{0}$ are defined in \eqref{def:phi0x0}, and $\zeta$ is Riemann's zeta function.
\end{theorem}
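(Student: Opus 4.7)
The plan is to build on the determinantal structure of the weighted tiling measure (from \cite{CC2025}): $F_N^{m,k+1}(a;\epsilon)$ can be written as a product involving the leading coefficients (or $L^{2}$ norms) of polynomials orthogonal with respect to a discrete weight whose support and shape depend on $m,k,a,\epsilon$. A key preliminary step is to rewrite this product as the exponential of an integral of a differential identity, obtained by differentiating $\log F_N^{m,k+1}$ with respect to a well chosen deformation parameter $t$ that interpolates between a reference weight --- for which the answer is either known in closed form (e.g.\ $k=m+1$ and \eqref{eq:ADT}) or has been obtained in Theorem \ref{thm:birth of a cut} --- and the weight associated to the target parameters. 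The derivative of $\log F_N^{m,k+1}$ in $t$ can then be expressed in terms of the solution $Y(z)=Y(z;t,N)$ of a Riemann-Hilbert (RH) problem of size $m+1$ evaluated on a small contour, in the spirit of \cite{CC2025} and of many Hankel/Toeplitz determinant asymptotic analyses.

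Given this identity, the bulk of the work is a Deift-Zhou nonlinear steepest descent analysis of the RH problem in the regime $m\sim \mu N$, $k\sim \kappa N$ with $0<\kappa<\kappa_2(\mu)$. The relevant phase function is a $\kappa$-perturbation of $\varphi_0$ in \eqref{def:phi0x0}, and its saddle point analysis leads to the quantity $z_0=z_0(\kappa;\mu)$ characterized by \eqref{def of z0 intro-0}, whose existence and smooth dependence on $(\kappa,\mu)$ will be established in Proposition \ref{prop:z0}. After opening lenses around the equilibrium measure produced by the associated variational problem, the RH problem is reduced to a global parametrix built from Szeg\H{o} functions together with local Airy parametrices at the soft edges of the spectrum. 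The standard small-norm argument then gives an approximation of $Y$ with an error that degenerates as $\kappa\to\kappa_2$ like $(\kappa_2-\kappa)^{-2}$, which is the origin of the term $\bigO\bigl((N(\kappa_2-\kappa)^{2})^{-1}\bigr)$ in \eqref{asymp main thm}; the residual $o(1)$ term accounts for the discrete-to-continuous Riemann-sum approximation implicit in integrating the differential identity.

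Integrating this identity from the reference parameters up to the target $(\mu,\kappa)$ produces the three functions $C_{2},C_{1},C_{0}$; the $N^{2}$, $N$ and $\log N$ orders correspond respectively to the equilibrium energy, to the boundary contributions from the Szeg\H{o}-type factors, and to the universal Barnes-$G$/$\zeta$-type contribution, which yields the $-\tfrac{1}{12}\log N$ coefficient. The integration constants are fixed by matching at the boundary $\kappa\to\mu$ (or $k=m+1$) against the explicit formula \eqref{eq:ADT}, and the smoothness of the $C_j$ is inherited from the smoothness of $z_0(\kappa;\mu)$ and of the equilibrium measure. The explicit expressions \eqref{def:C2}--\eqref{def:C0} are then read off from the Szeg\H{o} function and the leading-order RH data through the auxiliary functions $G,H,F$ of \eqref{def:G}--\eqref{def:F}.

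Finally, the $\kappa_{+}\to 0$ asymptotics \eqref{asympCj0} are obtained by a careful Taylor expansion of $G,H,F$ at $\kappa_{+}=0$. This expansion is singular, because $z_0$ collides with $x_0$ and the soft edge of the spectrum degenerates in that limit; this is precisely the mechanism that generates the $\kappa_+^{2}\log\kappa_+$, $\kappa_+\log\kappa_+$ and $\log\kappa_+$ contributions. The coefficients of the singular terms must agree with those predicted by Theorem \ref{thm:birth of a cut} combined with the Barnes-$G$ asymptotic $\log\mathcal G(k+1)=\tfrac{k^{2}}{2}\log k-\tfrac{3k^{2}}{4}+\tfrac{k}{2}\log(2\pi)-\tfrac{1}{12}\log k+\zeta'(-1)+o(1)$, which provides a strong consistency check. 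I expect the main obstacle to be the uniform control of the RH analysis as $\kappa\to\kappa_2$: there the soft edge merges with the support of the equilibrium measure, the Airy parametrix starts to break down, and the loss of uniformity is exactly what restricts the range to $\kappa\leq\kappa_2-MN^{-2/3}$ and produces an error $\bigO((N(\kappa_2-\kappa)^{2})^{-1})$ that blows up precisely at the Tracy--Widom scale.
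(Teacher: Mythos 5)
Your steepest--descent blueprint ($g$-function, Szeg\H{o}/Airy parametrices, degeneration near $\kappa_2$, anchoring against Theorem \ref{thm:birth of a cut} and Barnes--$G$ asymptotics, the $\kappa_+\to 0$ singular expansion) matches the paper's at the structural level. But the starting characterization is off. The paper does \emph{not} go through discrete (Krawtchouk-type) orthogonal polynomials, and there is no RH problem of ``size $m+1$'' anywhere: the central object is the $2\times 2$ continuous RH problem for $Y$ from \cite{CC2025}, with simple rational jumps, and the authors remark explicitly that the discrete-orthogonal-polynomial route would be ``highly challenging to obtain subleading terms and uniform error terms'' --- precisely the precision this theorem needs.

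The more substantive gap is your differential identity. You propose to differentiate $\log F_N^{m,k+1}$ in a continuous deformation parameter $t$ interpolating between reference and target weights, and then integrate. If $t$ is the natural continuation of $\kappa$, the jump matrices carry $z^{k}$ on closed loops, so non-integer $k$ introduces branch cuts and monodromy and the deformation derivative is not well defined; and even granting some workaround, you still need to control the discrete-to-continuous discrepancy, which is where the $-\tfrac{1}{12}\log N$ really comes from. The paper instead exploits the exact discrete ratio identity $F_N^{m,k+1}/F_N^{m,k}=Y_{22}(0)$, derives the ratio asymptotics $NG(\kappa,\mu)+H(\kappa,\mu)+\tfrac{1}{N}F(\kappa,\mu)+\text{error}$ (Proposition \ref{prop:ratio asymp in rough region}) by steepest descent, and then telescopes in $k$, converting sums to integrals via Euler--Maclaurin (Lemmas \ref{lemma:Riemannsum}--\ref{lemma:Riemann sums2}). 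In that scheme the $-\tfrac{1}{12}\log N$ emerges from the $\tfrac{1}{12\kappa}$ pole of $F$, the half-integer endpoint shifts, and the Barnes--$G$ tail of Theorem \ref{thm:birth of a cut}; and the $o(1)$ in \eqref{asymp main thm} comes from letting the fixed anchoring cutoff $M$ tend to infinity, not from a discretization error as you suggest. As written, your plan needs either a rigorous construction of an admissible deformation (which the paper avoids) or the discrete telescoping argument it actually uses.
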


\begin{remark}
The condition $k \leq \min\{N\kappa_2 -MN^{1/3},N(\mu-\delta)\}$ means that the corner of the removed region lies inside the arctic ellipse, at least at a distance $MN^{1/3}$ away from the ellipse and at least at distance $\delta N$ away from the point where the ellipse touches the border of the Aztec diamond. Note also that the error terms in \eqref{asymp main thm} are small if $k\to\infty$ and $N^{-1/2}(N\kappa_2-k)\to \infty$.
\end{remark}

\begin{remark}
Barnes' $G$-function satisfies the asymptotic expansion
\begin{equation}\label{asBarnes}
\log \mathcal G(k+1)=\frac{k^2}{2}\log\left(k+\frac{1}{2}\right)-\frac{3k^2}{4}-\frac{k}{4}+\frac{k}{2}\log(2\pi)-\frac{1}{12}\log\left(k+\frac{1}{2}\right)+\frac{1}{16}+\zeta'(-1)+{\bigO\Big(\frac{1}{k}\Big)}
\end{equation}
as $k\to\infty$. Substituting this in Theorem \ref{thm:birth of a cut}, we obtain that the asymptotics from Theorem \ref{thm:birth of a cut} are compatible as $k\to\infty$ with those from
Theorem \ref{thm:rough}; more precisely, they match with the expansion \eqref{asymp main thm} with $\kappa_+=\frac{2k+1}{2N}$ and in which we substitute the small $\kappa_+$ asymptotic expansions \eqref{asympCj0}.
\end{remark}
\begin{remark}\label{remark:numerical check 2}
Figure \ref{fig:numerical confirmation} (right) supports the validity of Theorem \ref{thm:rough} with $\epsilon=1$. We also verified Theorem \ref{thm:rough} numerically for $\epsilon=0$, although we decided not to include a graph for that case.
\end{remark}

\begin{remark}
The simulation in Figure \ref{fig:tilings large and almost maximal frozen regions} (right) indicates several interesting asymptotic features, in particular it shows an arctic curve separating frozen and rough regions with two cusp points.
This arctic curve has been computed explicitly in \cite{CPS2019}.
Finer statistics like local correlation functions have not been analyzed so far. Like in the full Aztec diamond, one expects behavior described in terms of the Airy process near the arctic curve \cite{Joh2}, except near the cusp points. There, one would rather expect a behavior related to the cusp-Airy process \cite{DJM, Petrov}.
\end{remark}
\begin{remark}\label{remark:comparison with Kitaev Pronko}
Theorem \ref{thm:rough} was previously known in the case where $\epsilon=0$ and the removed region has the same number of corners along each side, that is, when $N-m=m-k+1$, see also Figure \ref{fig:ADreduced2} (right). In this particular situation, the expansion can be considerably simplified, as shown by Kitaev and Pronko in \cite{KP2016}. Indeed, by \cite[Theorem 1.2]{KP2016} (the parameters $\alpha$, $s$ and $r$ in \cite{KP2016} correspond here to $\frac{a^{2}}{1+a^{2}}$, $1-\mu$ and $\mu$, respectively), we have
\begin{multline}\label{asymp main thm Pronko}
\log \frac{F_N^{m,2m-N}(a;0)}{F_{N}(a)} =  \bigg( \frac{(1-2\mu)^{2}}{2(1-\mu)^{2}} \log \frac{1-v}{1-u} + \frac{1}{2(1-\mu)^{2}} \log \frac{1+v}{1+u} - \log \frac{v}{u} \bigg)(1-\mu)^{2}N^{2} \\
- \frac{1}{12}\log N - \frac{1}{12}\log(1-\mu)+\frac{1}{8}\log \frac{(1-u^{2})v^{2}}{v^{2}-u^{2}} - \frac{1}{12}\log \frac{1-v^{2}}{2} + \zeta'(-1) + \bigO(N^{-2}),
\end{multline}
as $N\to + \infty$ uniformly for $v$ in compact subsets of $(u,1)$, where
\begin{align*}
u = 1+2a^{2}-2a\sqrt{1+a^{2}}, \qquad v = \frac{1-\mu}{\mu}.
\end{align*}
Moreover, the next two terms were also given explicitly in \cite[Theorem 1.2]{KP2016}. It is worth noting that in \eqref{asymp main thm Pronko}, there is no term of order $N$. We have checked numerically that Theorem \ref{thm:rough} is consistent with \eqref{asymp main thm Pronko}.
\end{remark}

For $\mu>\frac{a^2}{1+a^2}$ and $k$ near $N\kappa_2$, ${P_N^{m,k}(a;\epsilon)=}F_N^{m,k}(a;\epsilon)/F_N(a)$ is the probability to have a frozen region that extends to a point near the arctic curve. It is well-known that the fluctuations near the arctic curve are described by the Airy process \cite{Joh2} and the Tracy-Widom distribution. 
We recall \cite{TW} that the Tracy-Widom distribution $F^{\rm TW}(s)$ is given by
\begin{equation}\label{def:TW}
F^{\rm TW}(s)=\exp\left(-\int_s^{+\infty}(x-s)u(x)^2 dx\right),
\end{equation}
where $u(x)$ is the Hastings-McLeod solution of the Painlev\'e II
equation, i.e.,
\[u''(x)=xu(x)+2u(x)^3,\qquad u(x)\sim {\rm Ai}(x)\mbox{ as $x\to +\infty$,}\]
where ${\rm Ai}$ is the Airy function. As $s\to +\infty$, $\log F^{\rm TW}(s)$ converges rapidly (faster than exponentially) to $0$,
\begin{align}\label{eq:TW easy tail}
\log F^{\rm TW}(s) = \frac{e^{-\frac{4}{3}s^{3/2}}}{16\pi s^{3/2}}(1+o(1)), \qquad \mbox{as } s\to + \infty,
\end{align}
while its tail as $s\to -\infty$ is described by \cite{BBdF, DIK}
\begin{equation}\label{eq:TWtail}
\log F^{\rm TW}(s)=-\frac{|s|^3}{12}-\frac{1}{8}\log|s|+\frac{1}{24}\log 2+\zeta'(-1)+\bigO(|s|^{-3/2}), \qquad \mbox{as } s\to - \infty,
\end{equation}
where $\zeta$ is the Riemann zeta function.

\begin{figure}
\begin{center}
\begin{tikzpicture}[master]
\node at (0,0) {\includegraphics[width=6cm]{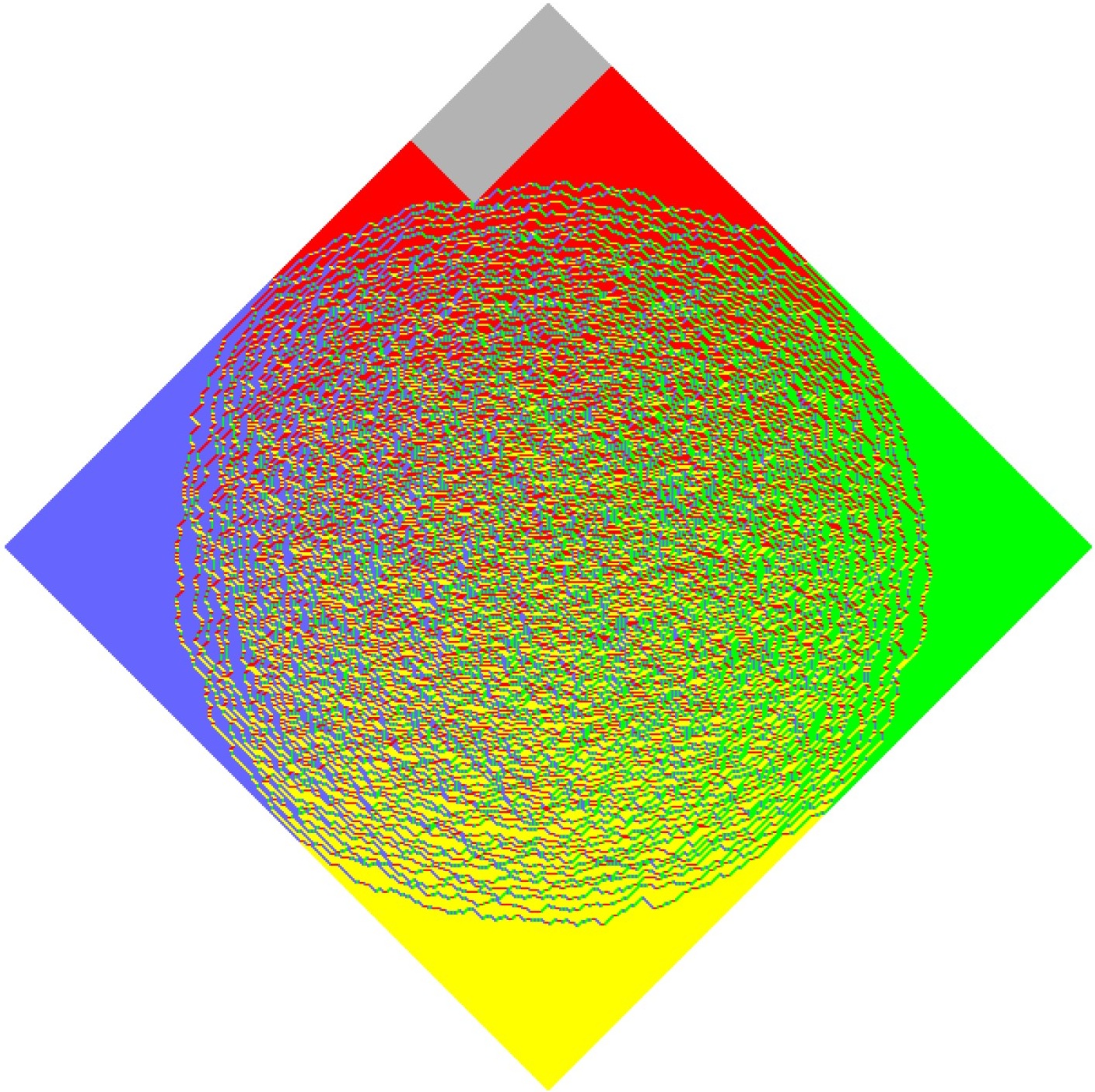}};
\node at (0,3.5) {\footnotesize Critical removed corner};
\node at (0,3.2) {\footnotesize Theorem \ref{thm:boundary}};
\end{tikzpicture}
\begin{tikzpicture}[slave]
\node at (0,0) {\includegraphics[width=6cm]{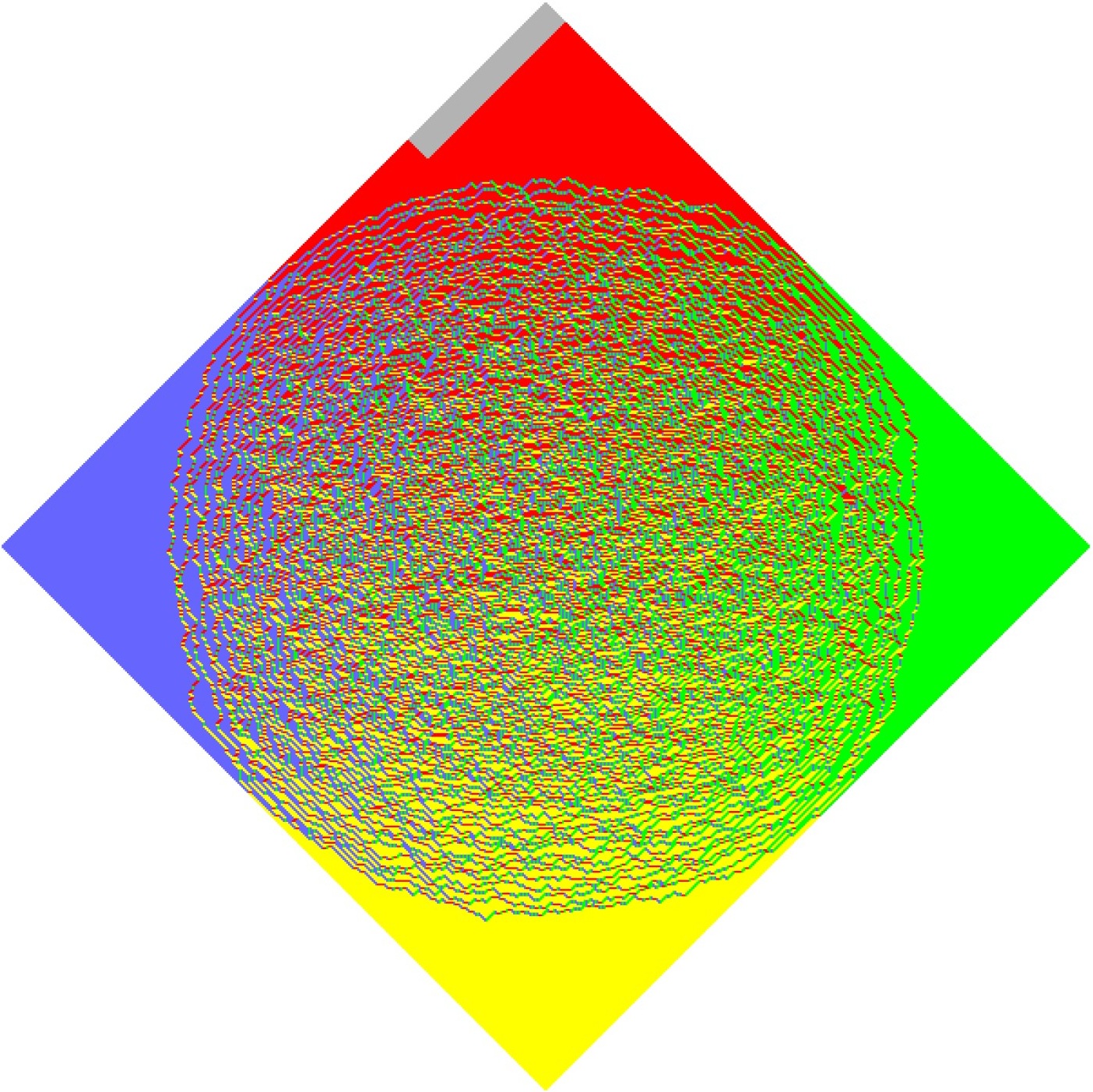}};
\node at (0,3.5) {\footnotesize Small removed corner};
\node at (0,3.2) {\footnotesize Theorem \ref{thm:frozen}};
\end{tikzpicture}
\end{center}
\caption{\label{fig:tilings small and critical frozen regions}Left: a domino tiling of $A_{300}^{225,191}$ taken uniformly at random.  Right: a domino tiling of $A_{300}^{225,215}$ taken uniformly at random.}
\end{figure}

The next result is an improvement of existing results \cite{Joh2} describing convergence to the Tracy-Widom distribution. Compared to \cite{Joh2}, we obtain uniform convergence for $(m,k)$ in a slightly bigger neighborhood of the arctic curve, and with more control over the error term.
\begin{theorem}{\bf (Critical removed corner).}
\label{thm:boundary}
Let $\epsilon\in\{0,1\}$, $a\in (0,1]$ be fixed. For any $\delta>0$ and $c<2/5$, we have 
\begin{align*}
F_N^{m,k}(a;\epsilon)=F_N(a)F^{\rm TW}\left({\frac{N^{2/3}(\kappa-\kappa_2)}{(c^{*})^{1/3}}}\right)\left(1+{\mathcal{E}_{N}}\right),\qquad N\to\infty,
\end{align*}
uniformly for $\frac{a^2}{1+a^2}(1+\delta)N\leq m\leq (1-\delta)N$ and for $N\kappa_2-N^{c}\leq k\leq N(\kappa_2+\delta)$. Here $c^*=c^*(\mu,a)>0$, with $\mu=\frac{m}{N}$, is given by
\begin{align*}
{c^{*}}=\mu{-}\kappa_2 {+}\frac{a^3(1-\mu)}{(1/{x}^*-a)^3} {-} \frac{\mu}{(1+a/{x}^*)^3}>0,\qquad {x}^*=  \frac{a-(1+a^{2})\sqrt{\mu(1-\mu)}}{(1+a^{2})\mu-1}\in(0,1/a),
\end{align*}
and $\mathcal{E}_{N}$ satisfies the following asymptotics as $N\to \infty$:
\begin{align*}
\mathcal{E}_{N} = \begin{cases}
\bigO(N^{-1/3}), & \mbox{if } {N^{\frac{2}{3}}(\kappa-\kappa_{2})} \mbox{ is bounded}, \\
\bigO\Big({\frac{e^{-c'N(\kappa-\kappa_{2})^{3/2}}}{N^{1/2}(\kappa-\kappa_{2})^{1/4}}}\Big), & \mbox{if } {N^{\frac{2}{3}}(\kappa-\kappa_{2})} \to + \infty, \\
\bigO({N^{3}(\kappa-\kappa_{2})^{5}}), & \mbox{if } {N^{\frac{2}{3}}(\kappa-\kappa_{2})} \to - \infty,
\end{cases}
\end{align*}
for any {fixed} $c' \in (0,\frac{2}{3}(c^{*})^{-1/2})$.
In particular, by \eqref{eq:TWtail}, as $N\to \infty$ such that $M:={N^{\frac{2}{3}}(\kappa_{2}-\kappa)} \to + \infty$ and {$M/N^{1/15}\to 0$}, 
\begin{align}\label{lol22}
\log F_N^{m,k}(a;\epsilon)=\log F_N(a)-\frac{M^3}{12c^{*}}-\frac{1}{8}\log M+\frac{1}{24}\log (2c^{*})+\zeta'(-1)+\bigO\bigg(\frac{1}{M^{3/2}} + \frac{M^{2}}{N^{1/3}}\bigg).
\end{align}
\end{theorem}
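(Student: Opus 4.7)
The plan is to write $P_{N}^{m,k}(a;\epsilon) = F_{N}^{m,k}(a;\epsilon)/F_{N}(a)$ as a Fredholm determinant $\det(I-K_{N})$ for a Christoffel--Darboux--type discrete kernel $K_{N}$, and to perform a Deift--Zhou Riemann--Hilbert steepest descent analysis with an Airy parametrix at the critical saddle point. The relationship between L-shaped Aztec domains and the full Aztec diamond makes $P_{N}^{m,k}(a;\epsilon)$ a gap probability for the determinantal point process describing a random domino tiling of $A_{N}$: the event that the removed corner is covered only by north dominoes (subject to the $v^{m,k}$-constraint when $\epsilon=0$) translates to all particles on a certain slice lying below a prescribed threshold. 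The resulting kernel $K_{N}$ admits the same RH characterization used for Theorems \ref{thm:birth of a cut} and \ref{thm:rough}, with $g$-function $\varphi_{0}$ from \eqref{def:phi0x0}.

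The critical value $\kappa=\kappa_{2}(\mu)$ is precisely the value of $\kappa$ for which the saddle of $\varphi_{0}(z)-\kappa \log z$ becomes an inflection point. A direct computation shows that the critical saddle is $x^{*}\in(0,1/a)$ as in the theorem, and that the phase expands locally as a cubic with leading coefficient proportional to $c^{*}$. The standard steepest descent transformations (opening of lenses, global outer parametrix, Airy local parametrix in a disk of radius $\asymp N^{-1/3}$ around $x^{*}$) then yield convergence of $K_{N}$ to the Airy kernel on rescaled coordinates with Airy scale $(c^{*})^{1/3}N^{-2/3}$. This gives Tracy--Widom asymptotics with argument $s_{N}=N^{2/3}(\kappa-\kappa_{2})/(c^{*})^{1/3}$. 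The three regimes of $\mathcal{E}_{N}$ come from quantifying the RH error: when $s_{N}$ is bounded, the mismatch of parametrices on the boundary of the Airy disk yields $\bigO(N^{-1/3})$; when $s_{N}\to+\infty$, the saddle separates from the pole at $1/a$ so that $K_{N}$ decays pointwise like $e^{-c'N(\kappa-\kappa_{2})^{3/2}}$, which combined with \eqref{eq:TW easy tail} gives the stated exponential bound; when $s_{N}\to-\infty$, one pushes the Airy approximation slightly beyond its natural range, and the next-order term in the cubic expansion of $\varphi_{0}$, integrated through the Fredholm determinant, contributes a relative error of order $|s_{N}|^{5}/N^{2}\asymp N^{3}(\kappa-\kappa_{2})^{5}$.

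Equation \eqref{lol22} then follows by substituting $s_{N}=-M/(c^{*})^{1/3}$ into the left-tail expansion \eqref{eq:TWtail} and tracking $\log(1+\mathcal{E}_{N})$; the constraint $M\ll N^{1/15}$ is exactly what is needed so that the $N^{3}(\kappa-\kappa_{2})^{5}=M^{5}/N^{1/3}$ contribution is absorbed into the stated error $\bigO(M^{2}/N^{1/3})$. The main obstacle is the left-tail regime $s_{N}\to-\infty$: obtaining the sharp error $\bigO(N^{3}(\kappa-\kappa_{2})^{5})$ requires a careful second-order analysis of the Airy parametrix that remains uniform as the effective window widens like $|s_{N}|^{1/2}$, and in particular requires compatibility with the saddle configuration underlying Theorem \ref{thm:rough} (where the saddle has moved well inside the rough region). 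The bounded and right-tail regimes, by contrast, should follow directly from the RH analysis already developed for Theorems \ref{thm:birth of a cut} and \ref{thm:rough}, specialized to $k$ approaching $N\kappa_{2}$.
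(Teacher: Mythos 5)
Your proposal takes a genuinely different route from the paper, but it also contains concrete errors and leaves its central difficulty unresolved. The paper does \emph{not} pass through a Fredholm determinant of a discrete kernel with an Airy local parametrix. Instead it exploits the ratio identity $F_N^{m,k+1}(a;\epsilon)/F_N^{m,k}(a;\epsilon)=Y_{22}(0)$ from Section~\ref{section:RHP for Y}, analyzes the $2\times2$ RH problem for $Y$ with a \emph{Painlev\'e II} (Flaschka--Newell / Hastings--McLeod) model parametrix near $x^*$, and then telescopes. The reason a Painlev\'e II rather than Airy parametrix appears is the local structure \eqref{eq:expandphi2}: $\phi(z;\kappa)-\phi(x^*;\kappa)$ is a cubic plus a logarithm with small coefficient $\kappa-\kappa_2$, which is exactly the PII model RH problem, not the pure cubic that an Airy parametrix handles. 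The matching condition then produces $q\big(N^{2/3}(\kappa-\kappa_2)s^*\big)$ in each single-step ratio (Proposition~\ref{prop:ratio asymp in TW region}), and the telescoping sum is a Riemann sum for $\int q=\log F^{\rm TW}$, with the three error regimes read off from the $s\to\pm\infty$ asymptotics \eqref{lol5} and \eqref{asymp of u and q update} of the PII solution. This is precisely what gives uniform control of $\mathcal E_N$; the Fredholm/Airy-kernel route is essentially Johansson's original argument, which the paper explicitly says it improves in this respect.

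Three concrete problems with the proposal as written. First, you identify $\varphi_0$ from \eqref{def:phi0x0} as the relevant phase, but $\varphi_0$ governs the almost-maximal regime of Theorem~\ref{thm:birth of a cut}; the critical regime is governed by $\phi(z;\kappa)$ from \eqref{def:phi}, and the two differ in the sign of the $\log(z+a)$ term. Second, your heuristic for the left-tail error is arithmetically off: with $s_N\asymp N^{2/3}(\kappa-\kappa_2)$ one has $|s_N|^5/N^2\asymp N^{4/3}(\kappa-\kappa_2)^5$, which is not $N^3(\kappa-\kappa_2)^5$ (the latter is $\asymp |s_N|^5 N^{-1/3}$); and in any case ``contributes a relative error of order $\ldots$, integrated through the Fredholm determinant'' is a guess, not a proof. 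You acknowledge this as ``the main obstacle'' but do not overcome it; in the paper's approach the estimate falls out of the explicit $s\to-\infty$ expansion \eqref{eq:Psi33} combined with the Riemann-sum error analysis. Third, the claim that the bounded and right-tail regimes ``follow directly from the RH analysis already developed for Theorems~\ref{thm:birth of a cut} and \ref{thm:rough}'' is not correct: Theorem~\ref{thm:birth of a cut} requires $k=\bigO(1)$, and Theorem~\ref{thm:rough} requires $\kappa\leq\kappa_2-MN^{-2/3}$, so neither covers $\kappa\in[\kappa_2,\kappa_2+\delta]$; the right-tail bound is derived from the super-exponential decay in \eqref{lol5}.
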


\begin{remark}
Note that there is an interval $k\in \left[N\kappa_2-N^{c_1},N\kappa_2-N^{c_2})\right]$, for any $\frac{5}{15}<c_2<c_1<\frac{6}{15}$, in which the results of Theorem \ref{thm:rough} and Theorem \ref{thm:boundary} both hold. The compatibility of both results in that case suggest the following asymptotics as $\kappa_+\to \kappa_2$ for $C_2$ and $C_1$,
\begin{subequations}\label{asympCjkappa2}
\begin{align}
C_{2}(\kappa_+,\mu) &= 
\frac{1}{2}\log(1+a^{2})-\frac{1}{12c^*}(\kappa_2-\kappa_+)^3+\bigO((\kappa_2-\kappa_+)^4),\\
 C_{1}(\kappa_+,\mu) &=
\frac{1}{2} \log(1+a^{2}) 
+\bigO((\kappa_2-\kappa_+)^{{2}}).  
\end{align}
\end{subequations}
We will prove these asymptotics in Subsection \ref{subsection:end}.
\end{remark}

Finally, for $\mu>\frac{a^2}{1+a^2}(1+\delta)$ and ${\kappa}>\kappa_2+\delta$, the removed {corner} lies entirely north of the arctic curve. Hence, we expect $P_N^{m,k}(a;\epsilon)$ to be very close to $1$. The next theorem confirms this expectation.
\begin{theorem}{\bf (Small removed corner).}
\label{thm:frozen}
Let $\epsilon\in\{0,1\}$, $a\in (0,1]$ be fixed. There exists $c>0$ such that, for any $\delta>0$,
\begin{align*}
F_N^{m,k}(a;\epsilon)=\big(1+O(e^{-c{N}(\kappa-\kappa_2)^{3/2}})\big)F_N(a),\qquad \mbox{as } N\to +\infty,
\end{align*}
uniformly for $\frac{a^2}{1+a^2}(1+\delta)\leq {\mu} \leq 1-\delta$ and for $\kappa_2+\delta \leq {\kappa}\leq {\mu}$.
\end{theorem}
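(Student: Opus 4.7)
The proof follows from combining two ingredients: Theorem \ref{thm:boundary} in the regime where $\kappa$ is close to $\kappa_2$, and a direct steepest-descent estimate in the ``deep frozen'' regime. Using the machinery set up earlier in the paper, the ratio $P_N^{m,k}(a;\epsilon) = F_N^{m,k}(a;\epsilon)/F_N(a)$ admits a determinantal representation of the form $\det(I - K_N|_S)$, where $K_N$ is the correlation kernel of the Aztec diamond DPP and $S$ encodes the event that no non-north domino appears inside the removed corner. The claim reduces to showing that this determinant is $1 + \bigO(e^{-cN(\kappa-\kappa_2)^{3/2}})$, uniformly in the stated range.

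For $\kappa - \kappa_2$ small but positive, that is, on any strip of the form $\kappa - \kappa_2 \in [\delta,\delta_0]$ with $\delta_0>0$ compatible with the admissibility constraints of Theorem \ref{thm:boundary}, I would simply invoke that theorem. Since $N^{2/3}(\kappa-\kappa_2)\to+\infty$ throughout this strip, combining the Tracy--Widom easy-tail estimate \eqref{eq:TW easy tail} with the exponentially small error $\mathcal E_N$ yields
\[
P_N^{m,k}(a;\epsilon) \;=\; F^{\rm TW}\!\bigl(N^{2/3}(\kappa-\kappa_2)/(c^*)^{1/3}\bigr)\,(1+\mathcal E_N) \;=\; 1 + \bigO\bigl(e^{-cN(\kappa-\kappa_2)^{3/2}}\bigr).
\]

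For $\kappa-\kappa_2$ outside the reach of Theorem \ref{thm:boundary}, the removed corner is well inside the frozen zone and I would perform a Deift--Zhou steepest-descent analysis on the underlying RHP: the support of the equilibrium measure is then uniformly separated from the ``target'' interval encoded by the removed corner, the associated $g$-function produces a phase whose real part is strictly negative on the opened lenses, and all jump matrices of the resulting small-norm RHP differ from the identity by $\bigO(e^{-cN})$ uniformly. Reconstruction then gives $P_N^{m,k}(a;\epsilon) = 1 + \bigO(e^{-cN})$, and since $(\kappa-\kappa_2)^{3/2}$ is bounded from above on the relevant compact parameter region, this is absorbed into the target bound after adjusting the constant $c$.

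The main obstacle I anticipate is ensuring the exponential rate $c>0$ is uniform in $(\mu,\kappa)$ across the full parameter box, in particular as $\mu$ approaches either boundary value $\tfrac{a^2}{1+a^2}(1+\delta)$ or $1-\delta$. In the first subregime this uniformity is inherited directly from the uniformity statement in Theorem \ref{thm:boundary}; in the second it reduces to a compactness argument for the real saddle-point value $|\mathrm{Re}\,\varphi(x_*)|$, which is continuous and strictly positive on the relevant compact parameter region, and hence uniformly bounded below.
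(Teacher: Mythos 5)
Your proposal diverges substantially from the paper's argument, and the divergence introduces a real circularity problem. In the paper, Theorem~\ref{thm:frozen} is proved \emph{first}, and the Tracy--Widom result Theorem~\ref{thm:boundary} then \emph{uses} it: the telescoping sum in the proof of the corollary after Proposition~\ref{prop:ratio asymp in TW region} runs up to some $k_{\rm max}\in(N(\kappa_2+\delta/2),N(\kappa_2+\delta))$ and requires knowing that $\log F_N^{m,k_{\rm max}}(a;\epsilon)-\log F_N(a)$ is exponentially small, which is exactly Theorem~\ref{thm:frozen}. Invoking Theorem~\ref{thm:boundary} to handle the strip $\kappa-\kappa_2\in[\delta,\delta_0]$ therefore loops back on itself. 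This can, in principle, be disentangled (first prove the small-corner bound for $\kappa\geq\kappa_2+\delta_0$ with a fixed $\delta_0$, then use the critical theorem to push down to arbitrary $\delta$), but you do not flag the issue, and there is no need for the two-case decomposition at all: the paper's Proposition~\ref{prop:crittraj1} and Corollary~\ref{prop:contours} give, for \emph{every} $\kappa\in(\kappa_2,\mu]$, contours on which $\Re\phi(z;\kappa)-h$ is bounded away from zero by $c(\kappa-\kappa_2)^{3/2}$, which already produces the correct uniform rate $\bigO(e^{-cN(\kappa-\kappa_2)^{3/2}})$ for the jump matrices in a single stroke.

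Two further points of mismatch. First, the paper does not work with a Fredholm determinant $\det(I-K_N|_S)$; the entire framework is the $2\times 2$ RH problem for $U$ from \eqref{eq:ratioid} and the ratio identity $F_N^{m,k+1}/F_N^{m,k}=Y_{22}(0)$, followed by a telescoping product from $k$ up to $m+1$ using $F_N^{m,m+1}(a;\epsilon)=F_N(a)$. Your proposal drops this ratio-and-telescope structure, which is the key mechanism the paper relies on. Second, the steepest-descent picture you describe, a $g$-function with a support uniformly separated from a target interval and lenses opened around it, is the machinery of Section~\ref{sec:large} for the \emph{large} removed corner, where the two saddles are a complex-conjugate pair and a genuine equilibrium problem appears. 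In the small-corner regime both saddles $x_1(\kappa),x_2(\kappa)$ are real and lie in $(0,1/a)$, there is no bulk or branch cut, and the paper gets away with a trivial conjugation $T=e^{\frac{N}{2}h\sigma_3}Ye^{-\frac{N}{2}h\sigma_3}$ plus a choice of level sets, no $g$-function, no parametrices. So the approach you outline for the deep-frozen regime is not wrong in spirit but it invokes the wrong mechanism; the actual proof is considerably more elementary.
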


\paragraph{Outline.}
The starting point of our asymptotic analysis is a characterization of the quantity
$\frac{F_{N}^{m,k+1}(a;\epsilon)}{F_{N}^{m,k}(a;\epsilon)}$
in terms of a $2\times 2$ Riemann-Hilbert (RH) problem established in \cite{CC2025} using a 
discrete version of Fourier techniques developed in \cite{BertolaCafasso} combined with classical Its-Izergin-Korepin-Slavnov theory \cite{IIKS}. We recall this characterization in Section \ref{section:RHP for Y}.
In Section \ref{sec:small frozen region}, we perform the large $N$ asymptotic analysis of the RH problem in the situation of a small removed corner. This asymptotic analysis requires the inspection of a phase function and a suitable deformation of jump contours, but no construction of parametrices, and it will allow us to prove Theorem \ref{thm:frozen}.
In Section \ref{sec:critical}, we continue with the asymptotic RH analysis in the critical regime where the removed corner hits the arctic curve. Here, we will need to construct a local parametrix related to the Painlev\'e II equation, which will lie at the basis of the proof of the Tracy-Widom asymptotics in Theorem \ref{thm:boundary}.
In Section \ref{sec:max}, we will complete the asymptotic analysis for an almost maximal removed corner and prove Theorem \ref{thm:birth of a cut}. Here we will need to construct a local parametrix in terms of Hermite polynomials. 
In Section \ref{sec:large}, we finally analyze the RH problem asymptotically for a large removed corner. In that case, we will need to construct a suitable $g$-function and at a later stage local Airy parametrices and a global parametrix. This will lead us to the proof of Theorem \ref{thm:rough}.

\section{Ratio identity and RH problem}\label{section:RHP for Y}

The starting point of our analysis is a characterization of $F_N^{m,k}(a;\epsilon)$ in terms of a $2\times 2$ matrix RH problem, obtained recently in \cite{CC2025}.
By  \cite[Proposition 3.2, Remark 4.1 and Theorem 4.2]{CC2025}, we have
\begin{equation}\label{eq:ratioid}
\frac{F_{N}^{m,k+1}(a;\epsilon)}{F_{N}^{m,k}(a;\epsilon)}
 =U_{11}(0),
\end{equation}
where $U$ is the unique solution of the following RH problem, which depends on the parameters $\epsilon\in\{0,1\}$, $a\in(0,1]$, $N\in\mathbb N$, $m\in\{1,\ldots, N\}$, $k\in\{1,\ldots, m\}$. %

\subsubsection*{RH problem for $U$}
\begin{itemize}
\item[(a)] $U:\mathbb C\setminus\left(\gamma_0\cup\gamma_1\right)\to \mathbb C^{2\times 2}$ is analytic, where $\gamma_0$ is a simple positively oriented closed contour around $a$ but not enclosing $-1/a$, and $\gamma_1$ is a simple positively oriented contour encircling $\gamma_0$ and $0$.
\item[(b)] $U$ admits continuous boundary values $U_\pm(z)$ as $z\in\gamma_0\cup\gamma_1$ is approached from the left ($+$) or right ($-$) according to the orientation of the contour, and they are related by
$U_+(s)=U_-(z)J_U(z)$ for $z\in \gamma_0\cup\gamma_1$, with
\begin{align*}
J_U(z)=\begin{cases}
\begin{pmatrix}
1& \frac{z^{N-m+k+\epsilon-1}}{(z-a)^{N-m+\epsilon}(1+az)^m}\\ 0& 1
\end{pmatrix},&z\in\gamma_0,\\
\begin{pmatrix}
1& 0\\ -\frac{(z-a)^{N-m+\epsilon}(1+az)^m}{z^{N-m+k+\epsilon-1}}
& 1
\end{pmatrix},&z\in\gamma_1.\end{cases}
\end{align*}\item[(c)]{As $z\to \infty$, $U(z)={I}+\bigO(z^{-1})$}, where $I$ is the $2\times 2$ identity matrix.
\end{itemize}
It follows from standard complex analysis arguments that $\det U(z)\equiv 1$. Moreover, the RH solution $U$ depends on the choice of contours $\gamma_0,\gamma_1$, but the value $U_{11}(0)$ does not, provided that 
$\gamma_0$ and $\gamma_1$ satisfy the conditions described in the above RH problem.
\begin{remark}It is well known that domino tilings of the Aztec diamond are connected to discrete orthogonal polynomials with respect to a Krawtchouk weight, see \cite{Jptrf}, and this connection can be used to characterize $\frac{F_{N}^{m,k+1}(a;\epsilon)}{F_{N}^{m,k}(a;\epsilon)}$ in terms of a discrete RH problem, characterizing orthogonal polynomials with respect to a truncated Krawtchouk weight. In principle, one could attempt to analyze this discrete RH problem asymptotically using the techniques of \cite{Baiketal}. It would however be highly challenging to obtain subleading terms and uniform error terms in this way. The above RH problem for $U$ is simpler in the sense that it is a continuous RH problem with simple (rational) jump matrices, which greatly simplifies asymptotic analysis. We emphasize that the RH problem for $U$ is not directly related to orthogonal polynomials; it can in fact be interpreted as a RH problem in Fourier space rather than in position space, and its solution can be expressed in terms of Padé approximants, see \cite{CC2025} for details.
\end{remark}

In view of asymptotic analysis, it will be convenient to change variable $z\mapsto 1/z$ in this RH problem.
To that end, we define 
\begin{equation}\label{def:Y}
Y(z)=\sigma_3U(0)^{-1}U(1/z)\sigma_3,\qquad\sigma_3=\begin{pmatrix}1&0\\0&-1\end{pmatrix}.
\end{equation}
Under the map $z\mapsto 1/z$, the contour $\gamma_1$ is sent to a closed negatively oriented loop $\Sigma_0$ enclosing $0$ but not enclosing $1/a$, while $\gamma_0$ is sent to a closed loop $\Sigma_1$ lying in the exterior of $\Sigma_0$ and not passing through $1/a$. 
If $\gamma_{0}$ encloses $0$, then $\Sigma_{1}$ is oriented negatively and encloses $-a,0$ but not $1/a$; if $\gamma_{0}$ does not enclose $0$, then $\Sigma_{1}$ is oriented positively and encloses $1/a$ but not $-a,0$; if $\gamma_0$ passes through $0$, then $\Sigma_1$ closes at infinity. While all these choices are admissible, below we will always assume that $\gamma_{0}$ surrounds $0$, so that $\Sigma_{1}$ surrounds $-a,0$ but not $1/a$.
After changing the orientation of the contours $\Sigma_0$ and $\Sigma_1$, we obtain the following RH problem for $Y$.

\subsubsection*{RH problem for $Y$}
\begin{itemize}
\item[(a)] $Y:\mathbb C\setminus\left(\Sigma_0\cup\Sigma_1\right)\to \mathbb C^{2\times 2}$ is analytic, with $\Sigma_0$ positively oriented and enclosing $0$, and $\Sigma_1$ either positively oriented and enclosing $\Sigma_0$ and $-a$ without enclosing $1/a$, or negatively oriented and enclosing $1/a$ without enclosing nor intersecting $\Sigma_0$ and without enclosing $-a$.
\item[(b)] $Y_+(z)=Y_-(z)J_Y(z)$ for $z\in \Sigma_0\cup\Sigma_1$, with
\begin{align}\label{eq:JY}
J_Y(z)=\begin{cases}
\begin{pmatrix}
1& \frac{z^{m-k+1}}{(1-az)^{N-m+\epsilon}(z+a)^m}\\ 0& 1
\end{pmatrix},&z\in\Sigma_1,\\
\begin{pmatrix}
1& 0\\ - \frac{(1-az)^{N-m+\epsilon}(z+a)^m}{z^{m-k+1}}
& 1
\end{pmatrix},&z\in\Sigma_0.\end{cases}
\end{align}
\item[(c)]{As $z\to \infty$, $Y(z) = I +\bigO(z^{-1})$}.
\end{itemize}
Using \eqref{def:Y}, \eqref{eq:ratioid} and the fact that $U(z)=I+\bigO(z^{-1})$ as $z\to \infty$, the ratio identity in terms of $Y$ reads
\begin{equation}
\label{eq:ratioid2}
\frac{F_{N}^{m,k+1}(a;\epsilon)}{F_{N}^{m,k}(a;\epsilon)}=Y_{22}(0)\det U(0)=Y_{22}(0).
\end{equation}
Let us parametrize $m\in\{1,\ldots, N\}$ and $k\in\{1,\ldots, m\}$ as follows:
\begin{equation}
\label{def:parametr}
k=\kappa N,\quad m=\mu N,\qquad {\mbox{such that}\quad \mu\in\left(0,1\right],\quad \kappa\in (0,\mu].}
\end{equation}
In a large part of our analysis, we will fix $\mu\in \big(\frac{a^2}{1+a^2},1\big]$. Later on, we will argue that our results are valid uniformly for $\mu\in\big(\frac{a^2}{1+a^2}(1+\delta),1-\delta\big]$.
On the other hand, we will treat $\kappa\in\left(0,\mu\right]$ as a varying parameter throughout our analysis. This is needed in particular because the ratio identity \eqref{eq:ratioid2} involves two different values of $\kappa$. For this reason, in our notations below, we will omit the dependence on $\mu$ of several quantities, while we do indicate the dependence on $\kappa$.

For $z\in \C\setminus \big( (-\infty,0]\cup [\frac{1}{a},+\infty) \big)$, we write 
\begin{equation}\label{def:phi}
\phi(z;\kappa)=(\kappa - \mu)  \log z + (1-\mu)\log(1-az) + \mu\log(z+a) ,
\end{equation}
where the principal branch is used for the logarithms, such that the jump matrix $J_Y$ can be rewritten as
\begin{align}\label{eq:JY2}
J_Y(z)=\begin{cases}
\begin{pmatrix}
1& \frac{z}{(1-az)^\epsilon}e^{-N\phi(z;\kappa)}\\ 0& 1
\end{pmatrix},&z\in\Sigma_1,\\
\begin{pmatrix}
1& 0\\ -\frac{(1-az)^\epsilon}{z} e^{N\phi(z;\kappa)}
& 1
\end{pmatrix},&z\in\Sigma_0.\end{cases}
\end{align}

\section{Small removed corner}\label{sec:small frozen region}
We assume in this section that $\mu\in\big(\frac{a^2}{1+a^2},1\big)$ and that $\kappa\in[\kappa_2,\mu]$, with $\kappa_2$ given by \eqref{def:kappa2}.
For $\kappa\in(\kappa_2,\mu]$ and with $\phi$ given by \eqref{def:phi}, the discriminant of the saddle point equation $\phi'(z;\kappa)=0$ is positive, so that we have two real saddle points 
\begin{align*}
& x_{1}(\kappa) = \frac{\kappa + a^{2}(2\mu-\kappa-1) - \sqrt{-a^{2}(1+4\kappa\mu -2\kappa - 2 \kappa^{2}-(2\mu-1)^{2})+\kappa^{2}+a^{4}(\kappa - 2\mu+1)^{2}}}{2a(1+\kappa - \mu)}, \\
& x_{2}(\kappa) = \frac{\kappa + a^{2}(2\mu-\kappa-1) + \sqrt{-a^{2}(1+4\kappa\mu-2\kappa- 2 \kappa^{2}-(2\mu-1)^{2})+\kappa^{2}+a^{4}(\kappa - 2\mu+1)^{2}}}{2a(1+\kappa - \mu)}.
\end{align*}
For $\kappa=\kappa_2$, these two saddle points coincide, such that there is a double saddle point at
\begin{align}\label{def of xstar}
x^*:=\frac{\kappa_2 + a^{2}(2\mu-\kappa_2-1)}{2a(1+\kappa_2 - \mu)}.
\end{align}
Note also that $x_{1}(\mu)=0$ and 
\begin{align*}
0 < x_{1}(\kappa) < x_{2}(\kappa)< \frac{1}{a}, \qquad \mbox{for all } \mu \in \Big(\frac{a^{2}}{1+a^{2}},1\Big), \; \kappa \in (\kappa_{2},\mu).
\end{align*} 
By \eqref{def:phi}, we can rewrite $\phi'(z;\kappa)$ as follows,
\[\phi'(z;\kappa)=\left(\kappa+1-\mu\right)\frac{(z-x_1(\kappa))(z-x_2(\kappa))}{z(z+a)(z-1/a)}.\]

\medskip

We can use the theory of critical trajectories of quadratic differentials \cite{Strebel} to understand the behavior of $\Re \phi(z;\kappa)$ as a function of $z\in\mathbb C$.
An object of the form $\phi'(z;\kappa)^2 dz^2$ with $\phi'(z;\kappa)^2$  rational, is called a quadratic differential. A curve is called a (vertical) trajectory of this quadratic differential if  $\phi'(z;\kappa)^2dz^2<0$ for $z$ on the curve, or equivalently if $\Re \big(\phi'(z;\kappa) dz\big)=0$, i.e.
\[\Re\left(\frac{(z-x_1(\kappa))(z-x_2(\kappa))}{z(z+a)(z-1/a)}dz\right)=0.\]
By the fundamental theorem of calculus, the real part of $\phi(z;\kappa)$ is constant along such trajectories.
There is an extensive literature on the local and global structure of trajectories of quadratic differentials, see e.g.\ \cite{Strebel}. We limit ourselves to recalling the main results that we will need for our specific quadratic differential which has two double zeros $x_1(\kappa),x_2(\kappa)$ if $\kappa>\kappa_2$ and three double poles $-a,0,1/a$.
 Trajectories going through one of the zeros $x_1(\kappa),x_2(\kappa)$ are called critical trajectories. 
 These critical trajectories consist locally near that zero of two orthogonal curves making angles $\pm\pi/4$ with the real line. 
Other, non-critical trajectories, are smooth curves without self-intersections.  
 If $\kappa=\kappa_2$ such that $x_1(\kappa)=x_2(\kappa)$, the critical trajectory through this point consists of three curves making angles $-\pi/6,\pi/6, \pi/2$ with the real line. 
Moreover, a trajectory of $\phi'(z;\kappa)^2dz^2$ can only contain a closed loop if it encloses at least one of the poles $-a,0,1/a$; otherwise the maximum and minimum principles applied to the harmonic function $\Re\phi(z;\kappa)$ would imply that $\Re\phi(z;\kappa)$ is constant in the interior of the closed loop, which is impossible.
We denote the trajectory of level $h$ as
\[\mathcal T_h:=\{z\in\mathbb C:\Re\phi(z;\kappa)=h\}.\] Of course, trajectories corresponding to different levels cannot intersect, and they are invariant under complex conjugation since $\overline{\phi'(\overline{z};\kappa)}=\phi'(z;\kappa)$. It will be useful to observe that
\begin{align}\label{re phi at the poles}
\Re \phi(\infty,\kappa) = \Re \phi(0,\kappa) = +\infty, \qquad \Re \phi(-a,\kappa) = \Re \phi(\tfrac{1}{a},\kappa)=-\infty.
\end{align} The next result summarizes the global structure of the critical trajectories, both for $\kappa>\kappa_2$ and for $\kappa=\kappa_2$. We don't need the latter in this section, but we will need it later when studying the transition where $\kappa\approx\kappa_2$.

\begin{figure}
\begin{center}
\begin{tikzpicture}[master]
\node at (0,0) {\includegraphics[width=7cm]{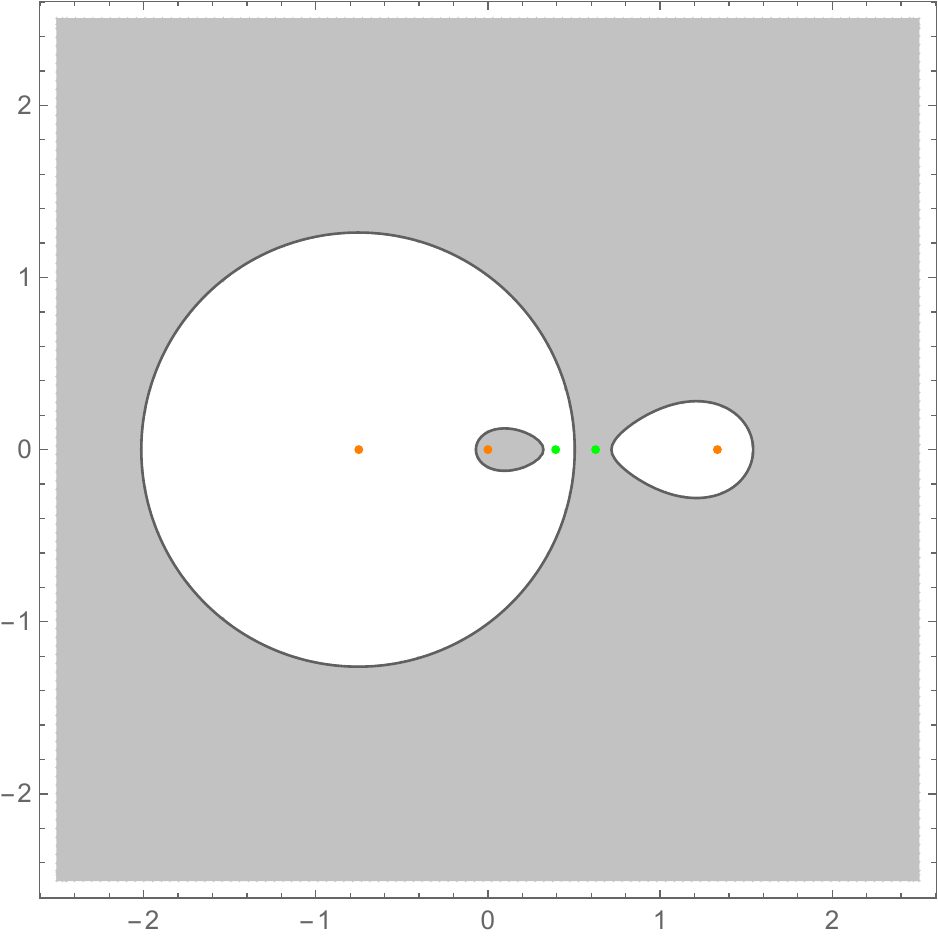}};
\node at (0.25,0.42) {\tiny $\Gamma_{\hspace{-0.025cm}1}\hspace{-0.025cm}(\hspace{-0.025cm}h\hspace{-0.025cm})$};
\node at (-0.75,1.9) {\tiny $\Gamma_{\hspace{-0.025cm}2}\hspace{-0.025cm}(\hspace{-0.025cm}h\hspace{-0.025cm})$};
\node at (1.75,0.65) {\tiny $\Gamma_{\hspace{-0.025cm}3}\hspace{-0.025cm}(\hspace{-0.025cm}h\hspace{-0.025cm})$};
\node at (0,3.8) {$\kappa>\kappa_{2}$};
\end{tikzpicture}
\begin{tikzpicture}[slave]
\node at (0,0) {\includegraphics[width=7cm]{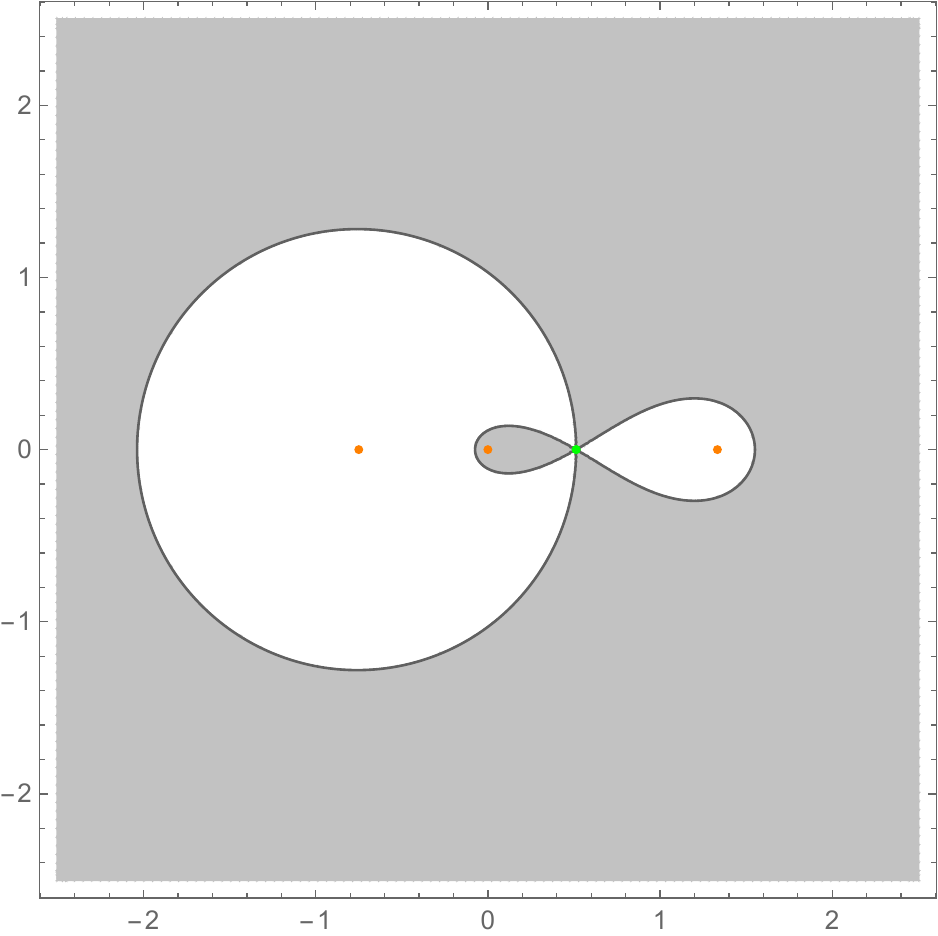}};
\node at (0.25,0.44) {\tiny $\Gamma_{\hspace{-0.025cm}1}$};
\node at (-0.75,1.92) {\tiny $\Gamma_{\hspace{-0.025cm}2}$};
\node at (1.75,0.65) {\tiny $\Gamma_{\hspace{-0.025cm}3}$};
\node at (0,3.8) {$\kappa=\kappa_{2}$};
\end{tikzpicture}
\end{center}
\caption{{In all images, the shaded regions correspond to $\{z:\Re \phi(z;\kappa)>h\}$ and the white regions to $\{z:\Re \phi(z;\kappa)<h\}$. The orange dots are $-a,0,1/a$, and the green dots are $x_{1}(\kappa),x_{2}(\kappa)$. Left: $a=0.75$, $\mu=0.808 \in (\frac{a^{2}}{1+a^{2}},1)$, $\kappa = 0.95\kappa_{2}+0.05\mu > \kappa_{2}$, $h=(h_{1}+h_{2})/2$. Right: $a=0.75$, $\mu=0.808 \in (\frac{a^{2}}{1+a^{2}},1)$, $\kappa = \kappa_{2}$, $h=\Re \phi(x^{*};\kappa_{2})$.} \label{fig:Re phi}}
\end{figure}

\begin{proposition}\label{prop:crittraj1}
\begin{enumerate} 
\item Let $\kappa>\kappa_2$. Denote $h_1=\Re \phi(x_1(\kappa);\kappa)$ and $h_2=\Re \phi(x_2(\kappa);\kappa)$. For $h_1<h<h_2$, the trajectory $\mathcal T_{h}=\Gamma_1(h)\cup\Gamma_2(h)\cup\Gamma_3(h)$ consists of three disjoint simple closed curves $\Gamma_1(h),\Gamma_2(h),\Gamma_3(h)$, symmetric under complex conjugation; $\Gamma_1(h)$ encloses $0$ but neither of the points $-a,x_1(\kappa),x_2(\kappa)$, $1/a$; $\Gamma_2(h)$ encloses $-a,0,x_1(\kappa)$ but does not enclose  $x_2(\kappa), 1/a$; $\Gamma_3(h)$ encloses $1/a$ but neither of the points $-a,0,x_1(\kappa),x_2(\kappa)$. See Figure \ref{fig:Re phi} (left).
\item Let $\kappa=\kappa_2$. Denote $h=\Re\phi(x^*;\kappa_2)$. The critical trajectory $\mathcal T_{h}=\Gamma_1\cup\Gamma_2\cup\Gamma_3$ is the union of three simple closed curves intersecting only at $x^*$; $\Gamma_1$ encloses $0$ but neither of the points $-a, 1/a$; $\Gamma_2$ encloses $\Gamma_1$ and $-a,0$ but not $1/a$; $\Gamma_3$ encloses $1/a$ but neither of the points $-a,0$. See Figure \ref{fig:Re phi} (right).
\end{enumerate}
\end{proposition}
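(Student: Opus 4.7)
The plan is to apply the classical theory of trajectories of the quadratic differential $\phi'(z;\kappa)^{2}\,dz^{2}$. I would first record its local structure. The form $\phi'(z;\kappa)$ has simple poles at $-a,0,1/a$ with the real residues $\mu,\kappa-\mu,1-\mu$ (and residue $-(\kappa+1-\mu)$ at infinity); the reality of these residues forces trajectories near each pole to close up into simple loops encircling the pole, rather than spiralling. At each double zero $x_{i}$ of $\phi'$ (the case $\kappa>\kappa_{2}$), the form $\phi'(z;\kappa)^{2}\,dz^{2}$ has a zero of order two, and the critical trajectory consists of two smooth orthogonal arcs crossing at $x_{i}$ at angles $\pm\pi/4$ with the real axis. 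At the fourth-order zero $x^{*}$ (the case $\kappa=\kappa_{2}$), six rays emanate and pair into three smooth arcs meeting at $x^{*}$ at angles $-\pi/6,\pi/6,\pi/2$.

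The central global input is that $\Re\phi$ extends to a function harmonic on $\mathbb{C}\setminus\{-a,0,1/a\}$ (continuous across the branch cut $(-\infty,0]$). By the maximum/minimum principle, every closed component of a trajectory must enclose at least one of the three poles. Since $\Re\phi\to+\infty$ at infinity by \eqref{re phi at the poles}, no component can be unbounded for finite $h$; so at a non-critical value of $h$, every component of $\mathcal{T}_{h}$ is a simple closed curve, and by $\overline{\phi(\bar z;\kappa)}=\phi(z;\kappa)$ each such curve is symmetric under complex conjugation.

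For part (1), I would analyse $\Re\phi$ along the real axis (and its branch-cut extension on $(-\infty,0)$) using the sign of $\phi'$ on each subinterval defined by the poles and saddles together with \eqref{re phi at the poles}. For any $h\in(h_{1},h_{2})$, this produces exactly six real intersection points $r_{1}<\cdots<r_{6}$ of $\mathcal{T}_{h}$ with the real axis, one in each of $(-\infty,-a),(-a,0),(0,x_{1}),(x_{1},x_{2}),(x_{2},1/a),(1/a,\infty)$. Each conjugation-symmetric simple closed curve crosses the real axis in exactly two points, so the six $r_{j}$ are organised by a non-crossing pairing. Among the $C_{3}=5$ non-crossing pairings, the maximum principle (each enclosed region must contain at least one pole) rules out four, leaving uniquely $(r_{2},r_{3}),(r_{1},r_{4}),(r_{5},r_{6})$; reading off the enclosed poles identifies the three resulting closed curves with $\Gamma_{1}(h),\Gamma_{2}(h),\Gamma_{3}(h)$ having the properties claimed, where the curve through $r_{2},r_{3}$ encloses only $0$, the curve through $r_{1},r_{4}$ (which is nested outside the first) encloses $-a,0,x_{1}$, and the curve through $r_{5},r_{6}$ (disjoint from both) encloses only $1/a$.

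For part (2), I would pass to the limit $\kappa\downarrow\kappa_{2}$ in part (1): the three interior intersections $r_{3},r_{4},r_{5}$ all converge to $x^{*}$ while $r_{1},r_{2},r_{6}$ remain in their respective intervals, so the three curves $\Gamma_{1}(h),\Gamma_{2}(h),\Gamma_{3}(h)$ converge to three closed curves $\Gamma_{1},\Gamma_{2},\Gamma_{3}$ meeting only at $x^{*}$, with nested/disjoint structure and enclosures preserved in the limit (alternatively, one can directly combine the local six-ray structure at $x^{*}$, the closed-loop structure near each pole, and the max principle to pair the rays and identify the three curves). The main obstacle is the bookkeeping of the non-crossing pairing argument, in particular verifying which pairings leave an enclosed region without a pole; the reality of the residues at the poles — which rules out spiralling trajectories and guarantees that each trajectory component is in fact a closed curve — is the technical point on which everything hinges.
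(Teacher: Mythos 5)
Your argument is correct but follows a genuinely different route from the paper's. You fix a level $h\in(h_1,h_2)$, count the six real intersection points of $\mathcal T_h$ with $\mathbb R$ (one per interval, by monotonicity of $\Re\phi$ between the poles and saddles together with \eqref{re phi at the poles}), note that the disjoint conjugation-symmetric simple closed curves induce a non-crossing pairing of these six points, and then rule out four of the five such pairings because the max/min principle forbids a bounded complementary region whose boundary is at level $h$ but which contains no pole. The paper instead deforms continuously in $h$: for large $h$, equation \eqref{re phi at the poles} forces $\mathcal T_h$ to be a small loop around $0$ plus a large loop around everything; decreasing $h$ past $h_2$ pinches the large loop at $x_2(\kappa)$ into a figure-eight, which then splits into $\Gamma_2(h)$ and $\Gamma_3(h)$. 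Both proofs hinge on the same two inputs --- \eqref{re phi at the poles} and the max/min principle --- but your combinatorial enumeration replaces the paper's homotopy-in-$h$ argument, pinning down the topology at a single level rather than tracking it across levels. Your version does quietly use the lemma that a conjugation-symmetric Jordan curve transversal to $\mathbb R$ crosses it exactly twice (which follows since conjugation restricted to the curve is an orientation-reversing circle homeomorphism, hence has exactly two fixed points); you should state this rather than leave it implicit, since it is the hinge on which the pairing enumeration rests. For part (2) the paper argues directly about the critical level set, whereas your primary suggestion (limit $\kappa\downarrow\kappa_2$) would need some care with convergence of level curves; your parenthetical alternative (pairing the six rays at $x^*$) is closer to what the paper actually does. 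Finally, the ``reality of residues forbids spiralling'' point you highlight is really just a restatement of \eqref{re phi at the poles}: since the residues are real and nonzero, $\Re\phi\to\pm\infty$ at the poles, so a finite-level trajectory cannot approach them and spiralling never arises; it is not an additional technical input.
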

\begin{proof}
For $h$ large and positive, we easily conclude from \eqref{re phi at the poles} that $\mathcal T_h$ consists of a 
small closed curve $\mathcal C_1(h)$ around $0$ and a large closed curve $\mathcal C_2(h)$ enclosing all of the points $-a,0,x_1(\kappa),x_2(\kappa),1/a$. 
As $h$ decreases,
$\mathcal C_1(h),\mathcal C_2(h)$ are homotopically deformed in $\mathbb C\setminus\{-a,0,x_1(\kappa),x_2(\kappa),1/a\}$, until $h=\Re\phi(x_2(\kappa);\kappa)$.

Suppose first that $\kappa>\kappa_2$, such that $h_1<h_2$.
Since $\mathcal T_{h_2}$ contains two orthogonal curves going through $x_2(\kappa)$ and since it can only contain a closed loop if it encloses at least one of the poles $-a,0,1/a$, necessarily $x_2(\kappa)$ belongs to $\mathcal C_2(h_2)$ which turns from a simple closed loop into an eight-shaped contour as $h\to h_2$. As $h$ decreases further, $\mathcal C_2(h)$ splits into two disjoint simple closed loops, the right one going around $1/a$ and the left one going around $\mathcal C_1(h)$ and $-a$. (As $h\to h_1$, the left one connects with $\mathcal C_1(h)$, and for $h<h_1$, they become one closed curve around $-a$ not enclosing $0$.)

Next, suppose that $\kappa=\kappa_2$, such that $h_1=h_2$.
Then, the critical trajectory $\mathcal T_{h}$ consists locally near $x^*$ of three curves. Since closed loops in $\mathcal T(h)$ need to enclose a pole, this can only happen if $x^*$ belongs to both of the components $\mathcal C_1(h),\mathcal C_2(h)$. In other words, $\mathcal C_2(h)=\Gamma_2\cup\Gamma_3$ is eight-shaped with intersection at $x^*$, and $\mathcal C_1(h)=\Gamma_1$ also contains $x^*$ while it lies inside the leftmost loop $\Gamma_2$ of $\mathcal C_2(\kappa_2)$.
\end{proof}

\begin{corollary}\label{prop:contours}
\begin{enumerate}
\item There exists $c>0$  
such that for any $\kappa\in\left(\kappa_2,\mu\right]$, there exist $h\in\mathbb R$ and disjoint contours $\Sigma_0$ and $\Sigma_1$,
with $\Sigma_0,\Sigma_1$ as specified in the RH problem for $Y$, such that
\begin{align*}
&\Re\phi(z;\kappa)-h\geq c(\kappa-\kappa_2)^{3/2},&z\in\Sigma_1,\\
&\Re\phi(z;\kappa)-h\leq -c(\kappa-\kappa_2)^{3/2},&z\in\Sigma_0.
\end{align*}
\item Let $\kappa=\kappa_2$. There exist contours $\Sigma_0,\Sigma_1$ intersecting only at $x^*$,
with $\Sigma_0,\Sigma_1$ as specified in the RH problem for $Y$, such that the following holds: for any $\delta>0$, there exists $c>0$ such that
\begin{align*}
&\Re\phi(z;\kappa_2)-\Re\phi(x^*;\kappa_2)>c,&z\in\Sigma_1\setminus\{|z-x^*|\leq \delta\},\\
&\Re\phi(z;\kappa_2)-\Re\phi(x^*;\kappa_2)<-c,&z\in\Sigma_0\setminus\{|z-x^*|\leq\delta\}.
\end{align*}
\end{enumerate}
\end{corollary}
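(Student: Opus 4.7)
The plan is to build $\Sigma_0,\Sigma_1$ as (slight perturbations of) level curves of $\Re\phi(\cdot\,;\kappa)$, reading off their topology from Proposition \ref{prop:crittraj1} and extracting the quantitative $(\kappa-\kappa_2)^{3/2}$ margin from a local cubic expansion of $\phi$ near the coalescing saddle $x^*$.

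First, since $\Re\phi$ is harmonic on $\mathbb{C}\setminus\{-a,0,1/a\}$ with the pole values \eqref{re phi at the poles}, the maximum/minimum principles determine the sign of $f(z):=\Re\phi(z;\kappa)-h$ on each component of $\mathbb{C}\setminus\mathcal{T}_h$. For $h_1<h<h_2$: $f>0$ holds inside $\Gamma_1(h)$ (containing $0$) and in the unbounded component (containing $\infty$ and $x_2(\kappa)$), while $f<0$ holds in the annular component between $\Gamma_1(h)$ and $\Gamma_2(h)$ (containing $-a$ and $x_1(\kappa)$) and inside $\Gamma_3(h)$ (containing $1/a$).

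For part (1), I set $h:=\tfrac{1}{2}(h_1+h_2)$ and $\delta:=\tfrac{1}{4}(h_2-h_1)$, and take $\Sigma_0:=\Gamma_1(h-\delta)$ and $\Sigma_1:=\Gamma_2(h+\delta)$. Applying Proposition \ref{prop:crittraj1}(1) at the two levels $h\pm\delta\in(h_1,h_2)$ shows that $\Sigma_0,\Sigma_1$ are disjoint simple closed curves with exactly the topology required in the RH problem for $Y$ (option 1 for $\Sigma_1$), and on them $\Re\phi-h\equiv\mp\delta$. What remains is the uniform lower bound $h_2-h_1\geq C(\kappa-\kappa_2)^{3/2}$ with $C>0$ depending only on $a,\mu$. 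For $\kappa$ bounded away from $\kappa_2$ this follows from continuity and $h_1<h_2$. Near $\kappa=\kappa_2$, the double zero $x^*$ of $\phi'(\cdot\,;\kappa_2)$ together with $\partial_\kappa\phi'(x^*;\kappa_2)=1/x^*\neq 0$ produces the expansion
\[
\phi'(z;\kappa)=A(z-x^*)^2+\tfrac{\kappa-\kappa_2}{x^*}+\bigO\bigl((z-x^*)^3+(\kappa-\kappa_2)(z-x^*)\bigr),
\]
with $A:=\tfrac{\kappa_2+1-\mu}{x^*(x^*+a)(x^*-1/a)}<0$. Solving $\phi'=0$ gives $x_{1,2}(\kappa)=x^*\mp\sqrt{-(\kappa-\kappa_2)/(Ax^*)}\,(1+o(1))$ (real since $Ax^*<0$), and integrating $\phi'$ along the real segment from $x_1(\kappa)$ to $x_2(\kappa)$ yields
\[
\phi(x_2(\kappa);\kappa)-\phi(x_1(\kappa);\kappa)=-\tfrac{4A}{3}\bigl(-\tfrac{1}{Ax^*}\bigr)^{3/2}(\kappa-\kappa_2)^{3/2}(1+o(1))>0.
\]
Taking real parts and patching the two regimes produces the uniform $C>0$. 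This cubic-bifurcation estimate is the main technical step.

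For part (2), at $\kappa=\kappa_2$ the three loops $\Gamma_1,\Gamma_2,\Gamma_3$ of Proposition \ref{prop:crittraj1}(2) meet exactly at $x^*$, and the expansion $\phi(z;\kappa_2)-\phi(x^*;\kappa_2)=\tfrac{A}{3}(z-x^*)^3+\bigO(|z-x^*|^4)$ shows $f$ alternates sign in six sectors of opening $\pi/3$ around $x^*$. I construct $\Sigma_0$ by pinning it at $x^*$ along a smooth arc whose two tangent directions at $x^*$ lie strictly inside a pair of opposite $\{f<0\}$ sectors, and then extending it through the open annular region $\{f<0\}$ (staying outside $\Gamma_1$ and inside $\Gamma_2$) to close up into a simple loop encircling $0$. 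Similarly $\Sigma_1$ is pinned at $x^*$ along directions strictly inside opposite $\{f>0\}$ sectors and extended in the unbounded $\{f>0\}$ component (outside $\Gamma_2$, passing to the left of $\Gamma_3$) to form a simple loop encircling $\Sigma_0$ and $-a$ but not $1/a$. By the choice of distinct local sectors the two curves meet only at $x^*$. Outside $\{|z-x^*|\leq\delta\}$, $\Sigma_0$ (resp.\ $\Sigma_1$) is a compact subset of the open set $\{f<0\}$ (resp.\ $\{f>0\}$), so compactness and continuity of $f$ yield the desired constant $c=c(\delta)>0$.
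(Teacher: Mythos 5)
Your proposal is essentially correct and follows the same strategy as the paper, but there is one technical imprecision in part (2) that would be an error if taken literally, and some additional content worth noting.

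For part (1), you take $\Sigma_0=\Gamma_1(h-\delta)$, $\Sigma_1=\Gamma_2(h+\delta)$, whereas the paper takes $\Sigma_0=\Gamma_1(h')$, $\Sigma_1=\Gamma_3(h'')$ with $h_1<h'<h<h''<h_2$. These correspond to the two admissible topologies of $\Sigma_1$ in the RH problem for $Y$: yours encircles $\Sigma_0$ and $-a$ but not $1/a$; the paper's encircles $1/a$ but neither $-a$ nor $\Sigma_0$. Both are fine, and since $h\pm\delta\in(h_1,h_2)$ the proposition applies at those levels, so $\Gamma_1(h-\delta)\subset\mathrm{int}\,\Gamma_2(h+\delta)$ as you need. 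You also supply a genuine proof of $h_2-h_1\geq C(\kappa-\kappa_2)^{3/2}$ via the Taylor expansion of $\phi'$ around the coalescing saddle and the ensuing cubic estimate $\phi(x_2;\kappa)-\phi(x_1;\kappa)=-\tfrac{4A}{3}\big(-\tfrac{1}{Ax^*}\big)^{3/2}(\kappa-\kappa_2)^{3/2}(1+o(1))$; the paper simply asserts that bound. Your expansion and sign checks ($A<0$, $x^*>0$) are correct.

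For part (2), the phrase ``a smooth arc whose two tangent directions at $x^*$ lie strictly inside a pair of opposite $\{f<0\}$ sectors'' is not literally possible. Locally $\phi(z;\kappa_2)-\phi^*\sim\tfrac{A}{3}(z-x^*)^3$ with $A<0$, so the six sectors at $x^*$ alternate sign and diametrically opposite sectors always carry \emph{opposite} signs; a $C^1$ curve through $x^*$ must have one-sided tangent directions differing by $\pi$ and hence must leave $\{f<0\}$ on one side. The region between $\Gamma_1$ and $\Gamma_2$ meets $x^*$ only in the two conjugate (not opposite) sectors $(\pi/2,5\pi/6)$ and $(-5\pi/6,-\pi/2)$, so $\Sigma_0$ necessarily has a corner at $x^*$ with its one-sided tangent rays pointing into these two sectors (e.g.\ at angles $\pm2\pi/3$, as the paper chooses when setting up the local parametrix in Section 4), and likewise $\Sigma_1$ has a corner with rays into $(\pi/6,\pi/2)$ and $(-\pi/2,-\pi/6)$. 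Once ``smooth arc'' is replaced by ``piecewise-smooth curve with a corner at $x^*$'' and ``opposite'' by ``conjugate'', your construction and the compactness argument away from $x^*$ coincide with the paper's proof. Your $\Sigma_1$ in part (2) again uses the other admissible topology (going around $\Gamma_2$ rather than $\Gamma_3$), which is equally valid but differs from what the paper actually uses downstream in Section 4.
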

\begin{proof}
For $\kappa=\kappa_2$, we take $\Sigma_0$ in the region between $\Gamma_1$ and $\Gamma_2$ (thus $\Sigma_0$ passes through $x^{*}$). Since $-a$ lies in that region and since $\Re\phi(-a;\kappa_{2})=-\infty$, we then  have $\Re\phi(z;\kappa_2)< \Re\phi(x^*;\kappa_2)$ on $\Sigma_0\setminus \{x^{*}\}$. Further, we take $\Sigma_1$ outside $\Gamma_2$ and going around $\Gamma_3$ (thus $\Sigma_1$ also passes through $x^{*}$). Since $\Re\phi(\infty;\kappa_2)=+\infty$, we have $ \Re\phi(z;\kappa_2)>\Re\phi(x^*;\kappa_2)$ on $\Sigma_1\setminus\{x^*\}$.

For $\kappa>\kappa_2$, we use the notations of Proposition \ref{prop:crittraj1} and let $h_1<h'<h<h''<h_2$.
Since $h_2-h_1\geq 3c(\kappa-\kappa_2)^{3/2}$ for some $c>0$, we can take $h',h''$ moreover such that $h-h',h''-h\geq c(\kappa-\kappa_2)^{3/2}$.
We take $\Sigma_0=\Gamma_1(h')$ and $\Sigma_1= \Gamma_3(h'')$. Then we have 
\begin{align*}
&\Re\phi(z;\kappa)-h=h''-h\geq c(\kappa-\kappa_2)^{3/2},&z\in\Sigma_1,\\
&\Re\phi(z;\kappa)-h=h'-h\leq -c(\kappa-\kappa_2)^{3/2},&z\in\Sigma_0.
\end{align*}
This proves the result.
\end{proof}

We will now prove that the RH problem for $Y$ is equivalent to a small-norm RH problem for $\kappa\in(\kappa_2,\mu]$, with jump matrices exponentially close to identity as $N\to\infty$.

In the remaining part of this section, we let $\kappa>\kappa_2+\delta$ for some $\delta>0$ and we choose $\Sigma_1,\Sigma_0$ such that they satisfy the inequalities in Corollary \ref{prop:contours}, part 1, for some $h \in \R$. 
We define
\begin{equation}
\label{def:T}
T(z)=e^{\frac{N}{2}h\sigma_3}
Y(z)
e^{-\frac{N}{2}h\sigma_3}.
\end{equation}
Then $T$ solves the following RH problem, whose jumps are exponentially close to the identity.

\subsubsection*{RH problem for $T$}
\begin{itemize}
\item[(a)] $T:\mathbb C\setminus\left(\Sigma_0\cup\Sigma_1\right)\to \mathbb C^{2\times 2}$ is analytic.
\item[(b)] $T_+(z)=T_-(z)J_T(z)$ for $z\in \Sigma_0\cup\Sigma_1$, with
\begin{align}\label{eq:JT}
J_T(z)=\begin{cases}
\begin{pmatrix}
1& \frac{z}{(1-az)^\epsilon}e^{-N(\phi(z;\kappa)-h)}\\ 0& 1
\end{pmatrix},&z\in\Sigma_1,\\
\begin{pmatrix}
1& 0\\ -\frac{(1-az)^\epsilon}{z} e^{N(\phi(z;\kappa)-h)}
& 1
\end{pmatrix},&z\in\Sigma_0.\end{cases}
\end{align}\item[(c)]{As $z\to \infty$, $T(z) = I +\bigO(z^{-1})$}.
\end{itemize}

We obtain from {Corollary} \ref{prop:contours} that, for sufficiently small $c>0$, 
\begin{align*}
J_T(z)=I+\mathcal O(e^{-c N(\kappa-\kappa_2)^{3/2}}),\qquad \mbox{as } N\to+\infty,
\end{align*}
uniformly for $z\in\Sigma_0\cup\Sigma_1$. It then follows from general RH theory \cite{DKMVZ,DeiftZhou} that
\begin{align*}
T(z)=I+\mathcal O(e^{-c N(\kappa-\kappa_2)^{3/2}}),\qquad \mbox{as } N\to+\infty,
\end{align*}
uniformly for $z\in\mathbb C\setminus(\Sigma_0\cup\Sigma_1)$.
In particular, we obtain that
\begin{align*}
T_{22}(0)=Y_{22}(0)=1+\mathcal O(e^{-c N(\kappa-\kappa_2)^{3/2}}),\qquad \mbox{as } N\to+\infty,
\end{align*}
such that \eqref{eq:ratioid2} implies the following result.

\begin{proposition}
There exists $c>0$ such that, for any $\delta>0$,\begin{align*}
\frac{F_N^{m,k+1}(a;\epsilon)}{F_N^{m,k}(a;\epsilon)}=1+O(e^{-c{N(\kappa-\kappa_2)^{3/2}}}),\qquad N\to\infty,
\end{align*}
uniformly for $N(\kappa_2+\delta)<k\leq N$.
\end{proposition}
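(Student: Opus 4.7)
The plan is to combine the ratio identity \eqref{eq:ratioid2} with a small-norm analysis of the transformed RH problem for $T$ defined in \eqref{def:T}. All the preparatory work is in place, so the argument mostly consists of assembling the existing ingredients. First I would invoke Corollary \ref{prop:contours}(1) to choose, for each $\kappa\in(\kappa_{2}+\delta,1]$, admissible contours $\Sigma_{0},\Sigma_{1}$ (as required in the RH problem for $Y$) together with a level $h\in\mathbb R$ satisfying
\[
\Re\phi(z;\kappa)-h \geq c(\kappa-\kappa_{2})^{3/2} \quad\text{on } \Sigma_{1}, \qquad \Re\phi(z;\kappa)-h \leq -c(\kappa-\kappa_{2})^{3/2} \quad\text{on } \Sigma_{0}.
\]
Applying the transformation \eqref{def:T} turns the jump of $Y$ into the jump $J_{T}$ in \eqref{eq:JT}.

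The next step is to control the prefactors $\frac{z}{(1-az)^{\epsilon}}$ and $\frac{(1-az)^{\epsilon}}{z}$ in \eqref{eq:JT}. Since the trajectories of the quadratic differential $\phi'(\cdot;\kappa)^{2}dz^{2}$ used to build $\Sigma_{0},\Sigma_{1}$ stay bounded away from the poles $0$ and $1/a$ and avoid $\infty$, these prefactors are uniformly bounded on $\Sigma_{0}\cup\Sigma_{1}$. Combining this with the exponential decay of the off-diagonal entries yields $\|J_{T}-I\|_{L^{\infty}\cap L^{2}(\Sigma_{0}\cup\Sigma_{1})}=\bigO(e^{-cN(\kappa-\kappa_{2})^{3/2}})$. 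Standard small-norm RH theory \cite{DKMVZ,DeiftZhou} then produces a unique solution $T$ for $N$ large, with $T(z)=I+\bigO(e^{-cN(\kappa-\kappa_{2})^{3/2}})$ uniformly in $z\in\mathbb C\setminus(\Sigma_{0}\cup\Sigma_{1})$ (after possibly shrinking $c>0$). Since $\sigma_{3}$ is diagonal, conjugation by $e^{\frac{N}{2}h\sigma_{3}}$ preserves the $(2,2)$-entry, so $Y_{22}(0)=T_{22}(0)=1+\bigO(e^{-cN(\kappa-\kappa_{2})^{3/2}})$; the ratio identity \eqref{eq:ratioid2} then finishes the proof.

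The only subtle point is the uniformity in $\kappa$ throughout $[\kappa_{2}+\delta,1]$ (and implicitly in $\mu$ over a compact subset of $(\tfrac{a^{2}}{1+a^{2}},1)$). To handle this I would verify that the contours $\Sigma_{0},\Sigma_{1}$ built from the trajectories $\Gamma_{1}(h'),\Gamma_{3}(h'')$ of Proposition \ref{prop:crittraj1} can be chosen continuously in $\kappa$, with uniformly bounded length and uniformly positive distance from the poles $-a,0,1/a$. This follows from the smooth dependence on $\kappa$ of $x_{1}(\kappa),x_{2}(\kappa)$ and hence of the entire trajectory structure, together with compactness of the parameter range. I expect this uniformity check to be the only routine but genuinely technical item; no construction of local parametrices or $g$-function is needed, which is what makes this regime the simplest of the four treated in the paper.
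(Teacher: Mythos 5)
Your proposal is correct and takes essentially the same route as the paper: choose contours via Corollary \ref{prop:contours}(1), conjugate to $T$ via \eqref{def:T}, observe $J_T=I+\bigO(e^{-cN(\kappa-\kappa_2)^{3/2}})$, invoke small-norm theory to get the same estimate for $T$, and read off $Y_{22}(0)=T_{22}(0)$ through \eqref{eq:ratioid2}. The only difference is that the paper dispatches the boundedness of the rational prefactors and the uniformity in $\kappa$ and $\mu$ with a one-line remark at the end, whereas you flag these as the technical points to verify explicitly.
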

It is straightforward to verify that all the error terms appearing above are uniform for $\frac{a^2}{1+a^2}(1+\delta)\leq {\mu} \leq 1-\delta$, for any {fixed} $\delta>0$.
We now easily prove Theorem \ref{thm:frozen} by taking a telescoping product of the ratios $\frac{F_N^{m,\ell+1}(a;\epsilon)}{F_N^{m,\ell}(a;\epsilon)}$ for $\ell=k,\ldots, m$, since $F_N^{m,m+1}(a;\epsilon)=F_N(a;\epsilon)=(1+a^2)^{\frac{N(N+1)}{2}}$.

\section{Critical removed corner}
\label{sec:critical}
In this section, we let $\mu=m/N\in\big(\frac{a^2}{1+a^2},1\big)$ and $\kappa=k/N \in [\kappa_{2}-N^{c-1},\kappa_{2}+\delta]$ for some $c\in (0,2/5)$ and for some small but fixed $\delta>0$.
\subsection{Transformation $Y\mapsto T$}
We denote 
\begin{align*}
& x^*:= \frac{\kappa_2 + a^{2}(2\mu-\kappa_2-1)}{2a(1+\kappa_2 - \mu)}, \qquad \phi^* := \phi(x^*;\kappa_2).
\end{align*}
In other words, $x^*$ is the double saddle point of $\phi$ for $\kappa=\kappa_2$ (recall the discussion around \eqref{def of xstar}), and $\phi^*$ is the value of $\phi$ at this double saddle point.
We define $T$ as in \eqref{def:T} with $h=\phi(x^*;\kappa)$, such that it satisfies the following RH problem.
\subsubsection*{RH problem for $T$}
\begin{itemize}
\item[(a)] $T:\C\setminus\left(\Sigma_0\cup\Sigma_1\right)\to \mathbb C^{2\times 2}$ is analytic.
\item[(b)] $T_+(z)=T_-(z)J_T(z)$ for $z\in \Sigma_0\cup\Sigma_1$, with
\begin{align}\label{eq:JT PII}
J_T(z)=\begin{cases}
\begin{pmatrix}
1& \frac{z}{(1-az)^{\epsilon}}e^{-N(\phi(z;\kappa)-\phi(x^*;\kappa))}  \\ 0& 1
\end{pmatrix},&z\in\Sigma_1,\\
\begin{pmatrix}
1& 0\\ 
-\frac{(1-az)^{\epsilon}}{z}e^{N(\phi(z;\kappa)-\phi(x^*;\kappa))}  & 1
\end{pmatrix},&z\in\Sigma_0.
\end{cases}
\end{align}
\item[(c)]{As $z\to \infty$, $T(z)=I +\bigO(z^{-1})$}.
\end{itemize}

Recall from Corollary \ref{prop:contours} (part 2) that for $\kappa=\kappa_2$, we can choose the jump
contours $\Sigma_0,\Sigma_1$ such that they intersect at $x^*$, with $\Sigma_0$ enclosing $0$, $\Sigma_1$ enclosing $1/a$ but not enclosing $-a$ nor $\Sigma_0$, and moreover such that 
\begin{equation}\label{eq:estimateJYÏI}
J_T(z)=I+\mathcal O(e^{-c N}),\qquad z\in(\Sigma_0\cup\Sigma_1)\setminus\{|z-x^*|\leq \delta\},
\end{equation}
as $N\to+\infty$.
For $\kappa=\kappa_2$, we have
\begin{equation}\label{eq:expandphi}
\phi(z;\kappa_2)=\phi^*+\frac{\phi'''(x^*;\kappa_2)}{6}(z-x^*)^3+\mathcal O((z-x^*)^4),\qquad z\to x^*,
\end{equation} which implies that 
we can choose these curves $\Sigma_0,\Sigma_1$ such that they intersect each other and the real line at $x^*$ with angles, for instance, $\pm\pi/3$.
If we take $\kappa$ sufficiently close to $\kappa_2$, then we easily see that there exists $\delta>0$ such that the estimate \eqref{eq:estimateJYÏI} remains valid, uniformly in $z$ and $\kappa$.

By \eqref{def:phi} and \eqref{eq:expandphi}, we have
\begin{equation}\label{eq:expandphi2}
\phi(z;\kappa)-\phi(x^*;\kappa)=\frac{\phi'''(x^*;\kappa_2)}{6}(z-x^*)^3+ (\kappa-\kappa_2)\log\frac{z}{x^*}+\mathcal O((z-x^*)^4),\qquad z\to x^*.
\end{equation}
{(Note that $x^{*}$ remains bounded away from $0$ when $\mu$ remains bounded away from $\frac{a^{2}}{1+a^{2}}$.)}
This implies cubic exponential behavior of the jump matrix $J_T$ near $x^*$ for $\kappa$ close to $\kappa_2$. Analogies with similar situations in e.g.\ \cite{BDJ, BleherIts, ClaeysKuijlaars, ClaeysMauersberger}  suggest that we need to construct a local parametrix near $x^*$ built out of a model RH problem associated to the Painlev\'e II equation, which we describe next.

\subsection{Painlev\'{e} II model RH problem}

The general RH problem characterizing solutions to the Painlev\'e II equation 
\begin{equation}\label{eq:PII}
u''(s)=su(s)+2u^3(s)
\end{equation} 
depends on four parameters $s,a_{1},a_{2},a_{3}$ and is as follows, see \cite{FlaschkaNewell, FIKN}.

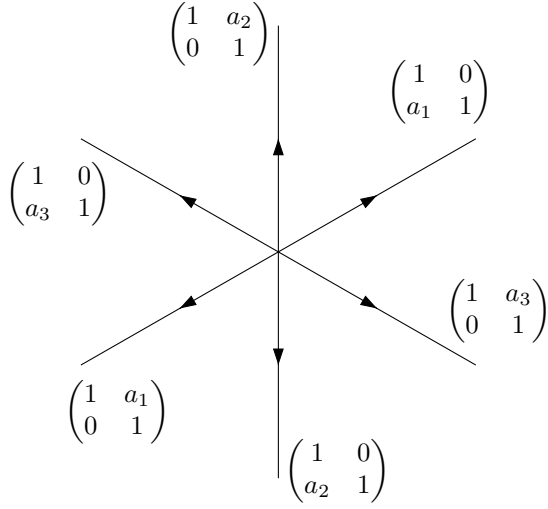
\begin{figure}[h]
\begin{center}
\begin{tikzpicture}
\node (A) at (0,0) {};

\node (a1) at (45:3) {$\begin{pmatrix} 1 & 0 \\ a_{1} & 1 \end{pmatrix}$};

\node (a2) at (105:3) {$\begin{pmatrix} 1 & a_{2} \\ 0 & 1 \end{pmatrix}$};

\node (a3) at (165:3) {$\begin{pmatrix} 1 & 0 \\ a_{3} & 1 \end{pmatrix}$};

\node (a4) at (-15:3) {$\begin{pmatrix} 1 & a_{3} \\ 0 & 1 \end{pmatrix}$};

\node (a5) at (-75:3) {$\begin{pmatrix} 1 & 0 \\ a_{2} & 1 \end{pmatrix}$};

\node (a6) at (-135:3) {$\begin{pmatrix} 1 & a_{1} \\ 0 & 1 \end{pmatrix}$};

\draw (0,0) -- (30:3);
\draw[black,arrows={-Triangle[length=0.22cm,width=0.14cm]}]
($(30:1.5)$) --  ++(30:0.001);

\draw (0,0) -- (90:3);
\draw[black,arrows={-Triangle[length=0.22cm,width=0.14cm]}]
($(90:1.5)$) --  ++(90:0.001);

\draw (0,0) -- (150:3);
\draw[black,arrows={-Triangle[length=0.22cm,width=0.14cm]}]
($(150:1.5)$) --  ++(150:0.001);

\draw (0,0) -- (-30:3);
\draw[black,arrows={-Triangle[length=0.22cm,width=0.14cm]}]
($(-30:1.5)$) --  ++(-30:0.001);

\draw (0,0) -- (-90:3);
\draw[black,arrows={-Triangle[length=0.22cm,width=0.14cm]}]
($(-90:1.5)$) --  ++(-90:0.001);

\draw (0,0) -- (-150:3);
\draw[black,arrows={-Triangle[length=0.22cm,width=0.14cm]}]
($(-150:1.5)$) --  ++(-150:0.001);
\end{tikzpicture}
\end{center}
\caption{\label{figII}The jump contour $\Sigma = \bigcup_{j=0}^{5} \{ xe^{i(\frac{\pi}{6}+j \frac{\pi}{3})}:x > 0 \}$, together with the corresponding jump matrices.}
\end{figure}
\subsubsection*{RH problem for $\Phi=\Phi(\cdot;s,a_{1},a_{2},a_{3})$}
\begin{itemize}
\item[(a)] $\Phi : \mathbb{C} \setminus \Sigma \rightarrow \mathbb{C}^{2 \times 2}$ is analytic, and $\Sigma$ consists of $6$ semi-infinite rays starting at the origin, as shown in Figure \ref{figII}.
\item[(b)] $\Phi$ has the jump relation $\Phi_+=\Phi_-J_\Phi$ on $\Sigma$, with $J_\Phi$ as shown in Figure \ref{figII}. 
\item[(c)] As $\zeta \to \infty$, we have
\begin{equation}
\Phi(\zeta) =  \left( I + \frac{1}{\zeta}\begin{pmatrix}
-\frac{1}{2i}q(s) & \frac{1}{2i}u(s) \\[0.1cm] - \frac{1}{2i}u(s) & \frac{1}{2i}q(s)
\end{pmatrix} + \bigO(\zeta^{-2}) \right) e^{-i( \frac{4}{3}\zeta^{3}+s\zeta )\sigma_{3}},
\end{equation}
where
\begin{equation}\label{defl:qP}
q(s)=\int_{\infty}^{s}u^{2}(s')ds',\end{equation}
and where 
$u$ is a solution to the Painlev\'e II equation \eqref{eq:PII}.

\smallskip \noindent As $\zeta \to 0$, we have
\begin{equation}
\Phi(\zeta) = \bigO(1).
\end{equation} 
\end{itemize}
If
\begin{equation}
a_{1}a_{2}a_{3} + a_{1}+a_{2}+a_{3} = 0,
\end{equation}
this RH problem is solvable for $s\in\mathbb C$, except at isolated singularities which are poles of the relevant Painlev\'e II solution $u$. The Painlev\'e II solution depends on the values of the Stokes parameters $a_1,a_2,a_3$.

\begin{figure}
\begin{center}
\begin{tikzpicture}
\node (A) at (0,0) {};

\node (a1) at (45:3) {$\begin{pmatrix} 1 & 1 \\ 0 & 1 \end{pmatrix}$};

\node (a2) at (135:3) {$\begin{pmatrix} 1 & 0 \\ 1 & 1 \end{pmatrix}$};

\node (a5) at (-45:3) {$\begin{pmatrix} 1 & 1 \\ 0 & 1 \end{pmatrix}$};

\node (a6) at (-135:3) {$\begin{pmatrix} 1 & 0 \\ 1 & 1 \end{pmatrix}$};

\draw (0,0) -- (60:3);
\draw[black,arrows={-Triangle[length=0.22cm,width=0.14cm]}]
($(60:1.5)$) --  ++(60:0.001);

\draw (0,0) -- (120:3);
\draw[black,arrows={-Triangle[length=0.22cm,width=0.14cm]}]
($(120:1.3)$) --  ++(120+180:0.001);

\draw (0,0) -- (-60:3);
\draw[black,arrows={-Triangle[length=0.22cm,width=0.14cm]}]
($(-60:1.3)$) --  ++(-60+180:0.001);

\draw (0,0) -- (-120:3);
\draw[black,arrows={-Triangle[length=0.22cm,width=0.14cm]}]
($(-120:1.5)$) --  ++(-120:0.001);
\end{tikzpicture}
\end{center}
\caption{\label{fig:Psi}The jump contour $\Sigma_{\Psi} = \bigcup_{\alpha_{j} = \frac{\pi}{3},\frac{2\pi}{3}} \{ xe^{i\alpha_{j}}:x \in \mathbb{R} \}$. The orientations are indicated by arrows, and the matrices along the rays are the corresponding jump matrices.}
\end{figure}
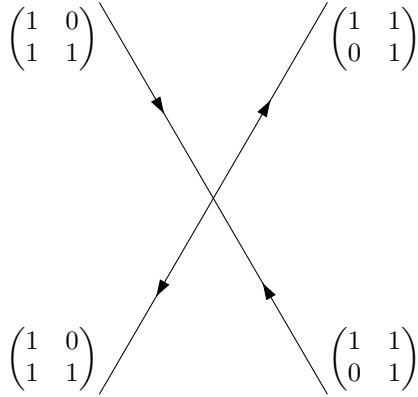

The Hastings-McLeod solution $u(s)$ of the Painlev\'{e} II equation is characterized by its 
rapid decay $u(s)\sim {\rm Ai}(s)$ as $s\to+\infty$, and it corresponds to $a_{1} = 1, a_{2} = 0, a_{3}=-1$. It has no poles on the real line. It is the RH problem corresponding to this solution that we will use to construct a local parametrix.
We define
\begin{equation}\label{def of Psi}
\Psi(\zeta;s) = \sigma_{3}\sigma_{1}\Phi(\zeta e^{\frac{i\pi}{2}};s,a_1=1,a_2=0,a_3=-1)\sigma_{1}\sigma_{3},
\end{equation}
where $\sigma_{1}=\begin{pmatrix}
0 & 1 \\ 1 & 0
\end{pmatrix}$ is the first Pauli matrix.
Then, $\Psi$ solves the following RH problem.
\subsubsection*{RH problem for $\Psi$}
\begin{itemize}
\item[(a)] $\Psi : \mathbb{C} \setminus \Sigma_{\Psi} \rightarrow \mathbb{C}^{2 \times 2}$ is analytic, and $\Sigma_{\Psi}$ consists of $4$ semi-infinite rays as shown in Figure \ref{fig:Psi}.
\item[(b)] $\Psi$ has the jump relation $\Psi_+=\Psi_-J_\Psi$ on $\Sigma_{\Psi}$, with $J_\Psi$ as shown in Figure \ref{fig:Psi} (note that we have reversed the orientation of two rays).
\item[(c)] As $\zeta \to \infty$, we have
\begin{equation}\label{eq:Psiasymp}
\Psi(\zeta) =  \left( I + \frac{1}{\zeta}\begin{pmatrix}
-\frac{1}{2}q(s) & -\frac{1}{2}u(s) \\[0.1cm] \frac{1}{2}u(s) & \frac{1}{2}q(s)
\end{pmatrix} + \Psi_2(s) \zeta^{-2}+\bigO(\zeta^{-3}) \right) e^{ (\frac{4}{3}\zeta^{3}-s\zeta )\sigma_{3}},
\end{equation}
where $\Psi_2(s)$ is a $2\times 2$ matrix depending on $s$.

\smallskip \noindent As $\zeta \to 0$, we have
\begin{equation}
\Psi(\zeta) = \bigO(1).
\end{equation} 
\end{itemize}
Observe that, by \eqref{def:TW} and \eqref{defl:qP}, we have
\begin{equation}\label{derTW}
\partial_s \log F^{\rm TW}(s)=-q(s).
\end{equation}

The asymptotic analysis of the RH problem for $\Phi$ as $s\to \pm\infty$ has been carried out in \cite{FIKN} (with \cite[Chapter 11]{FIKN} being particularly relevant to our setting). However, the specific estimates required for our purposes do not appear to be directly available in the literature. We will therefore perform two nonlinear steepest descent analyses on $\Psi$, inspired by \cite{FIKN}: first in the regime $s \to +\infty$, and then in the regime $s \to -\infty$.

\subsubsection{Asymptotic analysis for $\Psi$ as $s\to + \infty$}
As $s\to +\infty$, not only the Painlev\'e II solution $u(s)$ decays rapidly (faster than exponentially), but the same is true for the functions $q(s)$ and $\Psi_2(s)$ in the expansion \eqref{eq:Psiasymp}. To show this, we first consider $\Psi_{\mathrm{b}}(\zeta) = \Psi(\sqrt{s}\zeta)e^{-s^{3/2}(\frac{4}{3}\zeta^{3}-\zeta)\sigma_{3}}$. We then apply the transformation
\begin{align}\label{deformation of contour s pos}
\Psi_{\mathrm{c}}(\zeta) = \Psi_{\mathrm{b}}(\zeta) \begin{cases}
\begin{pmatrix} 1 & e^{2s^{3/2}(\frac{4}{3}\zeta^{3}-\zeta)} \\ 0 & 1 \end{pmatrix}, & \mbox{if } \zeta \in \Omega_{\mathrm{r}}, \\
\begin{pmatrix} 1 & 0 \\ e^{-2s^{3/2}(\frac{4}{3}\zeta^{3}-\zeta)} & 1 \end{pmatrix}, & \mbox{if } \zeta \in \Omega_{\mathrm{l}}, \\
I, & \mbox{otherwise},
\end{cases}
\end{align}
where 
\begin{align*}
\Omega_{\mathrm{r}} = \{z=r e^{i\theta}: r\in (0,1), \theta\in (-\tfrac{\pi}{3},\tfrac{\pi}{3}), \Re z \in (0,\tfrac{1}{2}) \}, \qquad \Omega_{\mathrm{l}} = -\Omega_{\mathrm{r}}.
\end{align*}
The jump contour for $\Psi_{\mathrm{c}}$ is shown in Figure \ref{fig:Psi s to +inf} (left). 
\begin{figure}
\begin{center}
\begin{tikzpicture}[master]
\node at (0,0) {\includegraphics[width=7cm]{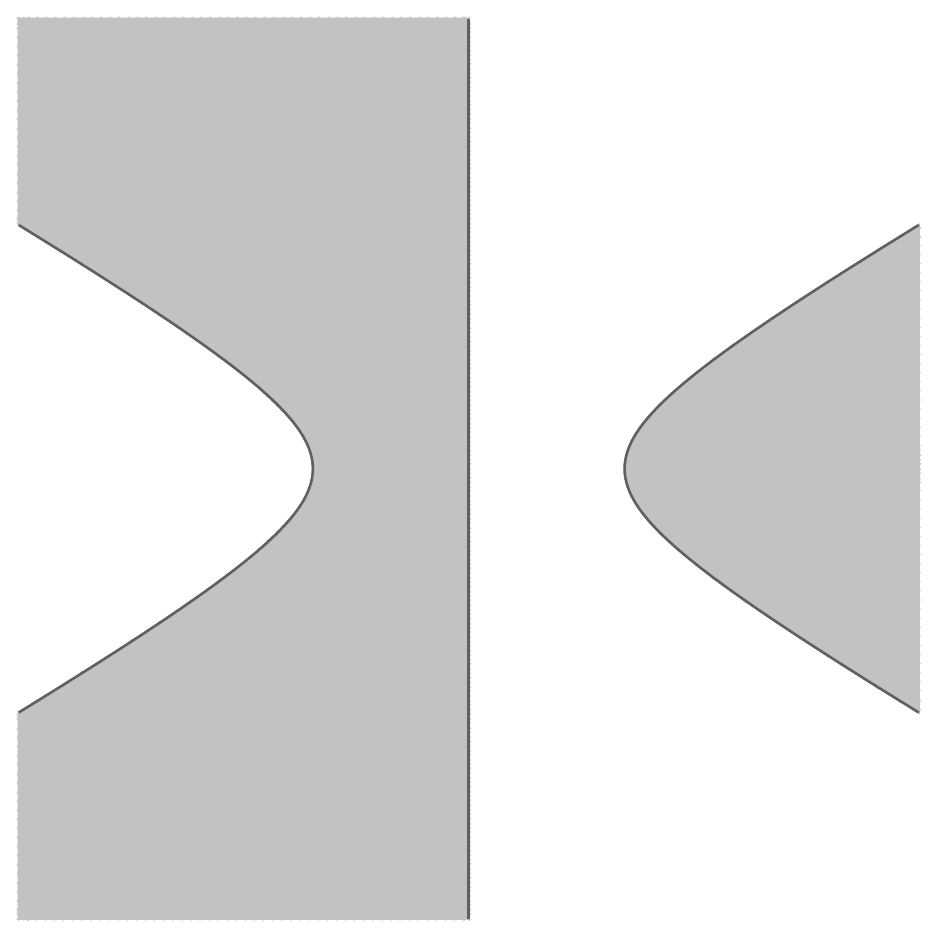}};

\node (a1) at (55:3.4) {\footnotesize $\begin{pmatrix} 1 & e^{2s^{3/2}(\frac{4}{3}\zeta^{3}-\zeta)} \\ 0 & 1 \end{pmatrix}$};

\node (a2) at (124:3.35) {\footnotesize $\begin{pmatrix} 1 & 0 \\ e^{-2s^{3/2}(\frac{4}{3}\zeta^{3}-\zeta)} & 1 \end{pmatrix}$};

\draw (60:1.2) -- (-60:1.2);
\draw[black,arrows={-Triangle[length=0.22cm,width=0.14cm]}]
($(0.6,0.08)$) --  ++(90:0.001);
\draw (120:1.2) -- (-120:1.2);
\draw[black,arrows={-Triangle[length=0.22cm,width=0.14cm]}]
($(-0.6,-0.18)$) --  ++(-90:0.001);

\draw (60:1.2) -- (60:3);
\draw[black,arrows={-Triangle[length=0.22cm,width=0.14cm]}]
($(60:2.1)$) --  ++(60:0.001);

\draw (120:1.2) -- (120:3);
\draw[black,arrows={-Triangle[length=0.22cm,width=0.14cm]}]
($(120:1.8)$) --  ++(120+180:0.001);

\draw (-60:1.2) -- (-60:3);
\draw[black,arrows={-Triangle[length=0.22cm,width=0.14cm]}]
($(-60:1.8)$) --  ++(-60+180:0.001);

\draw (-120:1.2) -- (-120:3);
\draw[black,arrows={-Triangle[length=0.22cm,width=0.14cm]}]
($(-120:2.1)$) --  ++(-120:0.001);


\end{tikzpicture} \begin{tikzpicture}[slave]
\node at (0,0) {\includegraphics[width=7cm]{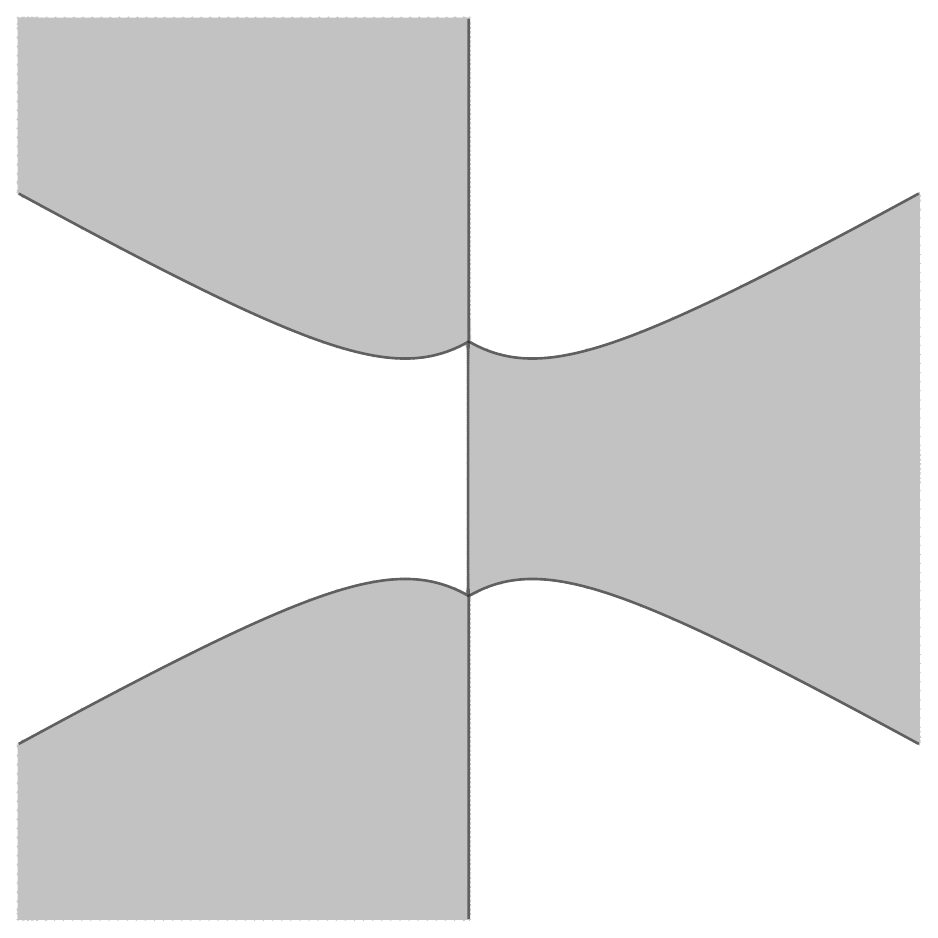}};

\node at (55:3.4) {\footnotesize $\begin{pmatrix} 1 & e^{2|s|^{3/2}g(\zeta)} \\ 0 & 1 \end{pmatrix}$};
\node at (-55:3.4) {\footnotesize $\begin{pmatrix} 1 & e^{2|s|^{3/2}g(\zeta)} \\ 0 & 1 \end{pmatrix}$};

\node at (124:3.5) {\footnotesize $\begin{pmatrix} 1 & 0 \\ e^{-2|s|^{3/2}g(\zeta)} & 1 \end{pmatrix}$};
\node at (-124:3.5) {\footnotesize $\begin{pmatrix} 1 & 0 \\ e^{-2|s|^{3/2}g(\zeta)} & 1 \end{pmatrix}$};

\node at (2,0) {\footnotesize $\begin{pmatrix} 0 & 1 \\ -1 & e^{|s|^{3/2}(g_{+}(\zeta)-g_{-}(\zeta))} \end{pmatrix}$};

\draw (0,0.94) -- ($(0,0.94)+(60:2.4)$);
\draw[black,arrows={-Triangle[length=0.22cm,width=0.14cm]}]
($(0,0.94)+(60:1.3)$) --  ++(60:0.001);

\draw (0,-0.94) -- ($(0,-0.94)+(-60:2.4)$);
\draw[black,arrows={-Triangle[length=0.22cm,width=0.14cm]}]
($(0,-0.94)+(-60:1.2)$) --  ++(120:0.001);

\draw (0,0.94) -- ($(0,0.94)+(120:2.4)$);
\draw[black,arrows={-Triangle[length=0.22cm,width=0.14cm]}]
($(0,0.94)+(120:1.08)$) --  ++(-60:0.001);

\draw (0,-0.94) -- ($(0,-0.94)+(-120:2.4)$);
\draw[black,arrows={-Triangle[length=0.22cm,width=0.14cm]}]
($(0,-0.94)+(-120:1.42)$) --  ++(-120:0.001);

\draw (0,-0.94) -- (0,0.94);
\draw[black,arrows={-Triangle[length=0.22cm,width=0.14cm]}]
($(0,0)$) --  ++(90:0.001);


\end{tikzpicture}
\end{center}
\caption{\label{fig:Psi s to +inf}
Left: The shaded regions correspond to $\{\zeta:\Re (\frac{4}{3}\zeta^{3}-\zeta)>0\}$ and the white regions to $\{\zeta:\Re (\frac{4}{3}\zeta^{3}-\zeta)<0\}$. The jump matrices for $\Psi_{\mathrm{c}}$ are uniformly close to $I$ on the contour as $s\to + \infty$. The vertical segments pass through $\pm \frac{1}{2}$. Right: The shaded regions correspond to $\{\zeta:\Re g(\zeta)>0\}$ and the white regions to $\{\zeta:g(\zeta)<0\}$. The jump matrices for $\tilde{\Psi}_{\mathrm{c}}$ are uniformly close to $I$ on the contour  as $s\to - \infty$, except on the vertical segment $[\frac{-i}{\sqrt{2}},\frac{i}{\sqrt{2}}]$ and in small neighborhoods of $\pm \frac{i}{\sqrt{2}}$.}
\end{figure}
The jump matrices for $\Psi_{\mathrm{c}}$ satisfy 
\begin{align*}
I+\bigO(e^{-2s^{3/2} |\Re (\frac{4}{3}\zeta^{3}-\zeta)|}), \qquad  \mbox{as } s \to +\infty
\end{align*}
uniformly for $\zeta$ on the contour $\Sigma_{\Psi_{\mathrm{c}}}$. It is also easy to check that $|\Re (\frac{4}{3}\zeta^{3}-\zeta)| \geq \frac{1}{3}$ for all $\zeta \in \Sigma_{\Psi_{c}}$. It then follows from standard theory for small norm RH problems \cite{DeiftZhou, DKMVZ} that
\begin{align*}
\Psi_{\mathrm{c}}(\zeta) = I+\bigO\Bigg(\frac{\int_{\Sigma_{\Psi_{\mathrm{c}}}}e^{-2s^{3/2} |\Re (\frac{4}{3}z^{3}-z)|}|dz|}{1+|\zeta|}\Bigg)=I+\bigO\Big(\frac{s^{-3/4}e^{-\frac{2}{3}s^{3/2}}}{1+|\zeta|}\Big), \qquad \mbox{as } s \to +\infty,
\end{align*} 
uniformly for $\zeta \in \C\setminus \Sigma_{\Psi_{\mathrm{c}}}$.
 Since $\Psi(\zeta) = \Psi_{c}(\zeta/\sqrt{s})e^{(\frac{4}{3}\zeta^{3}-s\zeta )\sigma_{3}}$ for all $|\zeta|>\sqrt{s}$, we conclude that
\begin{equation}\label{eq:Psiasymp2}
\Psi(\zeta) =  \bigg( I + \bigO\Big(\frac{s^{-1/4}}{\zeta}e^{-\frac{2}{3}s^{3/2}}\Big) \bigg) e^{ (\frac{4}{3}\zeta^{3}-s\zeta )\sigma_{3}}, \qquad \mbox{as } s\to + \infty
\end{equation}
uniformly for $|\zeta|>\sqrt{s}$. Comparing \eqref{eq:Psiasymp2} with \eqref{eq:Psiasymp}, we conclude that
\begin{align}\label{lol5}
u(s) = \bigO\Big(s^{-1/4}e^{-\frac{2}{3}s^{3/2}}\Big), \qquad q(s) = \bigO\Big(s^{-1/4}e^{-\frac{2}{3}s^{3/2}}\Big), \qquad \Psi_2(s)=\bigO\Big(s^{1/4}e^{-\frac{2}{3}s^{3/2}}\Big),
\end{align}
as $s\to+\infty$.

\subsubsection{Asymptotic analysis for $\Psi$ as $s\to - \infty$}
The expansion \eqref{eq:Psiasymp} does not hold uniformly as $s\to -\infty$. This is connected to the fact that $q,u, \Psi_2$ and further terms in the expansion blow up in this limit. The goal of this subsection is to obtain an expansion for $\Psi(\zeta)$ as $\zeta\to \infty$ that is valid uniformly for large negative $s$. We first define $\tilde{\Psi}(\zeta) = \Psi(\sqrt{-s}\zeta)$. Then, we analytically continue $\tilde\Psi$ from each of the $4$ sectors, in order to construct a new function $\tilde{\Psi}_{\mathrm{b}}$ with jumps on the contour shown in Figure \ref{fig:Psi s to +inf} (right - note that the jump matrices shown in the figure are not the ones for $\tilde{\Psi}_{\mathrm{b}}$; they correspond to $\tilde{\Psi}_{\mathrm{c}}$ defined below). The transformation $\tilde{\Psi}\mapsto \tilde{\Psi}_{\mathrm{b}}$ can be made explicit in a similar way as in \eqref{deformation of contour s pos}, but since this explicit transformation will not play any role for us, we omit it here. By \eqref{eq:Psiasymp}, as $\zeta\to \infty$ we have
\begin{align}\label{lol1}
\tilde{\Psi}_{\mathrm{b}}(\zeta) =  \left( I + \frac{1}{\zeta\sqrt{-s}}\begin{pmatrix}
-\frac{1}{2}q(s) & -\frac{1}{2}u(s) \\[0.1cm] \frac{1}{2}u(s) & \frac{1}{2}q(s)
\end{pmatrix} + \Psi_2(s) \zeta^{-2}|s|^{-1}+\bigO(\zeta^{-3}) \right) e^{ (-s)^{3/2}(\frac{4}{3}\zeta^{3}+\zeta )\sigma_{3}}.
\end{align}
Define $g(\zeta) = \frac{4}{3}(\zeta^{2}+\tfrac{1}{2})^{3/2}$, where the branch cut is chosen so that $g$ is analytic in $\C$ except on the vertical segment $[-\frac{i}{\sqrt{2}},\frac{i}{\sqrt{2}}]$, and such that
\begin{align}\label{lol2}
g(\zeta) = \frac{4}{3}\zeta^{3} + \zeta + \frac{1}{8\zeta} + \bigO(\zeta^{-3}), \qquad \mbox{as } \zeta \to \infty.
\end{align}
Let us orient $[-\frac{i}{\sqrt{2}},\frac{i}{\sqrt{2}}]$ upwards. We directly verify that
\begin{align*}
g_{+}(\zeta)+g_{-}(\zeta)=0, \qquad g_{+}(\zeta)-g_{-}(\zeta)<0, \qquad \mbox{for all } \zeta \in (-\tfrac{i}{\sqrt{2}},\tfrac{i}{\sqrt{2}}).
\end{align*}
We next apply the transformation 
\begin{align*}
\tilde{\Psi}_{\mathrm{c}}(\zeta) = \tilde{\Psi}_{\mathrm{b}}(\zeta)e^{-(-s)^{3/2}g(\zeta)\sigma_{3}}.
\end{align*}
By \eqref{lol1}--\eqref{lol2}, we have
\begin{align}\label{lol3}
\tilde{\Psi}_{\mathrm{c}}(\zeta) = I + \frac{1}{\zeta}\Bigg( -\frac{|s|^{3/2}}{8}\sigma_{3} + \frac{1}{|s|^{1/2}}\begin{pmatrix}
-\frac{q(s)}{2} & -\frac{u(s)}{2} \\[0.1cm] \frac{u(s)}{2} & \frac{q(s)}{2}
\end{pmatrix} \Bigg) + \bigO(\zeta^{-2}) \qquad \mbox{as } \zeta \to \infty.
\end{align}
The jump matrices for $\tilde{\Psi}_{\mathrm{c}}$ are shown in Figure \ref{fig:Psi s to +inf} (right); they are uniformly close to $I$ on the contour as $s\to - \infty$, except on the vertical segment $[\frac{-i}{\sqrt{2}},\frac{i}{\sqrt{2}}]$ and in small neighborhoods of $\pm \frac{i}{\sqrt{2}}$. To handle the jump matrix on $(-\tfrac{i}{\sqrt{2}},\tfrac{i}{\sqrt{2}})$, we use the global parametrix
\begin{align}\label{def:Qinf}
Q^{\infty}(\zeta)=\begin{pmatrix}1&-1\\-i&-i\end{pmatrix}
\left(\frac{\zeta+\frac{i}{\sqrt{2}}}{\zeta-\frac{i}{\sqrt{2}}}\right)^{\sigma_3 /4}
\begin{pmatrix}1&-1\\-i&-i\end{pmatrix}^{-1}, \qquad \mbox{for } \zeta \in \C \setminus [-\tfrac{i}{\sqrt{2}},\tfrac{i}{\sqrt{2}}],
\end{align}
where the branchs are chosen so that $Q^{\infty}$ is analytic on $\C \setminus [-\tfrac{i}{\sqrt{2}},\tfrac{i}{\sqrt{2}}]$ and behaves as 
\begin{equation}\label{Qinftyexpansion}Q^{\infty}(\zeta) = I +\frac{1}{2\sqrt{2}\zeta}\begin{pmatrix}0&-1\\1&0\end{pmatrix}+ \bigO(\zeta^{-2})\quad\mbox{ as $\zeta \to \infty$}.\end{equation}
It is easy to verify that
\begin{align*}
Q^{\infty}_{+}(\zeta) = Q^{\infty}_{-}(\zeta)\begin{pmatrix}
0 & 1 \\ -1 & 0
\end{pmatrix}, \qquad \zeta \in (-\tfrac{i}{\sqrt{2}},\tfrac{i}{\sqrt{2}}),
\end{align*}
so that $\tilde{\Psi}_{\mathrm{c}}(\zeta)Q^{\infty}(\zeta)^{-1}$ has uniformly small jumps, except near $\pm \frac{i}{\sqrt{2}}$. 
To handle the jumps of $\tilde{\Psi}_{\mathrm{c}}$ for $\zeta$ close to $\frac{i}{\sqrt{2}}$, we construct a local parametrix $Q^{(\frac{i}{\sqrt{2}})}(\zeta)$, defined for $|\zeta - \frac{i}{\sqrt{2}}|\leq \frac{1}{10}$ such that $P^{(\frac{i}{\sqrt{2}})}$ has the same jumps as $\tilde{\Psi}_{c}$ on this disk, and such that
\begin{align*}
Q^{(\frac{i}{\sqrt{2}})}(\zeta)Q^{\infty}(\zeta)^{-1} = I + \bigO(|s|^{-3/2}), \qquad \mbox{as } s \to - \infty
\end{align*}
uniformly for $|\zeta - \frac{i}{\sqrt{2}}| = \frac{1}{10}$. $Q^{(\frac{i}{\sqrt{2}})}$ can be constructed in terms of Airy function as in \cite{DKMVZ}, but since this expression will not play any role for us, we do not provide details here. We then define
\begin{align*}
\tilde{R}(\zeta) = \begin{cases}
\tilde{\Psi}_{\mathrm{c}}(\zeta) Q^{\infty}(\zeta)^{-1}, & \mbox{if } |\zeta-\frac{i}{\sqrt{2}}|>\frac{1}{10} \mbox{ and } |\zeta+\frac{i}{\sqrt{2}}|>\frac{1}{10}, \\
\tilde{\Psi}_{\mathrm{c}}(\zeta) Q^{(\frac{i}{\sqrt{2}})}(\zeta)^{-1}, & \mbox{if } |\zeta-\frac{i}{\sqrt{2}}|<\frac{1}{10}, \\
\tilde{\Psi}_{\mathrm{c}}(\zeta) \overline{Q^{(\frac{i}{\sqrt{2}})}(\overline{\zeta})}^{-1}, & \mbox{if } |\zeta+\frac{i}{\sqrt{2}}|<\frac{1}{10}.
\end{cases}
\end{align*}
The jumps for $\tilde{R}$ are $I+ \bigO(|s|^{-3/2}e^{-|\zeta|})$ as $s\to -\infty$ uniformly for $\zeta$ on the contour, and therefore, by standard theory for small norms RH problems \cite{DeiftZhou, DKMVZ},
\begin{align}\label{lol4}
\tilde{R}(\zeta) = I + \bigO\bigg( \frac{1}{(1+|\zeta|)|s|^{3/2}} \bigg), \qquad \mbox{as } s\to -\infty,
\end{align}
uniformly for $\zeta\in \C$. Since $\tilde{\Psi}_{\mathrm{c}}(\zeta) = \tilde{R}(\zeta)Q^{\infty}(\zeta)$ for all $|\zeta| \geq 1$, \eqref{lol3}, \eqref{Qinftyexpansion} and \eqref{lol4} imply that
\begin{align}\label{asymp of u and q}
u(s)=\sqrt{-s/2}+\bigO(s^{-1}),\qquad q(s)=-\frac{s^2}{4}+\mathcal O(1/s), \qquad \mbox{as } s \to - \infty.
\end{align}
In fact, by \cite[Theorem 10.2 (part 3 with $s_{2}=0$)]{FIKN}, the first formula in \eqref{asymp of u and q} can be improved, while by \eqref{derTW} and \eqref{eq:TWtail}, the second formula in \eqref{asymp of u and q}
can be improved (more precisely, we need to use here the fact that the $s$-derivative of the $\bigO$-term in \eqref{eq:TWtail} is $\bigO(s^{-5/2})$, which follows from the analysis in \cite{BBdF, DIK}). This gives
\begin{align}\label{asymp of u and q update}
u(s)=\sqrt{-s/2}+\bigO(s^{-5/2}), \qquad q(s)=-\frac{s^2}{4}+\frac{1}{8s}+\mathcal O(s^{-5/2}),\qquad \mbox{as } s \to - \infty.
\end{align}
Inverting the transformations $\Psi \to \tilde{\Psi} \to \tilde{\Psi}_{\mathrm{b}} \to \tilde{\Psi}_{\mathrm{c}} \to \tilde{R}$, we obtain 
\begin{align*}
\Psi(\zeta) & = \tilde{\Psi}_{\mathrm{c}}(\tfrac{\zeta	}{|s|^{1/2}})e^{|s|^{3/2}g(\zeta/|s|^{1/2})\sigma_{3}} = \tilde{R}(\tfrac{\zeta	}{|s|^{1/2}}) Q^{\infty}(\tfrac{\zeta}{|s|^{1/2}})e^{|s|^{3/2}g(\zeta/|s|^{1/2})\sigma_{3}}
\end{align*}
for all $|\zeta|\geq \sqrt{|s|}$. By \eqref{lol4}, we thus have
\begin{equation}\label{eq:Psi32}
\Psi(\zeta)  =\left(I +\bigO\left(\frac{1}{|s|^{3/2}\zeta}\right)\right)Q^\infty(\tfrac{\zeta}{|s|^{1/2}})e^{\frac{4}{3} (\zeta^2-s/2)^{3/2}\sigma_3},\qquad \mbox{as } s\to -\infty
\end{equation} 
uniformly for $|\zeta|\geq \sqrt{|s|}$. By expanding $(\zeta^2-s/2)^{3/2}$ and $Q^{\infty}(\frac{\zeta}{|s|^{1/2}})$ as $\zeta/|s|^{1/2} \to \infty$, and by using the asymptotics \eqref{asymp of u and q update} for $u(s)$ and $q(s)$, we can also rewrite \eqref{eq:Psi32} as
\begin{equation}\label{eq:Psi33}
\Psi(\zeta)  =\Bigg(I + \frac{1}{\zeta}\begin{pmatrix}
-\frac{1}{2}q(s) & -\frac{1}{2}u(s) \\[0.1cm] \frac{1}{2}u(s) & \frac{1}{2}q(s)
\end{pmatrix} +\bigO\left(\frac{1}{|s|^{3/2}\zeta}\right) +\mathcal O\left(\frac{s^4}{\zeta^2}\right)\Bigg) e^{ (\frac{4}{3}\zeta^{3}-s\zeta )\sigma_{3}},
\end{equation} 
uniformly as $\zeta\to\infty$ and $s\to -\infty$ such that $s^2/\zeta\to 0$. In fact, the above asymptotics can be shown to hold as $s\to \infty$ such as $|\arg s-\pi|<\delta$, see e.g. \cite[Chapter 11]{FIKN}.

\subsection{Local parametrix near $x^*$}
We need to construct a local parametrix $P$ near $x^*$ satisfying the following conditions. We denote $\mathcal D$ for the open disk centered at $x^*$ with sufficiently small radius $\eta>0$.

\subsubsection*{RH problem for $P$}
\begin{itemize}
\item[(a)] $P:\mathcal{D}\setminus\left(\Sigma_0\cup\Sigma_1\right)\to \mathbb C^{2\times 2}$ is analytic.
\item[(b)] $P_+(z)=P_-(z)J_{T}(z)$ for $z\in (\Sigma_0\cup\Sigma_1)\cap\mathcal D$.
\item[(c)]$P(z)=I+\bigO(N^{-1/3})$ as $N\to +\infty$ uniformly for $z\in \partial{\mathcal D}$.
\end{itemize}
We construct $P$ of the form
\begin{align}\label{def of P PII}
P(z) = \left(\frac{(1-az)^{\epsilon}}{z}\right)^{-\frac{\sigma_{3}}{2}} \Psi \big(N^{\frac{1}{3}} f(z); N^{\frac{2}{3}}s(z;\kappa)\big) e^{\frac{N}{2}(\phi(z;\kappa)-\phi(x^*;\kappa))\sigma_{3}}\left(\frac{(1-az)^{\epsilon}}{z}\right)^{\frac{\sigma_{3}}{2}}.
\end{align}
Here $f$ is a conformal map in a sufficiently small disk $\mathcal D:=\{z\in\mathbb C:|z-x^*|<\eta\}$, given by
\begin{align*}
f(z) & = \bigg(\frac{3}{8}\bigg)^{1/3}(\phi^*- \phi(z;\kappa_2))^{1/3}. 
\end{align*}
It satisfies
\begin{align}\label{expansion of f PII}
f(z) = f^*(z-x^*)\big(1+\bigO(z-x^*)\big), \qquad \mbox{as } z\to x^*,
\end{align}
by \eqref{eq:expandphi}, with 
\begin{equation}\label{def:f0}
f^*=\left(-\frac{1}{16}\phi'''(x^*;\kappa_2)\right)^{1/3}>0.
\end{equation} 
Note that $f$ (and therefore also $f^*$) is independent of $\kappa$. We choose, a posteriori, the jump contours $\Sigma_0$ and $\Sigma_1$ for $T$ locally near $x^*$ in such a way that $z\mapsto N^{\frac{1}{3}}f(z)$ maps $(\Sigma_0\cup\Sigma_1)\cap\mathcal D$ to a subset of the jump contour $\Sigma_\Psi$ for $\Psi$. By this choice, $P$ has exactly the same jump relation as $T$ on $(\Sigma_0\cup\Sigma_1)\cap\mathcal D$, such that condition (b) of the RH problem for $P$ is satisfied.
The function $s$ is defined by
\begin{align}\label{def:s}
s(z;\kappa) = \frac{\phi(z;\kappa)-\phi(x^*;\kappa)-(\phi(z;\kappa_{2})-\phi^*)}{2f(z)}=(\kappa-\kappa_2)\frac{\log z - \log x^*}{2f(z)}.
\end{align}
Note that $\frac{s(z;\kappa)}{\kappa-\kappa_{2}}$ is independent of $\kappa$ and satisfies
\begin{align}\label{asymp:s}
\frac{s(z;\kappa)}{\kappa-\kappa_{2}} = s^* + \bigO\big( z-x^* \big), \qquad \mbox{as } z \to x^*,\qquad s^*=\frac{1}{2x^* f^*}.
\end{align}
Hence, using also \eqref{def:s}, we can (and do) choose $\eta$ sufficiently small so that \[\frac{9}{10}{s^*}|\kappa-\kappa_{2}| \leq |s(z;\kappa)|\leq \frac{11}{10}{s^*}|\kappa-\kappa_{2}|\quad\mbox{ for all $z\in \partial \mathcal{D}$.}\]

As $N\to+\infty$, the RH problem for $\Psi$, more specifically \eqref{eq:Psiasymp} together with the strengthened versions \eqref{eq:Psiasymp2}-\eqref{lol5} as $s\to +\infty$ and \eqref{eq:Psi33} as $s\to -\infty$, implies that
\begin{multline*}
P(z) = I + \frac{1}{N^{\frac{1}{3}} f(z)} \left(\frac{(1-az)^{\epsilon}}{z}\right)^{-\frac{\sigma_{3}}{2}} \begin{pmatrix}
-\frac{1}{2}q(N^{\frac{2}{3}}s(z;\kappa)) & -\frac{1}{2}u(N^{\frac{2}{3}}s(z;\kappa)) \\[0.1cm] \frac{1}{2}u(N^{\frac{2}{3}}s(z;\kappa)) & \frac{1}{2}q(N^{\frac{2}{3}}s(z;\kappa))
\end{pmatrix} \left(\frac{(1-az)^{\epsilon}}{z}\right)^{\frac{\sigma_{3}}{2}} \\ 
+1_{\{|\kappa-\kappa_2|\leq N^{-2/3}\}}\bigO(N^{-2/3}) +1_{\{\kappa<\kappa_2\}}\bigO\Big(N^2 s(z;\kappa)^{4}\Big)+1_{\{\kappa\geq\kappa_2\}}\mathcal O(N^{-\frac{1}{2}}s(z;\kappa)^{1/4}e^{-\frac{2}{3}Ns(z;\kappa)^{3/2}}),
\end{multline*}
as $N\to\infty$, which can be rewritten as
\begin{multline}\label{expansion on dDz0 PII}
P(z) = I + \frac{1}{N^{\frac{1}{3}} f(z)} \left(\frac{(1-az)^{\epsilon}}{z}\right)^{-\frac{\sigma_{3}}{2}} \begin{pmatrix}
-\frac{1}{2}q(N^{\frac{2}{3}}s(z;\kappa)) & -\frac{1}{2}u(N^{\frac{2}{3}}s(z;\kappa)) \\[0.1cm] \frac{1}{2}u(N^{\frac{2}{3}}s(z;\kappa)) & \frac{1}{2}q(N^{\frac{2}{3}}s(z;\kappa))
\end{pmatrix} \left(\frac{(1-az)^{\epsilon}}{z}\right)^{\frac{\sigma_{3}}{2}} \\ 
+1_{\{|\kappa-\kappa_2|\leq N^{-2/3}\}}\bigO(N^{-2/3})
+1_{\{\kappa<\kappa_2\}}\bigO\Big(N^2 (\kappa_2-\kappa)^4\Big)+1_{\{\kappa\geq\kappa_2\}}\mathcal O\bigg(\frac{(\kappa-\kappa_{2})^{1/4}}{N^{1/2}}e^{-c'N(\kappa-\kappa_{2})^{3/2}}\bigg).
\end{multline}
for any fixed $0<c'<\frac{2}{3}(s^*)^{3/2}$. This expansion is uniform in $z\in \partial \mathcal{D}$ and the error terms are uniformly small for  $\frac{a^2}{1+a^2}(1+\delta)\leq {\mu}\leq 1-\delta$ and for $\kappa\in\left[\kappa_2-\frac{1}{N^{\mathfrak{c}}},\kappa_2+\delta\right]$ for $\delta>0$ sufficiently small and for $\mathfrak{c}>1/2$.

\begin{figure}
\begin{center}
\begin{tikzpicture}[master]
\node at (0,0) {\includegraphics[width=7cm]{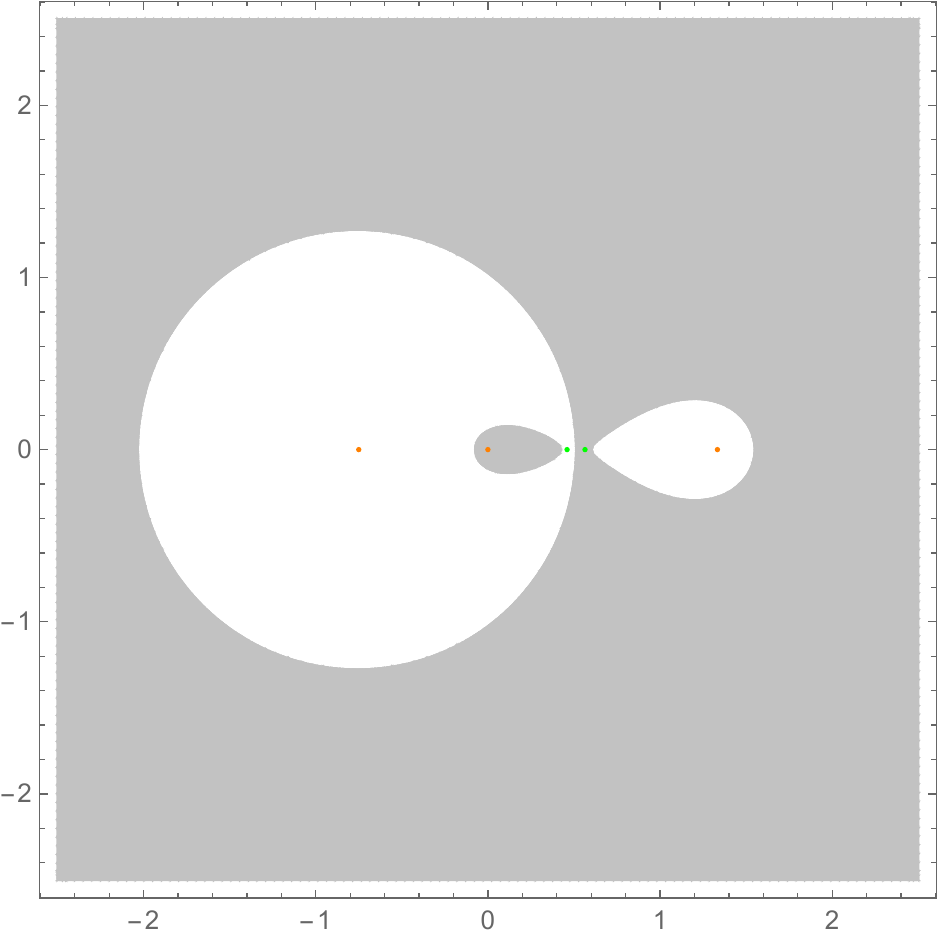}};
\node at (0,3.8) {$\kappa>\kappa_{2}$ ($s>0$)};
\draw (0.8,0.13) circle (0.3);
\draw[black,arrows={-Triangle[length=0.12cm,width=0.08cm]}]
(0.88,0.428) --  ++(00:0.001);

\draw ($(0.8,0.13)+(60:0.3)$) to [out=60, in=0]
(-1,2) to [out=180, in=90] (-2.8,0.13) to [out=-90, in=180] (-1,-1.74) to [out=0, in=-60] ($(0.8,0.13)+(-60:0.3)$);
\draw[black,arrows={-Triangle[length=0.12cm,width=0.08cm]}]
(-1,2) --  ++(180:0.001);

\draw ($(0.8,0.13)+(120:0.3)$) to [out=120, in=0]
(-1,1) to [out=180, in=90] (-1.8,0.13) to [out=-90, in=180] (-1,-0.74) to [out=0, in=-120] ($(0.8,0.13)+(-120:0.3)$);
\draw[black,arrows={-Triangle[length=0.12cm,width=0.08cm]}]
(-1,1) --  ++(180:0.001);
\end{tikzpicture}
\begin{tikzpicture}[slave]
\node at (0,0) {\includegraphics[width=7cm]{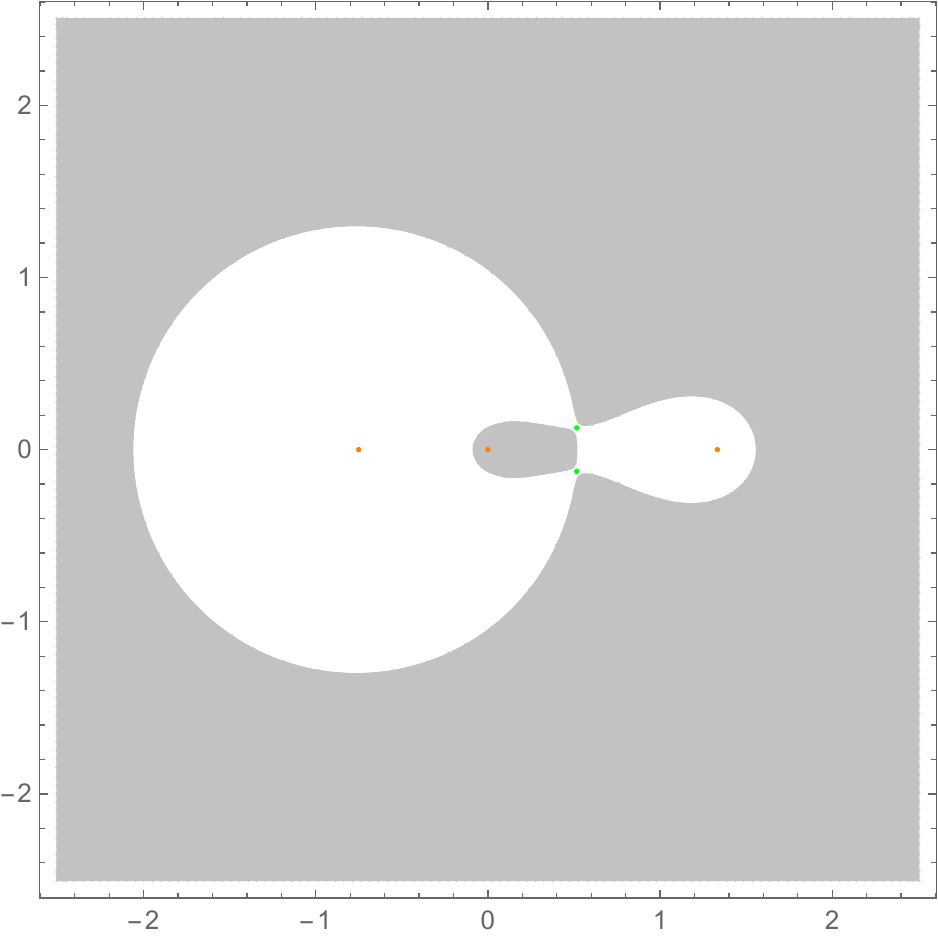}};
\node at (0,3.8) {$\kappa<\kappa_{2}$ ($s<0$)};
\draw (0.8,0.13) circle (0.3);
\draw[black,arrows={-Triangle[length=0.12cm,width=0.08cm]}]
(0.88,0.428) --  ++(00:0.001);

\draw ($(0.8,0.13)+(60:0.3)$) to [out=60, in=0]
(-1,2) to [out=180, in=90] (-2.8,0.13) to [out=-90, in=180] (-1,-1.74) to [out=0, in=-60] ($(0.8,0.13)+(-60:0.3)$);
\draw[black,arrows={-Triangle[length=0.12cm,width=0.08cm]}]
(-1,2) --  ++(180:0.001);

\draw ($(0.8,0.13)+(120:0.3)$) to [out=120, in=0]
(-1,1) to [out=180, in=90] (-1.8,0.13) to [out=-90, in=180] (-1,-0.74) to [out=0, in=-120] ($(0.8,0.13)+(-120:0.3)$);
\draw[black,arrows={-Triangle[length=0.12cm,width=0.08cm]}]
(-1,1) --  ++(180:0.001);
\end{tikzpicture}
\end{center}
\caption{In all images, $\Sigma_{R}$ is the black oriented contour. The shaded regions correspond to $\{z:\Re \phi(z;\kappa)>h\}$ and the white regions to $\{z:\Re \phi(z;\kappa)<h\}$, where $h=\phi(x^{*};\kappa)$. The orange dots are $-a,0,1/a$, and the green dots are $x_{1}(\kappa),x_{2}(\kappa)$. The jump contour $\Sigma_{R}$ is independent of $s$ and is therefore the same in both images. \label{fig:Small norm PII}}
\end{figure}

\subsection{Small norm RH problem}
Define
\begin{align}\label{def of R PII}
R(z) = \begin{cases}
T(z), & z \in \C\setminus \overline{\mathcal D}, \\
T(z) P(z)^{-1}, & z \in \mathcal{D}.
\end{cases}
\end{align}
$R$ satisfies the following RH problem.
\subsection*{RH problem for $R$}
\begin{itemize}
\item[(a)] $R:\C\setminus \Sigma_{R}\to \C^{2\times 2}$ is analytic, where
\begin{align*}
\Sigma_{R} = \partial \mathcal{D} \cup \big( (\Sigma_{0}\cup \Sigma_{1}) \setminus \mathcal{D} \big).
\end{align*}
The contour $\Sigma_{R}$ is oriented as shown in Figure \ref{fig:Small norm PII}. In particular, we orient the circle $\partial \mathcal{D}$ in the clockwise direction.
\item[(b)] For $z \in \Sigma_{R}$, we have $R_{+}(z) = R_{-}(z)J_{R}(z)$, where
\begin{align*}
& J_{R}(z) = P(z), & & z \in \partial \mathcal{D}, \\
& J_{R}(z) = J_{T}(z), & & z \in \Sigma_{R} \setminus  \partial \mathcal{D}.
\end{align*}
\item[(c)] $R(z)$ remains bounded as $z$ tends to the points of self-intersection of $\Sigma_{R}$.

\noindent As $z\to \infty$, $R(z) = I + \frac{R_{1}}{z} + \bigO(z^{-2})$ for some $R_{1}$ independent of $z$.
\end{itemize}

Observe that we have the symmetry $R(z) = \overline{R(\overline{z})}$, $z\in \C \setminus \Sigma_{R}$. 

On $\partial \mathcal{D}$, by \eqref{expansion on dDz0 PII} and \eqref{def:s}, we have 
\begin{multline}\label{eq:expansionJR}
J_{R}(z) = I + \frac{J_{R}^{(1)}(z;N^{2/3}s(z;\kappa))}{N^{\frac{1}{3}}}+1_{|\kappa-\kappa_2|\leq N^{-2/3}}\bigO(N^{-2/3})
\\ +1_{\{\kappa<\kappa_2\}}\bigO\Big(N^2(\kappa_2-\kappa)^4\Big)+1_{\{\kappa\geq\kappa_2\}}\mathcal O\bigg(\frac{(\kappa-\kappa_{2})^{1/4}}{N^{1/2}}e^{-c'N (\kappa-\kappa_2)^{3/2}}\bigg), 
\end{multline}
as $N\to\infty$ uniformly for $z\in \partial \mathcal{D}$ and $\kappa\in\left[\kappa_2-\frac{1}{N^{\mathfrak{c}}},\kappa_2+\delta\right]$, for some fixed constants $\delta>0$, $\mathfrak{c}>1/2$ and $c'\in (0,\frac{2}{3}(s^*)^{3/2})$, and for some matrix $J_R^{(1)}(z;s)$. On the rest of the (bounded) contour, by Corollary \ref{prop:contours} (part 2, using also that $\phi(z;\kappa)$ vary smoothly with $\kappa$) and \eqref{eq:JT PII}, we have
\begin{align*}
J_{R}(z) = I+\bigO(e^{-cN}), \qquad \mbox{as } N \to + \infty
\end{align*}
uniformly for $z\in \Sigma_{R}\setminus \partial \mathcal{D}$ and $\kappa\in\left[\kappa_2-\frac{1}{N^{\mathfrak{c}}},\kappa_2+\delta\right]$.
By small-norm theory of RH problems \cite{DeiftZhou, DKMVZ}, we then obtain a similar expansion for $R$:
\begin{multline}\label{large N exp of R PII}
R(z) = I + \frac{R^{(1)}(z;N^{2/3}s(z;\kappa))}{N^{\frac{1}{3}}} 
+1_{|\kappa-\kappa_2|\leq N^{-2/3}}\bigO\bigg(\frac{1}{N^{2/3}(1+|z|)}\bigg)
\\
+1_{\{\kappa<\kappa_2\}}\bigO\bigg(\frac{N^2(\kappa_2-\kappa)^4}{1+|z|}\bigg) +1_{\{\kappa\geq\kappa_2\}}\mathcal O\bigg(\frac{(\kappa-\kappa_{2})^{1/4} e^{-c'N (\kappa-\kappa_2)^{3/2}}}{N^{1/2}(1+|z|)}\bigg),
\end{multline}
as $N\to\infty$ uniformly for $z\in \C \setminus  \Sigma_{R}$ and $\kappa\in\left[\kappa_2-\frac{1}{N^{\mathfrak{c}}},\kappa_2+\delta\right]$.
By substituting \eqref{eq:expansionJR} and \eqref{large N exp of R PII} in the jump relation $R_+=R_-J_R$ valid on $\partial D$  we obtain the relation (omitting the dependence on $N^{2/3}s(z;\kappa)$ in our notations)
\[R_+^{(1)}(z)-R_-^{(1)}(z)=J_R^{(1)}(z),\qquad z\in\partial\mathcal D.\]
Since $R^{(1)}(z)\to 0$ as $z\to\infty$, this relation allows us to compute $R^{(1)}$ explicitly as the Cauchy integral
\begin{align*}
R^{(1)}(z) = \frac{1}{2\pi i}\int_{\partial \mathcal{D} } \frac{J_{R}^{(1)}(s)}{s-z}ds,
\end{align*}
where we recall that the orientation of the disk is clockwise. Therefore, for $z \in \C \setminus \overline{\mathcal{D}}$, we have
\begin{align*}
R^{(1)}(z) = & \, \frac{1}{z-x^*} \mbox{Res}\bigg( J_{R}^{(1)}(s); s=x^* \bigg).
\end{align*}
The above residue can be computed explicitly using \eqref{asymp:s}, \eqref{expansion on dDz0 PII} and \eqref{expansion of f PII}:
\begin{align}\label{eq:R1}
& \mbox{Res}\bigg( J_{R}^{(1)}(s); s=x^* \bigg) = \frac{1}{f_{0}}\begin{pmatrix}
-\frac{1}{2}q(N^{\frac{2}{3}}(\kappa-\kappa_{2}){s}^*) & -\frac{1}{2}u(N^{\frac{2}{3}}(\kappa-\kappa_{2}){s}^*) \frac{x^*}{(1-ax^*)^{\epsilon}} \\[0.1cm] \frac{1}{2}u(N^{\frac{2}{3}}(\kappa-\kappa_{2}){s}^*)\frac{(1-ax^*)^{\epsilon}}{x^*} & \frac{1}{2}q(N^{\frac{2}{3}}(\kappa-\kappa_{2}){s}^*)
\end{pmatrix}.
\end{align}
We now have all the ingredients needed to compute the large $N$ asymptotics for $F_N^{m,k}(a;\epsilon)$.

\begin{proposition}[Ratio asymptotics in the Tracy-Widom region]\label{prop:ratio asymp in TW region}
As $N\to\infty$, we have
\begin{multline*}
\log F_N^{m,k}(a;\epsilon)-\log F_N^{m,k+1}(a;\epsilon)
=\frac{s^*}{N^{\frac{1}{3}}} q(N^{\frac{2}{3}}(\kappa-\kappa_{2})s^*)
+1_{\{|\kappa-\kappa_2|\leq N^{-2/3}\}}\bigO(N^{-2/3})\\
+1_{\{{\kappa<\kappa_2}\}}\bigO\Big({N^{2}(\kappa_2-\kappa)^4}\Big) +1_{\{{\kappa\geq \kappa_2}\}}\mathcal O(N^{-1/2}e^{-c'{N(\kappa-\kappa_2)^{3/2}}}),
\end{multline*}
uniformly for  $\frac{a^2}{1+a^2}(1+\delta)\leq {\mu}\leq 1-\delta$ and for $\kappa_{2}-N^{-\mathfrak{c}}\leq {\kappa}\leq \kappa_{2}+\delta$ with $\mathfrak{c}>1/2$, $c' \in (0,\frac{2}{3}(s^*)^{3/2})$ fixed and $\delta>0$ fixed and sufficiently small.
\end{proposition}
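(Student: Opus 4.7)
The plan is to unwind all three transformations $Y \mapsto T \mapsto R$ (together with the local parametrix $P$) and extract $Y_{22}(0)$, and then to combine with the ratio identity $\frac{F_N^{m,k+1}(a;\epsilon)}{F_N^{m,k}(a;\epsilon)}=Y_{22}(0)$ from \eqref{eq:ratioid2}.

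First, since $x^*>0$, for $\eta>0$ small enough the disk $\mathcal D=\{|z-x^*|<\eta\}$ does not contain $0$, so by \eqref{def of R PII} we have $T(0)=R(0)$. From the definition \eqref{def:T} of $T$ (with $h=\phi(x^*;\kappa)$), the conjugation by the diagonal matrix $e^{\frac{N}{2}h\sigma_3}$ leaves the $(2,2)$ entry invariant, so
\[
Y_{22}(0)=T_{22}(0)=R_{22}(0).
\]

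Next, I use the small-norm expansion \eqref{large N exp of R PII} for $R$ evaluated at $z=0$. The explicit Cauchy-integral formula for $R^{(1)}$ yields
\[
R^{(1)}(0)=\frac{1}{0-x^*}\operatorname{Res}\bigl(J_R^{(1)}(s);s=x^*\bigr)=-\frac{1}{x^*}\operatorname{Res}\bigl(J_R^{(1)}(s);s=x^*\bigr).
\]
The conjugation factor $\bigl(\frac{(1-az)^\epsilon}{z}\bigr)^{\pm\sigma_3/2}$ in \eqref{def of P PII} is diagonal and leaves diagonal entries unchanged, so by \eqref{eq:R1} and \eqref{asymp:s}
\[
R^{(1)}_{22}(0)=-\frac{1}{x^*}\cdot\frac{1}{f^*}\cdot\frac{1}{2}q\bigl(N^{2/3}(\kappa-\kappa_2)s^*\bigr)=-s^*\,q\bigl(N^{2/3}(\kappa-\kappa_2)s^*\bigr),
\]
using $s^*=\frac{1}{2x^*f^*}$. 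Collecting the error terms from \eqref{large N exp of R PII} evaluated at $z=0$, which are uniformly bounded because $1+|z|$ stays away from zero, gives
\[
R_{22}(0)=1-\frac{s^*}{N^{1/3}}\,q\bigl(N^{2/3}(\kappa-\kappa_2)s^*\bigr)+\mathcal E_N,
\]
where $\mathcal E_N$ is bounded by the sum of the three indicator-controlled error terms in \eqref{large N exp of R PII}.

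Finally, taking $-\log$ of both sides of $\frac{F_N^{m,k+1}(a;\epsilon)}{F_N^{m,k}(a;\epsilon)}=R_{22}(0)$ and expanding $-\log(1+x)=-x+\mathcal O(x^2)$ proves the proposition, since $s^*$ is bounded and bounded away from zero on the relevant parameter range, and since $q$ is bounded on $[-M,M]$ while $|q(N^{2/3}(\kappa-\kappa_2)s^*)|$ decays fast enough in the two unbounded regimes by the tail estimates \eqref{lol5} (for $\kappa\geq\kappa_2$, using $u^2$ integrated) and \eqref{asymp of u and q update} (for $\kappa<\kappa_2$) to absorb its square into the stated error terms.

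The only non-routine step is bookkeeping the three error terms in \eqref{large N exp of R PII} through the $-\log$; the main obstacle is checking in each regime (bounded $N^{2/3}(\kappa-\kappa_2)$, $\to+\infty$, $\to-\infty$) that the quadratic correction $\frac{1}{2}\bigl(\frac{s^*}{N^{1/3}}q\bigr)^2$ is indeed absorbed by the stated error. In the left tail $\kappa<\kappa_2$ one uses $q\sim -\frac14(N^{2/3}(\kappa-\kappa_2)s^*)^2$ from \eqref{asymp of u and q update}, so $(q/N^{1/3})^2\asymp N^{2/3}(\kappa_2-\kappa)^4 \cdot (\kappa_2-\kappa)^2 N^{2/3}$, which is dominated by $N^2(\kappa_2-\kappa)^4$ provided $(\kappa_2-\kappa)^2 N^{2/3}=\mathcal O(N^{4/3})$, i.e.\ always; in the right tail $q$ is superexponentially small and trivially compatible; in the bounded regime $q$ is bounded and the square is $\mathcal O(N^{-2/3})$.
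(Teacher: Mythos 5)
Your proposal is correct and follows essentially the same route as the paper: unwind the transformations to get $Y_{22}(0)=R_{22}(0)$, plug in the small-norm expansion \eqref{large N exp of R PII} and the residue formula \eqref{eq:R1}, use $s^*=\frac{1}{2x^*f^*}$ to identify $R^{(1)}_{22}(0)=-s^*q$, then combine with \eqref{eq:ratioid2} and expand $-\log(1+x)$. Two small remarks: your intermediate exponent bookkeeping in the left tail is off (the correct computation gives $(q/N^{1/3})^2\asymp N^2(\kappa_2-\kappa)^4$ directly, not $N^{4/3}(\kappa_2-\kappa)^6$), though the conclusion you draw is still right; and you did not explicitly observe that the $(\kappa-\kappa_2)^{1/4}$ prefactor in the right-tail error of \eqref{large N exp of R PII} must be absorbed into the exponential at the cost of a slightly smaller $c'$ — the paper does this via the elementary inequality $x^{1/4}e^{-c'Nx^{3/2}}\leq e^{-c''Nx^{3/2}}$ — but this is a cosmetic cleanup, not a gap.
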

\begin{proof}
By \eqref{def:T} (with $h=\phi(x^{*};\kappa)$) and \eqref{def of R PII}, for $z=0$, we have
\begin{align*}
Y(z) = Y(z;k) = e^{-\frac{N\phi(x^*;\kappa)}{2}\sigma_{3}}R(z) e^{\frac{N\phi(x^*;\kappa)}{2}\sigma_{3}}.
\end{align*} 
Using also \eqref{large N exp of R PII}, \eqref{eq:R1} and the fact that for any $0<c''<c'$, there exists $N_{0}$ such that
\begin{align}\label{lol6}
x^{1/4}e^{-c'Nx^{3/2}} \leq e^{-c''Nx^{3/2}}, \qquad \mbox{for all } x \geq 0, \; N \geq N_{0},
\end{align}
we obtain (after renaming $c''$ as $c'$)
\begin{multline*}
Y_{22}(0;k)  = R_{22}(0;k) = 1 - \frac{1}{N^{\frac{1}{3}}} \frac{1}{2f_{0}x^*}q(N^{-\frac{1}{3}}(k-N\kappa_{2})s^*) +1_{\{|\kappa-\kappa_2|\leq N^{-2/3}\}}\bigO(N^{-2/3})\\
+1_{\{{\kappa<}\kappa_2\}}\bigO\Big({N^{2}(\kappa_2-\kappa})^4\Big) +1_{\{{\kappa\geq }\kappa_2\}}\mathcal O\bigg(\frac{e^{-c'{N(\kappa-\kappa_2)}^{3/2}}}{N^{1/2}}\bigg).
\end{multline*}
By \eqref{eq:ratioid2} and \eqref{asymp:s}, the result follows.
\end{proof}
\begin{corollary}
As $N\to + \infty$, we have
\begin{align*}
\log F_N^{m,k}(a;\epsilon)-\log F_N(a) &= \log F^{\rm TW}\left(s^*{N^{2/3}(\kappa-\kappa_2) }\right)+\mathcal{E}_{N},
\end{align*}
uniformly for  $\frac{a^2}{1+a^2}(1+\delta)\leq {\mu}\leq 1-\delta$ and for ${\kappa_{2}-N^{\mathfrak{c}-1}\leq \kappa\leq \kappa_{2}+\delta}$ with $\mathfrak{c}<2/5$ fixed and $\delta>0$ fixed and sufficiently small, where $\mathcal{E}_{N}$ satisfies
\begin{align}\label{estimate for Epsilon N}
\mathcal{E}_{N} = \begin{cases}
\bigO(N^{-1/3}), & \mbox{if } {N^{\frac{2}{3}}(\kappa-\kappa_{2})} \mbox{ is bounded}, \\
\bigO\Big(\frac{e^{-c'{N(\kappa-\kappa_{2})^{3/2}}}}{N^{1/2}(\kappa-\kappa_{2})^{1/4}}\Big), & \mbox{if } {N^{\frac{2}{3}}(\kappa-\kappa_{2})} \to + \infty, \\
\bigO({N^{3}(\kappa_{2}-\kappa)}^{5}), & \mbox{if } {N^{\frac{2}{3}}(\kappa-\kappa_{2})} \to - \infty,
\end{cases}
\end{align}
and where $c' \in (0,\frac{2}{3}(s^*)^{3/2})$ is arbitrary but fixed.
\end{corollary}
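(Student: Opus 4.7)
The plan is to telescope and convert the resulting sum into $\log F^{\rm TW}$ via the integral representation $\log F^{\rm TW}(s)=\int_{s}^{\infty}q(\sigma)\,d\sigma$, which follows from $\partial_{s}\log F^{\rm TW}=-q$ (see \eqref{derTW}) and $\log F^{\rm TW}(+\infty)=0$. Fix $\delta_{1}\in(0,\delta)$, set $\ell^{*}=\lceil N(\kappa_{2}+\delta_{1})\rceil$, and use $F_{N}^{m,m+1}(a;\epsilon)=F_{N}(a)$ to write
\begin{equation*}
\log F_{N}^{m,k}(a;\epsilon)-\log F_{N}(a)=\sum_{\ell=k}^{m}\bigl[\log F_{N}^{m,\ell}(a;\epsilon)-\log F_{N}^{m,\ell+1}(a;\epsilon)\bigr].
\end{equation*}
The tail $\ell\geq\ell^{*}$ lies in the range of Theorem \ref{thm:frozen} applied to both numerator and denominator, and thus contributes $\bigO(e^{-cN\delta_{1}^{3/2}})$, which is absorbed into $\mathcal{E}_{N}$.

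For the head $k\leq\ell\leq\ell^{*}-1$, apply Proposition \ref{prop:ratio asymp in TW region}. With the change of variable $\sigma_{\ell}=s^{*}N^{2/3}(\kappa_{\ell}-\kappa_{2})$ (step $\Delta\sigma=s^{*}/N^{1/3}$), the sum of main terms is a Riemann sum for
\begin{equation*}
\int_{s^{*}N^{2/3}(\kappa-\kappa_{2})}^{s^{*}N^{2/3}(\kappa_{\ell^{*}}-\kappa_{2})}q(\sigma)\,d\sigma=\log F^{\rm TW}\!\bigl(s^{*}N^{2/3}(\kappa-\kappa_{2})\bigr)+\bigO(e^{-c'N}),
\end{equation*}
the upper-endpoint tail being negligible by \eqref{eq:TW easy tail}. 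The Riemann-sum-versus-integral discrepancy can be controlled by Euler--Maclaurin using the asymptotics \eqref{asymp of u and q update} of $u$ and $q$; one checks that it is subdominant in each of the three regimes.

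It remains to estimate $\sum_{\ell}\mathcal{E}_{N}^{(\ell)}$ case by case. When $N^{2/3}(\kappa-\kappa_{2})$ is bounded, the $\bigO(N^{-2/3})$ per-term bound contributes for $\bigO(N^{1/3})$ indices and yields $\bigO(N^{-1/3})$, while the other error types are negligible. When $N^{2/3}(\kappa_{2}-\kappa)\to+\infty$, the dominant error is
\begin{equation*}
\sum_{\ell=k}^{\lfloor N\kappa_{2}\rfloor}N^{2}(\kappa_{2}-\kappa_{\ell})^{4}\sim\tfrac{1}{5}N^{3}(\kappa_{2}-\kappa)^{5},
\end{equation*}
which stays $o(1)$ precisely when $\mathfrak{c}<2/5$ and accounts for the restriction on $\mathfrak{c}$ in the statement. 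When $N^{2/3}(\kappa-\kappa_{2})\to+\infty$, one uses the \emph{refined} per-term estimate $\bigO(N^{-1/2}(\kappa_{\ell}-\kappa_{2})^{1/4}e^{-c'N(\kappa_{\ell}-\kappa_{2})^{3/2}})$ available from \eqref{expansion on dDz0 PII} prior to the simplification via \eqref{lol6}: approximating the sum by $N^{1/2}\int_{\kappa-\kappa_{2}}^{\infty}v^{1/4}e^{-c'Nv^{3/2}}\,dv$ and substituting $t=c'Nv^{3/2}$ gives the bound $\bigO\bigl(e^{-c'N(\kappa-\kappa_{2})^{3/2}}/(N^{1/2}(\kappa-\kappa_{2})^{1/4})\bigr)$, as required.

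The main obstacle is this last regime: the sharp prefactor $(\kappa-\kappa_{2})^{-1/4}$ can only be obtained by retaining the refined Painlev\'e II prefactor $(\kappa-\kappa_{2})^{1/4}$ from \eqref{expansion on dDz0 PII} throughout, rather than the looser form used inside the Proposition; matching the constants so that a single $c'\in(0,\tfrac{2}{3}(s^{*})^{3/2})$ works across all three regimes, and simultaneously absorbing the Euler--Maclaurin error without disturbing the already-sharp $N^{3}(\kappa_{2}-\kappa)^{5}$-bound in Regime~2, is the most delicate step.
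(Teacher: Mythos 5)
Your proposal follows the same route as the paper: telescope via the ratio identity, apply Proposition~\ref{prop:ratio asymp in TW region} to each factor, recognize the main-term sum as a Riemann sum for $q$, compare it with $\int_{s^*N^{2/3}(\kappa-\kappa_2)}^{\infty}q\,d\sigma=\log F^{\rm TW}(\cdot)$, truncate the upper tail with Theorem~\ref{thm:frozen} and \eqref{eq:TW easy tail}, and estimate the error sums regime by regime. The result and the decomposition are correct.

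There is, however, a misconception in your discussion of what you call the ``most delicate step''. You claim that the sharp factor $(\kappa-\kappa_{2})^{-1/4}$ in the second regime can \emph{only} be obtained by retaining the refined per-term prefactor $(\kappa_{\ell}-\kappa_{2})^{1/4}$ from \eqref{expansion on dDz0 PII} before it is absorbed via \eqref{lol6}. This is not so: in the paper's proof, the $(\kappa-\kappa_{2})^{-1/4}$ factor arises already from the Riemann-sum discrepancy $\bigO\big(q(s^*N^{2/3}(\kappa-\kappa_{2}))/N^{1/3}\big)$ together with the asymptotics $q(s)=\bigO(s^{-1/4}e^{-\frac{2}{3}s^{3/2}})$ from \eqref{lol5}, which yields $\bigO\big(e^{-\frac{2}{3}(s^*)^{3/2}N(\kappa-\kappa_{2})^{3/2}}/(N^{1/2}(\kappa-\kappa_{2})^{1/4})\big)$. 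The error-term sum, estimated with the \emph{simpler} per-term bound $\bigO(N^{-1/2}e^{-c'N(\kappa_{\ell}-\kappa_{2})^{3/2}})$ already present in Proposition~\ref{prop:ratio asymp in TW region}, gives $\bigO(e^{-c'N(\kappa-\kappa_{2})^{3/2}}/N^{1/2})$, which is already dominated (for $\kappa-\kappa_{2}<1$) by the stated bound since $(\kappa-\kappa_{2})^{-1/4}\geq 1$. The paper's final bound is just the pointwise maximum of these two contributions, using the larger exponent rate $c'<\frac{2}{3}(s^{*})^{3/2}$ and the worse prefactor. Likewise, the ``matching of $c'$ across all three regimes'' is a non-issue, since $c'$ appears only in the regime $N^{2/3}(\kappa-\kappa_{2})\to+\infty$. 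Your Euler--Maclaurin treatment of the Riemann-sum error would also work, but the direct one-sided estimate available from the monotonicity of $q$ ($q'=u^{2}\geq 0$, $q(+\infty)=0$) is simpler and sufficient; your regime-$3$ computation $\frac{1}{N^{2}}\sum_{j}j^{4}\sim\frac{1}{5}N^{3}(\kappa_{2}-\kappa)^{5}$ and the resulting constraint $\mathfrak{c}<2/5$ are correct.
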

\begin{proof}
We take the sum of $\log F_N^{m,j}(a;\epsilon)-\log F_N^{m,j+1}(a;\epsilon)$ for $j=k, k+1,\ldots, k_{\rm max}-1$, where we choose $N(\kappa_2+\delta/2)<k_{\rm max}<N(\kappa_2+\delta)$. By Proposition \ref{prop:ratio asymp in TW region}, we get
\begin{multline*}
\log F_N^{m,k}(a;\epsilon)-\log F_N^{m,k_{\rm max}}(a;\epsilon)= \sum_{j=k}^{k_{\rm max}-1} \bigg(\frac{s^*}{N^{\frac{1}{3}}}q(N^{-\frac{1}{3}}(j-N\kappa_{2})s^*) +  1_{\{|k-\kappa_2 N|\leq N^{1/3}\}}\bigO(N^{-2/3}) \\
+ 1_{\{k<N\kappa_2\}}\bigO\Big(N^{-2}(N\kappa_2-k)^4\Big) + 1_{\{k\geq N\kappa_2\}}\mathcal O(N^{-1/2}e^{-c'N^{-1/2}(k-N\kappa_2)^{3/2}})\bigg),
\end{multline*}
as $N\to\infty$.
We recognize the first part of the sum as a Riemann sum for the (smooth and fast decaying at $+\infty$) function $q$:
\begin{align*}
& \frac{s^*}{N^{\frac{1}{3}}} \sum_{j=k}^{k_{\rm max}-1} q(N^{-\frac{1}{3}}(j-N\kappa_{2})s^*) \\
& = \int_{s^*N^{-1/3}(k-N\kappa_2)}^{s^*N^{-1/3}(k_{\rm max}-N\kappa_2)} q(s)ds + \bigO \bigg( \frac{q({N^{\frac{2}{3}}(\kappa-\kappa_{2})}s^*)}{N^{\frac{1}{3}}} \bigg), \qquad \mbox{as } N \to + \infty.
\end{align*}
By \eqref{lol5} and \eqref{asymp of u and q}, as $N\to \infty$, we have
\begin{align*}
\frac{q({N^{\frac{2}{3}}(\kappa-\kappa_{2})}s^*)}{N^{\frac{1}{3}}} = \begin{cases}
\bigO(N^{-1/3}), & \mbox{if } {N^{\frac{2}{3}}(\kappa-\kappa_{2})} \mbox{ is bounded}, \\
\bigO\Big(\frac{e^{-\frac{2}{3}(s^*)^{3/2}{N(\kappa-\kappa_{2})}^{3/2}}}{{N^{1/2}(\kappa-\kappa_{2})^{1/4}}}\Big), & \mbox{if } {N^{\frac{2}{3}}(\kappa-\kappa_{2})} \to + \infty, \\
\bigO({N (\kappa-\kappa_{2})^{2}}), & \mbox{if } {N^{\frac{2}{3}}(\kappa-\kappa_{2})} \to - \infty.
\end{cases}
\end{align*}
Observe also that as $N\to + \infty$, 
\begin{align*}
& \sum_{j=k}^{k_{\rm max}-1} 1_{\{k<N\kappa_2\}}\; N^{-2}(N\kappa_2-k)^4 = \begin{cases}
\bigO(N^{-1/3}), & \mbox{if } {N^{\frac{2}{3}}(\kappa-\kappa_{2})} \mbox{ is bounded}, \\
\bigO({N^{3}(\kappa_{2}-\kappa)^{5}}), & \mbox{if } {N^{\frac{2}{3}}(\kappa-\kappa_{2})} \to - \infty,
\end{cases}, \\
& \sum_{j=k}^{k_{\rm max}-1} 1_{\{k\geq N\kappa_2\}} N^{-1/2}e^{-c'{N(\kappa-\kappa_2)^{3/2}}} = \begin{cases}
\bigO(N^{-1/2}), & \mbox{if } {N^{\frac{2}{3}}(\kappa-\kappa_{2})} \mbox{ is bounded} \\
\bigO\Big(\frac{e^{-c'N(\kappa-\kappa_{2})^{3/2}}}{N^{1/2}}\Big), & \mbox{if } {N^{\frac{2}{3}}(\kappa-\kappa_{2})} \to + \infty. \\
\end{cases}
\end{align*}
Using also that $\log F_N^{m,k_{\rm max}}(a;\epsilon)- \log F_{N}(a)$ is exponentially small as $N\to\infty$ by Theorem \ref{thm:frozen}, we obtain
\begin{align*}
\log F_N^{m,k}(a;\epsilon)-\log F_{N}(a)&=\int_{s^*{N^{2/3}(\kappa-\kappa_2})}^{+\infty} q(s)ds + \mathcal{E}_{N}, \qquad \mbox{as } N \to + \infty,
\end{align*}
where $\mathcal{E}_{N}$ satisfies \eqref{estimate for Epsilon N}. Since $u^2(s)$ is the derivative of $q(s)$ (with $u(s)$ the Hastings-McLeod solution of Painlev\'e II), we can integrate by parts to obtain
\begin{align*}
\log F_N^{m,k}(a;\epsilon)-\log F_{N}(a)
&=-\int_{s^*{N^{2/3}(\kappa-\kappa_2)}}^{+\infty} (s-s^*N^{-1/3}(k-N\kappa_2))u^2(s)ds + \mathcal{E}_{N}\\
&=\log F^{\rm TW}\left(s^*N^{-1/3}(k-N\kappa_2)\right)+\mathcal{E}_{N},\qquad N\to\infty.
\end{align*}
\end{proof}
We finally observe that 
$s^*=(c^*)^{-1/3}$, by \eqref{def:phi0x0},     \eqref{def:phi}, \eqref{def:f0} and \eqref{asymp:s}.
This completes the proof of Theorem \ref{thm:boundary}.

\section{Almost maximal removed corner}\label{sec:max}
In this section, $k\in\{1,\ldots, M\}$ for some fixed $M\in \N$, and $\mu=m/N\in (0,1)$. 
\subsection{The RH problem for $Y$}
We start from the RH problem for $Y$ from Section \ref{section:RHP for Y}, and we deform the contour $\Sigma_{0}$ so that it coincides with $\Sigma_{1}$ (here we use the fact that $\Sigma_{0}$ is allowed to cross $-a$). In this way, $Y$ satisfies the following properties. Note that the jump matrix in \eqref{lol27} below is the product of the jump matrices on $\Sigma_1$ and $\Sigma_0$ from \eqref{eq:JY2}.

\subsubsection*{RH problem for $Y$}
\begin{itemize}
\item[(a)] $Y:\mathbb C\setminus \Sigma_1 \to \mathbb C^{2\times 2}$ is analytic, with $\Sigma_1$ positively oriented and enclosing $0,-a$ without enclosing $1/a$.
\item[(b)] $Y_+(z)=Y_-(z)J_Y(z)$ for $z\in \Sigma_1$, with
\begin{align}\label{lol27}
& J_Y(z)=
\begin{pmatrix}
0 & \frac{z}{(1-az)^{\epsilon}}z^{-k}e^{-N\phi(z;0)} \\
- \frac{(1-az)^{\epsilon}}{z}z^{k}e^{N\phi(z;0)} & 1
\end{pmatrix}, \qquad z\in\Sigma_1.
\end{align}
\item[(c)]{As $z\to \infty$, $Y(z) = I +\bigO(z^{-1})$}.
\end{itemize}

\subsection{Transformation $Y\mapsto T$}
With our next transformation, we reshuffle and renormalize the entries of the jump matrix by defining
\begin{align}\label{def of T birth}
T(z) = \begin{cases}
\ds -\sigma_{1} Y(z) \sigma_{3}(1-az)^{-(1-\mu)N\sigma_{3}}, & \mbox{inside } \Sigma_{1}, \\[0.1cm]
\ds \sigma_{1} Y(z) \sigma_{1}\Big(\frac{z+a}{z}\Big)^{\mu N \sigma_{3}}, & \mbox{outside } \Sigma_{1}.
\end{cases}
\end{align}
$T$ satisfies the following RH problem.
\subsubsection*{RH problem for $T$}
\begin{itemize}
\item[(a)] $T:\mathbb C\setminus \Sigma_1 \to \mathbb C^{2\times 2}$ is analytic.
\item[(b)] $T_+(z)=T_-(z)J_T(z)$ for $z\in \Sigma_1$, with
\begin{align*}
& J_T(z)=
\begin{pmatrix}
\frac{(1-az)^{\epsilon}}{z}z^{k} & e^{N\varphi_{0}(z)} \\
0 & \frac{z}{(1-az)^{\epsilon}}z^{-k}
\end{pmatrix}, \qquad z\in\Sigma_1,
\end{align*}
where
\begin{align*}
\varphi_{0}(z) := (1-\mu)\log(1-az)-\mu \log(z+a) + \mu \log z
\end{align*}
and the principal branch is used for the logarithms.
\item[(c)]As $z\to \infty$, $T(z) = I +\bigO(z^{-1})$.
\end{itemize} 

Note that $\varphi_0(z)$ is not equal to $\phi(z;0)$ defined in \eqref{def:phi}: indeed the signs of two terms are different. The new phase function $\varphi_{0}$ has two saddle points, which are given by
\begin{align*}
x_0=\frac{-a-\sqrt{a^2+4\mu(1-\mu)}}{2(1-\mu)}\in (-\infty,-a), \qquad \gamma_0=\frac{-a+\sqrt{a^2+4\mu(1-\mu)}}{2(1-\mu)} \in (0,\tfrac{1}{a}).
\end{align*}
The relevant saddle point for our analysis is $x_{0}$. Since $\varphi_{0}''(x_{0})>0$, there are four vertical trajectories emanating from $x_{0}$, making angles $\pm \frac{\pi}{4},\pm \frac{3\pi}{4}$ with respect to $\R$. Since
\begin{align}\label{blow up of varphi0}
\Re \varphi_{0}(\infty) = \Re \varphi_{0}(-a) = +\infty, \qquad \Re \varphi_{0}(0) = \Re \varphi_{0}(\tfrac{1}{a}) = -\infty,
\end{align}
the two trajectories (say $\Gamma_{1},\Gamma_{2}$) going into the upper half-plane cannot connect with $+\infty,-a,0,\frac{1}{a}$. By the max/min principle for harmonic functions, these trajectories can neither form closed loops in the upper half-plane nor intersect each other. Hence they must intersect $\R$ at two distinct points. A direct inspection of $x \to \Re \varphi_{0}(x)$ for $x\in \R$ then shows that one curve must intersect $(-a,0)$ and the other one must intersect $(\frac{1}{a},+\infty)$. The symmetry $\varphi_{0}(z) = \overline{\varphi_{0}(\overline{z})}$ then implies that $\Gamma_{1},\Gamma_{2}$ are symmetric with respect to $\R$ (hence they must be closed loops). The graph of $x \mapsto \Re \varphi_{0}(x)$ for $x\in \R$ also shows that
\begin{align*} 
\{x\in \R\setminus \{x_{0}\}: \Re \varphi_{0}(x) =  \varphi_{0}(x_{0}) \}
\end{align*}
consists of exactly two points; hence, by the max/min principle for harmonic functions, the level set
\begin{align*}
\{z\in \C: \Re \varphi_{0}(x) = \varphi_{0}(x_{0}) \}
\end{align*}
consists exclusively of $\Gamma_{1}\cup \Gamma_{2}$, see also Figure \ref{fig:level curve varphi0}. The set $\Gamma_{1}\cup \Gamma_{2}$ delimits three regions, and the sign of $\varphi_{0}$ within each region is determined by \eqref{blow up of varphi0}.

\begin{figure}
\begin{center}
\begin{tikzpicture}[master]
\node at (0,0) {\includegraphics[width=7cm]{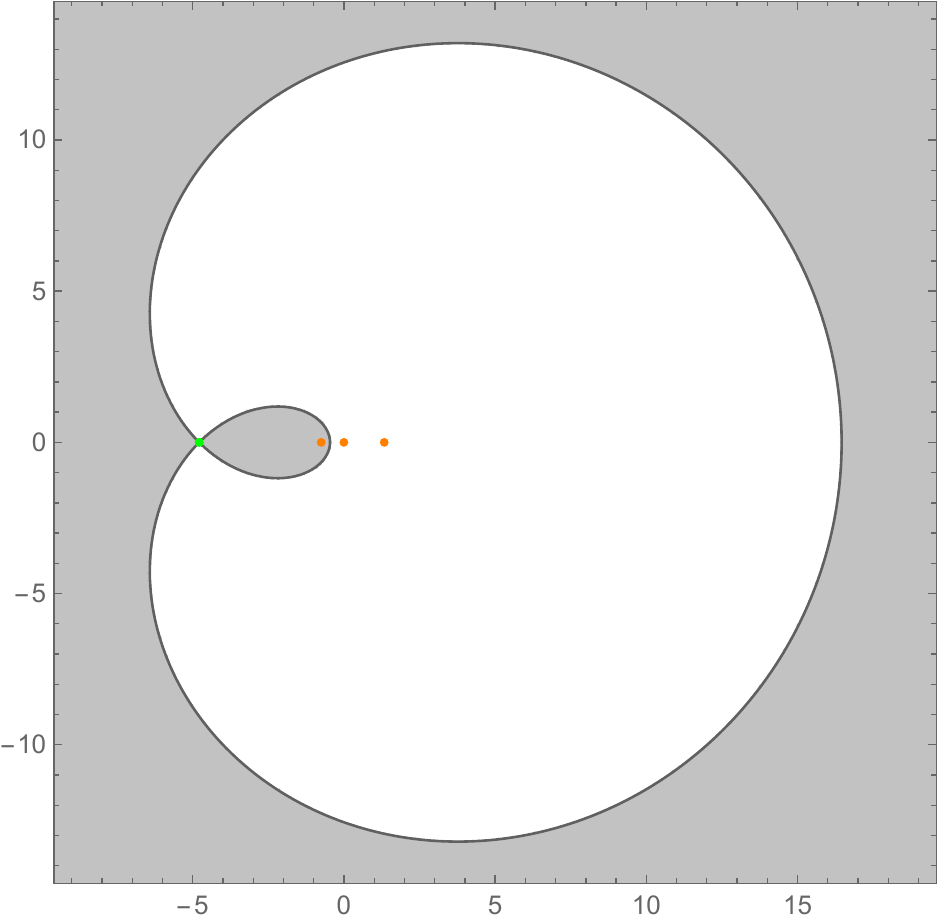}};

\draw (-1.4,0.13) circle (0.61cm);

\draw[black,arrows={-Triangle[length=0.135cm,width=0.09cm]}]
(-1.55,0.73) --  ++(180:0.001);
\node at (-1.35,1) {\footnotesize $\Sigma_{1}$};
\end{tikzpicture}
\end{center}
\caption{\label{fig:level curve varphi0}
The shaded regions correspond to $\{z:\Re \varphi_{0}(z)>h\}$ and the white regions to $\{z:\Re \varphi_{0}(z)<h\}$, where $h=\varphi_{0}(x_{0})$. The orange dots are $-a,0,1/a$, and the green dot is $x_{0}$. The figure is drawn for $a=0.75$ and $\mu=0.808 \in (\frac{a^{2}}{1+a^{2}},1)$.}
\end{figure}
 
\medskip The above discussion shows that we can deform the contour $\Sigma_{1}$ of the RH problem for $T$ so that 
\begin{align}\label{lol15}
\Sigma_{1}\setminus \{x_{0}\} \subset \{z\in \C: \Re \varphi_{0}(x) < \varphi_{0}(x_{0}) \},
\end{align}
see again Figure \ref{fig:level curve varphi0}.

\subsection{Transformation $T\mapsto S$}
Next, in order to create jump matrices close to the identity matrix, we define
\begin{align}\label{def of S birth}
S(z) = e^{-\frac{N}{2}\varphi_{0}(x_{0})\sigma_{3}}T(z)e^{\frac{N}{2}\varphi_{0}(x_{0})\sigma_{3}}.
\end{align}
$S$ satisfies the following RH problem.
\subsubsection*{RH problem for $S$}
\begin{itemize}
\item[(a)] $S:\mathbb C\setminus \Sigma_1 \to \mathbb C^{2\times 2}$ is analytic.
\item[(b)] $S_+(z)=S_-(z)J_S(z)$ for $z\in \Sigma_1$, with
\begin{align*}
& J_S(z)=
\begin{pmatrix}
\frac{(1-az)^{\epsilon}}{z}z^{k} & e^{N(\varphi_{0}(z)-\varphi_{0}(x_{0}))} \\
0 & \frac{z}{(1-az)^{\epsilon}}z^{-k}
\end{pmatrix}, \qquad z\in\Sigma_1.
\end{align*}
\item[(c)]As $z\to \infty$, $S(z) = I +\bigO(z^{-1})$.
\end{itemize} 
By \eqref{lol15}, the $(1,2)$-entry of $J_S$ is uniformly exponentially small on $\Sigma_{1}$ as $N\to + \infty$, except within a fixed neighborhood of $x_{0}$.

\subsection{Global parametrix}
We now construct a global parametrix $\widehat P^{(\infty)}$ satisfying the following properties.
\subsection*{RH problem for $\widehat P^{(\infty)}$}
\begin{itemize}
\item[(a)] $\widehat P^{(\infty)}:\C\setminus \Sigma_{1} \to \mathbb C^{2\times 2}$ is analytic.
\item[(b)] $\widehat P^{(\infty)}_+(z)=\widehat P^{(\infty)}_-(z) \begin{pmatrix}
\frac{(1-az)^{\epsilon}}{z}z^{k} & 0 \\
0 & \frac{z}{(1-az)^{\epsilon}}z^{-k}
\end{pmatrix} $ for $z\in \Sigma_{1}\setminus \{x_{0}\}$.
\item[(c)]As $z\to \infty$, $\widehat P^{(\infty)}(z)=I+\bigO(z^{-1})$.
\end{itemize}
A solution to the above RH problem is given by
\begin{align}\label{def of Pinf hat}
\widehat P^{(\infty)}(z) = \begin{cases}
\ds \begin{pmatrix}
\frac{(1-az)^{\epsilon}}{z-x_{0}}(z-x_{0})^{k} & 0 \\
0 & \frac{z-x_{0}}{(1-az)^{\epsilon}}(z-x_{0})^{-k}
\end{pmatrix}, & \mbox{if } z \mbox{ is inside } \Sigma_{1}, \\[0.35cm]
\ds \begin{pmatrix}
\frac{(z-x_{0})^{k-1}}{z^{k-1}} & 0 \\
0 & \frac{z^{k-1}}{(z-x_{0})^{k-1}}
\end{pmatrix}, & \mbox{otherwise}.
\end{cases}
\end{align}
Note that $\widehat P^{(\infty)}$ has a singularity at $x_{0}$. 
\subsection{Local parametrix}
We need to construct a local parametrix $P$ near $x_{0}$ satisfying the following conditions. We denote $\mathcal D$ for the open disk centered at $x_{0}$ with small but fixed radius $\eta>0$.

\subsubsection*{RH problem for $P$}
\begin{itemize}
\item[(a)] $P:\mathcal{D}\setminus \Sigma_1 \to \mathbb C^{2\times 2}$ is analytic.
\item[(b)] $P_+(z)=P_-(z)J_{S}(z)$ for $z\in \Sigma_1 \cap\mathcal D$.
\item[(c)]$P(z)=\widehat{P}^{(\infty)}(z)\big(I+\bigO(N^{-1/2})\big)$ as $N\to \infty$ uniformly for $z\in \partial \mathcal D$.
\end{itemize}

We will construct $P$ using the following model RH problem, whose solution can be constructed using Hermite polynomials.
\subsubsection*{RH problem for $\Upsilon$}
\begin{itemize}
\item[(a)] $\Upsilon:\mathbb C\setminus\mathbb R\to\mathbb C^{2\times 2}$ is analytic, where $\R$ is oriented from left to right.
\item[(b)] For $x\in\mathbb R$,
\[\Upsilon_+(x)=\Upsilon_-(x)\begin{pmatrix}1&e^{-x^2/2}\\0&1\end{pmatrix}.\]
\item[(c)] As $\zeta\to\infty$, we have
\begin{align}\label{Upsilon at infty}
\Upsilon(\zeta)=\left(I+\frac{1}{\zeta}\begin{pmatrix}0&\frac{i \, (k-1)!}{\sqrt{2\pi}} \\
\frac{-i\sqrt{2\pi}}{(k-2)!}&0\end{pmatrix}+\mathcal O(\zeta^{-2})\right)\zeta^{(k-1)\sigma_3}.
\end{align}
\end{itemize}
This is a special case of the Fokas-Its-Kitaev RH problem characterizing orthogonal polynomials \cite{FokasItsKitaev}, corresponding to the weight function $e^{-x^{2}/2}$. The relevant orthogonal polynomials are then Hermite polynomials. Writing $p_j$ for the normalized Hermite polynomial of degree $j$ and $\kappa_j=\frac{1}{(2\pi)^{1/4}\sqrt{j!}}$ for its leading coefficient, and the solution $\Upsilon$ can be constructed as follows,
\begin{align*}
\Upsilon(\zeta) = \begin{pmatrix}
\frac{1}{\kappa_{k-1}}p_{k-1}({\zeta})&\frac{1}{2\pi i\kappa_{k-1}}\int_{\mathbb R}p_{k-1}(s)e^{-s^2/2}\frac{ds}{s-\zeta}\\ 
-2\pi i\kappa_{k-2}p_{k-2}({\zeta})& -\kappa_{k-2}\int_{\mathbb R}p_{k-2}(s)e^{-s^2/2}\frac{ds}{s-\zeta} \end{pmatrix}.
\end{align*}
 Then $\widehat{\Upsilon}(z) := \Upsilon(z)e^{-\frac{z^{2}}{4}\sigma_{3}}$ has constant jumps. We construct the local parametrix $P(z)$ for $z\in \mathcal{D}\setminus \Sigma_{1}$ as follows:
\begin{align*}
& P(z) = E(z)\widehat{\Upsilon}(\sqrt{N}f(z))e^{-\frac{N}{2}(\varphi_{0}(z)-\varphi_{0}(x_{0}))\sigma_{3}}\widehat{\mathcal{G}}(z)^{\sigma_{3}}, 
\end{align*}
where $f$ is the conformal map from $\mathcal{D}$ to a neighborhood of $0$ given by
\begin{align}\label{def of f birth}
f(z) = i\sqrt{2(\varphi_{0}(z)-\varphi_{0}(x_{0}))},
\end{align}
and $E$ and $\widehat{\mathcal{G}}$ are defined by
\begin{align}
& E(z) = \widehat{P}^{(\infty)}(z)\widehat{\mathcal{G}}(z)^{-\sigma_{3}}f(z)^{-(k-1)\sigma_{3}}, \label{def of E birth of a cut} \\
& \widehat{\mathcal{G}}(z) = \begin{cases}
(1-az)^{\frac{\epsilon}{2}} i^{-k+1}(-z)^{\frac{k-1}{2}}, & \mbox{if } z \mbox{ is inside } \Sigma_{1}, \\
(1-az)^{-\frac{\epsilon}{2}}i^{k-1}(-z)^{-\frac{k-1}{2}}, & \mbox{if } z \mbox{ is outside } \Sigma_{1}, \label{def of Gcal birth}
\end{cases}
\end{align}
In \eqref{def of Gcal birth}, the principal branch is used for the roots, and in \eqref{def of f birth}, the branch for the root is chosen so that
\begin{align*}
f(z) = i\sqrt{\varphi_{0}''(x_{0})}(z-x_{0})\big( 1+\bigO(z-x_{0}) \big), \qquad \mbox{as } x\to x_{0}.
\end{align*}
We also choose the contour $\Sigma_{1}$ so that $f(\Sigma_{1})\subset \R$. It is easy to check that $E(z)$ has no jumps for $z\in \mathcal{D}$. Moreover, by \eqref{def of Pinf hat}, $E(z)=\bigO(1)$ as $z\to x_{0}$. Hence $E$ is analytic in $\mathcal{D}$. Using \eqref{Upsilon at infty}, we obtain
\begin{align}\label{matching condition for R birth of a cut}
P(z)\widehat{P}^{(\infty)}(z)^{-1} = N^{-\frac{k-1}{2}\sigma_{3}}\bigg\{I + \frac{E(z)}{\sqrt{N}f(z)} 
\begin{pmatrix}
0 & \frac{i \, (k-1)!}{\sqrt{2\pi}} \\
\frac{-i\sqrt{2\pi}}{(k-2)!} & 0
\end{pmatrix}E(z)^{-1} + \bigO(N^{-1})\bigg\}N^{\frac{k-1}{2}\sigma_{3}},
\end{align}
as $N\to \infty$ uniformly for $z\in \partial \mathcal{D}$.

\subsection{Small norm RH problem}

Define
\begin{align}\label{def of RHermite}
R(z) = \begin{cases}
N^{\frac{k-1}{2}\sigma_{3}} S(z)\widehat P^{(\infty)}(z)^{-1} N^{-\frac{k-1}{2}\sigma_{3}}, & z \in \mathbb C\setminus \overline{\mathcal{D}} , \\
N^{\frac{k-1}{2}\sigma_{3}} S(z) P(z)^{-1} N^{-\frac{k-1}{2}\sigma_{3}}, & z \in \mathcal{D}.
\end{cases}
\end{align}
$R$ satisfies the following RH problem.
\subsection*{RH problem for $R$}
\begin{itemize}
\item[(a)] $R:\C\setminus \Sigma_{R}\to \C^{2\times 2}$ is analytic, where
\begin{align*}
\Sigma_{R} = \partial \mathcal{D} \cup \big( \Sigma_1 \setminus \mathcal{D} \big)
\end{align*}
and where $\partial \mathcal{D}$ is oriented in the clockwise direction.
\item[(b)] For $z \in \Sigma_{R}$, we have $R_{+}(z) = R_{-}(z)J_{R}(z)$, where
\begin{align*}
& J_{R}(z) = P(z)\widehat{P}^{(\infty)}(z)^{-1}, & & z \in \partial \mathcal{D}, \\
& J_{R}(z) =\widehat P^{(\infty)}(z) J_{S}(z)\widehat P^{(\infty)}(z)^{-1}, & & z \in \Sigma_{R} \setminus \partial \mathcal{D}.
\end{align*}
\item[(c)] $R(z)$ remains bounded as $z$ tends to the points of self-intersections of $\Sigma_{R}$.

\noindent As $z\to \infty$, $R(z) = I + \frac{R_{1}}{z} + \bigO(z^{-2})$ for some $R_{1}$ independent of $z$.
\end{itemize}
From the matching condition \eqref{matching condition for R birth of a cut}, we obtain
\begin{align*}
J_R(z)=I + \frac{E(z)}{\sqrt{N}f(z)} 
\begin{pmatrix}
0 & \frac{i \, (k-1)!}{\sqrt{2\pi}} \\
\frac{-i\sqrt{2\pi}}{(k-2)!} & 0
\end{pmatrix}E(z)^{-1} + \bigO(N^{-1}),
\end{align*}
for $z\in\partial\mathcal D$.
It follows from standard theory for small norm RH problems \cite{DeiftZhou, DKMVZ} that
\begin{align}\label{eq:asR birth}
R(z)=I+\frac{R^{(1)}(z)}{\sqrt{N}}+\bigO\bigg(\frac{1}{N(1+|z|)}\bigg),\qquad N\to\infty,
\end{align}
uniformly for $z\in \C\setminus \Sigma_{R}$. For $z \in \C \setminus \overline{\mathcal{D}}$, we obtain by residue computation that
\begin{align}\label{Rp1p birth}
R^{(1)}(z) = \frac{1}{z-x_{0}} \frac{1}{\sqrt{\varphi_{0}''(x_{0})}} 
\begin{pmatrix}
0 & E_{11}(x_{0})^{2}\frac{(k-1)!}{\sqrt{2\pi}} \\
\frac{-\sqrt{2\pi}}{(k-2)!E_{11}(x_{0})^{2}} & 0
\end{pmatrix}.
\end{align}
The quantity $E_{11}(x_{0})$ can be computed explicitly using \eqref{def of E birth of a cut}:
\begin{align*}
E_{11}(x_{0}) = \frac{|1-a x_{0}|^{\frac{\epsilon}{2}}}{|x_{0}|^{\frac{k-1}{2}}\varphi_{0}''(x_{0})^{\frac{k-1}{2}}}>0.
\end{align*}

\subsection{Ratio asymptotics and proof of Theorem \ref{thm:birth of a cut}}

\begin{proposition}[Ratio asymptotics for a near-maximal {removed corner}]\label{prop:ratio asymp in max region}
For any fixed $\delta, M>0$, we have
\begin{align}
&\log F_N^{m,k+1}(a;\epsilon)-\log F_N^{m,k}(a;\epsilon)
=N\varphi_{0}(x_{0})-\Big(k-\frac{1}{2}\Big)\log N + \log\big((k-1)!\big)\nonumber \\ &\qquad\qquad\qquad - \Big( k-\frac{1}{2} \Big) \log\big(|x_{0}|^{2}\varphi_{0}''(x_{0})\big) 
 +\epsilon \log(1-ax_{0}) - \frac{1}{2}\log (2\pi)+\bigO(N^{-1/2}), \label{asymp ratio birth}
\end{align}
uniformly for $k\in\{1,\ldots, M\}$ and for $\delta \leq {\mu}\leq 1-\delta$.
\end{proposition}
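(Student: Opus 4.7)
The plan is to trace the chain of transformations $Y \mapsto T \mapsto S \mapsto R$ at the point $z=0$ and combine it with the small-norm asymptotic already established in \eqref{eq:asR birth}--\eqref{Rp1p birth}.

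The first step is to express $Y_{22}(0)$ in terms of $R(0)$. Since $z=0$ lies inside $\Sigma_1$ (which encloses both $0$ and $-a$) and outside the disk $\mathcal{D}$ around $x_0<-a$, the inside branch of \eqref{def of T birth} applies and the diagonal factor $(1-az)^{-(1-\mu)N\sigma_3}$ equals $I$ at $z=0$. A direct $2\times 2$ computation of $-\sigma_1 Y(0)\sigma_3$ gives $T_{12}(0)=Y_{22}(0)$. Combining this with \eqref{def of S birth}, \eqref{def of RHermite}, and the explicit value $\widehat P^{(\infty)}(0)=\mathrm{diag}(|x_0|^{k-1},|x_0|^{1-k})$ read off from \eqref{def of Pinf hat}, I obtain
\[
Y_{22}(0) \;=\; e^{N\varphi_0(x_0)}\,|x_0|^{1-k}\,N^{-(k-1)}\,R_{12}(0),
\]
where the factor $N^{-(k-1)}$ arises from the conjugation by $N^{\frac{k-1}{2}\sigma_3}$ acting on the $(1,2)$-entry.

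The second step inserts the small-norm expansion $R_{12}(0)=R^{(1)}_{12}(0)/\sqrt{N}+O(1/N)$. Formula \eqref{Rp1p birth}, together with the explicit value of $E_{11}(x_0)$ and the identities $-1/x_0=1/|x_0|$ and $1-ax_0>0$, gives
\[
R^{(1)}_{12}(0) \;=\; \frac{(1-ax_0)^{\epsilon}\,(k-1)!}{|x_0|^{k}\,\varphi_0''(x_0)^{k-1/2}\,\sqrt{2\pi}}.
\]
Substituting back yields
\[
Y_{22}(0) \;=\; e^{N\varphi_0(x_0)}\,N^{-(k-1/2)}\,\frac{(1-ax_0)^{\epsilon}\,(k-1)!}{|x_0|^{2k-1}\,\varphi_0''(x_0)^{k-1/2}\,\sqrt{2\pi}}\,\bigl(1+O(N^{-1/2})\bigr),
\]
and taking logarithms, using $(2k-1)\log|x_0|+(k-\tfrac{1}{2})\log\varphi_0''(x_0)=(k-\tfrac{1}{2})\log(|x_0|^2\varphi_0''(x_0))$, produces exactly \eqref{asymp ratio birth}; the ratio identity \eqref{eq:ratioid2} turns the left-hand side into $\log F_N^{m,k+1}(a;\epsilon)-\log F_N^{m,k}(a;\epsilon)$.

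All the serious analysis (choice of jump contour along the level curve of $\Re\varphi_0$ in Figure \ref{fig:level curve varphi0}, construction of the Hermite parametrix, small-norm bounds) has already been carried out in the preceding subsections, so the proof reduces to careful bookkeeping. The main pitfall is tracking the conjugations by $\sigma_1,\sigma_3$ when reading $Y_{22}(0)$ off from $T(0)$, together with the rescaling $N^{\frac{k-1}{2}\sigma_3}$ in \eqref{def of RHermite}, so that one correctly lands on the $(1,2)$-entry of $R$ (whose leading contribution encodes the Hermite polynomial of degree $k-1$) rather than on $R_{21}(0)$, which would produce the wrong power of $N$. The claimed uniformity is immediate since $k$ is bounded and $x_0,\varphi_0(x_0),\varphi_0''(x_0),1-ax_0$ depend smoothly on $\mu\in[\delta,1-\delta]$ with $\varphi_0''(x_0)$ and $1-ax_0$ bounded away from zero.
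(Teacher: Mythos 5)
The proposal is correct and follows essentially the same chain of transformations $Y\mapsto T\mapsto S\mapsto R$ evaluated at $z=0$ that the paper uses; the student simply fills in the bookkeeping (conjugations, $\widehat P^{(\infty)}(0)$, $E_{11}(x_0)$, the residue formula) that the paper leaves implicit in its terse one-paragraph proof. All the algebra checks: $T_{12}(0)=Y_{22}(0)$, $S_{12}(0)=e^{-N\varphi_0(x_0)}T_{12}(0)$, $S_{12}(0)=N^{-(k-1)}|x_0|^{1-k}R_{12}(0)$, and the explicit value of $R^{(1)}_{12}(0)$ combine as claimed to give \eqref{asymp ratio birth} via \eqref{eq:ratioid2}.
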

\begin{proof}
By \eqref{def of T birth}, \eqref{def of S birth}, \eqref{def of Pinf hat}, \eqref{def of RHermite}, we have
\begin{align*}
Y_{22}(0) = T_{12}(0) = e^{N\varphi_{0}(x_{0})}S_{12}(0) = e^{N\varphi_{0}(x_{0})}N^{-k+1}|x_{0}|^{-k+1}R_{12}(0).
\end{align*}
Using the large $N$ asymptotics \eqref{eq:asR birth} of $R$, we obtain
\begin{align*}
\log Y_{22}(0) = N\varphi_{0}(x_{0}) - \Big(k-\frac{1}{2}\Big) \log N  + \log \frac{R_{12}^{(1)}(0)}{|x_{0}|^{k-1}} +\mathcal O(N^{-1/2}).
\end{align*}
Substituting \eqref{Rp1p birth} in the above expansion and using \eqref{eq:ratioid2}, we obtain the claim.
\end{proof}

Let $K>0$ be any fixed number.
Summing \eqref{asymp ratio birth} with $k$ running from $1$ to $K$ yields
\begin{align}
\log \frac{F_N^{m,K+1}(a;\epsilon)}{F_N^{m,1}(a;\epsilon)} & = KN \varphi_{0}(x_{0}) - \frac{K}{2} \log N + \log G({K}+1) - \frac{K^{2}}{2} \log\big( |x_{0}|^{2}\varphi_{0}''(x_{0}) \big) \nonumber \\
&\quad + K \epsilon \log(1-ax_{0}) - \frac{K}{2}\log(2\pi) + \bigO(N^{-1/2}) \label{lol16}
\end{align}
as $N \to +\infty$. By \eqref{eq:formulamirror},
\begin{align}\label{lol17}
\log F_N^{m,1}(a;\epsilon)=\bigg( \frac{N^{2}}{2} + \Big( \frac{1}{2}-m \Big)N + m^{2} - \epsilon \, m \bigg) \log (1+a^2).
\end{align}
Summing \eqref{lol16} and \eqref{lol17} and using $m=\mu N$, we obtain 
\begin{align*}
& \log F_N^{m,K+1}(a;\epsilon) = F_2(\mu)N^{2} + F_1(K,\mu) N - \frac{K^2}{2}\log N + F_0(K,\mu) + \bigO(N^{-1/2})
\end{align*}
as $N\to + \infty$, where
$F_2,F_1,F_0$ are given {by \eqref{def:F1}--\eqref{def:F4}.} This proves Theorem \ref{thm:birth of a cut}.

\section{Large removed corner}
\label{sec:large}

In this section, $\mu=m/N\in\big(0,1\big)$, and $\kappa =k/N\in(0,\kappa_2)$. We observe from \eqref{def:kappa2} that $\kappa_2<\mu$ for $\mu\in(0,1)\setminus\{\frac{a^2}{1+a^2}\}$.

\subsection{Construction and analysis of the $g$-function}
Here, we cannot simply deform the jump contours $\Sigma_0,\Sigma_1$ to arrive at a small-norm RH problem. Instead, we need to construct a $g$-function, with a branch cut on a curve $\mathcal S$ connecting two points $z_0, \overline{z_0}$, depending on $\mu,\kappa$ and to be determined later. It has to satisfy the following properties.

\subsubsection*{RH problem for $g$}
\begin{itemize}
\item[(a)] $g:\C\setminus \mathcal{S} \to \C$ is analytic.
\item[(b)] There exists a constant $\ell \in \C$ such that
\begin{align}\label{lol9}
g_{+}(z) + g_{-}(z) + \ell = \phi(z;\kappa), \qquad z \in \mathcal{S}.
\end{align}
\item[(c)] $g(z) = g_{1}z^{-1} + \bigO(z^{-2})$ as $z\to\infty$, for some $g_{1}\in \C$.
\end{itemize}
Recall the definition of $\phi$ from \eqref{def:phi}.
Since $\phi$ contains logarithmic terms, it is convenient to first solve the RH problem for $g'$ and afterwards construct $g$ by integrating. The derivative of $g$ needs to satisfy the following properties.
\subsubsection*{RH problem for $g'$}
\begin{itemize}
\item[(a)] $g':\C\setminus \mathcal{S} \to \C$ is analytic.
\item[(b)] $g_{+}'(z) + g_{-}'(z) = \phi'(z;\kappa)$ for $z \in \mathcal{S}$.
\item[(c)] $g'(z) = -g_{1}z^{-2} + \bigO(z^{-3})$ as $z\to\infty$, for some $g_{1}\in \C$.
\end{itemize}
Note that $g$ depends on $\mu,\kappa$, although this is not visible in our notation.
Let $\mathcal{R}(z) =\left((z-z_{0})(z-\overline{z_{0}})\right)^{1/2}$, where the branch cut is on $\mathcal{S}$ and such that $\mathcal{R}(z)>0$ for $z\in (\Re z_{0},+\infty)$. Then
\begin{align*}
\frac{g'_{+}(z)}{\mathcal{R}_{+}(z)} - \frac{g'_{-}(z)}{\mathcal{R}_{-}(z)} = \frac{\phi'(z;\kappa)}{\mathcal{R}_{+}(z)}, \qquad z\in \mathcal{S}.
\end{align*}
We can construct $g'$ in such a way that $\frac{g'(z)}{\mathcal{R}(z)}=\bigO(1)$ as $z\to z_{0}$ and as $z\to\overline{z_{0}}$, by setting
\begin{align}
g'(z) & = \frac{\mathcal{R}(z)}{2\pi i} \int_{\overline{z_{0}}}^{z_{0}} \frac{\phi'(s;\kappa)}{\mathcal{R}_{+}(s)}\frac{ds}{s-z},\qquad  z\in \C \setminus \mathcal{S}.\nonumber 
\end{align}
We make the ansatz that $\mathcal{S}$ crosses the real axis between $0$ and $\frac{1}{a}$. With this choice, we have
\begin{align*}
\mathcal{R}\left(1/a\right)=|z_0-1/a|>0,\qquad \mathcal{R}(-a)=-|z_0+a|<0,\qquad \mathcal{R}(0)=-|z_0|<0.
\end{align*}
We can compute $g'$ explicitly in terms of $z_0$ using residue calculus and obtain
\begin{align}
g'(z)&=
 \mathcal{R}(z) \bigg( ( \kappa - \mu) \frac{\frac{1}{\mathcal{R}(0)}-\frac{1}{\mathcal{R}(z)}}{2(0-z)} + (1-\mu) \frac{\frac{1}{\mathcal{R}(\frac{1}{a})}-\frac{1}{\mathcal{R}(z)}}{2(\frac{1}{a}-z)} + \mu\frac{\frac{1}{\mathcal{R}(-a)}-\frac{1}{\mathcal{R}(z)}}{2(-a-z)} \bigg) \nonumber \\
& = \frac{\phi'(z;\kappa)}{2} - \frac{\mathcal{R}(z)}{2} \bigg( \frac{\mu-\kappa}{|z_0|z} + \frac{1-\mu}{|z_0-\frac{1}{a}|(z-\frac{1}{a})} -  \frac{\mu}{|z_0+a|(z+a)} \bigg). \label{def of gprim}
\end{align}
We must choose $z_{0}$ such that $g'(z)=\bigO(z^{-2})$ as $z\to \infty$, i.e. $z_{0}$ must be such that
\begin{align}\label{def of z0 intro}
\begin{cases}
\ds \frac{1-\mu}{|z_{0}-\frac{1}{a}|} + \frac{\mu-\kappa}{|z_{0}|} - \frac{\mu}{|z_{0}+a|} = 0, \\
\ds \kappa+1-\mu - \frac{a\mu}{|z_{0}+a|} - \frac{1-\mu}{a |z_{0}-\frac{1}{a}|} = 0.
\end{cases}
\end{align}
\paragraph{Existence and uniqueness of $z_{0}$.}
The equations in \eqref{def of z0 intro} can be rewritten as 
\begin{align}\label{def:z0gamma}
\begin{cases}
\ds |z_0+a|=\frac{ (1+a^2) \mu}{(a+\gamma) ( \kappa+1-\mu)}, \\
\ds |z_0-1/a|=\frac{ (1+a^2) (1-\mu)}{a (1-a \gamma) (\kappa+1-\mu)},
\end{cases}
\end{align}
where 
\begin{align}\label{def of gamma}
\gamma:=\frac{\mu-\kappa}{1-\mu+\kappa}\frac{1}{|z_0|}.
\end{align}
This parametrization in terms of $\gamma$ turns out to be convenient. Given $\gamma>0$, the equations in \eqref{def:z0gamma} describe the intersection of two circles with center on the real line, and thus uniquely fix the value of $z_0\in\mathbb C$ with $\Im z_0\geq 0$, provided that 
\begin{align}\label{lol28}
|z_0+a|+|z_0-\tfrac{1}{a}|\geq a+\tfrac{1}{a},\qquad \left(|z_0+a|-|z_0-\tfrac{1}{a}|\right)^2 \leq (a+\tfrac{1}{a})^2.
\end{align}
For $0<\kappa <\kappa_{2} \leq \mu < 1$, the next lemma shows that both inequalities hold strictly and simultaneously if and only if $\gamma \in (0,\frac{1}{a})$.
\begin{lemma}
Let $\mu \in (0,1)$, and suppose that $z_{0}\in \C$ satisfies the two equations in \eqref{def:z0gamma} for some $\gamma \in (0,\frac{1}{a})$. Then
\begin{align}
& |z_0+a|+|z_0-\tfrac{1}{a}|> a+\tfrac{1}{a} , & & \mbox{for all } \kappa \in (0,\kappa_{2}), \label{lol30} \\
& (a+\tfrac{1}{a})^2 > (|z_0+a|-|z_0-\tfrac{1}{a}|)^2, & & \mbox{for all } \kappa \in (0,\kappa_{2}). \label{lol31}
\end{align}
Moreover, \eqref{lol30} does not hold if $\gamma \notin (0,\frac{1}{a})$.
\end{lemma}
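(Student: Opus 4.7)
The plan is to turn \eqref{lol30} and \eqref{lol31} into polynomial inequalities in $\gamma$ and $\kappa$. Since $\kappa+1-\mu>0$ always (as $\kappa>0$ and $\mu<1$) and $(a+\gamma)(1-a\gamma)>0$ on $(0,1/a)$, multiplying \eqref{lol30} through by the positive quantity $a(\kappa+1-\mu)(a+\gamma)(1-a\gamma)$ transforms it into the equivalent statement $Q_1(\gamma,\kappa)>0$, where
\[
Q_1(\gamma,\kappa):=a(\mu-\kappa)+\gamma\bigl[a^2(1+\kappa-2\mu)-\kappa\bigr]+a(\kappa+1-\mu)\gamma^2.
\]
This is an upward parabola in $\gamma$ whose endpoint values on $[0,1/a]$ are directly positive: $Q_1(0,\kappa)=a(\mu-\kappa)>0$ (using $\kappa<\kappa_2\leq\mu$, which holds on $(0,1)$) and $Q_1(1/a,\kappa)=(1-\mu)(1+a^2)/a>0$.

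Next I would compute the $\gamma$-discriminant of $Q_1$ and simplify it, using the key identity $(a^2+\mu(1-a^2))^2-(a^2-\mu(1+a^2))^2=4a^2\mu(1-\mu)$ together with $(1-a^2)^2+4a^2=(1+a^2)^2$, to obtain
\[
\Delta_\gamma(\kappa)=(1+a^2)^2(\kappa-\kappa_1)(\kappa-\kappa_2),\qquad \kappa_{1,2}=\frac{a^2(2\mu-1)\mp 2a\sqrt{\mu(1-\mu)}}{1+a^2}.
\]
For $\kappa\in(\kappa_1,\kappa_2)$ this is strictly negative, so $Q_1$ has no real roots and $Q_1>0$ on $\R$; this already covers all of $(0,\kappa_2)$ whenever $\kappa_1\leq 0$. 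In the residual subregime $\kappa_1>0$ (which, after squaring, happens iff $4\mu(1-\mu)(1+a^2)<a^2$ and forces $\mu>1/2$) I would treat $\kappa\in(0,\kappa_1]$ via Vieta: the product of roots is $(\mu-\kappa)/(\kappa+1-\mu)>0$, and the sum is $[\kappa(1-a^2)+a^2(2\mu-1)]/[a(\kappa+1-\mu)]$, which is positive because $\mu>1/2$ and $a\leq 1$. Combined with positivity of $Q_1$ at the two endpoints $0$ and $1/a$ and of its leading coefficient, both real roots must lie in $(1/a,+\infty)$, so once again $Q_1>0$ on $(0,1/a)$. This proves \eqref{lol30}.

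For \eqref{lol31} I would avoid repeating the polynomial analysis and instead use the Heron-type identity
\[
\bigl((|z_0+a|+|z_0-\tfrac{1}{a}|)^2-(a+\tfrac{1}{a})^2\bigr)\bigl((a+\tfrac{1}{a})^2-(|z_0+a|-|z_0-\tfrac{1}{a}|)^2\bigr)=4\bigl(a+\tfrac{1}{a}\bigr)^2(\Im z_0)^2,
\]
which is verified by direct expansion (it encodes that the squared area of the triangle with vertices $-a,1/a,z_0$ equals $(a+1/a)^2(\Im z_0)^2/4$). The first factor on the left is positive by \eqref{lol30}. The right-hand side is strictly positive because $z_0\notin\R$: the only way $\Im z_0=0$ while $z_0$ satisfies \eqref{def:z0gamma} is through a tangency of the two circles, corresponding either to $Q_1=0$ (ruled out above) or to the vanishing of one of two analogous quadratics $Q_2,R_2$ (the numerators of $|z_0+a|\pm(a+1/a)-|z_0-1/a|$), whose critical sets are obtained by the same discriminant-plus-Vieta recipe and likewise lie on the boundary of the parameter region. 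Hence the second factor is strictly positive, which is \eqref{lol31}. For the ``moreover'' part, note that if $\gamma\geq 1/a$ the factor $1-a\gamma$ in the right-hand side of the second equation of \eqref{def:z0gamma} is non-positive, so no $z_0\in\C$ can realise it as a distance and the premise of \eqref{lol30} fails; if $\gamma\leq 0$, the defining relation \eqref{def of gamma} is incompatible with $\mu>\kappa$ and $|z_0|>0$.

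The main obstacle is the subcase $\kappa\in(0,\kappa_1]$ when $\kappa_1>0$: the clean negative-discriminant argument is unavailable there, so the location of the real roots of $Q_1$ must be pinned down by Vieta together with the endpoint values, pushing both roots beyond $1/a$. An analogous case split is required for $Q_2,R_2$ to complete the \eqref{lol31} step.
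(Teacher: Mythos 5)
Your treatment of \eqref{lol30} starts from the same polynomial $Q_1 = p_1$ as the paper and arrives at the same discriminant factorization $\Delta_\gamma(\kappa) = (1+a^2)^2(\kappa-\kappa_1)(\kappa-\kappa_2)$. You are right that this discriminant is \emph{not} negative on all of $(0,\kappa_2)$ --- when $\kappa_1>0$ (equivalently $4\mu(1-\mu)(1+a^2)<a^2$) and $\kappa\in(0,\kappa_1)$ one has $\Delta_\gamma>0$ and $Q_1$ acquires two real roots --- so the paper's blanket assertion that the discriminant is negative throughout $(0,\kappa_2)$ does skip a case. However, your Vieta fix for that case has a gap. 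Positive leading coefficient, positive root-sum and root-product, and $Q_1(0),Q_1(1/a)>0$ are all compatible with both roots lying \emph{inside} $(0,1/a)$: the quadratic $(\gamma-0.2)(\gamma-0.4)$ with $1/a=1$ satisfies all of your stated facts yet is negative on $(0.2,0.4)$. To conclude that both roots exceed $1/a$ you must additionally verify that the vertex of $Q_1$ lies to the right of $1/a$, equivalently that the root-sum exceeds $2/a$; unwinding this gives the inequality $2\mu-\kappa > 1 + \frac{1}{1+a^2}$, which can be shown to hold for $\kappa\leq\kappa_1$ whenever $\kappa_1>0$, but it is a non-trivial step that your proposal does not supply.

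Your Heron-identity route to \eqref{lol31} is circular as written. Given that the first factor is positive by \eqref{lol30}, the identity shows that the second factor equals a positive multiple of $(\Im z_0)^2$, so it vanishes exactly when $\Im z_0 = 0$. Asserting ``the right-hand side is strictly positive because $z_0\notin\R$'' therefore presupposes precisely what the two strict inequalities jointly express. The escape you gesture at --- verifying non-vanishing of the quadratics $Q_2,R_2$ --- amounts to the same polynomial analysis the paper carries out on its $p_2,p_3$ (your $Q_2,R_2$ up to overall sign), so the Heron identity does not actually save that work; you would still need a complete argument that $Q_2,R_2$ cannot vanish on the relevant parameter range, and that is left unwritten. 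Your ``moreover'' paragraph is fine.
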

\begin{proof}
By \eqref{def:z0gamma},
\begin{align}\label{two circles}
|z_0+a|+|z_0-\tfrac{1}{a}| - (a+\tfrac{1}{a}) = \frac{(1+a^{2})p_{1}(\gamma)}{a(a+\gamma)(1-a\gamma)(1-\mu+\kappa)},
\end{align}
where
\begin{align*}
p_{1}(\gamma) = a(1+\kappa-\mu)\gamma^{2} + \big( a^{2}(1+\kappa-2\mu)-\kappa \big) \gamma + a(\mu-\kappa).
\end{align*}
The discriminant of $p_{1}$ is $<0$ for $\kappa \in (0,\kappa_{2})$. Hence $p_{1}(\gamma)>0$ for all $\gamma \in \R$ and $\kappa\in (0,\kappa_{2})$, and therefore \eqref{lol30} holds if and only if $\gamma \in (0,\frac{1}{a})$. Similarly, using the equations in \eqref{def:z0gamma}, we obtain
\begin{align*}
(a+\tfrac{1}{a})^2 - (|z_0+a|-|z_0-\tfrac{1}{a}|)^2 = \frac{(1+a^{2})^{2}p_{2}(\gamma)p_{3}(\gamma)}{a^{2}(a+\gamma)^{2}(1-a\gamma)^{2}(1+\kappa-\mu)^{2}},
\end{align*}
where
\begin{align*}
& p_{2}(\gamma) = a(1+\kappa-\mu)\gamma^{2} + \big( a^{2}(1+\kappa)-\kappa \big)\gamma - a(\kappa + \mu), \\
& p_{3}(\gamma) = a(1+\kappa-\mu)\gamma^{2} + \big( a^{2}(1+\kappa - 2\mu) + 2\mu - 2 - \kappa \big) \gamma - a(2+\kappa - 3\mu).
\end{align*}
A direct computation shows that the discriminants of $p_{2}$ and $p_{3}$ are negative for any $\kappa \in \R$. Thus $p_{2}(\gamma)>0$ and $p_{3}(\gamma)>0$ for all $\gamma \in \R$ and $\kappa\in \R$, and \eqref{lol31} follows.
\end{proof}

To finish our proof that $z_{0}$ is uniquely determined by the condition $\Im z_{0} \geq 0$ and \eqref{def:z0gamma}, it remains to prove that there exists a unique $\gamma \in (0,\frac{1}{a})$ which is compatible with \eqref{def:z0gamma} and \eqref{def of gamma}. To show this, we apply the law of cosines to the triangles $(-a,0,z_{0})$ and $(0,\frac{1}{a},z_{0})$, and obtain
\begin{align*}
\begin{cases}
|z_{0}-\tfrac{1}{a}|^{2} = |z_{0}|^{2}+\tfrac{1}{a^{2}} -  \tfrac{2}{a} \Re z_{0}, \\
|z_{0}+a|^{2} = |z_{0}|^{2}+a^{2} +2a \Re z_{0}.
\end{cases}
\end{align*}
Using the first equality to eliminate $\Re z_{0}$ from the second, we find
\begin{align*}
(1+a^2) (1+|z_0|^2)=a^2 |z_0-1/a|^2+|z_0+a|^2.
\end{align*}
Substituting the expressions for $|z_0|,|z_0+a|,|z_0-1/a|$ from \eqref{def:z0gamma}--\eqref{def of gamma} in terms of $\gamma$, we obtain the following equation for $\gamma$,
\begin{equation}\label{eqgamma1}
 \frac{(1 + a^2) (1-\mu)^2}{(1-a \gamma)^2} + \frac{(1 + 
    a^2) \mu^2}{(a + \gamma)^2 } - \frac{(\mu- \kappa)^2}{
 \gamma^2}=(\kappa+1-\mu)^2.
\end{equation}
Since $0<\kappa<\kappa_2\leq\mu<1$, we have $\mu-\kappa\in (0,1)$. Hence, the left hand side of \eqref{eqgamma1} is $0$ at $\gamma=\pm\infty$, $-\infty$ at $\gamma=0$ and $+\infty$ at $\gamma=-a$ and $\gamma=1/a$, and therefore this equation has at least one solution $\gamma_1<-a$, one solution $-a<\gamma_2<0$, one solution $0<\gamma_3<1/a$, and one solution $\gamma_4>1/a$. 
We can also rewrite \eqref{eqgamma1} as a polynomial equation $
p(\gamma)q(\gamma)=0$, with
\begin{align}
& p(\gamma)=a \gamma^4 (\kappa+1-\mu)+ \gamma^3 \left(a^2 \kappa+a^2-\kappa+2\mu-2\right)-3 a \gamma^2+\gamma \left( a^2 (\kappa-1)+ (2\mu-\kappa)\right)+a (\mu-\kappa), \label{def:p} \\ 
&q(\gamma)=a(\kappa+1-\mu)\gamma^2+(a^2(\kappa-2\mu+1)-\kappa)\gamma+a(\mu-\kappa). \nonumber
\end{align}
The discriminant of the quadratic equation $q(\gamma)=0$ is negative for $\kappa_1<\kappa<\kappa_2$, such that $\gamma_1,\gamma_2,\gamma_3,\gamma_4$ are the (only) roots of the degree $4$ polynomial $p$. If $\kappa_{1}>0$ and $\kappa \in (0,\kappa_{1})$, then $q$ has two distinct roots, which can be computed explicitly, and both are bigger than $\frac{1}{a}$. For $\kappa=\kappa_{1}>0$, $q$ has a double root, which coincides with the root of $p$ lying on $(\frac{1}{a},+\infty)$. 

We conclude that there is a unique $\gamma \in (0,\frac{1}{a})$ which is compatible with \eqref{def:z0gamma} and \eqref{def of gamma}. 

We obtain the following result.

\begin{proposition}\label{prop:z0}
Let $\mu\in\big(0,1\big)$ and $\kappa\in (0,\kappa_2)$. There exists a unique  $z_0=z_0(\kappa)\in\mathbb C$ with $\Im z_0> 0$, smooth in $\mu\in(0,1)$ and $\kappa\in(0,\kappa_2)$, such that \eqref{def of z0 intro} holds. We have the expansions
\begin{align}
z_0 & = \tilde{z}_{0}^{(0)} + i\tilde{z}_{0}^{(1/2)}\sqrt{\kappa_{2}-\kappa} + \tilde{z}_{0}^{(1)}(\kappa_{2}-\kappa) + i\tilde{z}_{0}^{(3/2)}(\kappa_{2}-\kappa)^{3/2} \nonumber \\
& + \tilde{z}_{0}^{(2)}(\kappa_{2}-\kappa)^{2} + \mathcal O((\kappa_2-\kappa)^{5/2}), & &  \mbox{as $\kappa\to\kappa_2$,} \label{z0kappa2}
\end{align}
for some $\tilde{z}_{0}^{(0)}, \tilde{z}_{0}^{(1/2)},\tilde{z}_{0}^{(1)},\tilde{z}_{0}^{(3/2)},\tilde{z}_{0}^{(2)} \in \R$, and
\begin{align}
z_0 & = z_{0}^{(0)} + iz_{0}^{(1/2)}\sqrt{\kappa} + z_{0}^{(1)}\kappa + iz_{0}^{(3/2)}\kappa^{3/2}  + z_{0}^{(2)}\kappa^{2} + \mathcal O(\kappa^{5/2}), & & \mbox{as $\kappa\to 0$,} \label{z00}
\end{align}
for some $z_{0}^{(0)}, z_{0}^{(1/2)},z_{0}^{(1)},z_{0}^{(3/2)},z_{0}^{(2)} \in \R$. Both \eqref{z0kappa2} and \eqref{z00} hold uniformly for $\mu\in[\delta,1-\delta]$ for any fixed $\delta>0$. The coefficients of the leading terms are given by
\begin{align*}
& \tilde{z}_{0}^{(0)} = \frac{a-(1+a^{2})\sqrt{\mu(1-\mu)}}{(1+a^{2})\mu-1} \in (-a,\tfrac{1}{a}), \qquad z_{0}^{(0)} = \frac{-a-\sqrt{a^2+4\mu(1-\mu)}}{2(1-\mu)} \in (-\infty,-a), \\
& \tilde{z}_{0}^{(1/2)} = \frac{\sqrt{2}(1+a^{2})^{3/2}\big((1-\mu)\mu\big)^{1/4}}{\sqrt{a} \big( 1-(1-a^{2})\mu+2a\sqrt{(1-\mu)\mu} \big)},
\end{align*}
and moreover $z_{0}^{(1/2)}>0$. \end{proposition}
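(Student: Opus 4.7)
The plan is to leverage the algebraic analysis of the quartic $p(\gamma)$ that immediately precedes the statement. Existence and uniqueness of $z_0$ with $\Im z_0>0$ are already essentially in hand: among the four roots $\gamma_1<-a<\gamma_2<0<\gamma_3<1/a<\gamma_4$ of $p(\gamma)=0$ identified there, the preceding lemma shows that only $\gamma_3\in(0,1/a)$ is compatible with the strict inequalities in \eqref{lol28}, which are exactly the conditions for \eqref{def:z0gamma} to define a genuine $z_0$ in the open upper half-plane. Smoothness of $z_0$ in $(\mu,\kappa)$ on the open region $(0,1)\times(0,\kappa_2)$ then follows from the implicit function theorem applied to $p(\gamma;\mu,\kappa)=0$, once one notes that $\gamma_3$ remains a simple root throughout this range (the coalescences in the six-root equation $pq=0$ occur between the two roots of $q$ at $\kappa_1$ and between the complex saddles of $\phi(\cdot;\kappa_2)$ at $\kappa=\kappa_2$, never touching $\gamma_3$).

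The key observation for both Puiseux expansions is to reformulate the system \eqref{def of z0 intro} in the real unknowns
\begin{align*}
u=\Re z_0,\qquad v=(\Im z_0)^2,
\end{align*}
so that $|z_0|^2=u^2+v$, $|z_0+a|^2=(u+a)^2+v$, $|z_0-1/a|^2=(u-1/a)^2+v$. The two defining equations become $F_j(u,v;\kappa,\mu)=0$ ($j=1,2$) with $F_j$ smooth on a neighborhood of $\{v\geq 0\}$ away from the poles $u\in\{-a,0,1/a\}$. Along the physical branch, $v>0$ for $\kappa\in(0,\kappa_2)$ and $v\to 0$ at each endpoint. Provided the Jacobian $\partial_{(u,v)}F$ at the boundary solutions is invertible (which must be checked directly), the implicit function theorem extends $u(\kappa)$ and $v(\kappa)$ smoothly past both endpoints. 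Taking $z_0=u+i\sqrt{v}$ and expanding
\begin{align*}
\sqrt{v(\kappa)}=\sqrt{-v'(\kappa_2)}\,\sqrt{\kappa_2-\kappa}\,\bigl(1+O(\kappa_2-\kappa)\bigr),
\end{align*}
and similarly at $\kappa=0$, automatically produces the forms \eqref{z0kappa2} and \eqref{z00}: the real-part coefficients $\tilde z_0^{(0)},\tilde z_0^{(1)},\tilde z_0^{(2)}$ come from the Taylor expansion of $u(\kappa)$, while the imaginary-part coefficients $\tilde z_0^{(1/2)},\tilde z_0^{(3/2)}$ come from that of $\sqrt{v(\kappa)}$. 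Positivity of $\tilde z_0^{(1/2)}$ and $z_0^{(1/2)}$ is then immediate: $v>0$ on the open interval forces $v'(\kappa_2)<0$ and $v'(0)>0$ strictly.

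The leading coefficients are identified by solving $F_1=F_2=0$ at $v=0$. At $\kappa=0$, set $u<-a$ (so that $|u|=-u$, $|u+a|=-(u+a)$, $|u-1/a|=1/a-u$); a short manipulation shows that $F_1(u,0;0,\mu)=0$ reduces exactly to the saddle equation $\varphi_0'(u)=0$ from \eqref{def:phi0x0}, whose root in $(-\infty,-a)$ is $x_0$, and one checks that $F_2(u,0;0,\mu)=0$ is then automatic. Hence $z_0^{(0)}=x_0$. At $\kappa=\kappa_2$, the parallel reduction turns the system into the conditions for a double saddle of $\phi(\cdot;\kappa_2)$, whose unique root in $(0,1/a)$ is $x^*$ from \eqref{def of xstar}; matching this with the stated closed form $\tilde z_0^{(0)}=(a-(1+a^2)\sqrt{\mu(1-\mu)})/((1+a^2)\mu-1)$ is a direct algebraic check after inserting \eqref{def:kappa2}. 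The explicit value of $\tilde z_0^{(1/2)}$ finally comes from computing $v'(\kappa_2)$ by Cramer's rule from the linearization of $F_j=0$ at $(x^*,0,\kappa_2)$; simplification into the stated closed form uses the degeneracy relations $\phi'(x^*;\kappa_2)=\phi''(x^*;\kappa_2)=0$ together with the definition \eqref{def:kappa2} of $\kappa_2$.

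The main obstacle is therefore not conceptual but algebraic: verifying the non-degeneracy of $\partial_{(u,v)}F$ at both endpoints, and distilling the compact closed form for $\tilde z_0^{(1/2)}$ from Cramer's rule, requires some careful simplification. Uniformity in $\mu\in[\delta,1-\delta]$ is then automatic—all Puiseux coefficients depend continuously on $\mu$, and non-degeneracy of $\partial_{(u,v)}F$ at the endpoints is a closed condition that propagates from the interior to the compact parameter range $[\delta,1-\delta]$ by continuity.
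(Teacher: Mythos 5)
Your existence/uniqueness and smoothness arguments track the paper's own reasoning (the quartic $p(\gamma)$, the circle intersection, and the implicit function theorem applied to the simple root $\gamma_3$), and the reformulation in the variables $u=\Re z_0$, $v=(\Im z_0)^2$ is an attractive way to explain, rather than merely assert, the half-integer Puiseux structure. But the central step of the expansion part — the implicit function theorem applied directly to the system $F_1(u,v;\kappa,\mu)=F_2(u,v;\kappa,\mu)=0$ past the endpoint $\kappa=\kappa_2$ — does not go through, because the Jacobian $\partial_{(u,v)}F$ is singular there.

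To see this concretely: for $u\in(0,\tfrac1a)$ and $v=0$ one has $|z_0|=u$, $|z_0+a|=u+a$, $|z_0-\tfrac1a|=\tfrac1a-u$, and a short algebraic manipulation (of exactly the kind you sketch) gives the exact identity
\begin{equation*}
F_2(u,0;\kappa,\mu) = -u\, F_1(u,0;\kappa,\mu)\quad\text{for all } u\in(0,\tfrac1a),\ \kappa,\ \mu.
\end{equation*}
Differentiating in $u$ at a common zero $(u_*,0)$ yields $\partial_u F_2 = -u_*\,\partial_u F_1$. Meanwhile $\partial_u F_1(x^*,0;\kappa_2,\mu)=-\phi''(x^*;\kappa_2)$, which vanishes precisely because $x^*$ is a \emph{double} saddle of $\phi(\cdot;\kappa_2)$. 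Consequently the entire first column $\bigl(\partial_u F_1,\partial_u F_2\bigr)^T$ of the Jacobian is zero at $(x^*,0,\kappa_2)$, and the determinant vanishes. The implicit function theorem therefore gives you nothing at $\kappa=\kappa_2$. (The same redundancy $F_2=-uF_1$ is what you already observed at $\kappa=0$ when noting "$F_2=0$ is automatic"; the cleaner path is to see that on $v=0$ the two equations always collapse to one, so the two-dimensional IFT framework is the wrong tool at the endpoints.) The smooth extension of $(u,v)$ past $\kappa_2$ is nonetheless true — but the correct route is the one you already invoked for interior smoothness: $\gamma_3(\kappa)$ is a simple root of $p$ (hence smooth, by IFT in one variable), and then $u$ and $v$ are obtained from $(\gamma,\kappa)$ by the circle-intersection formulas, which are genuinely smooth functions (indeed $u$ solves a linear equation in $R_1^2-R_2^2$, and $v=R_1^2-(u+a)^2$). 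Composing gives $u(\kappa),v(\kappa)$ smooth past both endpoints, after which $z_0=u+i\sqrt{v}$ and your Puiseux argument proceed as you describe, with the leading-coefficient identifications carried out exactly as you outline. In short: correct target, correct ingredients, but the direct two-variable IFT step fails at $\kappa_2$ and must be replaced by the $\gamma$-parametrization you already have in hand.
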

\begin{remark}
All constants appearing in \eqref{z0kappa2} and \eqref{z00} can be computed explicitly. However, since these expressions are rather lengthy, we omit them.
\end{remark}
\begin{remark}
If $\mu-\kappa_{2}\geq\delta$ for some fixed $\delta>0$, then $\tilde{z}_{0}^{(0)}$ remains bounded away from $0$. Thus, by \eqref{z0kappa2},
\begin{align}\label{lol32}
z_0 = \tilde{z}_{0}^{(0)}(1+\bigO(\sqrt{\kappa_{2}-\kappa})), \qquad \mbox{as } \kappa \to \kappa_{2},
\end{align}
uniformly for $\mu\in[\delta,1-\delta]$ with $\mu-\kappa_{2}\geq\delta$.
\end{remark}
\begin{proof}
The existence and uniqueness of $z_0$ follows from the discussion above. The smoothness directly follows from \eqref{def of z0 intro}. The expansions \eqref{z0kappa2} and \eqref{z00} require a technical and involved analysis, which we omit; in particular, we first need to show that $z_{0}(\kappa,\mu)-\tilde{z}_{0}^{(0)}$ and $z_0(\kappa,\mu)-z_{0}^{(0)}$ are small as $\kappa \to \kappa_{2}$ and as $\kappa \to 0$, respectively; the coefficients in the expansions can then be obtained recursively using a long but straightforward computation.
\end{proof}

\begin{remark}
The expansions \eqref{z0kappa2} and \eqref{lol32} imply that 
\begin{align*}
|z_{0}| = \tilde{z}_{0}^{(0)} + \bigg( \frac{(\tilde{z}_{0}^{(1/2)})^{2}}{2\tilde{z}_{0}^{(0)}}+\tilde{z}_{0}^{(1)} \bigg)(\kappa_{2}-\kappa) + \bigO((\kappa_{2}-\kappa)^{2}), \qquad \mbox{as } \kappa \to \kappa_{2}
\end{align*}
uniformly for $\mu \in [\delta,1-\delta]$ and $\mu-\kappa_{2}\geq\delta$ for any fixed $\delta >0$. Using the fact that $\gamma(\kappa_2)=z_0(\kappa_2)=\tilde{z}_{0}^{(0)}$, we then obtain 
\begin{equation}\label{eq:asgamma}
\gamma=\frac{\mu-\kappa}{(1-\mu+\kappa)|z_0|}=
{\tilde{z}_{0}^{(0)}}+C_2(\kappa_2-\kappa) {+ \bigO((\kappa_{2}-\kappa)^{2})},\qquad \mbox{as } \kappa\to\kappa_2,
\end{equation}
for some $C_2\neq 0$ uniformly for $\mu \in [\delta,1-\delta]$ and $\mu-\kappa_{2}\geq\delta$. We will use this later on. 
\end{remark}

\paragraph{Construction of the $g$-function}
Integrating \eqref{def of gprim} from $\infty$ to $z \in\mathbb C\setminus\left( \mathcal{S}\cup [\overline{z_{0}},\overline{z_{0}}-i\infty)\right)$ yields
\begin{align*}
g(z) = \int_{\infty}^{z}g'(s)ds, \qquad z \in \mathbb C\setminus\left( \mathcal{S}\cup [\overline{z_{0}},\overline{z_{0}}-i\infty)\right).
\end{align*}
A residue computation, which uses \eqref{def of z0 intro}, then shows that $g$ can be analytically continued across $(\overline{z_{0}},\overline{z_{0}}-i\infty)$, as desired.

We define,
with the constant $\ell\in\mathbb C$ as in the RH conditions for $g$,
\begin{equation}\label{def:xi}
\xi(z)=g(z)-\frac{1}{2}\phi(z;\kappa)+\frac{\ell}{2},\qquad z\in\mathbb C\setminus\big(\mathcal S\cup (-\infty,0]\cup [\tfrac{1}{a},+\infty)\big).
\end{equation}
By \eqref{def of gprim}, we have
\begin{equation}\label{lol29}
\xi'(z)=-\frac{\mathcal{R}(z)}{2} \bigg(\frac{\mu-\kappa}{|z_0|z} +  \frac{1-\mu}{|z_0-\frac{1}{a}|(z-\frac{1}{a})} -  \frac{\mu}{|z_0+a|(z+a)} \bigg)
\end{equation}
We can rewrite this as 
\begin{equation}\label{def:xiprime}
\xi'(z)=-Q(z)\mathcal R(z),\qquad Q(z)=\frac{\kappa+1-\mu}{2}\frac{z-\gamma}{z(z+a)(z-1/a)},
\end{equation}
where $\gamma$ is given by \eqref{def of gamma}.
As $z\to -a,0,1/a$, we have that $g'(z)$ is bounded, such that $\xi'(z)\sim -\frac{1}{2}\phi'(z)$. 
Moreover, by \eqref{lol29}, $\xi'(x)/\mathcal{R}(x)$ increases from $-\infty$ to $+\infty$ as $x \in (0,\frac{1}{a})$ increases, so we must have $\gamma\in(0,1/a)$, which is consistent with the discussion surrounding \eqref{two circles}.

By \eqref{def:xiprime}, we have
\begin{equation}
\xi'(z)^2=
\frac{\left(\kappa+1-\mu\right)^2}{4}\frac{(z-\gamma)^2(z-z_0)(z-\overline{z_0})}{z^2(z+a)^2(z-1/a)^2}.
\end{equation}
In a similar way as in Section \ref{sec:small frozen region}, we now analyze the vertical trajectories of the quadratic differential \[\frac{(z-\gamma)^2(z-z_0)(z-\overline{z_0})}{z^2(z+a)^2(z-1/a)^2}dz^2,\]
in terms of the parameters $\gamma\in(0,1/a)$ and $z_0$ with $\Im z_0>0$, i.e.\ the curves on which 
\begin{align*}
\Re\left(\frac{(z-\gamma)\mathcal R(z)}{z(z+a)(z-1/a)}dz\right)=0.
\end{align*}
Instead of a quadratic differential with two double zeros, we now have a quadratic differential with one double zero $\gamma$ and two simple zeros $z_0,\overline{z_0}$.
Note that the trajectories are independent of the choice of the branch cut $\mathcal S$ for $\mathcal R$. Since $\xi_{+}+\xi_{-}=0$ on $\mathcal{S}$, we have $\xi(z_{0})=\xi(\overline{z_{0}})=0$, and thus we can write
\begin{align}\label{lol8}
\xi(z)=\int_{z_0}^z\xi'(z)dz, \qquad z \in \C \setminus \big( \mathcal{S} \cup (-\infty,0] \cup [\tfrac{1}{a},+\infty) \big),
\end{align}
where the path of integration does not cross $\mathcal{S} \cup (-\infty,0] \cup [\frac{1}{a},+\infty)$.
Note that for any $h\in \R$, the level set
\begin{align*}
\mathcal T_{h}:=\{z\in\mathbb C:\Re \xi(z)=h\}
\end{align*}
depends on the choice of the branch cut. (However, for $h=0$, the connected component of $\mathcal{T}_{0}$ containing $z_{0}$ is independent of the branch cut, since $\xi_+=-\xi_-$ on $\mathcal S$.) It is therefore important to specify $\mathcal{S}$. We will define it in terms of a horizontal trajectory. For this, first recall that the \textit{horizontal} trajectories of $\xi'(z)^{2}dz^{2}$ are the curves on which 
\begin{align*}
\Im\left(\frac{(z-\gamma)\mathcal R(z)}{z(z+a)(z-1/a)}dz\right)=0.
\end{align*}
Since $\xi'(x)\in \R$ for $x\in \R\setminus \{-a,0,\frac{1}{a}\}$, $\Im \xi$ is piecewise constant on $\R$. In particular, the two horizontal trajectories of $\xi'(z)^{2}dz^{2}$ emanating from any $x\in \R\setminus \{-a,0,\gamma,\frac{1}{a}\}$ must follow the real line. Since $\gamma$ is a simple zero of $\xi'$, there are four horizontal trajectories emanating from $\gamma$, two of which are subsets of $\R$, and the other two make angles $\pm\frac{\pi}{2}$ with $\R$. Let $\Gamma_{h,+}$ and $\Gamma_{h,-}$ be the trajectories going into the upper and lower half-plane, respectively. By the above discussion, $\Gamma_{h,+}$ cannot intersect $\R\setminus \{-a,0,\frac{1}{a}\}$. Moreover, for any $0 \leq \alpha_{1} \leq \alpha_{2} \leq \pi$, we easily check that as $\epsilon \to 0_{+}$,
\begin{align}
& \xi(-a+e^{\alpha_{1}i}\epsilon)-\xi(-a+e^{\alpha_{2}i}\epsilon) = \frac{\mu (\alpha_{2}-\alpha_{1})i}{4}(1+o(1)), \nonumber \\
& \xi(e^{\alpha_{1}i}\epsilon)- \xi(e^{\alpha_{2}i}\epsilon) = \frac{(\kappa-\mu) (\alpha_{2}-\alpha_{1})i}{4}(1+o(1)), \nonumber \\ 
& \xi(\tfrac{1}{a}+e^{\alpha_{1}i}\epsilon)- \xi(\tfrac{1}{a}+e^{\alpha_{2}i}\epsilon) = \frac{(1-\mu) (\alpha_{2}-\alpha_{1})i}{4}(1+o(1)). \label{lol7}
\end{align}
Equations \eqref{lol7} with $\alpha_{1}=0$ and $\alpha_{2}=\pi$ show that for $0<\kappa<\kappa_{2} \leq \mu < 1$, $\im \xi_{+}$ assumes four distinct values on the four connected components of $\R\setminus \{-a,0,\frac{1}{a}\}$. Equations \eqref{lol7} also imply that $\Gamma_{h,+}$ cannot intersect $\{-a,0,\frac{1}{a}\}$. Hence $\Gamma_{h,+}$ must connect with $z_{0}$. (By \eqref{lol8}, this also shows that $\im \xi=0$ on $(0,\frac{1}{a})$.) Since $\xi(z) = \overline{\xi(\overline{z})}$ holds for all $z\in \C$, we have $\Gamma_{h,-} = \overline{\Gamma_{h,+}}$. We define $\mathcal{S}:=\Gamma_{h,+}\cup \Gamma_{h,-}$ with the orientation going from $\overline{z_{0}}$ to $z_{0}$, see also the orange curves in Figure \ref{fig:level curve xi}. The next result establishes the structure of $\mathcal{T}_{0}$.

\begin{figure}
\begin{center}
\begin{tikzpicture}[master]
\node at (0,0) {\includegraphics[width=7cm]{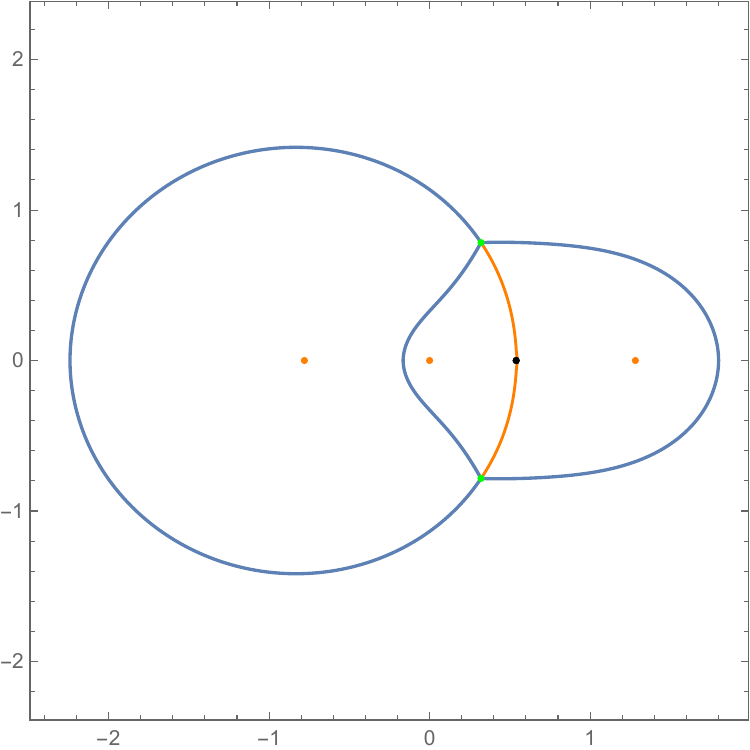}};
\node at (-1.5,0.63) {\small $+$};
\node at (0.85,0.33) {\small $-$};
\node at (1.8,0.7) {\small $+$};
\node at (1.8,1.65) {\small $-$};
\node at (1.4,-0.3) {\small $\mathcal{S}$};
\node at (-1.3,2.25) {\small $\Gamma_{1}$};
\node at (0.37,-0.4) {\small $\Gamma_{2}$};
\node at (1.7,-1.2) {\small $\Gamma_{3}$};
\end{tikzpicture} \begin{tikzpicture}[slave]
\node at (0,0) {\includegraphics[width=7cm]{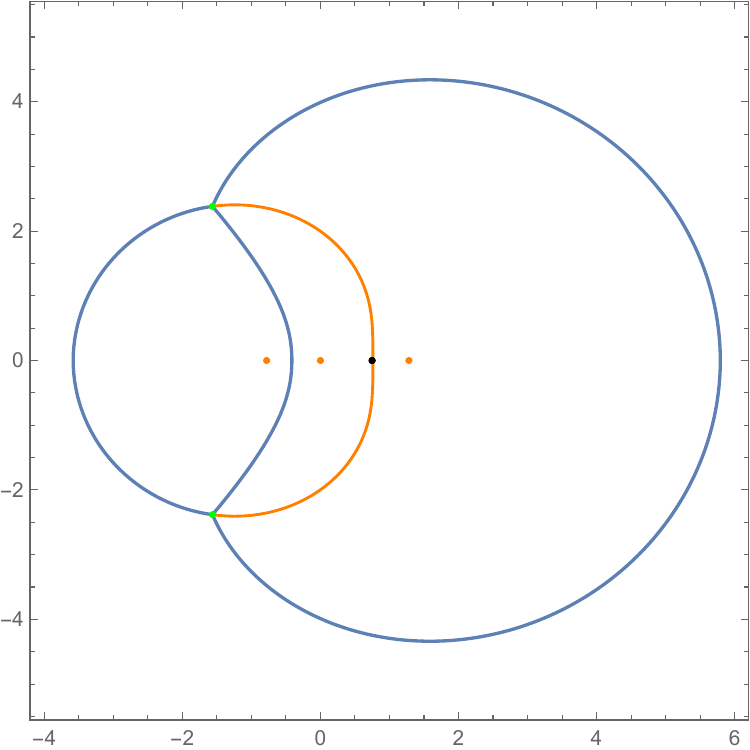}};
\node at (-2,0.63) {\small $+$};
\node at (-0.55,0.63) {\small $-$};
\node at (1,0.7) {\small $+$};
\node at (-2.2,2.2) {\small $-$};
\node at (0.05,-0.5) {\small $\mathcal{S}$};
\node at (-2.4,-1.1) {\small $\Gamma_{1}$};
\node at (-1.3,-0.6) {\small $\Gamma_{2}$};
\node at (0.7,-2.7) {\small $\Gamma_{3}$};
\end{tikzpicture}
\end{center}
\caption{\label{fig:level curve xi}
In each image, the blue curves are $\Gamma_{1},\Gamma_{2},\Gamma_{3}$, and the orange curve is $\mathcal{S}$. Together, these curves delimit four regions in the complex plane, with the sign of $\Re \xi$ in each region indicated by $\pm$. The orange dots are $-a$, $0$ and $\frac{1}{a}$; the black dot is $\gamma$; and the green dots are $z_{0}$ and $\overline{z_{0}}$. The figures are drawn for $a=0.78$, $\mu=0.76$, and with $\kappa=0.44$ (left) and $\kappa=0.08$ (right).}
\end{figure}
\begin{proposition}\label{prop:crittraj2}
Let $\mu\in\big(0,1\big)$ and $\kappa\in (0,\kappa_2)$. Then the level set $\mathcal T_0$ satisfies
\begin{align*}
\mathcal T_0=\Gamma_1\sqcup\Gamma_2\sqcup\Gamma_3\sqcup\{z_0,\overline{z_0}\},
\end{align*}
where the contours $\Gamma_1,\Gamma_2,\Gamma_3$ are disjoint, have no self-intersections, and
\begin{itemize}
\item $\Gamma_1$ connects $z_0$ with $\overline z_0$ and crosses the real line at a point smaller than $-a$;
\item $\Gamma_2$ connects $z_0$ with $\overline z_0$ and crosses the real line at a point between $-a$ and $0$;
\item $\Gamma_3$ connects $z_0$ with $\overline z_0$ and crosses the real line at a point bigger than $1/a$.
\end{itemize}
Moreover, 
\begin{itemize}
\item $\Re \xi >0$ in the region delimited by $\Gamma_1$ and $\Gamma_2$, and in the region delimited by $\mathcal S$ and $\Gamma_3$;
\item $\Re \xi <0$ in the region delimited by $\Gamma_2$ and $\mathcal S$, and in the outer region.
\end{itemize}
\end{proposition}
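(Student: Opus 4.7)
The plan is to mirror the topological argument of Proposition~\ref{prop:crittraj1}, applied now to the quadratic differential
\begin{equation*}
\xi'(z)^{2}\,dz^{2}=\tfrac{(\kappa+1-\mu)^{2}}{4}\,\frac{(z-\gamma)^{2}(z-z_{0})(z-\overline{z_{0}})}{z^{2}(z+a)^{2}(z-\tfrac{1}{a})^{2}}\,dz^{2},
\end{equation*}
whose critical points are double poles at $-a,0,\tfrac{1}{a},\infty$, a double zero at $\gamma$, and simple zeros at $z_{0},\overline{z_{0}}$. The three main inputs are: the explicit behavior of $\Re\xi$ on $\R$ (via the logarithmic singularities of $\xi$ at the poles and the sign of $\xi'$), the conjugation symmetry $\xi(z)=\overline{\xi(\overline{z})}$, and the max/min principle for the harmonic function $\Re\xi$.

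First, I would locate the real crossings of $\mathcal T_{0}$. From $\xi'(z)=-Q(z)\mathcal R(z)$ in \eqref{def:xiprime} together with the given signs $\mathcal R(-a),\mathcal R(0)<0$ and $\mathcal R(\tfrac{1}{a})>0$, I observe that $(x-\gamma)\mathcal R(x)>0$ on $\R\setminus\{\gamma\}$, so $\xi'(x)$ only flips sign at the poles $-a,0,\tfrac{1}{a}$. Combined with the logarithmic blow-ups $\Re\xi\to+\infty$ at $-a,\tfrac{1}{a}$ and $\Re\xi\to-\infty$ at $0,\infty$, this yields exactly one zero $x_{1}\in(-\infty,-a)$, one $x_{2}\in(-a,0)$, and one $x_{3}\in(\tfrac{1}{a},+\infty)$ of $\Re\xi$ on $\R$. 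To exclude real crossings on $(0,\tfrac{1}{a})$, I would use that $\mathcal S$ is a horizontal trajectory with $\xi(z_{0})=0$: hence $\Im\xi\equiv 0$ on $\mathcal S$, and $\xi$ is strictly monotone along $\mathcal S$ from $z_{0}$ to $\gamma$ since $\xi'$ has no zero strictly between. This gives $\xi(\gamma)\neq 0$ and, together with the branch jump $\xi_{+}+\xi_{-}=0$ across $\mathcal S$ at $\gamma$, forces the sign pattern that precludes any zero of $\Re\xi$ on $(0,\tfrac{1}{a})\setminus\{\gamma\}$.

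At each of $z_{0}$ and $\overline{z_{0}}$, three critical vertical trajectories of $\mathcal T_{0}$ emanate at angles $\tfrac{2\pi}{3}$ apart (from $\xi(z)\sim c(z-z_{0})^{3/2}$). Since $\Re\xi=\pm\infty$ at the poles and $\Re\xi(\gamma)\neq 0$, none of these arcs can terminate at a pole or at $\gamma$; a closed loop returning to $z_{0}$ would enclose a pole by the max/min principle, and a case analysis (analogous to that in Proposition~\ref{prop:crittraj1}) using the sign data on $\R$ and the conjugation symmetry excludes every configuration except the one where each emanating arc connects $z_{0}$ to $\overline{z_{0}}$. The resulting three symmetric simple arcs $\Gamma_{1},\Gamma_{2},\Gamma_{3}$ each cross $\R$ exactly once, and by the uniqueness of $x_{1},x_{2},x_{3}$ must pass through them respectively. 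Any additional component of $\mathcal T_{0}$ would be a closed loop enclosing a pole, but the four regions delimited by $\Gamma_{1},\Gamma_{2},\mathcal S,\Gamma_{3}$ already separate all four poles, so no further components exist. The signs in the four regions then follow from the blow-up at the enclosed pole together with the sign flip $\xi_{+}=-\xi_{-}$ across $\mathcal S$.

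The main obstacle will be the global bookkeeping in the third paragraph—enumerating the possible pairings of the six critical trajectory arcs emanating from $\{z_{0},\overline{z_{0}}\}$ and excluding all but the target configuration—and in particular verifying that the sign of $\xi(\gamma)$ on the relevant side of $\mathcal S$ is compatible with the claimed sign pattern, which in turn is what makes the jump-flip across $\mathcal S$ consistent with the absence of real zeros of $\Re\xi$ on $(0,\tfrac{1}{a})\setminus\{\gamma\}$.
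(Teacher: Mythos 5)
Your proposal is correct and takes essentially the same route as the paper's proof: three vertical trajectories emanating from $z_0$ must cross $\R$ because they cannot close up in the upper half-plane (max/min principle) nor connect with the poles or infinity, the conjugation symmetry forces $\Gamma_j=\overline{\Gamma_j}$, sign analysis of $\xi'$ on $\R$ pins down the unique real zeros $x_1,x_2,x_3$, and monotonicity of $\Re\xi_\pm$ along $\mathcal S$ together with the jump $\xi_++\xi_-=0$ yields $\Re\xi(\gamma-0_+)<0<\Re\xi(\gamma+0_+)$, excluding crossings on $(0,\tfrac{1}{a})$. The paper phrases the exclusion step as a count of possible crossings on $(0,\tfrac1a)$ (zero, one, or two) and rules out one by max/min and two by the monotonicity argument, but the underlying ingredients are identical to yours.
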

\begin{proof}
Since $\xi'(z)$ vanishes as a square root at $z_{0}$, there are three vertical trajectories emanating at $z_0$, which we denote by $\Gamma_1,\Gamma_2,\Gamma_3$. By the max/min principle for harmonic functions, the curves $\Gamma_1,\Gamma_2,\Gamma_3$ cannot form a closed loop in the upper-half plane. They also cannot connect with infinity, since $\Re \xi(\infty)=-\infty$. So they must intersect $\R\setminus \{-a,0,\frac{1}{a}\}$ at three distinct points. Since $\xi(z) = \overline{\xi(\overline{z})}$ holds for all $z\in \C$, the branches must obey the symmetries $\Gamma_{j} = \overline{\Gamma_{j}}$, $j=1,2,3$, and therefore they must connect $z_0$ with $\overline{z_{0}}$.

By inspection of the sign of ${\Re}\xi'(x)$ for $x$ on the real line, we also see that the level set $\mathcal T_{0}$ must contain exactly one point $x_1<-a$, exactly one point $-a<x_2<0$ and exactly one point $x_3>1/a$. {In addition, since $\Re\xi'(x)>0$ for all $x\in (0,\frac{1}{a})\setminus\{\gamma\}$ and $\Re \xi$ is possibly discontinuous at $x=\gamma$, there must be $0$, $1$ or $2$ points of the level set between $0$ and $1/a$ (if there are two points, one must be on $(0,\gamma)$ and the other one must be on $(\gamma,\frac{1}{a})$). In fact there cannot be $1$ point, otherwise we again find a contradiction using the max/min principle for harmonic functions. There also cannot be $2$ points. To see this, note that $\im \xi$ is constant along $\mathcal{S}$. Since $z_{0}$ is the only critical point of $\xi'(z)^{2}dz^{2}$ in the upper half-plane $\C^{+}$, $\Re \xi_{\pm}$ must be monotone on $\mathcal{S}\cap \C^{+}$. This, together with a local analysis of $\xi'$ around $z_{0}$, implies that $\Re \xi_{+}(z)$ (resp. $\Re \xi_{-}$) is decreasing (resp. increasing) as $z$ follows $\mathcal{S}$ from $z_{0}$ to $\gamma$. In particular, 
\begin{align*}
\Re \xi(\gamma-0_{+})<0, \qquad \Re \xi(\gamma+0_{+})>0,
\end{align*}
and therefore $\Gamma_{1},\Gamma_{2},\Gamma_{3}$ cannot intersect $(0,\frac{1}{a})$. By construction, the curves $\Gamma_{1},\Gamma_{2},\Gamma_{3}$ and $\mathcal{S}$ divide $\C$ into four regions where the sign of $\Re \xi$ remains constant. These signs can be determined from
\begin{align*}
\Re \xi (-a)=+\infty, \qquad \Re \xi (0)=-\infty, \qquad \Re \xi(\tfrac{1}{a})=+\infty, \qquad \Re \xi(\infty)=-\infty.
\end{align*}
}
\end{proof}

\begin{figure}
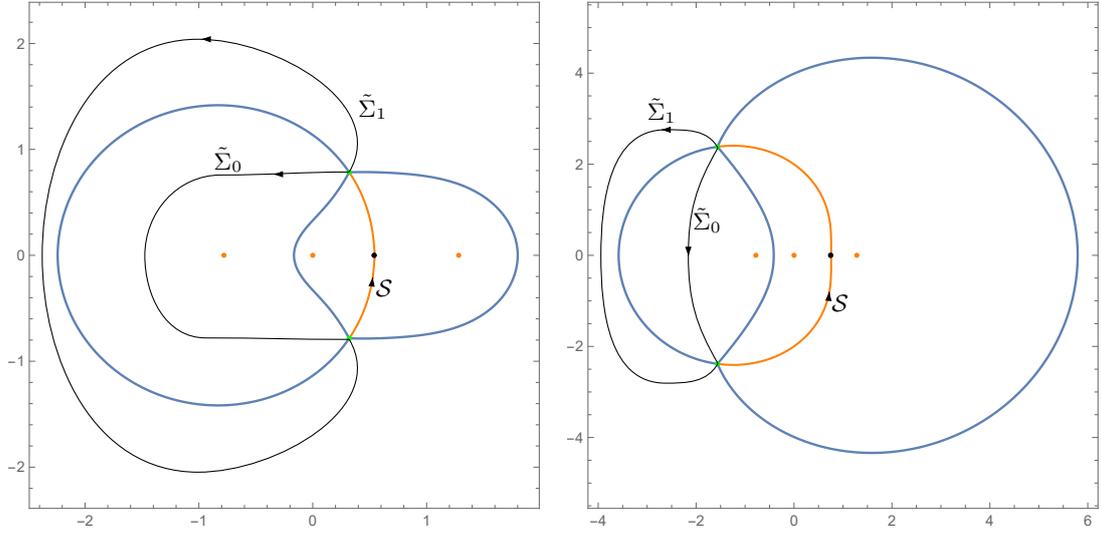

\begin{center}
\begin{tikzpicture}[master]
\node at (0,0) {\includegraphics[width=7cm]{Level_curves_liquid_zoom_out}};

\draw[black,arrows={-Triangle[length=0.12cm,width=0.08cm]}]
(-0.95,3) --  ++(180:0.001);
\node at (1.3,2.1) {\small $\tilde{\Sigma}_{1}$};
\draw ($(0,0.13)+(0.99,1.11)$) to [out=60, in=0]
(-1,3) to [out=180, in=90] (-3.05,0.13) to [out=-90, in=180] (-1,-2.74) to [out=0, in=-60] ($(0,0.13)+(0.99,-1.11)$);

\draw[black,arrows={-Triangle[length=0.12cm,width=0.08cm]}]
(-0,1.21) --  ++(180:0.001);
\node at (-0.6,1.4) {\small $\tilde{\Sigma}_{0}$};
\draw ($(0,0.13)+(0.99,1.11)$) to [out=180, in=0]
(-0.7,1.2) to [out=180, in=90] (-1.7,0.13) to [out=-90, in=180] (-0.9,-0.96) to [out=0, in=-180] ($(0,0.13)+(0.99,-1.11)$);

\node at (1.45,-0.3) {\small $\mathcal{S}$};
\draw[black,arrows={-Triangle[length=0.12cm,width=0.08cm]}]
(1.3,-0.16) --  ++(82:0.001);
\end{tikzpicture} \begin{tikzpicture}[slave]
\node at (0,0) {\includegraphics[width=7cm]{Level_curves_liquid_kappa_small_zoom_out}};

\draw[black,arrows={-Triangle[length=0.12cm,width=0.08cm]}]
(-2.25,1.8) --  ++(180:0.001);
\node at (-2.25,2.05) {\small $\tilde{\Sigma}_{1}$};
\draw ($(0,0.13)+(-1.51,1.43)$) to [out=120, in=0]
(-2.2,1.8) to [out=180, in=90] (-3.05,0.13) to [out=-90, in=180] (-2.2,-1.56) to [out=0, in=-120] ($(0,0.13)+(-1.51,-1.43)$);

\draw[black,arrows={-Triangle[length=0.12cm,width=0.08cm]}]
(-1.9,0.13) --  ++(-90:0.001);
\node at (-1.65,0.6) {\small $\tilde{\Sigma}_{0}$};
\draw ($(0,0.13)+(-1.51,1.43)$) to [out=240, in=90]
(-1.9,0.13) to [out=-90, in=-240] ($(0,0.13)+(-1.51,-1.43)$);

\node at (0.1,-0.5) {\small $\mathcal{S}$};
\draw[black,arrows={-Triangle[length=0.12cm,width=0.08cm]}]
(-0.045,-0.35) --  ++(84:0.001);
\end{tikzpicture}
\end{center}
\caption{\label{fig:contour Sigma T}
In each image, the black curves are $\tilde \Sigma_0,\tilde\Sigma_1$, the blue curves are $\mathcal{T}_{0}$, and the orange curve is $\mathcal{S}$. The orange dots are $-a$, $0$ and $\frac{1}{a}$; the black dot is $\gamma$; and the green dots are $z_{0}$ and $\overline{z_{0}}$. The figures are drawn for $a=0.78$, $\mu=0.76$, and with $\kappa=0.44$ (left) and $\kappa=0.08$ (right).}
\end{figure}

The following is a direct corollary of Proposition \ref{prop:crittraj2}.
\begin{corollary}\label{cor:contours2}
Let $\mu\in\big(0,1\big)$ and $\kappa\in (0,\kappa_2)$.
For any $\delta>0$, there exist $c,\eta>0$ and a contour $\Sigma_T=\tilde \Sigma_0\cup\tilde\Sigma_1\cup\mathcal S\cup\{z_0,\overline{z_0}\}$ as shown in Figure \ref{fig:contour Sigma T}, such that the following holds for $z\in\Sigma_T$:
\begin{align*}
&\Re \xi(z;\kappa)<-c , &z\in\tilde\Sigma_1,\ |z-z_0|\geq \eta,\ |z-\overline{z_0}|\geq \eta,\\
&\Re \xi(z;\kappa)>c , &z\in\tilde\Sigma_0,\ |z-z_0|\geq \eta,\ |z-\overline{z_0}|\geq \eta,
\\
&\Re \xi_+(z;\kappa)<-c, &z\in\mathcal S,\ |z-z_0|\geq \eta,\ |z-\overline{z_0}|\geq\eta,
\end{align*}
uniformly for $\kappa \in [\delta,\kappa_2-\delta]$ and $\mu\in[\delta,1-\delta]$.
\end{corollary}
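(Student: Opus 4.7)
The plan is to deduce the corollary directly from Proposition \ref{prop:crittraj2}: one identifies the regions of $\mathbb C$ with the correct sign of $\Re\xi$, chooses $\tilde\Sigma_0,\tilde\Sigma_1$ inside those regions, and then uses compactness to turn strict pointwise inequalities into uniform bounds.

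First I would fix $(\kappa,\mu)$ in the parameter range and describe the contours concretely. By Proposition \ref{prop:crittraj2} and Figure \ref{fig:contour Sigma T}, the complement $\mathbb C\setminus(\Gamma_1\cup\Gamma_2\cup\Gamma_3\cup\mathcal S)$ has four connected components on which $\Re\xi$ has constant sign. I take $\tilde\Sigma_1$ to be a simple closed curve through $z_0$ and $\overline{z_0}$ lying (except at its two endpoints) in the unbounded component, where $\Re\xi<0$, arranged so as to enclose $-a,0,\gamma$ but not $1/a$; and I take $\tilde\Sigma_0$ to be a simple closed curve through $z_0$ and $\overline{z_0}$ lying in the component bounded by $\Gamma_1$ and $\Gamma_2$, where $\Re\xi>0$, enclosing $0$ but not $-a$. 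Such curves exist, since the two regions in question are open, connected, and contain the required points in the prescribed topological configuration.

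Once the contours are fixed, for any $\eta>0$ the set $(\tilde\Sigma_0\cup\tilde\Sigma_1)\setminus(\overline{D_\eta(z_0)}\cup\overline{D_\eta(\overline{z_0})})$ is compact, and on it $\Re\xi$ is continuous and of the correct definite sign, so there exists $c>0$ with $\Re\xi\gtrless\pm c$ as required. For $\mathcal S$, I invoke the analysis carried out in the proof of Proposition \ref{prop:crittraj2}: along $\mathcal S\cap\mathbb C^+$, traversed from $z_0$ to $\gamma$, $\Re\xi_+$ is strictly monotone decreasing with $\Re\xi_+(z_0)=0$, and by conjugation symmetry the same holds on $\mathcal S\cap\mathbb C^-$; hence $\Re\xi_+<0$ strictly on $\mathcal S\setminus\{z_0,\overline{z_0}\}$, and the same compactness argument applies.

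The main obstacle is uniformity in $(\kappa,\mu)$. My approach is to observe that, by Proposition \ref{prop:z0}, $z_0$ depends smoothly on $(\kappa,\mu)$; consequently $\gamma$, $\mathcal R$, $\xi'$ and $\xi$ all vary smoothly, and the critical trajectories $\Gamma_1,\Gamma_2,\Gamma_3,\mathcal S$ deform continuously. Therefore, at any base point $(\kappa_*,\mu_*)$, a fixed admissible choice of contours $\tilde\Sigma_0,\tilde\Sigma_1$ remains admissible, with the same geometric description and the same sign of $\Re\xi$ on each piece, for all $(\kappa,\mu)$ in a sufficiently small neighborhood of $(\kappa_*,\mu_*)$; the constants $c,\eta$ can then be chosen to work uniformly on that neighborhood. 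Since the parameter set $\{(\kappa,\mu):\mu\in[\delta,1-\delta],\ \kappa\in[\delta,\kappa_2(\mu)-\delta]\}$ is compact (using continuity of $\mu\mapsto\kappa_2(\mu)$), a finite subcover produces a single global pair $(c,\eta)>0$ that works for all parameters in the allowed range, finishing the proof.
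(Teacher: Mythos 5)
The paper states this corollary without proof (it calls it ``a direct corollary'' of Proposition~\ref{prop:crittraj2}), so there is no proof to compare against directly; your job was to fill in the implicit argument, and your sketch does so essentially correctly. The key ingredients are all there: identifying the sign regions from Proposition~\ref{prop:crittraj2}, placing $\tilde\Sigma_0$ in the component between $\Gamma_1$ and $\Gamma_2$ (where $\Re\xi>0$) and $\tilde\Sigma_1$ in the unbounded component (where $\Re\xi<0$), using the monotonicity of $\Re\xi_+$ along $\mathcal S$ established in the proof of Proposition~\ref{prop:crittraj2} together with $\xi(z_0)=0$ to get strict negativity, and then invoking compactness of the parameter set (which is valid here because $\kappa_2(\mu)$ is continuous and we stay $\delta$-away from the boundary) to upgrade to uniform bounds.

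One phrasing in your uniformity step does not quite make sense as written and deserves a patch: you say ``a fixed admissible choice of contours $\tilde\Sigma_0,\tilde\Sigma_1$ remains admissible \ldots for all $(\kappa,\mu)$ in a sufficiently small neighborhood.'' But $\tilde\Sigma_0,\tilde\Sigma_1$ are arcs whose endpoints are $z_0(\kappa,\mu)$ and $\overline{z_0(\kappa,\mu)}$, and these endpoints move with the parameters; a literally fixed arc cannot be admissible for two distinct parameter values. What you really want is a \emph{continuous family} of contours $\tilde\Sigma_j(\kappa,\mu)$, obtainable for instance by composing a reference contour with a continuously varying diffeomorphism that tracks $z_0(\kappa,\mu)$ (smoothness of $z_0$ from Proposition~\ref{prop:z0} and the resulting continuous deformation of the level set $\mathcal T_0$ make this possible while preserving the sign of $\Re\xi$ on each piece). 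Once the contours vary continuously with the parameters, the map $(\kappa,\mu)\mapsto\min\{|\Re\xi(z;\kappa)|:z\in\tilde\Sigma_j(\kappa,\mu),\ \mathrm{dist}(z,\{z_0,\overline{z_0}\})\geq\eta\}$ is continuous and strictly positive on the compact parameter set, and your conclusion follows. A second, smaller imprecision: you call $\tilde\Sigma_0,\tilde\Sigma_1$ ``simple closed curves through $z_0$ and $\overline{z_0}$,'' but they are arcs from $\overline{z_0}$ to $z_0$; the closed curves are $\tilde\Sigma_j\cup\mathcal S$. These are wording issues rather than gaps; the substance of the argument is sound.
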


We can compute $g$ and $\ell$ explicitly in terms of $z_0$ as follows.
Integrating \eqref{def of gprim} from $\infty$ to $z\in \C \setminus \mathcal{S}$, we find
\begin{align}
& g(z) = \int_{\infty}^{z}g'(s)ds \nonumber\\
& = \frac{{\kappa-\mu}}{{2}|z_{0}|} \bigg\{  - (\Re z_{0}) \log \bigg( \frac{z + \mathcal{R}(z) - \Re z_{0}}{i \, \im z_{0}} \bigg) + |z_{0}| \log \bigg( \frac{(\Re z_{0})z - |z_{0}|^{2} + |z_{0}| \mathcal{R}(z)}{i \, \im z_{0}} \bigg) \bigg\} - \frac{{1-\mu}}{{2}|\frac{1}{a}-z_{0}|} \bigg\{ \nonumber\\
&\quad  \big( \tfrac{1}{a}-\Re z_{0} \big) \log \bigg( \frac{z+\mathcal{R}(z) - \Re z_{0}}{i \, \im z_{0}} \bigg) - |\tfrac{1}{a}-z_{0}| \log \bigg(  \frac{(\frac{1}{a}-\Re z_{0})z - \frac{\Re z_{0}}{a}+|z_{0}|^{2} + |\frac{1}{a}-z_{0}|\mathcal{R}(z)}{i\frac{1}{a}\im z_{0}} \bigg) \bigg\} \nonumber\\
&\quad  + \frac{{\mu}}{{2}|a+z_{0}|} \bigg\{ - (a+\Re z_{0}) \log \bigg( \frac{z+\mathcal{R}(z) - \Re z_{0}}{i \, \im z_{0}} \bigg)  \nonumber \\
&\quad  + |a+z_{0}| \log \bigg( \frac{(a+\Re z_{0})z - a \, \Re z_{0} - |z_{0}|^{2} + |a+z_{0}|\mathcal{R}(z)}{i\im z_{0}} \bigg) \bigg\} - \frac{\ell}{2},\label{eq:exprg}
\end{align} 
where
\begin{multline}\label{def of ell}
\ell = {(1-\mu)} \log a - {(\kappa+1-\mu)} \log 2 + {(1-\mu)} \log \big( \tfrac{1}{a} - \Re z_{0} + |\tfrac{1}{a}-z_{0}| \big) \\ + {(\kappa-\mu)} \log \big( \Re z_{0} + |z_{0}| \big) + {\mu} \log\big( a+\Re z_{0} + |a+z_{0}| \big).
\end{multline}
In \eqref{eq:exprg} and \eqref{def of ell}, the principal branch is used for the logarithms. Note that
the first line of equation \eqref{eq:exprg} defines a single-valued function, because the integral of $g'$ along any closed loop surrounding $\mathcal{S}$ is $0$.

\subsection{First transformation: $Y \mapsto T$}
Our first transformation consists in deforming the contours: according to the results obtained in Corollary \ref{cor:contours2}, we deform $\Sigma_0,\Sigma_1$ in such a way that they coincide along $\mathcal S$, and we denote $\tilde\Sigma_0,\tilde\Sigma_1$ for the parts of these deformed contours away from $\mathcal S$, as shown in Figure \ref{fig:contour Sigma T}.
We construct the solution of the resulting RH problem by analytically continuing the RH solution $Y$ from the region inside $\Sigma_0$ and from the region outside $\Sigma_1$ respectively. As a result of the fact that the two jump contours coincide on $\mathcal S$, the jump matrix for $T$ on $\mathcal S$ is the product of the jump matrices for $Y$ on $\Sigma_1$ and on $\Sigma_0$. We obtain the following RH problem.

\subsubsection*{RH problem for $T$}
\begin{itemize}
\item[(a)] $T:\mathbb C\setminus\Sigma_T\to \mathbb C^{2\times 2}$ is analytic, with
\[\Sigma_T=\left(\tilde \Sigma_0\cup\tilde \Sigma_1\cup\mathcal S\right)\cup\{z_0,\overline{z_0}\}.\]
\item[(b)] $T_+(z)=T_-(z)J_{\widehat{Y}}(z)$ for $z\in \tilde \Sigma_0\cup\tilde \Sigma_1\cup\mathcal S$, with
\begin{align}\label{eq:JYhat}
J_T(z)=\begin{cases}
\begin{pmatrix}
1& e^{-N\phi(z)}\frac{z}{(1-az)^\epsilon}\\ 0& 1
\end{pmatrix},&z\in\tilde\Sigma_1,\\
\begin{pmatrix}
1& 0\\ 
-e^{N\phi(z)}\frac{(1-az)^\epsilon}{z}& 1
\end{pmatrix},&z\in\tilde \Sigma_0, \\
\begin{pmatrix}
0& e^{-N\phi(z)}\frac{z}{(1-az)^\epsilon}\\ 
-e^{N\phi(z)}\frac{(1-az)^\epsilon}{z}& 1
\end{pmatrix},&z\in\mathcal{S}.
\end{cases}
\end{align}
\item[(c)]{As $z\to \infty$, $T(z) = I +\bigO(z^{-1})$}.

As $z\to z_0$ and as $z\to \overline{z_0}$, we have $T(z) = \bigO(1)$.
\end{itemize}

\subsection{Second transformation: $T\mapsto S$}\label{subsection: S liquid}

We now define
\begin{align}\label{def of S}
S(z) := e^{\frac{N\ell}{2}\sigma_{3}} T(z) e^{-Ng(z)\sigma_{3}} e^{-\frac{N\ell}{2}\sigma_{3}}.
\end{align}
Using \eqref{lol9}, we infer that $S$ satisfies the following RH problem.
\subsubsection*{RH problem for $S$}
\begin{itemize}
\item[(a)] $S:\C\setminus\Sigma_T\to \mathbb C^{2\times 2}$ is analytic.
\item[(b)] $S_+(z)=S_-(z)J_S(z)$ for $z\in \tilde\Sigma_0\cup\tilde \Sigma_1\cup\mathcal S$, with
\begin{align}\label{eq:JS}
J_S(z)=\begin{cases}
\begin{pmatrix}
1& e^{-N(\phi(z)-2g(z)-\ell)} \frac{z}{(1-az)^\epsilon}
\\ 0& 1
\end{pmatrix}, & z\in\tilde\Sigma_1,\\
\begin{pmatrix}
1& 0\\ 
-e^{N(\phi(z)-2g(z)-\ell)}\frac{(1-az)^\epsilon}{z} & 1
\end{pmatrix},&z\in\tilde\Sigma_0, \\
\begin{pmatrix}
0& \frac{z}{(1-az)^\epsilon} \\ 
-\frac{(1-az)^\epsilon}{z} & e^{N(g_{+}(z)-g_{-}(z))}
\end{pmatrix},&z\in\mathcal{S}.
\end{cases}
\end{align}
\item[(c)]As $z\to \infty$, $S(z)=I +\bigO(z^{-1})$.

As $z\to z_0$ and as $z\to \overline{z_0}$, $S(z) = \bigO(1)$.
\end{itemize}
In terms of the function $\xi$ defined in \eqref{def:xi}, we can rewrite the jump matrix $J_S$ as 
\begin{align}\label{eq:JS2}
J_S(z)=\begin{cases}
\begin{pmatrix}
1& e^{2N\xi(z)} \frac{z}{(1-az)^\epsilon}
\\ 0& 1
\end{pmatrix},&z\in\tilde\Sigma_1,\\
\begin{pmatrix}
1& 0\\ 
-e^{-2N\xi(z)}\frac{(1-az)^\epsilon}{z} & 1
\end{pmatrix},&z\in\tilde\Sigma_0, \\
\begin{pmatrix}
0& \frac{z}{(1-az)^\epsilon} \\ 
-\frac{(1-az)^\epsilon}{z} & e^{2N\xi_+(z)}
\end{pmatrix},&z\in\mathcal{S}.
\end{cases}
\end{align}
By Corollary \ref{cor:contours2}, we now easily see that for $z$ bounded away from $z_0,\overline{z_0}$, the jump matrix $J_S(z)$ converges as $N\to\infty$ to the identity matrix on $\tilde\Sigma_1$ and on $\tilde\Sigma_0$, and to the $N$-independent matrix $\begin{pmatrix}
0 & \frac{z}{(1-az)^\epsilon} \\ -\frac{(1-az)^\epsilon}{z} & 0
\end{pmatrix}$ on $\mathcal S$.
We will deal with this $N$-independent matrix by constructing a global parametrix which has exactly this jump matrix, and we will afterwards deal with small neighborhoods of $z_0,\overline{z_0}$ by constructing local parametrices there.

\subsection{Global parametrix}\label{subsection: Pinf liquid}
First, we will construct the global parametrix $P^{(\infty)}$, which needs to satisfy the following conditions.
\subsection*{RH problem for $P^{(\infty)}$}
\begin{itemize}
\item[(a)] $P^{(\infty)}:\C\setminus \mathcal{S}\to \mathbb C^{2\times 2}$ is analytic.
\item[(b)] $P^{(\infty)}_+(z)=P^{(\infty)}_-(z) \begin{pmatrix}
0 & \frac{z}{(1-az)^\epsilon} \\ -\frac{(1-az)^\epsilon}{z} & 0
\end{pmatrix} $ for $z\in \mathcal{S}$.
\item[(c)]As $z\to \infty$, $P^{(\infty)}(z)=I+\bigO(z^{-1})$.
\end{itemize}
A solution is given by
\begin{align}\label{def:Pinf}
P^{(\infty)}(z) = \mathcal{G}_{\infty}^{-\sigma_{3}}  \begin{pmatrix}
\frac{\beta(z)+\beta(z)^{-1}}{2} & \frac{\beta(z)-\beta(z)^{-1}}{2i} \\
\frac{\beta(z)-\beta(z)^{-1}}{-2i} & \frac{\beta(z)+\beta(z)^{-1}}{2}
\end{pmatrix}\mathcal{G}(z)^{\sigma_{3}}, \qquad z \in \C \setminus \mathcal{S},
\end{align}
where $\beta(z)$, $\mathcal{G}(z)=\mathcal{G}(z;\epsilon)$ and $\mathcal G_\infty=\mathcal G_\infty(\epsilon)$ are given by
\begin{align}
& \beta(z) = \frac{(z-z_{0})^{\frac{1}{4}}}{(z-\overline{z_{0}})^{\frac{1}{4}}}, \label{def of beta} \\
& \mathcal{G}(z) = \exp \bigg(  \frac{\mathcal{R}(z)}{2\pi i} \int_{\mathcal{S}} \frac{{\epsilon}\log(1-as)-\log s}{\mathcal{R}_{+}(s)} \frac{ds}{s-z} \bigg), \label{def of Gcal} \\
& \mathcal{G}_{\infty} = \exp \bigg(  \frac{-1}{2\pi i} \int_{\mathcal{S}} \frac{{\epsilon}\log(1-as)-\log s}{\mathcal{R}_{+}(s)} ds \bigg)>0, \label{def of Gcal inf}
\end{align}
and the branch cuts in \eqref{def of gamma} are such that $\beta:\C\setminus \mathcal{S}\to \C$ is analytic and satisfies $\lim_{z\to+\infty}\beta(z)=1$. The branch of the logarithms in \eqref{def of Gcal}--\eqref{def of Gcal inf} is the principal one.
Note that this solution is not unique because we did not specify how $P^{(\infty)}$ must behave around $z_0$ and $\overline{z_0}$. 

The following lemma simplifies the expressions of $\mathcal{G}(0)$ and $\mathcal{G}_{\infty}$. 
\begin{lemma}\label{lemma:simplif of G0 and Ginf}
We have the identities
\begin{align}
\mathcal{G}(0) & = \bigg( \frac{|z_{0}|+\Re z_{0}}{2|z_{0}|^{2}} \bigg)^{\frac{1}{2}} \bigg( \frac{|z_{0}|+\frac{1}{a}-|z_{0}-\frac{1}{a}|}{|z_{0}|+|z_{0}-\frac{1}{a}|-\frac{1}{a}} \; \frac{|z_{0}|-\Re z_{0}}{|z_{0}|+\Re z_{0}} \bigg)^{\frac{\epsilon}{2}}, \label{simplif of Gcal at 0}\\
\mathcal{G}_{\infty} & = \bigg( \frac{2}{|z_{0}|+\Re z_{0}} \bigg)^{\frac{1}{2}}  \bigg( \frac{a (\im z_{0})^{2}}{2(|z_{0}-\frac{1}{a}|+\Re z_{0} - \frac{1}{a})} \bigg)^{\frac{\epsilon}{2}} \label{simplif of Gcal at infty}.
\end{align}
\end{lemma}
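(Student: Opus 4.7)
Both identities follow from evaluating the Cauchy-type integrals in \eqref{def of Gcal}--\eqref{def of Gcal inf} by contour deformation. Set $H(s)=\epsilon\log(1-as)-\log s$. Since $\mathcal{R}_{+}=-\mathcal{R}_{-}$ on $\mathcal{S}$ while (with the principal branches) $H$ is analytic in a neighborhood of $\mathcal{S}$, one has
\[
\int_{\mathcal{S}}\frac{H(s)\,ds}{\mathcal{R}_{+}(s)(s-z)} = -\tfrac{1}{2}\oint_{C_{\mathcal{S}}}\frac{H(s)\,ds}{\mathcal{R}(s)(s-z)},
\]
with $C_{\mathcal{S}}$ a small positively oriented loop encircling $\mathcal{S}$. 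Because $\mathcal{S}$ crosses $\R$ in $(0,\tfrac{1}{a})$, it is disjoint from the principal-branch cuts of $\log s$ (along $(-\infty,0]$) and of $\log(1-as)$ (along $[\tfrac{1}{a},+\infty)$); apart from the simple pole at $s=z$, these are the only other singularities of the integrand.

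The plan is to deform $C_{\mathcal{S}}$ outward through these three singular sets. The arc at infinity contributes nothing since the integrand is $O(\log|s|/|s|^{2})$, and the deformation produces three terms: a residue contribution $2\pi i\,H(z)/\mathcal{R}(z)$ at $s=z$; a loop around $(-\infty,0]$ contributing $2\pi i\int_{-\infty}^{0}dx/(\mathcal{R}(x)(x-z))$, from the jump $2\pi i$ of $\log s$; and a loop around $[\tfrac{1}{a},+\infty)$ contributing $-2\pi i\,\epsilon\int_{1/a}^{+\infty}dx/(\mathcal{R}(x)(x-z))$, from the analogous jump of $\log(1-as)$. Combining these, for $z$ outside $\mathcal{S}$ and off the logarithmic cuts, one arrives at
\[
\log\mathcal{G}(z) = \tfrac{1}{2}\bigl(\epsilon\log(1-az)-\log z\bigr) + \tfrac{\mathcal{R}(z)}{2}\Bigl[I_{-}(z)-\epsilon\, I_{+}(z)\Bigr],
\]
where $I_{-}(z)=\int_{-\infty}^{0}dx/(\mathcal{R}(x)(x-z))$ and $I_{+}(z)=\int_{1/a}^{+\infty}dx/(\mathcal{R}(x)(x-z))$. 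Consistency of this formula with the jump relation $\mathcal{G}_{+}\mathcal{G}_{-}=e^{H}$ on $\mathcal{S}$ is immediate, since $H/2$ is single-valued across $\mathcal{S}$ and $\mathcal{R}_{+}+\mathcal{R}_{-}=0$ kills the integral term.

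The last step is to evaluate $I_{\pm}(z)$ explicitly. By the convention $\mathcal{R}(z)>0$ on $(\Re z_{0},+\infty)$ together with the fact that $\mathcal{S}$ crosses $\R$ in $(0,\tfrac{1}{a})$, one has $\mathcal{R}(x)=-\sqrt{(x-\Re z_{0})^{2}+(\Im z_{0})^{2}}$ on $(-\infty,0]$ and $\mathcal{R}(x)=+\sqrt{(x-\Re z_{0})^{2}+(\Im z_{0})^{2}}$ on $[\tfrac{1}{a},+\infty)$, so the substitution $x-\Re z_{0}=|\Im z_{0}|\sinh u$ (equivalently $w=e^{u}$) reduces each $I_{\pm}$ to an elementary closed form. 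Specialization to $z=0$ using $\mathcal{R}(0)=-|z_{0}|$ then yields \eqref{simplif of Gcal at 0}: the logarithmic divergence $-\tfrac{1}{2}\log z$ from $\tfrac{1}{2}H(z)$ cancels exactly against the divergence $+\tfrac{1}{2}\log z$ produced by the endpoint $x=0$ of $I_{-}$, and what remains collapses to the stated factorization in terms of $|z_{0}|\pm\Re z_{0}$ and $|z_{0}-\tfrac{1}{a}|$. Taking $z\to\infty$ in the same formula, using $\mathcal{R}(z)=z-\Re z_{0}+O(1/z)$ and $(x-z)^{-1}=-z^{-1}+O(z^{-2})$, extracts the constant term $\log\mathcal{G}_{\infty}$ and produces \eqref{simplif of Gcal at infty}.

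The main technical obstacle will be the careful handling of these two limits: at $z=0$ the simple pole coincides with the branch point of $\log s$, and in the direct integral for $\mathcal{G}_{\infty}$ the integrand $H/\mathcal{R}$ does not decay faster than $1/|s|$ (indeed, for $\epsilon=0$, $H(s)\sim -\log s$ at infinity), so a naive contour deformation with an arc at infinity fails in both regimes. Both difficulties are circumvented by first deriving the explicit formula above for generic $z$, in which all divergences are manifest and cancel algebraically, before passing to the limits.
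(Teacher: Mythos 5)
Your approach matches the paper's closely: both compute the Cauchy-type integrals by replacing $\int_{\mathcal S}$ with the loop $\tfrac{-1}{2}\oint_{\mathcal C_{\mathcal S}}$, deforming outward through the logarithmic branch cuts (the pole at $s=z$ and the arc at infinity handled as you describe), and then evaluating the resulting real-line integrals via explicit primitives; your general-$z$ formula just packages both identities into one computation, with your $z\to 0$ and $z\to\infty$ limits playing precisely the role of the paper's $\delta\to 0$ keyhole at the origin and its separate (omitted) $\mathcal G_\infty$ computation. One sign to recheck before carrying this out: across $[\tfrac1a,\infty)$ the jump of $\log(1-as)$ is $-2\pi i$, not $+2\pi i$ as for $\log s$ across $(-\infty,0]$, and tracking the relative signs in $H(s)=-\log s+\epsilon\log(1-as)$ gives $\log\mathcal G(z)=\tfrac12 H(z)+\tfrac{\mathcal R(z)}{2}\bigl[I_{-}(z)+\epsilon\,I_{+}(z)\bigr]$ rather than the $-\epsilon I_{+}$ you wrote (one can confirm the plus sign against the paper's term $\int_{1/a}^{\infty}\tfrac{-\epsilon}{\mathcal R(x)}\tfrac{dx}{x}$ after multiplying by $\mathcal R(0)=-|z_0|$).
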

\begin{proof}
By \eqref{def of Gcal} and the definition of $\mathcal{R}$,
\begin{align*}
\log \mathcal{G}(0) = - \frac{|z_{0}|}{2\pi i} \int_{\mathcal{S}} \frac{\epsilon\log(1-as)-\log s}{\mathcal{R}_{+}(s)} \frac{ds}{s} = \frac{|z_{0}|}{4\pi i} \int_{\mathcal{C}_\mathcal{S}} \frac{\epsilon\log(1-as)-\log s}{\mathcal{R}(s)} \frac{ds}{s},
\end{align*}
where $\mathcal{C}_\mathcal{S}$ is a closed loop surrounding $\mathcal{S}$ once in the positive direction, but not enclosing $0$ and $\frac{1}{a}$. Let $M>\frac{1}{a}$ be a large but fixed constant, and let $\delta>0$ be a small but fixed constant. By deforming $\mathcal{C}_\mathcal{S}$ into $\mathcal{C}(0,M)\cup (-\mathcal{C}(0,\delta)) \cup (-M+i0_{+},-\delta+i0_{+}) \cup (-\delta-i0_{+},-M-i0_{+}) \cup (M-i0_{+},\frac{1}{a}-i0_{+})\cup (\frac{1}{a}+i0_{+},M+i0_{+})$, where $\pm \mathcal{C}(c',R')$ denotes the positively (resp. negatively) oriented circle centered at $c'$ of radius $R'>0$, we obtain
\begin{align*}
\log \mathcal{G}(0) = \frac{|z_{0}|}{2} \bigg\{ \int_{\mathcal{C}(0,M)} \frac{\epsilon\log(1-as)-\log s}{\mathcal{R}(s)} \frac{ds}{2\pi i s} + \int_{\mathcal{C}(0,\delta)} \frac{\log s}{\mathcal{R}(s)} \frac{ds}{2\pi i s} \\
+ \int_{-M}^{-\delta} \frac{-1}{\mathcal{R}(x)} \frac{dx}{x} + \int_{\frac{1}{a}}^{M} \frac{- \epsilon}{\mathcal{R}(x)} \frac{dx}{x} \bigg\}.
\end{align*}
Letting $M\to + \infty$ in the above expression, we find
\begin{align}\label{lol25}
\log \mathcal{G}(0) = \frac{|z_{0}|}{2} \bigg\{ \int_{\mathcal{C}(0,\delta)} \frac{\log s}{\mathcal{R}(s)} \frac{ds}{2\pi i s} + \int_{-\infty}^{-\delta} \frac{1}{|\mathcal{R}(x)|} \frac{dx}{x} + \int_{\frac{1}{a}}^{+\infty} \frac{- \epsilon}{\mathcal{R}(x)} \frac{dx}{x} \bigg\}.
\end{align}
Using primitives, we obtain
\begin{align*}
\int_{-\infty}^{-\delta} \frac{1}{|\mathcal{R}(x)|} \frac{dx}{x} & = \frac{1}{|z_{0}|} \log \bigg( \frac{\sqrt{(\im z_{0})^{2} + (\delta + \Re z_{0})^{2}}+\delta - |z_{0}|}{\sqrt{(\im z_{0})^{2} + (\delta + \Re z_{0})^{2}}+\delta + |z_{0}|} \bigg) \\
& = \frac{\log \delta}{|z_{0}|} + \frac{1}{|z_{0}|}\log \bigg( \frac{\Re z_{0} + |z_{0}|}{2|z_{0}|^{2}} \bigg) + \bigO(\delta), \qquad \mbox{as } \delta \to 0.
\end{align*}
Since
\begin{align*}
\int_{\mathcal{C}(0,\delta)} \frac{\log s}{\mathcal{R}(s)} \frac{ds}{2\pi i s} = -\frac{\log \delta}{|z_{0}|} + \bigO(\delta), \qquad \mbox{as } \delta \to 0,
\end{align*}
by letting $\delta \to 0$ in \eqref{lol25} we obtain
\begin{align}\label{lol26}
\log \mathcal{G}(0) = \frac{|z_{0}|}{2} \bigg\{ \frac{1}{|z_{0}|}\log \bigg( \frac{\Re z_{0} + |z_{0}|}{2|z_{0}|^{2}} \bigg) + \int_{\frac{1}{a}}^{+\infty} \frac{- \epsilon}{\mathcal{R}(x)} \frac{dx}{x} \bigg\}.
\end{align}
The remaining integral can also be evaluated explicitly using primitives:
\begin{align*}
\int_{\frac{1}{a}}^{+\infty} \frac{- \epsilon}{\mathcal{R}(x)} \frac{dx}{x} = \frac{\epsilon}{|z_{0}|} \log \bigg( \frac{|z_{0}|+\frac{1}{a}-|z_{0}-\frac{1}{a}|}{|z_{0}|+|z_{0}-\frac{1}{a}|-\frac{1}{a}} \; \frac{|z_{0}|-\Re z_{0}}{|z_{0}|+\Re z_{0}} \bigg).
\end{align*}
Substituting the above in \eqref{lol26}, we find \eqref{simplif of Gcal at 0}. The proof of \eqref{simplif of Gcal at infty} is similar (and slightly simpler) and we omit it.
\end{proof}

\subsection{Local parametrix near $z_{0}$}
We let $\mathcal D_{z_0}$ be a sufficiently small disk around $z_0$, with radius at most $\frac{1}{2}\Im z_0$, such that $\mathcal D_{z_0}$ lies entirely in the upper half plane.
We will construct a local parametrix $P^{(z_0)}$ in $\mathcal D_{z_0}\setminus\Sigma_T$, which satisfies exactly the same jump relations as $S$ on $\Sigma_T\cap\mathcal D_{z_0}$, and which is close to the global parametrix $P^{(\infty)}$ on the boundary $\partial\mathcal D_{z_0}$ of the disk.

\subsubsection*{RH problem for $P^{(z_{0})}$}
\begin{itemize}
\item[(a)] $P^{(z_{0})}:\mathcal{D}_{z_{0}}\setminus\Sigma_T\to \mathbb C^{2\times 2}$ is analytic.
\item[(b)] $P^{(z_{0})}_+(z)=P^{(z_{0})}_-(z)J_{T}(z)$ for $z\in \Sigma_T\cap\mathcal D_{z_0}$.
\item[(c)]$P^{(z_{0})}(z)=P^{(\infty)}(z)\big(I+o(1)\big)$ as $N\to +\infty$ uniformly for $z\in \partial \mathcal D_{z_0}$.
\end{itemize}
We will construct the local parametrix using a model RH solution related to the Airy function. Such a construction of a local Airy parametrix is rather standard, see e.g. \cite{DKMVZ}. 
We recall this construction here, but without going into much detail.

\begin{figure}
\begin{center}
\begin{tikzpicture}[master]
\node at (0,0) {\includegraphics[width=7cm]{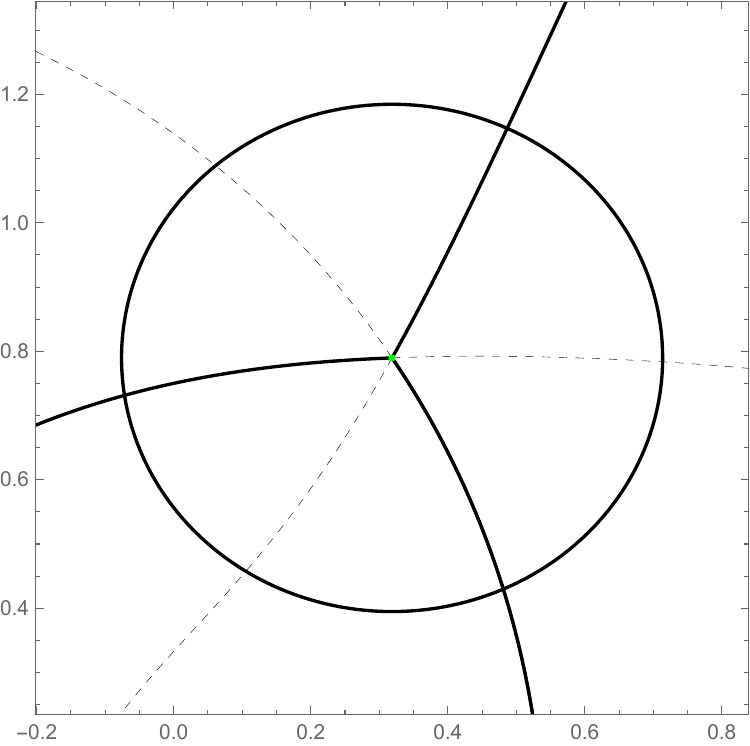}};


\draw[black,arrows={-Triangle[length=0.21cm,width=0.14cm]}]
($(0.16,0.14)+(62.4:1.35)$) --  ++(62.4:0.001);
\node at (1.8,2.9) {\small $\tilde{\Sigma}_{1}$};

\draw[black,arrows={-Triangle[length=0.21cm,width=0.14cm]}]
($(0.16,0.14)+(185:1.35)$) --  ++(185:0.001);
\node at (-2.8,-0.65) {\small $\tilde{\Sigma}_{0}$};

\draw[black,arrows={-Triangle[length=0.21cm,width=0.14cm]}]
($(0.16,0.14)+(-60.2:1.2)$) --  ++(180-63:0.001);
\node at (1.55,-2.6) {\small $\mathcal{S}$};

\node at ($(0.16,0.14)+(5:1.35)$) {\small $[1]$};
\node at ($(0.16,0.14)+(125:1.35)$) {\small $[2]$};
\node at ($(0.16,0.14)+(245:1.35)$) {\small $[3]$};

\end{tikzpicture}
\end{center}
\caption{\label{fig:Airy local param}
The solid curves are $\tilde \Sigma_0,\tilde\Sigma_1, \mathcal{S},\partial \mathcal{D}_{z_{0}}$ and the thin dashed curves are $\mathcal{T}_{0}$. The curves $\tilde \Sigma_0,\tilde\Sigma_1, \mathcal{S}$ divide $\mathcal{D}_{z_{0}}$ in three regions which are denoted by $[1]$, $[2]$ and $[3]$. The green dot is $z_{0}$. The figure is drawn for $a=0.78$, $\mu=0.76$ and $\kappa=0.44$.}
\end{figure}

First, define the complex-valued functions $\{y_{j}(\zeta)\}_{j=0}^2$ by
\begin{align*}
y_{j}(\zeta) = e^{\frac{2\pi i j}{3}} \mbox{Ai}(e^{\frac{2\pi i j}{3}}\zeta), \qquad j = 0,1,2,
\end{align*}
where $\mbox{Ai}$ is the Airy function, 
and let the $2\times 2$-matrix valued functions $\{A_j(\zeta)\}_{j=1}^3$ be given by
\begin{align}
& A_{1}(\zeta) = -2 i \sqrt{\pi} \begin{pmatrix}
-y_{2}(\zeta) & -y_{0}(\zeta) \\
-y_{2}'(\zeta) & -y_{0}'(\zeta)
\end{pmatrix}, \label{def of A1} \\
& A_{2}(\zeta) = -2 i \sqrt{\pi} \begin{pmatrix}
-y_{2}(\zeta) & y_{1}(\zeta) \\
-y_{2}'(\zeta) & y_{1}'(\zeta)
\end{pmatrix}, \label{def of A2} \\
& A_{3}(\zeta) = -2 i \sqrt{\pi} \begin{pmatrix}
y_{0}(\zeta) & y_{1}(\zeta) \\
y_{0}'(\zeta) & y_{1}'(\zeta)
\end{pmatrix}. \label{def of A3}
\end{align}
These functions satisfy
\begin{align*}
& A_{1}(\zeta) = A_{2}(\zeta) \begin{pmatrix}
1 & -1 \\
0 & 1 
\end{pmatrix}, \qquad A_{2}(\zeta) = A_{3}(\zeta) \begin{pmatrix}
1 & 0 \\
1 & 1
\end{pmatrix}, \qquad 
A_{1}(\zeta) = A_{3}(\zeta) \begin{pmatrix}
1 & -1 \\
1 & 0
\end{pmatrix}.
\end{align*}
Moreover, 
\begin{align}\label{weak asymp for Ak}
A_{k}(\zeta) = \zeta^{-\frac{\sigma_{3}}{4}} \begin{pmatrix}
1 & i \\
1 & -i
\end{pmatrix} \Big[I + \frac{1}{8 \, \zeta^{3/2}} \begin{pmatrix}
-\frac{1}{6} & -i \\ -i & \frac{1}{6}
\end{pmatrix} + \bigO(\zeta^{-3}) \Big]e^{\frac{2}{3}\zeta^{3/2}\sigma_{3}}
\end{align}
as $\zeta \to \infty$ in the sector $S_{k}$ for $k = 1,2,3$, with
\begin{align}\label{Skdef}
S_{k} = \left\{ \zeta \in \mathbb{C}: \frac{2k - 3}{3}\pi + \delta \leq \arg \zeta \leq  \frac{2k+1}{3}\pi - \delta \right\}, \qquad k = 1,2,3,
\end{align}
and the branches of the complex powers in (\ref{weak asymp for Ak}) are such that $\zeta^u = e^{u\log|\zeta| + iu \arg \zeta}$ where $\arg \zeta$ belongs to $(-\pi/3,\pi)$, $(\pi/3,5\pi/3)$, and $(\pi,7\pi/3)$ for $\zeta$ in $S_1, S_2, S_3$, respectively.

We construct the local parametrix $P^{(z_0)}$ for $\zeta \in \mathcal{D}_{z_{0}}\setminus\Sigma_T$ of the following form:
\begin{align}\label{def of P}
P^{(z_{0})}(z) = E(z)A_k\big(-N^{\frac{2}{3}} f(z)\big) e^{-N \xi(z) \sigma_{3}} \bigg(\frac{(1-az)^{\epsilon}}{z}\bigg)^{\frac{\sigma_{3}}{2}}, \qquad z \in [k], \;\; k = 1,2,3,
\end{align}
where the principal branch is used for the roots. Here, the regions $[k]$, $k = 1,2,3$, denote the three components of $\mathcal{D}_{z_{0}} \setminus \Sigma_T$ as shown in Figure \ref{fig:Airy local param}, $f$ is the conformal map from $\mathcal{D}_{z_{0}}$ to a neighborhood of $0$ defined by
\begin{align} \label{def of q}
f(z) = -\bigg( \frac{3}{2}\xi(z) \bigg)^{\frac{2}{3}},
\end{align}
and $E$ denotes the $2 \times 2$-matrix valued function analytic on $\mathcal{D}_{z_{0}}$ defined by
\begin{align}\label{def of E local param}
E(z) = P^{(\infty)}(z)\bigg( \frac{(1-az)^{\epsilon}}{z}\bigg)^{-\frac{\sigma_{3}}{2}}\begin{pmatrix}
1 & i \\ 1 & -i
\end{pmatrix}^{-1} \Big( -N^{\frac{2}{3}} f(z) \Big)^{\frac{\sigma_{3}}{4}}.
\end{align}
As $z\to z_{0}$, we have
\begin{align}\label{eq:expf}
f(z) = -c_{0}(z-z_{0})\big( 1+c_{1}(z-z_{0}) + \bigO((z-z_{0})^{2}) \big),
\end{align}
where
\begin{equation}\label{def of c0 and c1}
c_0=(2\Im z_0)^{1/3}(-e^{\frac{\pi i}{4}}Q(z_0))^{2/3},\qquad c_1=\frac{2}{5}(\log Q)'(z_0)-\frac{i}{10\Im z_0},
\end{equation}
and the principal branch is used for $(-e^{\frac{\pi i}{4}}Q(z_0))^{2/3}$.
Note that we need to take $\mathcal D_{z_0}$ sufficiently small so that $f$ is conformal on $\mathcal D_{z_0}$.

From \eqref{weak asymp for Ak} and \eqref{def of P}, we obtain the matching condition
\begin{multline}
P^{(z_0)}(z)P^{(\infty)}(z)^{-1}  \\
=P^{(\infty)}(z)
\Big[I + \frac{1}{8N \, (-f(z))^{3/2}} \begin{pmatrix}
-\frac{1}{6} & -i{\frac{z}{(1-az)^{\epsilon}}} \\ -i{\frac{(1-az)^{\epsilon}}{z}} & \frac{1}{6}
\end{pmatrix} + \bigO(N^{-2}) \Big]P^{(\infty)}(z)^{-1}, \label{eq:matchingz0}
\end{multline}
as $N\to\infty$ uniformly for $z\in\partial D_{z_0}$.
This matching condition is also uniform in $\kappa\in[\delta,\kappa_2-\delta]$ and in $\mu\in[\delta,1-\delta]$, for any fixed $\delta>0$. As $\kappa\to 0$, we need to replace the error term $\bigO(N^{-2})$, using \eqref{z00}, \eqref{weak asymp for Ak}, \eqref{eq:expf} and \eqref{def of c0 and c1}, by 
\begin{align}\label{lol10}
\bigO(N^{-2}f(z)^{-3})=\bigO\left(N^{-2}(\Im z_0)^{-4}\right)=\bigO\left(N^{-2}\kappa^{-2}\right).
\end{align}
On the other hand, as $\kappa\to \kappa_2$ with $\mu-\kappa_{2} \geq \delta$, we use \eqref{z0kappa2}, \eqref{eq:asgamma}, \eqref{weak asymp for Ak}, \eqref{eq:expf} and \eqref{def of c0 and c1} to replace the error term $\bigO(N^{-2})$ by
\begin{align}\label{lol11}
\bigO(N^{-2}f(z)^{-3})=\bigO\left(N^{-2}|z_0-\gamma|^{-2}(\Im z_0)^{-4}\right)=\bigO\left(N^{-2}(\kappa_2-\kappa)^{-3}\right).
\end{align}
This extends the analysis to the range 
\begin{align*}
\frac{M}{N} \leq \kappa\leq \min\{\kappa_2-\frac{M}{N^{2/3}},\mu-\delta\}, \qquad {\delta \leq \mu \leq 1-\delta}
\end{align*}
for sufficiently large but fixed $M>0$, and for any fixed $\delta>0$.

%
%


\subsection{Local parametrix near $\overline{z_{0}}$}
Near $\overline{z_0}$, we need to construct a local parametrix satisfying the following conditions.
\subsubsection*{RH problem for $P^{(\overline{z_{0}})}$}
\begin{itemize}
\item[(a)] $P^{(\overline{z_{0}})}:\mathcal{D}_{\overline{z_{0}}}\setminus {\Sigma_{T}}\to \mathbb C^{2\times 2}$ is analytic.
\item[(b)] $P^{(\overline{z_{0}})}_+(z)=P^{(\overline{z_{0}})}_-(z)J_{T}(z)$ for $z\in {\Sigma_{T}\cap \mathcal{D}_{\overline{z_{0}}}}$.
\item[(c)]$P^{(\overline{z_{0}})}(z)=P^{(\infty)}\big(I+\bigO(N^{-1})\big)$ as $N\to +\infty$ uniformly for $z\in \partial {\mathcal{D}_{\overline{z_{0}}}}$.
\end{itemize}
We can simply use the local parametrix near $z_0$ for this, and define
\begin{align}\label{sym of local param}
P^{(\overline{z_{0}})}(z) = \overline{P^{(z_{0})}(\overline{z})}, \qquad z \in \mathcal{D}_{\overline{z_{0}}}\setminus {\Sigma_{T}}
\end{align}

By \eqref{eq:matchingz0} and by the symmetry $\overline{P^{(\infty)}({\overline{z}})}=P^{(\infty)}(z)$, we obtain the matching condition
\begin{multline}
P^{(\overline{z_0})}(z)P^{(\infty)}(z)^{-1}=P^{(\infty)}(z)\Big[I + \frac{1}{8N \, {\overline{(-f(\overline{z}))^{3/2}}}} \begin{pmatrix}
-\frac{1}{6} & i {\frac{z}{(1-az)^{\epsilon}}} \\ i {\frac{(1-az)^{\epsilon}}{z}} & \frac{1}{6}
\end{pmatrix} + \bigO(N^{-2}) \Big]P^{(\infty)}(z)^{-1}, \label{eq:matchingz0bar}
\end{multline}
as $N\to\infty$ uniformly for $z\in\overline{\partial D_{z_0}}$ and for $\kappa\in [\delta,\kappa_2-\delta]$ for any fixed $\delta>0$. If $\kappa\to 0$ (respectively $\kappa\to \kappa_{2}$ with $\kappa \leq \mu-\delta$), then the error term $\bigO(N^{-2})$ in \eqref{eq:matchingz0bar} must be replaced by \eqref{lol10} (respectively \eqref{lol11}).

\subsection{Small norm RH problem}

\begin{figure}
\begin{center}
\begin{tikzpicture}[master]
\node at (0,0) {\includegraphics[width=7cm]{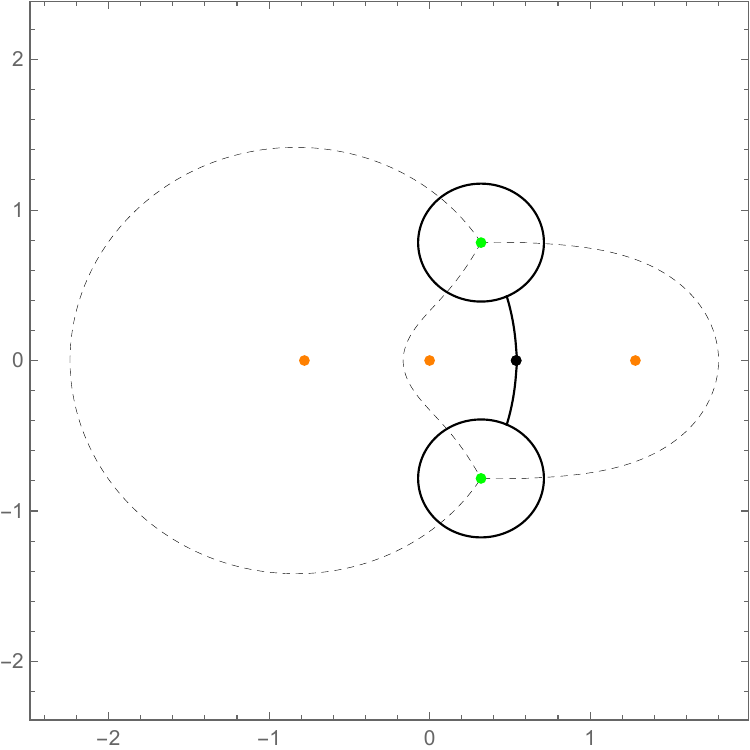}};

\draw[black,arrows={-Triangle[length=0.12cm,width=0.08cm]}]
(-0.95,3) --  ++(180:0.001);
\draw ($(0,0.13)+(0.99,1.11)+(80:0.53)$) to [out=80, in=0]
(-1,3) to [out=180, in=90] (-3.05,0.13) to [out=-90, in=180] (-1,-2.74) to [out=0, in=-80] ($(0,0.13)+(0.99,-1.11)+(-80:0.53)$);

\draw[black,arrows={-Triangle[length=0.12cm,width=0.08cm]}]
(-0.5,1.20) --  ++(180:0.001);
\draw ($(0,0.13)+(0.99,1.11)+(180:0.58)$) to [out=180, in=0]
(-0.7,1.2) to [out=180, in=90] (-1.7,0.13) to [out=-90, in=180] (-0.9,-0.96) to [out=0, in=-180] ($(0,0.13)+(0.99,-1.11)+(180:0.58)$);

\draw[black,arrows={-Triangle[length=0.12cm,width=0.08cm]}]
(1.3,-0.16) --  ++(82:0.001);

\draw[black,arrows={-Triangle[length=0.12cm,width=0.08cm]}]
($(0,0.14)+(1.05,1.11)+(100:0.54)$) --  ++(5:0.001);

\draw[black,arrows={-Triangle[length=0.12cm,width=0.08cm]}]
($(0,0.14)+(1.05,-1.11)+(100:0.56)$) --  ++(5:0.001);
\end{tikzpicture} \begin{tikzpicture}[slave]
\node at (0,0) {\includegraphics[width=7cm]{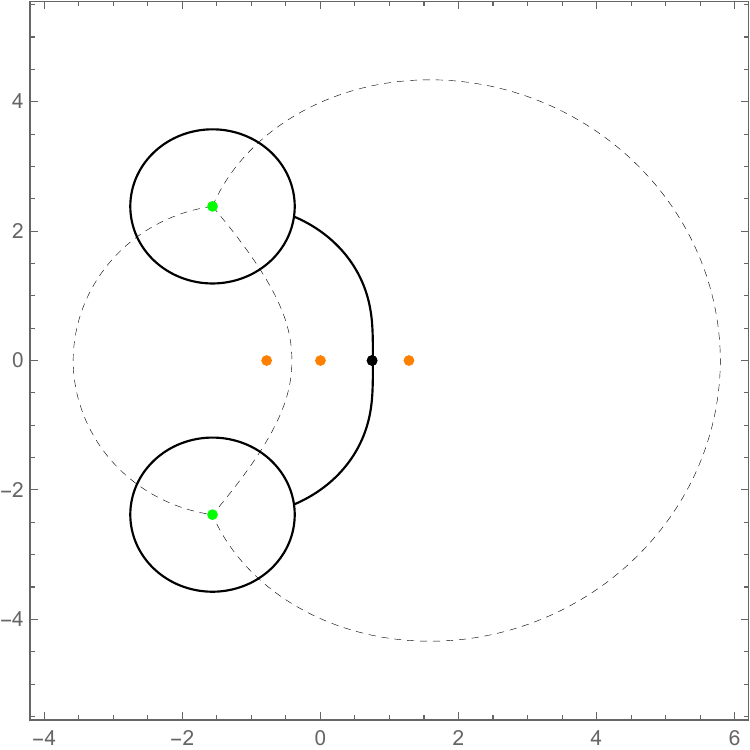}};

\draw[black,arrows={-Triangle[length=0.12cm,width=0.08cm]}]
(-3.05,0.13) --  ++(-90:0.001);
\draw ($(0,0.13)+(-1.51,1.43)+(160:0.77)$) to [out=180, in=90] (-3.05,0.13) to [out=-90, in=180] ($(0,0.13)+(-1.51,-1.43)+(-160:0.77)$);

\draw[black,arrows={-Triangle[length=0.12cm,width=0.08cm]}]
(-1.9,0.13) --  ++(-90:0.001);
\draw ($(0,0.13)+(-1.51,1.43)+(-113:0.73)$) to [out=-110, in=90]
(-1.9,0.13) to [out=-90, in=110] ($(0,0.13)+(-1.51,-1.43)+(113:0.73)$);

\draw[black,arrows={-Triangle[length=0.12cm,width=0.08cm]}]
($(0,0.13)+(-1.51,1.43)+(90:0.73)$) --  ++(0:0.001);

\draw[black,arrows={-Triangle[length=0.12cm,width=0.08cm]}]
($(0,0.13)+(-1.51,-1.43)+(90:0.71)$) --  ++(0:0.001);

\draw[black,arrows={-Triangle[length=0.12cm,width=0.08cm]}]
(-0.025,-0.13) --  ++(90:0.001);
\end{tikzpicture}
\end{center}
\caption{\label{fig:contour Sigma R}
In each image, the solid black curves are $\Sigma_{R}$ and the dashed curves are $\mathcal{T}_{0}$. The orange dots are $-a$, $0$ and $\frac{1}{a}$; the black dot is $\gamma$; and the green dots are $z_{0}$ and $\overline{z_{0}}$. The figures are drawn for $a=0.78$, $\mu=0.76$, and with $\kappa=0.44$ (left) and $\kappa=0.08$ (right).}
\end{figure}

Define
\begin{align}\label{def of R}
R(z) = \begin{cases}
S(z) P^{(\infty)}(z)^{-1}, & z \in \C\setminus (\overline{\mathcal{D}_{z_{0}}  \cup  \mathcal{D}_{\overline{z_{0}}}  }), \\
S(z) P^{(z_{0})}(z)^{-1}, & z \in \mathcal{D}_{z_{0}}, \\
S(z) P^{(\overline{z_{0}})}(z)^{-1}, & z \in \mathcal{D}_{\overline{z_{0}}}.
\end{cases}
\end{align}
$R$ satisfies the following RH problem.
\subsection*{RH problem for $R$}
\begin{itemize}
\item[(a)] $R:\C\setminus \Sigma_{R}\to \C^{2\times 2}$ is analytic, where
\begin{align*}
\Sigma_{R} = \partial \mathcal{D}_{z_{0}} \cup \partial \mathcal{D}_{\overline{z_{0}}} \cup \big( \Sigma_T \setminus (\mathcal{D}_{z_{0}} \cup \mathcal{D}_{\overline{z_{0}}}) \big)
\end{align*}
The contour $\Sigma_{R}$ is oriented as shown in Figure \ref{fig:contour Sigma R}. In particular, we orient the circles $\partial \mathcal{D}_{z_{0}}$ and
$\partial \mathcal{D}_{\overline{z_{0}}}$ in the clockwise direction.
\item[(b)] For $z \in \Sigma_{R}$, we have $R_{+}(z) = R_{-}(z)J_{R}(z)$, where
\begin{align*}
& J_{R}(z) = P^{(z_{0})}(z)P^{(\infty)}(z)^{-1}, & & z \in \partial \mathcal{D}_{z_{0}}, \\
& J_{R}(z) = P^{(\overline{z_{0}})}(z)P^{(\infty)}(z)^{-1}, & & z \in \partial \mathcal{D}_{\overline{z_{0}}}, \\
& J_{R}(z) = P^{(\infty)}(z)J_{S}(z)P^{(\infty)}(z)^{-1}, & & z \in \Sigma_{R} \setminus \big( \partial \mathcal{D}_{z_{0}} \cup \partial \mathcal{D}_{\overline{z_{0}}} \big).
\end{align*}
\item[(c)] $R(z)$ remains bounded as $z$ tends to the points of self-intersections of $\Sigma_{R}$.

\noindent As $z\to \infty$, $R(z) = I + \frac{R_{1}}{z} + \bigO(z^{-2})$ for some $R_{1}$ independent of $z$.
\end{itemize}

Observe that we have the symmetry $R(z) = \overline{R(\overline{z})}$, $z\in \C \setminus \Sigma_{R}$. 

On $\partial \mathcal{D}_{z_{0}} \cup \partial \mathcal{D}_{\overline{z_{0}}}$, by \eqref{eq:matchingz0} and \eqref{eq:matchingz0bar}, we have 
\begin{align}\label{lol12}
J_{R}(z) = I + \frac{J_{R}^{(1)}(z)}{N} + \bigO(N^{-2}), \qquad \mbox{as } N\to + \infty
\end{align}
uniformly for $z\in \partial \mathcal{D}_{z_{0}} \cup \partial \mathcal{D}_{\overline{z_{0}}}$, $\kappa\in [\delta,\kappa_2-\delta]$ and $\mu \in [\delta,1-\delta]$, for any fixed $\delta>0$ and for some matrix $J_{R}^{(1)}$ that can be explicitly computed. On the rest of the contour, using Corollary \ref{cor:contours2}, we obtain
\begin{align}\label{small jump liquid}
J_{R}(z) = I + \bigO(e^{-cN}), \qquad \mbox{as } N \to +\infty,
\end{align}
uniformly for $z\in \Sigma_{R}\setminus (\partial \mathcal{D}_{z_{0}} \cup \partial \mathcal{D}_{\overline{z_{0}}})$, $\kappa\in [\delta,\kappa_2-\delta]$ and $\mu \in [\delta,1-\delta]$, for some $c=c(\delta)>0$ and any fixed $\delta>0$.
By small-norm theory for RH problems \cite{DeiftZhou, DKMVZ}, we thus have
\begin{align}\label{eq:asR}
R(z) = I + \frac{R^{(1)}(z)}{N} + \bigO\bigg(\frac{N^{-2}}{1+|z|}\bigg), \qquad \mbox{as } N \to + \infty,
\end{align}
uniformly for $z\in \C \setminus  \Sigma_{R}$, where
\begin{align*}
R^{(1)}(z) = \frac{1}{2\pi i} \int_{\partial \mathcal{D}_{z_{0}} \cup \partial \mathcal{D}_{\overline{z_{0}}}} \frac{J_{R}^{(1)}(s)}{s-z}ds,
\end{align*}
and we recall that the orientation of the circles is clockwise. Therefore, for $z \in \C \setminus (\overline{\mathcal{D}_{z_{0}} \cup  \mathcal{D}_{\overline{z_{0}}}})$, we have
\begin{align}
R^{(1)}(z) = & \, \frac{1}{z-z_{0}} \mbox{Res}\bigg( J_{R}^{(1)}(s); s=z_{0} \bigg) + \frac{1}{(z-z_{0})^{2}} \mbox{Res}\bigg( (s-z_{0})J_{R}^{(1)}(s); s=z_{0} \bigg) \nonumber \\
& + \frac{1}{z-\overline{z_{0}}} \mbox{Res}\bigg( J_{R}^{(1)}(s); s = \overline{z_{0}} \bigg) + \frac{1}{(z-\overline{z_{0}})^{2}} \mbox{Res}\bigg( (s-\overline{z_{0}})J_{R}^{(1)}(s); s=\overline{z_{0}} \bigg). \label{Rp1p liquid}
\end{align}
The above residues can be computed explicitly using \eqref{eq:expf} and \eqref{sym of local param}:
\begin{align*}
& \mbox{Res}\bigg( (s-z_{0})J_{R}^{(1)}(s); s=z_{0} \bigg) = \frac{5\sqrt{\im z_{0}} \, e^{\frac{\pi i}{4}}}{48\sqrt{2} \,c_{0}^{\frac{3}{2}}} \begin{pmatrix}
1 & -i \mathcal{G}_{\infty}^{-2} \\ -i \mathcal{G}_{\infty}^{2} & -1
\end{pmatrix}, \\
& \mbox{Res}\bigg( (s-\overline{z_{0}})J_{R}^{(1)}(s); s=\overline{z_{0}} \bigg) = \overline{\mbox{Res}\bigg( (s-z_{0})J_{R}^{(1)}(s); s=z_{0} \bigg)}, \\
& \mbox{Res}\bigg( J_{R}^{(1)}(s); s=z_{0} \bigg) = \frac{e^{-\frac{\pi i}{4}}}{64\sqrt{2} \, c_{0}^{\frac{3}{2}}  \sqrt{\im z_{0}} } \begin{pmatrix}
-3-10i \, c_{1} \im z_{0} & (- \frac{19}{3}i - 10 \, c_{1} \im z_{0})\mathcal{G}_{\infty}^{-2} \\
(-\frac{19}{3}i - 10 \, c_{1} \im z_{0})\mathcal{G}_{\infty}^{2} & 3+10i \, c_{1} \im z_{0}
\end{pmatrix} \\
& \hspace{3.25cm} + \frac{\mathfrak{g}_{1}}{4\sqrt{2}c_{0}^{\frac{3}{2}}} \begin{pmatrix}
e^{\frac{\pi i}{4}} \sqrt{\im z_{0}} \, \mathfrak{g}_{1} & \frac{\sqrt{2} + e^{\frac{\pi i}{4}}\sqrt{\im z_{0}} \,\mathfrak{g}_{1}}{i \mathcal{G}_{\infty}^{2}} \\
\frac{\sqrt{2} - e^{\frac{\pi i}{4}}\sqrt{\im z_{0}} \,\mathfrak{g}_{1}}{-i \mathcal{G}_{\infty}^{-2}} & -e^{\frac{\pi i}{4}} \sqrt{\im z_{0}} \, \mathfrak{g}_{1}
\end{pmatrix} \\
& \mbox{Res}\bigg( J_{R}^{(1)}(s); s = \overline{z_{0}} \bigg) = \overline{\mbox{Res}\bigg( J_{R}^{(1)}(s); s=z_{0} \bigg)},
\end{align*}
where $\mathcal{G}_{\infty}$ is given by \eqref{simplif of Gcal at infty} and
\begin{align*}
\mathfrak{g}_{1} = - \frac{e^{-\frac{\pi i}{4}}}{\sqrt{2}} \frac{\sqrt{\im z_{0}}}{ 2\pi} \oint_{\mathcal{C}} \frac{\epsilon\log(1-as)- \log s}{\mathcal{R}(s)} \frac{ds}{s-z_{0}}.
\end{align*}
Here, the principal branch is used for the logarithms, and $\mathcal{C}\subset \C \setminus \big( (-\infty,0]\cup [\frac{1}{a},+\infty) \big)$ is a closed contour surrounding $\mathcal{S}$ once in the positive direction. {Using contour deformation, it is possible to simplify $\mathfrak{g}_{1}$ as in \eqref{gfrak1 simplif} below; the proof is similar to the proof of Lemma \ref{lemma:simplif of G0 and Ginf} and we omit it.}

We can again extend the range of validity of our asymptotic analysis as $\kappa\to\kappa_2$ with $\kappa \leq \mu-\delta$ and as $\kappa\to 0$: since $P^{(\infty)}$ remains bounded near $z_0,\overline{z_0}$ in these limits, we have
that \eqref{small jump liquid} holds uniformly for $M/N\leq \kappa\leq \min\{\kappa_2-MN^{-2/3},\mu-\delta\}$ with $M$ large, if we replace the error term $\bigO(e^{-cN})$ by $\bigO(e^{-c\kappa N})$ as $\kappa\to 0$, and by $\bigO(e^{-c(\kappa_{2}-\kappa)^{3/2} N})$ as $\kappa\to\kappa_2$ with $\kappa \leq \mu-\delta$. Similarly, \eqref{lol12} and \eqref{eq:asR} hold uniformly for $M/N\leq \kappa\leq \min\{\kappa_2-MN^{-2/3},\mu-\delta\}$ with $M$ large, if we replace the error terms $\bigO(N^{-2})$ by 
\begin{align}\label{lol13}
\bigO(N^{-2}\xi(z)^{-2})=\bigO\left(N^{-2}|\Im z_0|^{-4}\right)=\bigO\left(N^{-2}\kappa^{-2}\right),
\end{align}
as $\kappa\to 0$, and by
\begin{align}\label{lol14}
\bigO(N^{-2}\xi(z)^{-2})=\bigO\left(N^{-2}(z_0-\gamma)^{-2}|\Im z_0|^{-4}\right)=\bigO\left(N^{-2}(\kappa_2-\kappa)^{-3}\right),
\end{align}
as $\kappa\to\kappa_2$ {with $\kappa \leq \mu-\delta$}. All of these error terms are also uniform in $\mu\in[\delta,1-\delta]$, for any fixed $\delta>0$.

\subsection{Ratio asymptotics}
\begin{proposition}[Ratio asymptotics in the rough region]\label{prop:ratio asymp in rough region}
For any $\delta>0$ and for sufficiently large $M>0$, we have
\begin{multline*}
\log F_N^{m,k+1}(a;\epsilon)-\log F_N^{m,k}(a;\epsilon)
=NG(\kappa,\mu)+H(\kappa,\mu)+\frac{1}{N}F(\kappa,\mu)\\+\bigO\left(N^{-2}(\kappa_2-\kappa)^{-3}\right)+\bigO\left(N^{-2}\kappa^{-2}\right),
\end{multline*}
as $N\to\infty$, uniformly for
$\delta\leq \mu \leq 1-\delta$ and for
 $M/N\leq \kappa \leq \min\{\kappa_2 -M N^{-2/3},\mu-\delta\}$,
with $\mu=\frac{m}{N}$, $\kappa=k/N$. Here, 
\begin{align}
&\label{def:G1} G(\kappa,\mu) = -g(0;\kappa,\mu),\\
&\label{def:H1}H(\kappa,\mu)=\log \frac{\mathcal G_\infty}{\mathcal G(0)}+\log\cos\left(\frac{\arg z_0}{2}\right),\\
&\label{def:F1 F}
F(\kappa,\mu)=R^{(1)}_{22}(0)+\frac{R^{(1)}_{21}(0)}{\mathcal G_\infty^2}\tan\left(\frac{\arg z_0}{2}\right).
\end{align}
\end{proposition}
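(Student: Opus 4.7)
The plan is to unwind the chain of transformations $Y\mapsto T\mapsto S\mapsto R$ at the single point $z=0$ and insert the small-norm expansion \eqref{eq:asR} of $R$. First, by the ratio identity \eqref{eq:ratioid2} the object of interest is $Y_{22}(0)$. Since $z=0$ lies in the bounded region enclosed by the deformed contour $\tilde\Sigma_0\cup\mathcal S$, analytic continuation gives $T(0)=Y(0)$. The transformation \eqref{def of S} is a left/right multiplication by diagonal matrices, so a direct computation yields the scalar relation $S_{22}(0)=e^{Ng(0)}\,T_{22}(0)$. Since $0$ lies outside the local parametrix disks, \eqref{def of R} gives $S(0)=R(0)P^{(\infty)}(0)$, hence
\begin{equation*}
S_{22}(0)=R_{21}(0)\,P^{(\infty)}_{12}(0)+R_{22}(0)\,P^{(\infty)}_{22}(0).
\end{equation*}

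Substituting the expansion \eqref{eq:asR} in the form $R_{22}(0)=1+R^{(1)}_{22}(0)/N+\bigO(\ldots)$ and $R_{21}(0)=R^{(1)}_{21}(0)/N+\bigO(\ldots)$, factoring out $P^{(\infty)}_{22}(0)$, and taking the logarithm gives
\begin{equation*}
\log Y_{22}(0)=-Ng(0)+\log P^{(\infty)}_{22}(0)+\frac{1}{N}\Bigl[R^{(1)}_{22}(0)+R^{(1)}_{21}(0)\,\frac{P^{(\infty)}_{12}(0)}{P^{(\infty)}_{22}(0)}\Bigr]+\bigO(\ldots).
\end{equation*}
Next, the required entries of $P^{(\infty)}(0)$ are read off from \eqref{def:Pinf}. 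A branch analysis, using $\beta(\infty)=1$ and analytically continuing through the upper half-plane while avoiding $\mathcal S$, identifies $\beta(0)=e^{i\arg z_0/2}$, so that $(\beta(0)+\beta(0)^{-1})/2=\cos(\arg z_0/2)$ and $(\beta(0)-\beta(0)^{-1})/(2i)=\sin(\arg z_0/2)$. Combining with the diagonal factors $\mathcal G_\infty^{-\sigma_3}$ and $\mathcal G(0)^{\sigma_3}$ gives
\begin{equation*}
P^{(\infty)}_{22}(0)=\frac{\mathcal G_\infty}{\mathcal G(0)}\cos\!\Bigl(\frac{\arg z_0}{2}\Bigr),\qquad
\frac{P^{(\infty)}_{12}(0)}{P^{(\infty)}_{22}(0)}=\frac{1}{\mathcal G_\infty^{2}}\tan\!\Bigl(\frac{\arg z_0}{2}\Bigr).
\end{equation*}
Matching powers of $N$ then identifies the three coefficients with $G,H,F$ as in \eqref{def:G1}--\eqref{def:F1 F}.

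The error bounds $\bigO(N^{-2}(\kappa_2-\kappa)^{-3})$ and $\bigO(N^{-2}\kappa^{-2})$ are inherited from the refined matching conditions \eqref{lol10}--\eqref{lol14}, which dictate how the small-norm estimate for $R$ degrades at the two endpoints of the allowed range of $\kappa$. Taking the logarithm produces an additional quadratic correction of order $(R^{(1)}(0)/N)^2$; using the explicit residue formulas together with the scalings $c_0\sim(\kappa_2-\kappa)^{1/2}$ near $\kappa_2$ and $c_0\sim\kappa^{1/6}$ near $0$, one checks that this quadratic correction is absorbed into the stated error within the regime $M/N\le\kappa\le\min\{\kappa_2-MN^{-2/3},\mu-\delta\}$.

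The argument is essentially a transcription of the preceding asymptotic setup into scalar form, so the main technical nuisance is not any single estimate but rather the careful branch-cut bookkeeping: verifying the determination $\beta(0)=e^{i\arg z_0/2}$ (rather than one of the other three fourth roots) and confirming that $0$ lies inside the deformed contour so that $T(0)=Y(0)$ carries no spurious jump factor. Once these two points are settled, the proof amounts to the residue computations already present in the preceding subsection combined with the elementary algebra above.
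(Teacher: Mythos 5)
Your proposal is correct and follows essentially the same route as the paper: unwind $Y\mapsto T\mapsto S\mapsto R$ at $z=0$, read off $P^{(\infty)}(0)$ via $\beta(0)=e^{i\arg z_0/2}$, insert the small-norm expansion of $R$, and track the endpoint degradation of the error through \eqref{lol13}--\eqref{lol14}. The only point worth noting is that your explicit check of the quadratic correction from the logarithm (using $F(\kappa,\mu)=\bigO(\kappa^{-1})$ near $0$ and $\bigO((\kappa_2-\kappa)^{-1})$ near $\kappa_2$) makes rigorous a step the paper leaves implicit, but this is a supplement rather than a departure.
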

\begin{proof}
By the construction of $T$, \eqref{def of S}, and \eqref{def of R} (note that $0\in \C\setminus (\overline{\mathcal{D}_{z_{0}}\cup \mathcal{D}_{\overline{z_{0}}}})$), we have
\begin{align*}
Y_{22}(0)&=e^{-Ng(0)}S_{22}(0)\\
&=e^{-Ng(0)}\left(R_{21}(0)P^{(\infty)}_{12}(0)+R_{22}(0)P^{(\infty)}_{22}(0)\right).
\end{align*}
By \eqref{def:Pinf} and \eqref{def of gamma}, we have
\begin{align*}
P^{(\infty)}_{22}(0)=\frac{\mathcal G_\infty}{\mathcal G(0)}\cos\left(\frac{\arg z_0}{2}\right),\qquad P^{(\infty)}_{12}(0)=\frac{1}{\mathcal G(0) \mathcal G_\infty}\sin\left(\frac{\arg z_0}{2}\right).
\end{align*}
Using the large $N$ asymptotic expansion \eqref{eq:asR} of $R$, we obtain\begin{align*}
Y_{22}(0)
&=e^{-Ng(0)} \frac{\mathcal G_\infty}{\mathcal G(0)}\cos\left(\frac{\arg z_0}{2}\right)\left(1+\frac{1}{N}\left[R^{(1)}_{22}(0)+\frac{R^{(1)}_{21}(0)}{\mathcal G_\infty^{2}}\tan\left(\frac{\arg z_0}{2}\right)\right]+\mathcal O(N^{-2})\right),
\end{align*}
where we need to replace the error term by $\bigO\left(N^{-2}(\kappa_2-\kappa)^{-3}\right)$ or by $\bigO\left(N^{-2}\kappa^{-2}\right)$ as $\kappa\to\kappa_2$ or as $\kappa\to 0$, respectively (see \eqref{lol13} and \eqref{lol14}).
So
\begin{align*}
\log Y_{22}(0) =-Ng(0)+\log\frac{\mathcal G_\infty}{\mathcal G(0)}+\log\cos\left(\frac{\arg z_0}{2}\right)  +\frac{1}{N}\left[R^{(1)}_{22}(0)+\frac{R^{(1)}_{21}(0)}{\mathcal G_\infty^{2}}\tan\left(\frac{\arg z_0}{2}\right)\right]+\mathcal O(N^{-2}),
\end{align*}
again with modified error term as $\kappa\to \kappa_2$ or as $\kappa\to 0$. 
The result now follows from \eqref{eq:ratioid2}.
\end{proof}

We can evaluate $G,H,F$ explicitly in terms of $z_0$. For $H$, we use \eqref{simplif of Gcal at 0}--\eqref{simplif of Gcal at infty} and obtain
\begin{align}
H(\kappa,\mu)&= \frac{\epsilon}{2}\log\left(\bigg( \frac{a (\im z_{0})^{2}}{2(|z_{0}-\frac{1}{a}|+\Re z_{0} - \frac{1}{a})} \bigg)
\bigg( \frac{|z_{0}|+|z_{0}-\frac{1}{a}|-\frac{1}{a}}{|z_{0}|+\frac{1}{a}-|z_{0}-\frac{1}{a}|} \; \frac{|z_{0}|{+}\Re z_{0}}{|z_{0}|{-}\Re z_{0}} \bigg)\right)\nonumber\\
&\quad +\log\left(\frac{2|z_0|}{|z_0|+\Re z_0}\right)+\log \cos\left(\frac{\arg z_0}{2}\right)  .\label{def:H}
\end{align}
For $G$, we evaluate \eqref{eq:exprg} at $z=0$ and use \eqref{def of ell} to obtain
\begin{align}
G(\kappa,\mu) & = \frac{\mu-\kappa}{2} \bigg\{ \log \bigg( \frac{4|z_{0}|^{2}}{(\im z_{0})(|z_{0}|+\Re z_{0})} \bigg) - \frac{\Re z_{0}}{|z_{0}|} \log \bigg( \frac{|z_{0}|+\Re z_{0}}{\im z_{0}} \bigg) \bigg\} \nonumber \\
& \quad - \frac{1-\mu}{2}\bigg\{ \log \bigg( \frac{|z_{0}| \, |z_{0}-\frac{1}{a}| + \frac{1}{a}\Re z_{0} - |z_{0}|^{2}}{\frac{\im z_{0}}{2}(|z_{0}-\frac{1}{a}|+\frac{1}{a} - \Re z_{0})} \bigg) + \frac{\Re z_{0}-\frac{1}{a}}{|z_{0}-\frac{1}{a}|} \log \bigg( \frac{|z_{0}|+\Re z_{0}}{\im z_{0}} \bigg) \bigg\} \nonumber \\
& \quad - \frac{\mu}{2}\bigg\{ \log \bigg( \frac{|z_{0}| \, |z_{0}+a| + |z_{0}|^{2} + a \, \Re z_{0}}{\frac{\im z_{0}}{2}(|z_{0}+a|+a+\Re z_{0})} \bigg) - \frac{\Re z_{0}+a}{|z_{0}+a|} \log \bigg( \frac{|z_{0}|+\Re z_{0}}{\im z_{0}} \bigg) \bigg\} \label{def:G}.
\end{align} 
Finally, for $F$, we use \eqref{Rp1p liquid} and obtain
\begin{align}
& F(\kappa) = \frac{\sqrt{\im z_{0}}}{2\sqrt{2}} \bigg\{ \tan \Big( \frac{\arg z_{0}}{2} \Big) \im \bigg( \frac{5}{12} \frac{1}{z_{0}^{2}e_{0}}  + \frac{5}{8} \frac{e_{1}}{z_{0}e_{0}} - \frac{\mathfrak{g}_{1}^{2}}{z_{0}e_{0}} \bigg) - \re \bigg( \frac{5}{12} \frac{1}{z_{0}^{2}e_{0}}  + \frac{5}{8} \frac{e_{1}}{z_{0}e_{0}} - \frac{\mathfrak{g}_{1}^{2}}{z_{0}e_{0}} \bigg)  \bigg\} \nonumber \\
& - \frac{\im(\frac{3}{16}\frac{1}{z_{0}e_{0}})}{2\sqrt{2}\sqrt{\im z_{0}}} + \tan \Big( \frac{\arg z_{0}}{2} \Big) \bigg\{ \frac{\im(\frac{19}{48}\frac{1}{z_{0}e_{0}})}{2\sqrt{2}\sqrt{\im z_{0}}} + \im \bigg[ \frac{e^{-\frac{\pi i}{4}}\mathfrak{g}_{1}}{2z_{0}e_{0}} \bigg] \bigg\}, \label{def:F}
\end{align}
where $e_{0}$, $e_{1}$ and $\mathfrak{g}_{1}$ are given by
\begin{align}
& e_{0} = -\sqrt{2}\sqrt{\im z_{0}}Q(z_{0}), \qquad e_{1} = \frac{2}{5}\frac{Q'(z_{0})}{Q(z_{0})} - \frac{i}{10 \, \im z_{0}}, \nonumber \\
& Q(z)=\frac{\kappa+1-\mu}{2}\frac{z-\gamma}{z(z+a)(z-1/a)},\qquad \gamma=\frac{\mu-\kappa}{1-\mu+\kappa}\frac{1}{|z_0|}, \nonumber \\
& \mathfrak{g}_{1} = e^{\frac{\pi i}{4}}\sqrt{2} \sqrt{\im z_{0}} \bigg( \frac{1}{z_{0}+|z_{0}|} + \frac{\epsilon}{\frac{1}{a}-z_{0}+|\frac{1}{a}-z_{0}|} \bigg). \label{gfrak1 simplif}
\end{align}
Note that all the quantities defined above have a highly nontrivial dependence on $\kappa$ and $\mu$, via $z_0=z_0(\kappa,\mu)$.

It is crucial for us to understand the asymptotics of the quantities $H(\kappa,\mu)$, $F(\kappa,\mu)$ and $G(\kappa,\mu)$ as 
$\kappa\to 0$ and as $\kappa\to\kappa_2$.

\begin{proposition}\label{prop:GHF as kappa tends to 0}
Let $\mu\in(0,1)$. 
As $\kappa \to 0$, we have
\begin{align*}
& G(\kappa,\mu) = G_0 +G_* \kappa\log \kappa + G_1\kappa +  \bigO(\kappa^{2}), \qquad \partial_{\kappa}G(\kappa,\mu) = G_* \log \kappa + G_*+G_1+\bigO(\kappa), \\
& \partial_{\kappa}^{j} G(\kappa,\mu) = \bigO(\kappa^{-j+1}) \qquad \mbox{for } j=2,3,4, \\
& H(\kappa,\mu) = -\frac{1}{2}\log \kappa + H_0 + \bigO(\kappa), \qquad F(\kappa,\mu) =  \frac{F_{-1}}{\kappa} + \bigO(1),\\
&\partial_{\kappa}^jH(\kappa,\mu) = \bigO(\kappa^{-j}), \quad \partial_\kappa^j F(\kappa,\mu) = \bigO(\kappa^{-j-1}) \qquad \mbox{for }j=1,2,3,4,\\
\end{align*}
with \begin{align}
&G_0=\varphi_0(x_0), \qquad G_*=1, \qquad G_1=-1-\log\big(|x_{0}|^{2}\varphi_{0}''(x_{0})\big).\label{exprconstantslast} \\
& H_0=\frac{1}{2}\log\big(|x_{0}|^{2}\varphi_{0}''(x_{0})\big)
+\epsilon\log(1-ax_{0}),\qquad F_{-1}=\frac{1}{12}, \label{exprconstantsfirst}
\end{align}
The error terms are moreover uniform for $\mu\in[\delta,1-\delta]$ for any fixed $\delta >0$.
\label{prop:kappatozero}
\end{proposition}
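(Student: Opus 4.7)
The plan is to substitute the expansion
$z_0(\kappa,\mu) = x_0 + iz_0^{(1/2)}\sqrt\kappa + z_0^{(1)}\kappa + iz_0^{(3/2)}\kappa^{3/2} + z_0^{(2)}\kappa^2 + \bigO(\kappa^{5/2})$
from Proposition \ref{prop:z0} into the explicit formulas \eqref{def:G}, \eqref{def:H}, \eqref{def:F} for $G,H,F$ together with the auxiliary expressions \eqref{simplif of Gcal at 0}--\eqref{simplif of Gcal at infty} and \eqref{gfrak1 simplif}, and carefully track the resulting cancellations. Since $z_0 \to x_0 < -a < 0$, the branch cut $\mathcal{S}$ degenerates onto the negative real axis and one obtains the elementary expansions
\begin{align*}
\Im z_0 &= z_0^{(1/2)}\sqrt\kappa + \bigO(\kappa^{3/2}), & |z_0|+\Re z_0 &= \tfrac{(z_0^{(1/2)})^2}{2|x_0|}\kappa + \bigO(\kappa^{3/2}), \\
|z_0-\tfrac{1}{a}|+\Re z_0-\tfrac{1}{a} &= \tfrac{(z_0^{(1/2)})^2}{2(\frac{1}{a}-x_0)}\kappa + \bigO(\kappa^{3/2}), & \cos\!\big(\tfrac{\arg z_0}{2}\big) &= \tfrac{z_0^{(1/2)}}{2|x_0|}\sqrt\kappa + \bigO(\kappa),
\end{align*}
while $\tan(\arg z_0/2)$ blows up like $2|x_0|/(z_0^{(1/2)}\sqrt\kappa)$. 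First I would identify $z_0^{(1/2)}$ itself by Taylor-expanding the saddle equations \eqref{def of z0 intro} at $(\Re z_0,\Im z_0,\kappa) = (x_0,0,0)$: the $\sqrt\kappa$-contributions vanish identically because $\partial_{\Im z_0}$ of each equation is zero on the real axis, so the $\bigO(\kappa)$ terms give a $2\times 2$ real linear system in the unknowns $z_0^{(1)}$ and $(z_0^{(1/2)})^2$ whose solution — using $\varphi_0'(x_0)=0$ (the $\kappa=0$ limit of the first equation) — yields the key identity $(z_0^{(1/2)})^2 = 4/\varphi_0''(x_0)$.

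With these pieces in hand, the expansion of $H(\kappa,\mu)$ is extracted from \eqref{def:H} and \eqref{simplif of Gcal at 0}--\eqref{simplif of Gcal at infty}: $\log\mathcal{G}(0) = \tfrac{1}{2}\log\kappa + \bigO(1)$ and $\log\mathcal{G}_\infty = -\tfrac{1}{2}\log\kappa + \bigO(1)$ combine with $\log\cos(\arg z_0/2) = \tfrac{1}{2}\log\kappa + \bigO(1)$ to give $-\tfrac{1}{2}\log\kappa + H_0 + \bigO(\sqrt\kappa)$; the identity $(z_0^{(1/2)})^2 = 4/\varphi_0''(x_0)$ together with the fact that the $\epsilon$-dependent ratios in $\mathcal{G}(0)$ and $\mathcal{G}_\infty$ both tend to $(1-ax_0)^{\pm 1}$ collapses $H_0$ into the stated form. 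For $G(\kappa,\mu)$, substitution into \eqref{def:G} produces three $\log((|z_0|+\Re z_0)/\Im z_0)$-type contributions of size $\tfrac{1}{2}\log\kappa + \bigO(1)$; their $\kappa\log\kappa$-coefficients, weighted by $\mu-\kappa,1-\mu,\mu$ and by the ratios $\Re z_0/|z_0|, (\Re z_0-\tfrac{1}{a})/|z_0-\tfrac{1}{a}|, (\Re z_0+a)/|z_0+a|$ (each equal to $-1$ at $\kappa=0$), combine to $G_*=1$, while the $\kappa^0$ part reassembles to $\varphi_0(x_0)$ after applying $\varphi_0'(x_0)=0$, and the $\kappa^1$ constant extracts $G_1$ via the identity for $(z_0^{(1/2)})^2$. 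The hardest step is $F(\kappa,\mu)$: the pieces in \eqref{def:F} look individually as singular as $\kappa^{-3/4}$, but the simplified form \eqref{gfrak1 simplif} shows $\mathfrak{g}_1 = \bigO(\sqrt{\Im z_0})$, and $e_0 = -\sqrt{2}\sqrt{\Im z_0}\,Q(z_0) = \bigO(\kappa^{1/4})$; together these force all contributions worse than $\kappa^{-1}$ to cancel, and a direct leading-order computation then gives $F_{-1} = \tfrac{1}{12}$.

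Finally, the derivative bounds follow by termwise differentiation: each $\partial_\kappa$ brings in $\partial_\kappa z_0 = \tfrac{iz_0^{(1/2)}}{2\sqrt\kappa} + \bigO(1)$, which accounts for the growth $\kappa^{-j+1}$, $\kappa^{-j}$, $\kappa^{-j-1}$ in $\partial_\kappa^j G$, $\partial_\kappa^j H$, $\partial_\kappa^j F$ respectively. Uniformity in $\mu\in[\delta,1-\delta]$ is inherited from the uniform version of Proposition \ref{prop:z0}. The principal obstacle is bookkeeping rather than conceptual: many intermediate summands diverge as $\kappa\to 0$, and the stated closed forms for $G_0,G_1,H_0,F_{-1}$ emerge only after systematically using $\varphi_0'(x_0)=0$, the identity $(z_0^{(1/2)})^2 = 4/\varphi_0''(x_0)$, and the quadratic relation $(1-\mu)x_0^2 + ax_0 - \mu = 0$ satisfied by $x_0$.
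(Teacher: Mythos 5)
Your approach is genuinely different from the paper's. The paper establishes the \emph{structure} of the expansions (that $H = -\tfrac12\log\kappa + H_0 + \bigO(\kappa)$, $F = F_{-1}/\kappa + \bigO(1)$, $G = G_0 + G_*\kappa\log\kappa + G_1\kappa + \bigO(\kappa^2)$, with undetermined real constants) by substituting \eqref{z00} into \eqref{def:H}, \eqref{def:F}, \eqref{def:G}, but it determines the \emph{values} of $G_0, G_*, G_1, H_0, F_{-1}$ not by closed-form simplification but by a matching argument: writing $\log F_N^{m,k+1}-\log F_N^{m,k}$ both via Proposition \ref{prop:ratio asymp in rough region} (with the unknown constants) and via the independently derived Proposition \ref{prop:ratio asymp in max region}, and equating the $N$, $\log\kappa$, and $1/k$ coefficients (see \eqref{compat1}). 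This cleverly bypasses what you propose to do head-on. Your direct route is in principle correct, and your key identity $(z_0^{(1/2)})^2 = 4/\varphi_0''(x_0)$ -- which the paper never states -- is indeed exactly what is needed: for instance, combining \eqref{simplif of Gcal at 0}--\eqref{simplif of Gcal at infty} with $|z_0|+\Re z_0 \sim \tfrac{(z_0^{(1/2)})^2\kappa}{2|x_0|}$ and $\cos(\tfrac{\arg z_0}{2})\sim \tfrac{z_0^{(1/2)}\sqrt\kappa}{2|x_0|}$ gives $H = -\tfrac12\log\kappa + \log\tfrac{2|x_0|}{z_0^{(1/2)}} + \epsilon\log(1-ax_0) + o(1)$, and this reduces to the stated $H_0$ precisely when $(z_0^{(1/2)})^2 = 4/\varphi_0''(x_0)$. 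What the paper's matching approach buys is that this identity never needs to be isolated, let alone the much more delicate cancellations in $G$ and $F$.

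Two points in your sketch need repair. First, your claim $\mathfrak{g}_1 = \bigO(\sqrt{\Im z_0})$ is wrong by a factor of $\Im z_0$: from \eqref{gfrak1 simplif}, $z_0 + |z_0| = \tfrac{(\Im z_0)^2}{2|x_0|} + i\Im z_0$, so $(z_0+|z_0|)^{-1} = \bigO(1/\Im z_0)$ and hence $\mathfrak{g}_1 = \bigO((\Im z_0)^{-1/2}) = \bigO(\kappa^{-1/4})$. Consequently the individual terms in \eqref{def:F} are as large as $\kappa^{-5/4}$, and the reduction to $F = F_{-1}/\kappa + \bigO(1)$ depends on the distribution between real and imaginary parts (the $\bigO(\kappa^{-1/4})$ part of $(z_0 e_0)^{-1}$ is essentially real, while the large pieces of $e_1$ and $\mathfrak{g}_1^2$ are essentially imaginary); your bookkeeping must track that structure, not just absolute orders. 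Second, the derivative bounds $\partial_\kappa^j G = \bigO(\kappa^{-j+1})$, $\partial_\kappa^j H = \bigO(\kappa^{-j})$, $\partial_\kappa^j F = \bigO(\kappa^{-j-1})$ do not follow by "termwise differentiation" of $\bigO$-estimates as you suggest -- one cannot differentiate a big-$\bigO$. One needs to return to the closed forms \eqref{def:G}--\eqref{def:F} and the full $\sqrt\kappa$-expansion \eqref{z00} and check, after differentiation, that the same cancellations that produced the leading asymptotics persist; this is what the paper's phrase "straightforward/lengthy computation" is covering.
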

\begin{proof}
Using \eqref{def:H} and Proposition \ref{prop:z0}, we obtain after a straightforward computation that 
\[H(\kappa,\mu)=-\frac{1}{2}\log\kappa +H_0 + \bigO(\kappa)\qquad \mbox{as $\kappa\to 0$},\]
for some $H_0\in\mathbb R$, and that $\partial_{\kappa}H(\kappa,\mu) = \bigO(\kappa^{-j})$ as $\kappa \to 0$ for $j=1,2,3,4$.
Similarly, by \eqref{def:F} and Proposition \ref{prop:z0}, we can prove that
\[F(\kappa,\mu)=\frac{F_{-1}}{\kappa}+\bigO(1)\qquad \mbox{as $\kappa\to 0$},\]
for some $F_{-1}\in\mathbb R$, and that $\partial_\kappa^j F(\kappa,\mu)=\bigO(\kappa^{-j-1})$ as $\kappa\to 0$ for $j=1,2,3,4$.

Finally, by \eqref{def:G} and Proposition \ref{prop:z0}, we find after a lengthy computation that 
\[G(\kappa,\mu) = G_0 +G_* \kappa\log \kappa + G_1\kappa +  \bigO(\kappa^{2}),\]
and that $\partial_{\kappa}G(\kappa,\mu) = G_1 \log \kappa  + G_*+G_1+\bigO(\kappa)$, $\partial_{\kappa}^{j} G(\kappa,\mu) = \bigO(\kappa^{-j+1})$ for $j=2,3,4,$ as $\kappa\to 0$, for some constants $G_0,G_*,G_1\in\mathbb R$.

To compute the constants $H_0, F_{-1}, G_0, G_*, G_1$, we can now use Proposition \ref{prop:ratio asymp in max region} for $k$ fixed but sufficiently large: this gives\begin{align*}
\log F_N^{m,k+1}(a;\epsilon)-\log F_N^{m,k}(a;\epsilon)
& =N\varphi_{0}(x_{0})-\Big(k-\frac{1}{2}\Big)\log N + \log\big((k-1)!\big)  \nonumber \\
& \hspace{-1.5cm} - \Big( k-\frac{1}{2} \Big) \log\big(|x_{0}|^{2}\varphi_{0}''(x_{0})\big)+\epsilon \log(1-ax_{0}) - \frac{1}{2}\log (2\pi)+\bigO(N^{-1/2}),
\end{align*}
as $N\to\infty$,
which we can rewrite in terms of $\kappa=k/N$ as
\begin{align}
\log F_N^{m,k+1}(a;\epsilon)-\log F_N^{m,k}(a;\epsilon)
& =N\varphi_{0}(x_{0})+N\kappa\log\kappa 
-\kappa N-\kappa N\log\big(|x_{0}|^{2}\varphi_{0}''(x_{0})\big)-\frac{1}{2}\log \kappa \nonumber\\
&+\frac{1}{2}\log\big(|x_{0}|^{2}\varphi_{0}''(x_{0})\big)+\epsilon\log(1-ax_{0}) + S(k)+\bigO(N^{-1/2})
,\label{compat1}
\end{align}
with
\[S(k)=\log\big((k-1)!\big)-(k-\frac{1}{2})\log k+k- \frac{1}{2}\log (2\pi), \]
which is $\frac{1}{12{k}}+\bigO(1/k^3)$ as $k\to\infty$ by Stirling's approximation.

Now we compare this with Proposition \ref{prop:ratio asymp in rough region} in which we substitute the above expansions for $H(\kappa,\mu)$, $F(\kappa,\mu)$, $G(\kappa,\mu)$: this gives
\begin{align*}
\log F_N^{m,k+1}(a;\epsilon)-\log F_N^{m,k}(a;\epsilon)
&=NG_0+G_* N \kappa \log\kappa+G_1N\kappa
-\frac{1}{2}\log\kappa+ H_0
+\frac{F_{-1}}{k}\\
&
+\bigO(k^2N^{-1})
+\bigO(kN^{-1})
+\bigO(N^{-1}) +\bigO(k^{-2}),
\end{align*}
as $N\to\infty$ for $k$ large.
Comparing
the latter with \eqref{compat1}, we obtain \eqref{exprconstantslast}--\eqref{exprconstantsfirst}.
\end{proof}

\begin{proposition}
As $\kappa \to \kappa_2$, we have
\begin{align*}
& G(\kappa,\mu) = \tilde G_2(\kappa_{2}-\kappa)^{2}  +  \bigO((\kappa_{2}-\kappa)^{3}),   \\
&\partial_{\kappa} G(\kappa,\mu) = \bigO(\kappa_{2}-\kappa),\qquad \partial_{\kappa}^{j} G(\kappa,\mu) = \bigO(1) \qquad \mbox{for }j=2,3,4, \\
& H(\kappa,\mu) = \bigO(\kappa_{2}-\kappa), \qquad  F(\kappa) = \frac{\tilde F_{-1}}{\kappa_{2}-\kappa} + \bigO(1), \\
&\partial_{\kappa}^jH(\kappa,\mu) = \bigO(1),\qquad \partial_\kappa^j F(\kappa,\mu) = \bigO((\kappa_{2}-\kappa)^{-j-1}) \qquad \mbox{for }j=1,2,3,4,
\end{align*}
with
\begin{equation}\label{exprconstants}
\tilde G_2=\frac{\left(s^*\right)^3}{4},\qquad \tilde F_{-1}=\frac{1}{8}.
\end{equation}
The error terms are moreover uniform for $\mu\in[\delta,1-\delta]\setminus (\frac{a^{2}}{1+a^{2}}-\delta,\frac{a^{2}}{1+a^{2}}+\delta)$ for any fixed $\delta>0$.
\label{prop:kappatokappa2}
\end{proposition}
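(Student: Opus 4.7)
I would derive all three expansions directly from the explicit formulas \eqref{def:G}, \eqref{def:H}, \eqref{def:F} combined with the square-root expansion \eqref{z0kappa2} of $z_0$ and the expansion \eqref{eq:asgamma} of $\gamma$. For $\mu$ in the stated range, the limit point $\tilde z_0^{(0)}=x^*$ stays strictly inside $(0,1/a)$ and away from $\{-a,0,1/a\}$, so the only quantities vanishing as $\kappa\to\kappa_2$ are $\im z_0\sim\tilde z_0^{(1/2)}\sqrt{\kappa_2-\kappa}$, the three differences $|z_0|-\re z_0$, $|z_0-\frac1a|-(\frac1a-\re z_0)$, $|z_0+a|-(\re z_0+a)$ (all of order $\kappa_2-\kappa$), and $z_0-\gamma=\bigO(\sqrt{\kappa_2-\kappa})$ with nonzero imaginary leading coefficient.

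The key algebraic observation that delivers analyticity of $G$ in $\kappa_2-\kappa$ is that the total coefficient of $\log\im z_0$ in \eqref{def:G}, namely
\begin{align*}
\tfrac{1+\kappa-\mu}{2}+\tfrac{(\mu-\kappa)\re z_0}{2|z_0|}-\tfrac{(1-\mu)(\frac1a-\re z_0)}{2|z_0-\frac1a|}-\tfrac{\mu(\re z_0+a)}{2|z_0+a|},
\end{align*}
vanishes identically once one uses the defining equations \eqref{def of z0 intro} for $z_0$: eliminating $(1-\mu)/|z_0-\frac1a|$ via the first equation and then invoking the second collapses the expression to zero. Since the remaining building blocks of $G$ (namely $|z_0|$, $\re z_0$, $|z_0\pm c|$) are analytic in $\kappa_2-\kappa$, so is $G$. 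At $\kappa=\kappa_2$, each bracket in \eqref{def:G} reduces separately to $\log(2x^*/\im z_0)(1-1)=0$, so $G(\kappa_2,\mu)=0$; a similar (but longer) computation shows $\partial_\kappa G(\kappa_2,\mu)=0$, leaving the stated quadratic behaviour. For $H$, one rewrites each vanishing difference using the identities $|z_0|-\re z_0=(\im z_0)^2/(|z_0|+\re z_0)$ and analogues, which eliminates all occurrences of $\log\im z_0$ from \eqref{def:H} and shows that its argument reduces to $1$ at $\kappa=\kappa_2$, so $H=\bigO(\kappa_2-\kappa)$. For $F$, the $1/(\kappa_2-\kappa)$ pole is generated by the terms of \eqref{def:F} containing $1/e_0$, where $e_0=-\sqrt2\sqrt{\im z_0}\,Q(z_0)$ vanishes like $(\kappa_2-\kappa)^{3/4}$ because $Q(z_0)\propto z_0-\gamma=\bigO(\sqrt{\kappa_2-\kappa})$; the term $-\im(\tfrac{3}{16z_0e_0})/(2\sqrt2\sqrt{\im z_0})$ dominates and delivers the pole $\tilde F_{-1}/(\kappa_2-\kappa)$. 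The derivative bounds in all three cases follow by tracking orders via the chain rule through $z_0=z_0(\kappa)$ and $\gamma=\gamma(\kappa)$.

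To pin down the constants $\tilde G_2$ and $\tilde F_{-1}$ without finishing the full symbolic algebra, I would use compatibility with Theorem \ref{thm:boundary}. Fix $\kappa_{\max}\in(\kappa_2,\mu-\delta)$ and telescope the ratio identity of Proposition \ref{prop:ratio asymp in rough region} from $k$ up to $k_1:=\lfloor N\kappa_2-N^{c-1}\rfloor$ for some $c\in(\tfrac{5}{15},\tfrac{6}{15})$, completing the range up to $\lfloor N\kappa_{\max}\rfloor$ via Theorem \ref{thm:boundary} and using Theorem \ref{thm:frozen} to replace $\log F_N^{m,k_{\max}}$ by $\log F_N(a)+o(1)$. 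After approximating Riemann sums by integrals and substituting $G\sim\tilde G_2(\kappa_2-\kappa')^2$ and $F\sim\tilde F_{-1}/(\kappa_2-\kappa')$, the left-hand side becomes $-\tfrac{N^2\tilde G_2}{3}(\kappa_2-\kappa)^3+\tilde F_{-1}\log(\kappa_2-\kappa)+\cdots$; matching with the Tracy-Widom left tail \eqref{lol22}, namely $-M^3/(12c^*)-\tfrac18\log M+\cdots$ with $M=N^{2/3}(\kappa_2-\kappa)$, and using $s^*=(c^*)^{-1/3}$, identifies $\tilde G_2=1/(4c^*)=(s^*)^3/4$ and $\tilde F_{-1}=1/8$. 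The main obstacle is the symbolic bookkeeping in the expansions of \eqref{def:G} and \eqref{def:F}, of which the vanishing of the $\log\im z_0$ coefficient in $G$ is the pivotal simplification; uniformity in the stated range of $\mu$ follows since $x^*$ stays bounded away from $\{0,-a,1/a\}$ and all coefficients in \eqref{z0kappa2}--\eqref{eq:asgamma} depend smoothly on $\mu$ there.
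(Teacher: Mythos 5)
Your structural observations about $G$, $H$, and $F$ (the vanishing of the total $\log\im z_0$ coefficient in $G$, the identities eliminating $\log\im z_0$ from $H$, the $(\kappa_2-\kappa)^{-3/4}$ behaviour of $e_0$ driving the $F$-pole) are a useful expansion of what the paper compresses into "a long computation," and they plausibly establish the form of all three expansions with undetermined $\tilde G_2$ and $\tilde F_{-1}$. Your route to $\tilde G_2$ — a consistency argument against Tracy-Widom asymptotics — is also in the spirit of the paper, although the paper does it more economically by comparing the \emph{single} ratio asymptotics from Proposition \ref{prop:ratio asymp in TW region} and Proposition \ref{prop:ratio asymp in rough region} in the overlap regime $\kappa_2-\kappa=N^{-2/3+\eta}$, $\eta\in(\tfrac1{15},\tfrac16)$, without telescoping; the single ratio produces $N^{-1/3+2\eta}$ against errors of size at worst $N^{-3\eta}$, which can be made subdominant, and this cleanly isolates $\tilde G_2=(s^*)^3/4$.

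The genuine gap is your determination of $\tilde F_{-1}$. The coefficient $\tilde F_{-1}$ contributes only at the $\bigO(\log N)$ level after summation (or at the $N^{-1/3-\eta}$ level in a single ratio), and neither your telescoping scheme nor the paper's ratio comparison can see this term through the accumulated errors. Concretely, in your setup the summed error from Proposition \ref{prop:ratio asymp in rough region} is of order $N^{-1}(\kappa_2-\kappa_1)^{-2}$; to stay within the range of validity of Theorem \ref{thm:boundary} you need $\kappa_2-\kappa_1\lesssim N^{c-1}$ with $c<2/5$, which forces this error to be $\gtrsim N^{1/5}$, vastly larger than $\log(\kappa_2-\kappa)$. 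Even in the single-ratio version, matching the $N^{-1/3-\eta}$ coefficients requires simultaneously $\eta<1/15$ (to control the $q$-expansion error) and $\eta>1/6$ (to control the $N^{-3\eta}$ error from Proposition \ref{prop:ratio asymp in rough region}), which is impossible; the paper states this explicitly. Consequently $\tilde F_{-1}=1/8$ cannot be obtained by any compatibility argument of this flavour, and the paper instead derives it by a direct expansion of \eqref{def:F} and \eqref{Rp1p liquid} using the explicit coefficients $\tilde z_0^{(0)},\tilde z_0^{(1/2)}$ from Proposition \ref{prop:z0}. Your plan as written therefore fails precisely at this step.
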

\begin{proof}
We proceed similarly as in the proof of Proposition \ref{prop:GHF as kappa tends to 0}.
Using \eqref{def:H} and Proposition \ref{prop:z0}, we obtain  
\begin{align*}
H(\kappa,\mu)=\bigO(\kappa_2-\kappa),\qquad\partial_{\kappa}^jH(\kappa,\mu) = \bigO(1), \qquad \mbox{as }\kappa\to \kappa_2, \quad j=1,2,3,4.
\end{align*}
By \eqref{def:F}, Proposition \ref{prop:z0} and \eqref{Rp1p liquid}, we find
\[F(\kappa,\mu)=\frac{\tilde F_{-1}}{\kappa_{2}-\kappa}+\bigO(1)\qquad \mbox{as $\kappa\to \kappa_2$},\]
for some $\tilde F_{-1}\in\mathbb R$, and $\partial_\kappa^j F(\kappa,\mu)=\bigO((\kappa_2-\kappa)^{-j-1})$ as $\kappa\to \kappa_2$ for $j=1,2,3,4$.
By \eqref{def:G} and Proposition \ref{prop:z0}, we obtain after a long computation that 
\[G(\kappa,\mu) = \tilde G_2(\kappa_2-\kappa)^2 +  \bigO((\kappa_2-\kappa)^{3}),\]
and that $\partial_{\kappa}G(\kappa,\mu) = \bigO(\kappa_2-\kappa)$ as $\kappa\to \kappa_{2}$, for some constant $\tilde G_2\in\mathbb R$, as well as the fact that $\partial_\kappa^j G(\kappa,\mu)$ is bounded for $j=2,3,4$.

To compute the constant $\tilde G_2$, we use Proposition \ref{prop:ratio asymp in TW region} and \eqref{asymp of u and q update} for $\kappa_2-\kappa=N^{-\frac{2}{3}+\eta}$ with $\eta\in (\frac{1}{15},\frac{1}{6})$. We get
\begin{align}
& \log F_N^{m,k+1}(a;\epsilon)-\log F_N^{m,k}(a;\epsilon)
= \frac{\left(s^*\right)^3}{4}N^{-\frac{1}{3}+2\eta} \nonumber \\
& \hspace{2cm} +\frac{1}{8}N^{-\frac{1}{3}-\eta}+\bigO\left(N^{-\frac{2}{3}+4\eta}+N^{-\frac{1}{3}-\frac{5}{2}\eta}\right),\label{compat2}
\end{align}
as $N\to\infty$.
We compare this with Proposition \ref{prop:ratio asymp in rough region} in which we substitute the above expansions of $H(\kappa,\mu)$, $F(\kappa,\mu)$, $G(\kappa,\mu)$,
\begin{align}\label{compat2 bis}
\log F_N^{m,k+1}(a;\epsilon)-\log F_N^{m,k}(a;\epsilon)
&=\tilde G_2N^{-\frac{1}{3}+2\eta}+\tilde F_{-1}N^{-\frac{1}{3}-\eta}
+\bigO(N^{-1+3\eta}+N^{-\frac{2}{3}+\eta}  + N^{-3\eta}),
\end{align}
as $N\to\infty$.
Comparing 
the latter with \eqref{compat2}, we find the expression for $\tilde G_2$ in \eqref{exprconstants}. Note that $\tilde F_{-1}$ cannot be obtained by comparing \eqref{compat2} with \eqref{compat2 bis}, because $\eta$ cannot be chosen in such a way that the $\bigO$-terms in both \eqref{compat2} and \eqref{compat2 bis} are simultaneously small. However, a long and tedious computation, which uses the explicit expressions of the constants $\tilde{z}_{0}^{(0)}$, $\tilde{z}_{0}^{(1/2)}$ from Proposition \ref{prop:z0}, shows that $\tilde F_{-1}=\frac{1}{8}$. \end{proof}

\subsection{Proof of Theorem \ref{thm:rough}}

We will now sum the result from Proposition \ref{prop:ratio asymp in rough region} over $k$ running from $k_-$ to $k_+$, where $k_{-},k_{+}\in[M+1, N(\kappa_2-MN^{-2/3})]\cap \N$. We will explicitly evaluate the summation of the main terms, and estimate the summation of the error terms.
The key will be to recognize the sums as Riemann sums of integrals. We record the following elementary lemma to estimate the difference between Riemann sums and integrals.

\begin{lemma}\label{lemma:Riemannsum}
Let $k_-<k_+$ be integers and let $N\in\mathbb N$.
Write 
\begin{align}\label{def of kappa pm}
\kappa_-=\frac{k_--\frac{1}{2}}{N}, \qquad \kappa_+=\frac{k_++\frac{1}{2}}{N}.
\end{align}
Let $h$ be $C^4$ on $[\kappa_-,\kappa_+]$.
We have
\begin{align*}
\sum_{k=k_-}^{k_+}h(k/N)=N
\int_{\kappa_-}^{\kappa_+}h(x)dx-\frac{1}{24 N}(h'(\kappa_+)-h'_-(\kappa_-))+{\bigO(N^{-4}\Sigma_{h^{(4)}})},\qquad N \to\infty,
\end{align*}
where
\[\Sigma_f:=\sum_{k=k_-}^{k_+}\max_{\kappa \in [k/N,(k+1)/N]} |f(\kappa)|.\]
\end{lemma}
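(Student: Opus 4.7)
The plan is to recognize that $k/N$ is the midpoint of the subinterval $[(k-\tfrac12)/N,(k+\tfrac12)/N]$ of length $1/N$, so that $h(k/N)/N$ is exactly the midpoint-rule approximation to the integral of $h$ over this subinterval. I would carry out two nested midpoint-rule expansions: one at fourth order for $h$, and a weaker second-order one for $h''$, both of whose remainders can be majorized using only $h^{(4)}$.

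First, I would Taylor expand $h$ about $k/N$ up to order four and integrate over the symmetric subinterval $[(k-\tfrac12)/N,(k+\tfrac12)/N]$. The linear and cubic contributions vanish by symmetry, and what remains is
\begin{equation*}
\int_{(k-1/2)/N}^{(k+1/2)/N} h(x)\,dx = \frac{h(k/N)}{N} + \frac{h''(k/N)}{24 N^{3}} + \bigO\!\left(\frac{1}{N^{5}}\,\max_{|t-k/N|\leq 1/(2N)}|h^{(4)}(t)|\right).
\end{equation*}
Summing from $k=k_-$ to $k=k_+$ assembles the left-hand integrals into $\int_{\kappa_-}^{\kappa_+}$, and the pointwise maxima into at most a bounded multiple of $\Sigma_{h^{(4)}}$ (since each interval $[k/N-1/(2N),k/N+1/(2N)]$ is covered by at most two of the intervals appearing in the definition of $\Sigma_{h^{(4)}}$), yielding the intermediate identity
\begin{equation*}
\int_{\kappa_-}^{\kappa_+} h(x)\,dx = \frac{1}{N}\sum_{k=k_-}^{k_+} h(k/N) + \frac{1}{24 N^{3}}\sum_{k=k_-}^{k_+} h''(k/N) + \bigO\!\left(N^{-5}\,\Sigma_{h^{(4)}}\right).
\end{equation*}

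Next, I would apply the same midpoint-rule principle one order lower, to $h''$: a second-order Taylor expansion of $h''$ about $k/N$ (whose remainder already involves $h^{(4)}$) gives, after summation and telescoping via the fundamental theorem of calculus,
\begin{equation*}
\sum_{k=k_-}^{k_+} h''(k/N) = N\int_{\kappa_-}^{\kappa_+} h''(x)\,dx + \bigO\!\left(N^{-2}\,\Sigma_{h^{(4)}}\right) = N\bigl(h'(\kappa_+)-h'(\kappa_-)\bigr) + \bigO\!\left(N^{-2}\,\Sigma_{h^{(4)}}\right).
\end{equation*}
Substituting this into the previous identity, multiplying through by $N$, and solving for the sum yields the claim with total error $\bigO(N^{-4}\Sigma_{h^{(4)}})$.

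There is no serious obstacle; the whole argument is careful bookkeeping of Taylor remainders. The only minor subtlety is choosing the orders of expansion (four for $h$, two for $h''$) so that all remainder terms can be majorized uniformly by $|h^{(4)}|$ and aggregated into a single $\Sigma_{h^{(4)}}$-type sum, and so that the factor $N^{-3}$ in front of $\sum h''(k/N)$ absorbs the weaker error incurred in approximating that secondary sum by an integral.
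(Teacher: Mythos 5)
Your proposal is correct and follows essentially the same route as the paper: a fourth-order midpoint expansion for $h$ on each subinterval $[(k-\tfrac12)/N,(k+\tfrac12)/N]$, a second-order midpoint expansion for $h''$, and back-substitution using the fundamental theorem of calculus to eliminate the residual $\sum h''(k/N)$ term. The paper writes the intermediate identity as $\int h'' = \tfrac{1}{N}\sum h''(k/N)+\bigO(N^{-3}\Sigma_{h^{(4)}})$, which is just your version rearranged, and it glosses over the minor half-interval offset between the midpoint subintervals and the intervals $[k/N,(k+1)/N]$ appearing in $\Sigma_{h^{(4)}}$ that you explicitly address.
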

\begin{proof}
Using a mid-point rule for the integrals of $h$ and $h''$, we obtain that 
\begin{align*}
& \int_{\kappa_-}^{\kappa_+}h(x)dx=\frac{1}{N}\sum_{k=k_-}^{k_+}h(k/N)+ \frac{1}{24 N^3}\sum_{k=k_-}^{k_+}h''(k/N)+\bigO(N^{-5}\Sigma_{h^{(4)}}), \\
& \int_{\kappa_-}^{\kappa_+}h''(x)dx=\frac{1}{N}\sum_{k=k_-}^{k_+}h''(k/N)+ \bigO(N^{-3}\Sigma_{h^{(4)}}),
\end{align*}
as $N\to\infty$.
This implies that
\begin{align*}
\int_{\kappa_-}^{\kappa_+}h(x)dx&=\frac{1}{N}\sum_{k=k_-}^{k_+}h(k/N)+ \frac{1}{24 N^2}\int_{\kappa_-}^{\kappa_+}h''(x)dx+\bigO(N^{-5}\Sigma_{h^{(4)}})\\
&=\frac{1}{N}\sum_{k=k_-}^{k_+}h(k/N)+ \frac{1}{24 N^2}(h'(\kappa_+)-h'(\kappa_-))+{\bigO(N^{-5}\Sigma_{h^{(4)}})}.
\end{align*}
Consequently,
\begin{align*}
\sum_{k=k_-}^{k_+}h(k/N)&=N
\int_{\kappa_-}^{\kappa_+}h(x)dx-\frac{1}{24 N}(h'(\kappa_+)-h'(\kappa_-))+\bigO(N^{-4}\Sigma_{h^{(4)}}).
\end{align*}
\end{proof}

\begin{lemma}\label{lemma:Riemann sums}
Let $M\in \N$ be fixed but sufficiently large, and let $\delta>0$ be fixed. As $N \to +\infty$, we have 
\begin{align}\label{lol18}
& \sum_{k=k_-}^{k_{+}} N G(\kappa=\tfrac{k}{N},\mu) = N^{2} \int_{\kappa_-}^{\kappa_{+}} G(\kappa,\mu)d\kappa  - \frac{ \partial_{\kappa}G(\kappa_{+},\mu)-\partial_{\kappa}G(\kappa_{-},\mu)}{24} + {\bigO(\kappa_{-}^{-2}N^{-2})}, 
\end{align}
uniformly for $k_{-}, k_+\in[M+1, \min\{N(\kappa_2-MN^{-2/3}),N(\mu-\delta)\}]{\cap \N}$ and $\mu\in [\delta,1-\delta]$, and where $\kappa_{+},\kappa_{-}$ are as in \eqref{def of kappa pm}.
In particular, the following hold.
\begin{itemize}
\item As $N \to +\infty$, we have
\begin{align}
& \sum_{k=M+1}^{k_{+}} N G(\tfrac{k}{N},\mu) = N^{2} \int_{0}^{\kappa_{+}} G(\kappa,\mu)d\kappa -
G_0\bigg(M+\frac{1}{2}\bigg)N -\frac{6M^2+6M+1}{12}\log \frac{2M+1}{2N}  \nonumber \\
& -\frac{ \partial_{\kappa}G(\kappa_{+},\mu)}{24} +\frac{5}{48}-\frac{G_1}{12} +\frac{1}{4}\left(M^2+M\right)(1-2G_1)
 +\bigO\bigg(\frac{1}{M^2} +  \frac{M^{3}}{N} \bigg), \label{lol19}
\end{align}
uniformly for $k_+\in[M+1, \min\{N(\kappa_2-MN^{-2/3}),N(\mu-\delta)\}]\cap \N$ and $\mu\in [\delta,1-\delta]$.
\item Let $k_{+} = \lfloor N(\kappa_{2}-\frac{M}{N^{2/3}})\rfloor$. As $N \to +\infty$, we have
\begin{align}
\sum_{k=k_-}^{k_{+}} N G(\tfrac{k}{N},\mu) = N^{2} \int_{\kappa_-}^{\kappa_{2}} G(\kappa,\mu)d\kappa  + \frac{\partial_{\kappa}G(\kappa_-,\mu)}{24} - \frac{\tilde G_2}{3}M^{3} + \bigO\bigg(\frac{1}{\kappa_{-}^{2}N^{2}}+ \frac{M^{2}}{N^{1/3}}\bigg).\label{lol19b}
\end{align}
uniformly for $k_-\in[M+1, N(\kappa_2-MN^{-2/3})]\cap \N$ and for $\mu \in [\delta,1-\delta]\setminus (\frac{a^{2}}{1+a^{2}}-\delta,\frac{a^{2}}{1+a^{2}}+\delta)$.
\end{itemize} 
\end{lemma}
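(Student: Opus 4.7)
The strategy is to apply Lemma \ref{lemma:Riemannsum} to the function $h(\kappa) = NG(\kappa,\mu)$, and then, for the two refined statements \eqref{lol19} and \eqref{lol19b}, to split the integral $\int_{\kappa_-}^{\kappa_+}G\,d\kappa$ at $0$ (resp.\ at $\kappa_2$) and evaluate the remainder using the asymptotic expansions of $G$ provided by Propositions \ref{prop:kappatozero} and \ref{prop:kappatokappa2}.

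For \eqref{lol18}, Lemma \ref{lemma:Riemannsum} applied to $h=NG$ gives
\[\sum_{k=k_-}^{k_+}NG(\tfrac{k}{N},\mu)=N^2\int_{\kappa_-}^{\kappa_+}G(\kappa,\mu)\,d\kappa-\frac{\partial_\kappa G(\kappa_+,\mu)-\partial_\kappa G(\kappa_-,\mu)}{24}+\bigO\big(N^{-3}\Sigma_{G^{(4)}}\big),\]
since $h^{(4)}=NG^{(4)}$. The key step is to bound $\Sigma_{G^{(4)}}$: by Propositions \ref{prop:kappatozero} and \ref{prop:kappatokappa2}, $\partial_\kappa^{4}G(\kappa,\mu)=\bigO(\kappa^{-3})$ uniformly on $\kappa\in(0,\kappa_2-\delta]$ (the behavior at $\kappa_2$ being bounded and hence subsumed). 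A tail estimate then gives $\Sigma_{G^{(4)}}\lesssim N^3\sum_{k\geq k_-}k^{-3}\lesssim N^3/k_-^2\asymp N/\kappa_-^{2}$, so the error is $\bigO(\kappa_-^{-2}N^{-2})$ as required.

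For \eqref{lol19}, set $k_-=M+1$ so $\kappa_-=(M+\tfrac12)/N$, and split $\int_{\kappa_-}^{\kappa_+}=\int_0^{\kappa_+}-\int_0^{\kappa_-}$. Using $G(\kappa,\mu)=G_0+\kappa\log\kappa+G_1\kappa+\bigO(\kappa^2)$ (with $G_*=1$), term-by-term integration yields
\[N^2\int_0^{\kappa_-}G\,d\kappa=G_0(M+\tfrac12)N+\tfrac{(M+\frac12)^2}{2}\log\kappa_--\tfrac{(M+\frac12)^2}{4}+G_1\tfrac{(M+\frac12)^2}{2}+\bigO(M^3/N),\]
while $\partial_\kappa G(\kappa_-,\mu)=\log\kappa_-+1+G_1+\bigO(M/N)$. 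Substituting these into \eqref{lol18} and collecting terms: the coefficient of $\log\kappa_-$ becomes $-\tfrac{(M+\frac12)^2}{2}+\tfrac{1}{24}=-\tfrac{6M^2+6M+1}{12}$, while the constants combine via $(M+\tfrac12)^2/4-G_1(M+\tfrac12)^2/2+(1+G_1)/24=\tfrac14(M^2+M)(1-2G_1)+\tfrac{5}{48}-\tfrac{G_1}{12}$. This gives the stated expansion with error $\bigO(M^{-2}+M^3/N)$, the first coming from Part 1 at $k_-=M+1$ and the second from $\bigO(\kappa_-^3)$ in the Taylor remainder.

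For \eqref{lol19b}, take $k_+=\lfloor N(\kappa_2-M/N^{2/3})\rfloor$, so $\kappa_2-\kappa_+=M/N^{2/3}+\bigO(N^{-1})$, and split $\int_{\kappa_-}^{\kappa_+}=\int_{\kappa_-}^{\kappa_2}-\int_{\kappa_+}^{\kappa_2}$. By Proposition \ref{prop:kappatokappa2}, $G(\kappa,\mu)=\tilde G_2(\kappa_2-\kappa)^2+\bigO((\kappa_2-\kappa)^3)$, hence $N^2\int_{\kappa_+}^{\kappa_2}G\,d\kappa=\tilde G_2 M^3/3+\bigO(M^2/N^{1/3})$, where the $\bigO(M^2/N^{1/3})$ absorbs both the $\bigO((\kappa_2-\kappa_+)^{4})N^2=\bigO(M^4/N^{2/3})$ from the Taylor remainder and the discretization effect $\kappa_2-\kappa_+\neq M/N^{2/3}$ exactly. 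The Riemann correction at $\kappa_+$ is $\partial_\kappa G(\kappa_+,\mu)/24=\bigO(M/N^{2/3})$, which is absorbed in $\bigO(M^2/N^{1/3})$ for $N$ large. Combining with \eqref{lol18} yields \eqref{lol19b}. The main obstacle throughout is precisely this bookkeeping of constants—ensuring that the $+1/24$ coming from the Riemann correction combines cleanly with the $-(M+\tfrac12)^2/2$ coefficient of $\log\kappa_-$ to produce the target $-\tfrac{6M^2+6M+1}{12}$, and that the various residual powers of $M$ and $N$ line up—rather than any analytic difficulty, all heavy analytic inputs being encoded in Propositions \ref{prop:kappatozero}, \ref{prop:kappatokappa2} and Lemma \ref{lemma:Riemannsum}.
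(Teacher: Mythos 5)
Your proof is correct and follows essentially the same path as the paper's: apply Lemma \ref{lemma:Riemannsum} to $G$ (applying it to $h=NG$ is the same thing, since the lemma's statement is linear in $h$), use Propositions \ref{prop:kappatozero} and \ref{prop:kappatokappa2} to bound $\partial_\kappa^4 G$ and to expand $G$ and $\partial_\kappa G$ near the endpoints, and split the integral at $0$ (resp.\ $\kappa_2$) for the two refined statements. The bookkeeping of the constants, including the coefficient $-\frac{(M+1/2)^2}{2}+\frac{1}{24}=-\frac{6M^2+6M+1}{12}$ of $\log\kappa_-$ and the combination $\frac14(M^2+M)(1-2G_1)+\frac{5}{48}-\frac{G_1}{12}$, matches the paper exactly, and your estimate $\Sigma_{G^{(4)}}\lesssim N^3/k_-^2$ gives precisely the claimed $\bigO(\kappa_-^{-2}N^{-2})$ error.
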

\begin{proof}
The result \eqref{lol18} follows from Lemma \ref{lemma:Riemannsum} applied to the function $h(\kappa)=G(\kappa,\mu)$ and from the estimates for $\partial_\kappa^4 G(\kappa,\mu)$ obtained in Propositions \ref{prop:kappatozero} and \ref{prop:kappatokappa2}.

To derive \eqref{lol19}, we again use Proposition \ref{prop:kappatozero}, with $k_-=M+1$. We use it to obtain
\begin{align*}
&\sum_{k=k_-}^{k_{+}} N G(\tfrac{k}{N},\mu) = N^{2} \int_{\kappa_-}^{\kappa_{+}} G(\kappa,\mu)d\kappa  - \frac{ \partial_{\kappa}G(\kappa_{+},\mu)-\partial_{\kappa}G(\kappa_{-},\mu)}{24} + {\bigO(M^{-2})}\\
& =N^{2} \int_{0}^{\kappa_+} G(\kappa,\mu)d\kappa -N^{2} \int_0^{\kappa_-} G(\kappa,\mu)d\kappa  + \frac{1}{24}\left(- \partial_{\kappa}G(\kappa_{+},\mu)+\log\frac{2M+1}{2N}+1+G_1 \right)+ {\bigO(M^{-2})},
\end{align*}
which yields the result after a straightforward computation.
We obtain \eqref{lol19b} similarly using Proposition \ref{prop:kappatokappa2}.
\end{proof}

\begin{lemma}\label{lemma:Riemann sums1}
Let $M\in \N$ be fixed but sufficiently large, and let $\delta>0$ be fixed. As $N \to +\infty$, we have  
\begin{equation}\label{lol20}
\sum_{k=k_{-}}^{k_{+}}H(\kappa=\tfrac{k}{N}) = N \int_{\kappa_{-}}^{\kappa_{+}} H(\kappa,\mu)d\kappa + \bigO\bigg( \frac{1}{\kappa_{-}N} \bigg).
\end{equation}
uniformly for $k_{-}, k_+\in[M+1, \min\{N(\kappa_2-MN^{-2/3}),N(\mu-\delta)\}]{\cap \N}$  and $\mu\in [\delta,1-\delta]$, and where $\kappa_{+},\kappa_{-}$ are as in \eqref{def of kappa pm}.
In particular, the following hold.
\begin{itemize}
\item As $N \to +\infty$, we have
\begin{align*}
\hspace{-0.3cm} \sum_{k=M+1}^{k_{+}}H(\kappa=\tfrac{k}{N},\mu) = N \int_{0}^{\kappa_{+}} H(\kappa,\mu)d\kappa +
 \frac{2M+1}{4}\log \frac{2M+1}{2N}
- \frac{2M+1}{4}(2H_0+1) 
 + \bigO\bigg( \frac{1}{M} + \frac{M^{2}}{N} \bigg),
\end{align*}
uniformly for $k_+\in[M+1, \min\{N(\kappa_2-MN^{-2/3}),N(\mu-\delta)\}]\cap \N$ and $\mu\in [\delta,1-\delta]$.
\item Let $k_{+} = \lfloor N(\kappa_{2}-\frac{M}{N^{2/3}})\rfloor$. As $N \to +\infty$, we have 
\begin{align*}
\sum_{k=k_{-}}^{k_{+}}H(\kappa=\tfrac{k}{N},\mu) = N \int_{\kappa_{-}}^{\kappa_{2}} H(\kappa,\mu)d\kappa + \bigO\bigg( \frac{1}{\kappa_{-}N} + \frac{M^{2}}{N^{1/3}} \bigg)
\end{align*}
uniformly for $k_-\in[M+1, N(\kappa_2-MN^{-2/3})]\cap \N$ and for $\mu \in [\delta,1-\delta]\setminus (\frac{a^{2}}{1+a^{2}}-\delta,\frac{a^{2}}{1+a^{2}}+\delta)$.
\end{itemize} 
\end{lemma}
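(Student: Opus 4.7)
The plan is to apply Lemma \ref{lemma:Riemannsum} directly with $h(\kappa) = H(\kappa,\mu)$, exactly as in the proof of Lemma \ref{lemma:Riemann sums}, and then to compute the boundary integrals explicitly using the expansions from Propositions \ref{prop:GHF as kappa tends to 0} and \ref{prop:kappatokappa2}. The derivative estimates needed are already in these two propositions: $\partial_\kappa^{j} H(\kappa,\mu)=\bigO(\kappa^{-j})$ as $\kappa \to 0$ for $j=1,\ldots,4$, and $\partial_\kappa^{j} H(\kappa,\mu)=\bigO(1)$ as $\kappa\to\kappa_2$ for $j=1,\ldots,4$, both uniformly in $\mu$ on the relevant compact sets.

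For the main estimate \eqref{lol20}, Lemma \ref{lemma:Riemannsum} yields
\begin{equation*}
\sum_{k=k_-}^{k_+}H(\tfrac{k}{N},\mu) = N\int_{\kappa_-}^{\kappa_+}H(\kappa,\mu)d\kappa - \frac{H'(\kappa_+,\mu)-H'(\kappa_-,\mu)}{24N} + \bigO\bigl(N^{-4}\Sigma_{H^{(4)}}\bigr).
\end{equation*}
Since $H'(\kappa_-,\mu)=\bigO(\kappa_-^{-1})$ and $H'(\kappa_+,\mu)=\bigO(1)$ (as $\kappa_+$ is bounded away from $0$), the explicit correction term is $\bigO((\kappa_- N)^{-1})$. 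For the remainder, $\Sigma_{H^{(4)}}\leq C\sum_{k=k_-}^{k_+}(N/k)^{4}\leq C' N^{4}/k_-^{3}$, which gives $\bigO(N^{-3}\kappa_-^{-3})$; this is dominated by $\bigO((\kappa_-N)^{-1})$ as soon as $\kappa_-\geq 1/N$, which holds throughout the stated range. This proves \eqref{lol20}.

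For the first bullet, I take $k_-=M+1$, so $\kappa_- = (2M+1)/(2N)$, and split $\int_{\kappa_-}^{\kappa_+}=\int_0^{\kappa_+}-\int_0^{\kappa_-}$. Using the expansion $H(\kappa,\mu)=-\tfrac12\log\kappa + H_0 + \bigO(\kappa)$ from Proposition \ref{prop:GHF as kappa tends to 0}, I integrate:
\begin{equation*}
\int_0^{\kappa_-}H(\kappa,\mu)d\kappa = \kappa_-\Bigl(-\tfrac12\log\kappa_- + \tfrac12 + H_0\Bigr) + \bigO(\kappa_-^{2}).
\end{equation*}
Multiplying by $N$, with $N\kappa_- = (2M+1)/2$ and $-\log\kappa_- = \log\tfrac{2N}{2M+1}$, the combination yields the explicit terms $\tfrac{2M+1}{4}\log\tfrac{2M+1}{2N} - \tfrac{2M+1}{4}(2H_0+1)$; the error from this integral step is $\bigO(M^{2}/N)$, and combined with $\bigO((\kappa_-N)^{-1})=\bigO(1/M)$ from \eqref{lol20}, this gives the stated remainder.

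For the second bullet, with $k_+ = \lfloor N(\kappa_2-MN^{-2/3})\rfloor$, I use $\int_{\kappa_-}^{\kappa_+} = \int_{\kappa_-}^{\kappa_2} - \int_{\kappa_+}^{\kappa_2}$. Proposition \ref{prop:kappatokappa2} gives $H(\kappa,\mu)=\bigO(\kappa_2-\kappa)$ as $\kappa\to\kappa_2$, so
\begin{equation*}
N\int_{\kappa_+}^{\kappa_2}H(\kappa,\mu)d\kappa = \bigO\bigl(N(\kappa_2-\kappa_+)^{2}\bigr)=\bigO(M^{2}/N^{1/3}),
\end{equation*}
which combines with the $\bigO((\kappa_-N)^{-1})$ error from \eqref{lol20} to give the claimed bound. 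I do not expect any genuine obstacle here; the argument is entirely parallel to the proof of Lemma \ref{lemma:Riemann sums} already presented, with $H$ replacing $G$ and with the mild logarithmic singularity at $\kappa=0$ handled by the explicit small-$\kappa$ expansion. The only piece requiring attention is keeping track that the $\bigO$-constants are uniform in $\mu$ in the stated range, which is immediate from the uniform statements in Propositions \ref{prop:kappatozero} and \ref{prop:kappatokappa2}.
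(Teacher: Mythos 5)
Your proof is correct and follows the same approach as the paper, which simply applies Lemma \ref{lemma:Riemannsum} with $h=H$ and invokes the derivative bounds from Propositions \ref{prop:kappatozero} and \ref{prop:kappatokappa2}; you merely work out the integrals more explicitly. One small imprecision: in justifying the error term in \eqref{lol20} you assert $H'(\kappa_+,\mu)=\bigO(1)$ because ``$\kappa_+$ is bounded away from $0$,'' but in the general statement $k_+$ can be as small as $M+1$, so $\kappa_+$ need not stay away from $0$; the conclusion is nonetheless unchanged since $H'(\kappa_+,\mu)=\bigO(\kappa_+^{-1})=\bigO(\kappa_-^{-1})$ anyway, because $\kappa_+\geq\kappa_-$.
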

\begin{proof}Similarly as in the proof of Lemma \ref{lemma:Riemann sums}, we apply Lemma \ref{lemma:Riemannsum}, but now with $h(\kappa)=H(\kappa,\mu)$.
To estimate the error term, we use the bounds for $\partial_\kappa H(\kappa,\mu)$ and $\partial_\kappa^4 H(\kappa,\mu)$ from Propositions \ref{prop:kappatozero} and \ref{prop:kappatokappa2}. The two special cases are obtained in a similar way as in the proof of Lemma \ref{lemma:Riemann sums}, using Propositions \ref{prop:kappatozero} and \ref{prop:kappatokappa2}.
\end{proof}

\begin{lemma}
\label{lemma:Riemann sums2}
Let $M\in \N$ be fixed but sufficiently large, and let $\delta>0$ be fixed. As $N \to +\infty$, we have
\begin{align*}
\frac{1}{N}\sum_{k=k_{-}}^{k_{+}} F(\tfrac{k}{N},\mu) = \int_{\kappa_{-}}^{\kappa_{+}} F(\kappa,\mu) d\kappa + \bigO\bigg( \frac{1}{N^{2}\kappa_{-}^{2}} + \frac{1}{N^{2}(\kappa_{2}-\kappa_{+})^{2}} \bigg).
\end{align*}
uniformly for $k_{-},k_+\in[M+1, \min\{N(\kappa_2-MN^{-2/3}),N(\mu-\delta)\}]{\cap \N}$  and $\mu\in [\delta,1-\delta]$, and where $\kappa_{+},\kappa_{-}$ are as in \eqref{def of kappa pm}. In particular, the following hold.
\begin{itemize}
\item As $N \to +\infty$, we have 
\begin{align*}
\frac{1}{N}\sum_{k=M+1}^{k_{+}} F(\tfrac{k}{N},\mu) & = -\frac{1}{12} \log \frac{2M+1}{2N} + \int_{0}^{\kappa_{+}} \bigg( F(\kappa,\mu) -\frac{1}{12\kappa} \bigg)d\kappa \\
& + \frac{1}{12} \log \kappa_{+} + \bigO\bigg( \frac{1}{M^{2}} + \frac{M}{N} \bigg),
\end{align*}
uniformly for $k_+\in[M+1, \min\{N(\kappa_2-MN^{-2/3}),N(\mu-\delta)\}]\cap \N$ and $\mu\in [\delta,1-\delta]$.
\item Let $k_{+} = \lfloor N(\kappa_{2}-\frac{M}{N^{2/3}})\rfloor$. As $N \to +\infty$, we have 
\begin{multline*}
\frac{1}{N}\sum_{k=k_{-}}^{k_{+}} F(\tfrac{k}{N},\mu) =  \frac{1}{12} \log N - \frac{1}{8} \log M + \frac{1}{8} \log (\kappa_{2}-\kappa_{-}) \\
 + \int_{\kappa_{-}}^{\kappa_{2}} \bigg( F(\kappa,\mu) - \frac{1}{8}\frac{1}{\kappa_{2}-\kappa} \bigg)d\kappa   + \bigO\bigg( \frac{1}{N^{2}\kappa_{-}^{2}} + \frac{1}{M N^{1/3}} \bigg),
\end{multline*}
uniformly for $k_-\in[M+1, N(\kappa_2-MN^{-2/3})]\cap \N$ and for $\mu \in [\delta,1-\delta]\setminus (\frac{a^{2}}{1+a^{2}}-\delta,\frac{a^{2}}{1+a^{2}}+\delta)$.
\end{itemize} 
\end{lemma}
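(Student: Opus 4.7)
The proof would parallel those of Lemmas \ref{lemma:Riemann sums} and \ref{lemma:Riemann sums1}, in three stages matching the three assertions, with the added complication that $F(\kappa,\mu)$ is singular at \emph{both} endpoints of the admissible $\kappa$-range. For the general bound, I would apply Lemma \ref{lemma:Riemannsum} directly with $h(\kappa) = F(\kappa,\mu)$; after dividing by $N$, the boundary correction $-\frac{1}{24N^{2}}(F'(\kappa_+) - F'(\kappa_-))$, combined with the estimates $\partial_\kappa F = \bigO(\kappa^{-2})$ from Proposition \ref{prop:GHF as kappa tends to 0} and $\partial_\kappa F = \bigO((\kappa_2-\kappa)^{-2})$ from Proposition \ref{prop:kappatokappa2}, contributes exactly $\bigO\big(\tfrac{1}{N^{2}\kappa_-^{2}} + \tfrac{1}{N^{2}(\kappa_2-\kappa_+)^{2}}\big)$. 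The fourth-derivative remainder $\frac{1}{N} \cdot N^{-4}\Sigma_{F^{(4)}}$, crudely controlled using $|F^{(4)}(\kappa)| \leq C(\kappa^{-5} + (\kappa_2-\kappa)^{-5})$, is $\bigO((N\kappa_-)^{-4}) + \bigO((N(\kappa_2-\kappa_+))^{-4})$, which is strictly smaller than the boundary term under the hypotheses $N\kappa_- \geq M$ and $N^{2/3}(\kappa_2-\kappa_+) \geq M$ with $M$ chosen large.

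For the first special case ($k_- = M+1$) I would decompose $F(\kappa,\mu) = \tfrac{F_{-1}}{\kappa} + \tilde F(\kappa,\mu)$ with $F_{-1} = \tfrac{1}{12}$, so that $\tilde F$ and its first four derivatives extend continuously to $\kappa = 0$ by Proposition \ref{prop:GHF as kappa tends to 0}. Applying the general bound to $\tilde F$ gives $\frac{1}{N}\sum \tilde F(k/N) = \int_{\kappa_-}^{\kappa_+}\tilde F\,d\kappa + \bigO(N^{-2}) + \bigO\big(\tfrac{1}{N^{2}(\kappa_2-\kappa_+)^{2}}\big)$; writing $\int_{\kappa_-}^{\kappa_+}\tilde F = \int_0^{\kappa_+}\tilde F - \int_0^{\kappa_-}\tilde F$ leaves a tail $\int_0^{\kappa_-}\tilde F\,d\kappa = \bigO(M/N)$. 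For the singular piece I would invoke Lemma \ref{lemma:Riemannsum} a second time with $h(\kappa) = 1/\kappa$, obtaining $F_{-1}\sum_{k=M+1}^{k_+}\tfrac{1}{k} = F_{-1}[\log\kappa_+ - \log\kappa_-] + \bigO(1/M^2)$; recognizing $-F_{-1}\log\kappa_- = -\tfrac{1}{12}\log\tfrac{2M+1}{2N}$ and summing the two contributions yields the announced formula.

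The second special case is analogous, with the decomposition $F(\kappa,\mu) = \tfrac{\tilde F_{-1}}{\kappa_2-\kappa} + \check F(\kappa,\mu)$, $\tilde F_{-1} = \tfrac{1}{8}$, where $\check F$ and its derivatives extend smoothly across $\kappa_2$ by Proposition \ref{prop:kappatokappa2}. Lemma \ref{lemma:Riemannsum} applied to $h(\kappa) = \tilde F_{-1}/(\kappa_2-\kappa)$ produces $\tilde F_{-1}[\log(\kappa_2-\kappa_-) - \log(\kappa_2-\kappa_+)]$; using $\kappa_2-\kappa_+ = MN^{-2/3} + \bigO(1/N)$, this becomes $\tfrac{1}{12}\log N - \tfrac{1}{8}\log M + \tfrac{1}{8}\log(\kappa_2-\kappa_-)$ up to errors of size $\bigO(\tfrac{1}{MN^{1/3}})$. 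The smooth contribution $\frac{1}{N}\sum \check F(k/N) = \int_{\kappa_-}^{\kappa_+}\check F\,d\kappa + \bigO\big(\tfrac{1}{N^{2}\kappa_-^{2}}+\tfrac{1}{N^{2}}\big)$ is then extended to integration up to $\kappa_2$ at the cost of a tail $\int_{\kappa_+}^{\kappa_2}\check F\,d\kappa = \bigO(\kappa_2-\kappa_+) = \bigO(MN^{-2/3})$.

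The main obstacle is of the accounting type: verifying that every boundary term, tail, and higher-derivative remainder either fits inside the advertised error or combines with a singular integral to produce one of the announced logarithms. The most delicate point is the tail $\bigO(MN^{-2/3})$ arising in the second special case, which must be absorbed into $\bigO(\tfrac{1}{MN^{1/3}})$; this is legitimate in the regime $M$ fixed and $N\to\infty$ since then $M^{2}/N^{1/3}\to 0$, but it would require additional care if $M$ were to grow with $N$. Uniformity in $\mu$ throughout follows from the fact that the error estimates in Propositions \ref{prop:GHF as kappa tends to 0} and \ref{prop:kappatokappa2} are themselves uniform on the required $\mu$-ranges.
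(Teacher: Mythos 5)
Your proposal is correct and follows essentially the same route the paper takes: a direct application of Lemma \ref{lemma:Riemannsum} with $h=F$, with the boundary and fourth-derivative remainders controlled via the estimates on $\partial_\kappa F$ and $\partial_\kappa^4 F$ from Propositions \ref{prop:kappatozero} and \ref{prop:kappatokappa2}, and with the endpoint singularities peeled off via the leading terms $\frac{1}{12\kappa}$ and $\frac{1}{8(\kappa_2-\kappa)}$ exactly as the paper does for $G$ and $H$ in Lemmas \ref{lemma:Riemann sums}--\ref{lemma:Riemann sums1}. Your bookkeeping of the error terms (in particular the observation that $MN^{-2/3}=\bigO(\tfrac{1}{MN^{1/3}})$ for $M$ fixed) is accurate and consistent with the stated ranges.
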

\begin{proof}
The proof is again a straightforward application of Lemma \ref{lemma:Riemannsum}, now with $h(\kappa)=F(\kappa,\mu)$.\end{proof}

Now we are going back to Proposition \ref{prop:ratio asymp in rough region} and sum that result from $k=M+1$ to $k_+$: this yields
\begin{multline*}\log F_N^{m,k_++1}=\log F_N^{m,M+1}+N\sum_{k=M+1}^{k_+}G(\tfrac{k}{N},\mu)
+\sum_{k=M+1}^{k_+}H(\tfrac{k}{N},\mu)
+\frac{1}{N}\sum_{k=M+1}^{k_+}F(\tfrac{k}{N},\mu)\\
+\bigO(M^{-1}+N^{-1}(\kappa_2-\kappa_{+})^{-2})
\end{multline*}
as $N\to\infty$ for $M$ large, where $\kappa_{+}$ is as in \eqref{def of kappa pm}.
Then we substitute the results from Lemmas \ref{lemma:Riemann sums}--\ref{lemma:Riemann sums2} as well as the expressions for $G_0,G_1,H_0$ from Proposition \ref{prop:kappatozero} and obtain
as $N \to +\infty$, 
\begin{align}
\log F_N^{m,k_++1}&=\log F_N^{m,M+1} + A_2(\kappa_+,\mu) N^{2} + A_1(\kappa_+,\mu) N+\left(\frac{M^2}{2}-\frac{1}{12}\right)\log N+A_0(\kappa_+,\mu) \nonumber \\
&\quad + \bigO\bigg( \frac{1}{M} + \frac{M^{3}}{N} + \frac{1}{N(\kappa_{2}-\kappa_{+})^{2}} \bigg), \label{lol33}
\end{align}
uniformly for $k_+\in[M+1, \min\{N(\kappa_2-MN^{-2/3}),N(\mu-\delta)\}]\cap \N$, where \begin{align*}
A_2(\kappa_+,\mu) &= 
\int_{0}^{\kappa_+} G(\kappa',\mu)d\kappa' ,  \\
A_1(\kappa_+,\mu) &= \int_{0}^{\kappa_+} H(\kappa',\mu) d\kappa'  -  (\tfrac{1}{2}+M)\varphi_0(x_0),  \\
A_0(\kappa_+,\mu) &= \int_{0}^{\kappa_+} \bigg( F(\kappa',\mu) -\frac{1}{12\kappa'} \bigg)d\kappa' + \frac{1}{12} \log \kappa_+ + \frac{1-6M^2}{12} \log (M+\tfrac{1}{2})  - \frac{ \partial_{\kappa_+}G(\kappa_+,\mu)}{24} \\
&\quad 
+\frac{M+3M^2}{4}
-\frac{1}{16}
+\frac{3M^2-1}{6}\log(|x_0|^2\varphi_{0}''(x_0))-\frac{\epsilon(1+2M)}{2}\log(1-ax_0)
.
\end{align*}

Now we substitute the asymptotics of Theorem \ref{thm:birth of a cut} for $\log F_N^{m,M+1}$ in \eqref{lol33}.
This yields
\begin{align*}
\log F_N^{m,k_{+}+1}= C_2(\kappa_+,\mu) N^{2} + C_1(\kappa_{+},\mu) N-\frac{1}{12}\log N+\widehat C_0(\kappa_{+},\mu)  + \bigO\bigg( \frac{1}{M} + \frac{M^{3}}{N} + \frac{1}{N(\kappa_{2}-\kappa_+)^{2}} \bigg),
\end{align*}
with 
\begin{align*}
&C_2(\kappa_+,\mu)=A_2(\kappa_+,\mu)+F_2(\mu),\\
&C_1(\kappa_+,\mu)=A_1(\kappa_+,\mu)+F_1(M,\mu),\\
&\widehat C_0(\kappa_+,\mu)=A_0(\kappa_+,\mu)+F_0(M,\mu)
\end{align*}
In other words,
\begin{align}
\label{def:C2}C_{2}(\kappa_+,\mu) &=\left(\frac{1}{2}-\mu+\mu^2\right)\log(1+a^2) +
\int_{0}^{\kappa_+} G(\kappa',\mu)d\kappa' ,  \\
\label{def:C1}C_{1}(\kappa_+,\mu) &= \int_{0}^{\kappa_+} H(\kappa',\mu) d\kappa'    -\frac{1}{2}\varphi_0(x_0)+\left(\frac{1}{2}-\epsilon\mu\right)\log(1+a^2),\\
\label{def:C0hat}\widehat C_{0}(\kappa_+,\mu) &= \int_{0}^{\kappa_+} \bigg( F(\kappa',\mu) -\frac{1}{12\kappa'} \bigg)d\kappa' + \frac{1}{12} \log \kappa_+ + \frac{1-6M^2}{12} \log (M+\tfrac{1}{2})  - \frac{ \partial_{\kappa_+}G(\kappa_+,\mu)}{24} \nonumber \\
&\hspace{-1.1cm} -\frac{M}{2}\log(2\pi)  +\frac{-1+4M+12M^2}{16}-\frac{1}{6}\log(|x_0|^2\varphi_{0}''(x_0))  -\frac{\epsilon}{2}\log(1-ax_0)+\log \mathcal G(M+1)
.
\end{align}
Here we observe that the asymptotics \eqref{asBarnes} of $\log \mathcal G(M+1)$ as $M\to\infty$ imply that 
\[\log \mathcal G(M+1)+\frac{1-6M^2}{12} \log (M+\tfrac{1}{2})  -\frac{M}{2}\log(2\pi)+\frac{-1+4M+12M^2}{16}=\zeta'(-1)+\bigO(1/M).\]
This implies that we can set
\begin{align}
C_0(\kappa_+,\mu)=\int_{0}^{\kappa_+} \bigg( F(\kappa',\mu) -\frac{1}{12\kappa'} \bigg)d\kappa' + \frac{1}{12} \log \kappa_+   - \frac{ \partial_{\kappa_+}G(\kappa_+,\mu)}{24} \nonumber \\
 -\frac{1}{6}\log(|x_0|^2\varphi_{0}''(x_0))  -\frac{\epsilon}{2}\log(1-ax_0)+\zeta'(-1), \label{def:C0}
\end{align}
and obtain 
\begin{align}\label{asympfinal0}
\log F_N^{m,k_++1}= C_2(\kappa_+,\mu) N^{2} + C_1(\kappa_+,\mu) N-\frac{1}{12}\log N+{C}_0(\kappa_+,\mu)  + \bigO\bigg( \frac{1}{M} + \frac{M^{3}}{N} + \frac{1}{N(\kappa_{2}-\kappa_+)^{2}} \bigg).
\end{align}
Since we can choose $M$ arbitrarily large, the error term is 
\begin{align*}
o(1)+\bigO\bigg( \frac{1}{N(\kappa_{2}-\kappa_+)^{2}} \bigg),
\end{align*}
which finishes the proof of \eqref{asymp main thm}. By Proposition \ref{prop:kappatozero}, we obtain the $\kappa_+\to 0$ asymptotics \eqref{asympCj0}. This completes the proof of Theorem \ref{thm:rough}.

\subsection{Consistency check and proof of \eqref{asympCjkappa2}}\label{subsection:end}

Alternatively, we can obtain asymptotics by summing the result of Proposition \ref{prop:ratio asymp in rough region} from $k_-=k+1$ to a value $k_+=k_+(N)$ defined by
\begin{align*}
k_++1=N\left(\kappa_2-\frac{M_N}{N^{2/3}}\right),
\end{align*}
for some $M_N$ satisfying $\lim_{N\to\infty}M_N=+\infty$, and which we will determine later.
By Proposition \ref{prop:ratio asymp in rough region}, we get
\begin{multline}\label{lol34}
\log F_N^{m,k+1}=\log F_N^{m,k_++1}-N\sum_{{j}=k+1}^{k_+}G({\tfrac{j}{N}},\mu)-\sum_{{j}=k+1}^{k_+}H({\tfrac{j}{N}},\mu) \\
-\frac{1}{N}\sum_{{j}=k+1}^{k_+}F({\tfrac{j}{N}},\mu)+\bigO\left(M_N^{-2}N^{1/3}\right)+\bigO\left(k^{-1}\right)
\end{multline}
as $N\to\infty$, uniformly for $k \in[M+1, N(\kappa_2-\delta)]\cap \N$ and for $\mu \in [\delta,1-\delta]\setminus (\frac{a^{2}}{1+a^{2}}-\delta,\frac{a^{2}}{1+a^{2}}+\delta)$, for some fixed $\delta>0$ and some fixed but large $M>0$.

Let $\kappa_{-} = \frac{k_{-}-\frac{1}{2}}{N} = \frac{k+\frac{1}{2}}{N}$. Now we use again Lemmas \ref{lemma:Riemann sums}--\ref{lemma:Riemann sums2} to obtain
\begin{align*}
& \log F_N^{m,k+1}=\log F_N^{m,k_++1}-N^2\int_{\kappa_-}^{\kappa_2}G(\kappa,\mu)d\kappa-N\int_{\kappa_-}^{\kappa_2}H(\kappa,\mu)d\kappa-\int_{\kappa_-}^{\kappa_2}\bigg(F(\kappa,\mu)-\frac{1}{8}\frac{1}{\kappa_{2}-\kappa}\bigg)d\kappa\\
&  -\frac{\partial_{\kappa}G(\kappa_-,\mu)}{24} +\frac{\tilde G_2}{3}M_{N}^3 -\frac{1}{12}\log N + \frac{1}{8}\log \frac{M_{N}}{\kappa_{2}-\kappa_{-}}+\bigO\left(M_N^{2}N^{-1/3}+ M_N^{-2}N^{1/3} + k^{-1}\right).
\end{align*}
Substituting the Tracy-Widom asymptotics \eqref{lol22}, we obtain
\begin{align*}
& \log F_N^{m,k+1}=\frac{N^2}{2}\log(1+a^2)
-N^2\int_{\kappa_-}^{\kappa_2}G(\kappa,\mu)d\kappa
+\frac{N}{2}\log(1+a^2) 
-N\int_{\kappa_-}^{\kappa_2}H(\kappa,\mu)d\kappa\\
&\quad -\int_{\kappa_-}^{\kappa_2}\bigg(F(\kappa,\mu)-\frac{1}{8}\frac{1}{\kappa_{2}-\kappa}\bigg)d\kappa-\frac{M_N^3}{12c^*}-\frac{1}{8}\log M_N +\frac{1}{24}\log (2c^*)+\zeta'(-1)
 -\frac{\partial_{\kappa}G(\kappa_-,\mu)}{24} \\
 &\quad +\frac{\tilde G_2}{3}M_N^3 -\frac{1}{12}\log N + \frac{1}{8}\log \frac{M_{N}}{\kappa_{2}-\kappa_{-}} +\bigO\left(M_N^{2}N^{-1/3} + M_N^{-2}N^{1/3} + k^{-1}\right).
\end{align*}
Here one should note that we need $M_N^2=o(N^{1/15})$ as $N\to\infty$ in order to apply \eqref{lol22}.
Let us set $M_N=N^{\frac{1}{15}-\frac{\eta}{2}}$ for a sufficiently small $\eta>0$.
We obtain
\begin{align}\label{asympfinalkappa2}
\log F_N^{m,k_-+1}&=N^2B_2(\kappa_-,\mu)+NB_1(\kappa_-,\mu)+\bigO(N^{\frac{1}{5}+\eta} + {k}^{-1}),
\end{align}
where
\begin{align}
&\label{def:B2}
B_1(\kappa_-,\mu)=\frac{1}{2}\log(1+a^2)
-\int_{\kappa_-}^{\kappa_2}G(\kappa,\mu)d\kappa,\\
&\label{def:B1}
B_2(\kappa_{{-}},\mu)=\frac{1}{2}\log(1+a^2)
-\int_{\kappa_-}^{\kappa_2}H(\kappa,\mu)d\kappa.
\end{align}
Comparing this with \eqref{asympfinal0}, we see that 
$B_1(\kappa,\mu)=C_1(\kappa,\mu)$ and that $B_2(\kappa,\mu)=C_2(\kappa,\mu)$.
In other words, we have the integral identities
\begin{subequations}\label{hard identities intro}
\begin{align}
& \int_{0}^{\kappa_{2}} G(\kappa,\mu)d\kappa = \mu(1-\mu)\log(1+a^{2}), \\
& \int_{0}^{\kappa_{2}}  H(\kappa,\mu)d\kappa = \frac{\varphi_{0}(x_{0})}{2} + \epsilon \, \mu  \log(1+a^{2}).
\end{align}
\end{subequations}
We can now substitute the results from Proposition \ref{prop:kappatokappa2} in \eqref{def:B2}--\eqref{def:B1} to obtain \eqref{asympCjkappa2}.

\begin{remark}
Both identities \eqref{hard identities intro} have been verified numerically. In fact, we even verified numerically that 
\begin{align*}
\int_{0}^{\kappa_{2}} \bigg( F(\kappa',\mu) -\frac{1}{12\kappa'} - \frac{1}{8}\frac{1}{\kappa_{2}-\kappa'} \bigg)d\kappa' = & - \frac{5}{24}\log \kappa_{2} + \frac{1}{6}\log\big( |x_{0}|^{2}\varphi_{0}''(x_{0}) \big) \\
& + \frac{\epsilon}{2}\log(1-ax_{0}) + \frac{1}{24}\log(2c^{*}).
\end{align*}
We can however not prove this because the error term in \eqref{asympfinalkappa2} is bigger than $\bigO(1)$.
\end{remark}

\paragraph{Acknowledgements.} CC is a Research Associate of the Fonds de la Recherche Scientifique - FNRS, and also acknowledges support from the European Research Council (ERC), Grant Agreement No. 101115687.
TC
acknowledges support by FNRS Research Project T.0028.23 and by the Fonds Sp\'ecial de Recherche
of UCLouvain. {The authors are grateful to Lennart H\"{u}bner and Max van Horssen for allowing us to use their fast Python program, which we used to generate Figure~\ref{fig:numerical confirmation} and to numerically verify our main results.}

\footnotesize


\begin{thebibliography}{99}


\bibitem{AJvM}
M. Adler, K. Johansson, and P. van Moerbeke, A singular Toeplitz determinant and the discrete tacnode kernel for skew-Aztec rectangles, \textit{Ann. Appl. Prob.}
\textbf{32} (2022), no. 2, 1234--1294.



\bibitem{AvM}
M. Adler and Pierre van Moerbeke, Double interlacing in random matrix models,  \textit{J. Math. Phys.} \textbf{64} (2023), special collection in honor of Freeman Dyson.


\bibitem{Aggarwal}
A. Aggarwal,  Universality for lozenge tiling local statistics, \textit{Ann. of Math.} (2) {\bf 198} (2023), no. 3, 881--1012.

\bibitem{BBdF}
J. Baik, R. Buckingham, and J. Di Franco, Asymptotics of Tracy-Widom distributions and the total integral of a Painlev\'e II function, \textit{Comm. Math. Phys.} \textbf{280}
(2008), 463--497.


\bibitem{BDJ} J. Baik, P. Deift and K. Johansson, On the distribution of the length
of the longest increasing subsequence of random permutations, \emph{J. Amer.
Math. Soc.} {\bf 12} (1999), 1119--1178.


\bibitem{Baiketal}
J.~Baik, T.~Kriecherbauer, K.~T.-R. McLaughlin, and P.~D. Miller,
\newblock {Discrete orthogonal polynomials}, volume 164 of {\em Annals of Mathematics Studies},
\newblock Princeton University Press, Princeton, NJ, 2007.
\newblock Asymptotics and applications.

\bibitem{BB} T. Berggren and A. Borodin, Geometry of the doubly periodic Aztec dimer model, arXiv: 2306.07482. To appear in \textit{Commun. Am. Math. Soc.}


\bibitem{BertolaCafasso}
M. Bertola and M. Cafasso, The transition between the gap probabilities from the Pearcey to the Airy
process - a Riemann–Hilbert approach, \textit{Int. Math. Res. Not.} \textbf{2012} (2012), 1519--1568.


\bibitem{BleherIts}P. Bleher and A. Its, Double scaling limit in the random matrix model:
the Riemann-Hilbert approach, \emph{Comm. Pure Appl. Math.} \textbf{56} (2003),
433--516.


\bibitem{CC2025} C. Charlier and T. Claeys, Counting domino and lozenge tilings of reduced domains with Pad\'e-type approximants, arXiv:2502.19096.


\bibitem{ClaeysKuijlaars}
T. Claeys and A. Kuijlaars, Universality of the double scaling limit in random matrix models, \emph{Comm. Pure Appl. Math.} \textbf{59}, no. 11 (2006), 1573--1603.

\bibitem{ClaeysMauersberger} T. Claeys and J. Mauersberger, Large deviations for the log-Gamma polymer, arxiv:2410.18818.

\bibitem{CP2013} F. Colomo and A.G. Pronko, Third-order phase transition in random tilings. \textit{Phys. Rev. E (3)} \textbf{88} (2013), 042125.

\bibitem{CP2015} F. Colomo and A.G. Pronko, 
Thermodynamics of the six-vertex model in an L-shaped domain, \textit{Comm. Math. Phys.} \textbf{339} (2015), no. 2, 699--728.

\bibitem{CPS2019} F.Colomo, A.G. Pronko, and A. Sportiello, Arctic curve of the free-fermion six-vertex model in an L-Shaped Domain, \textit{J. Stat. Phys.} \textbf{174} (2019), 1--27 (2019). 

\bibitem{CHK}
S. Corteel, F. Huan, and C. Krattenthaler, Domino tilings of generalized Aztec triangles,
arxiv:2305.01774.


\bibitem{DIK} P. Deift, A. Its, and I. Krasovsky, Asymptotics of the Airy-Kernel Determinant, \textit{Commun. Math. Phys.} \textbf{278} (2008), 643--678.

\bibitem{DKMVZ}
P. Deift, T. Kriecherbauer, K.T-R McLaughlin, S. Venakides and X. Zhou, Strong asymptotics of orthogonal polynomials with respect to exponential weights, {\em Comm. Pure Appl. Math.} {\bf 52} (1999), 1491--1552.

\bibitem{DeiftZhou} P. Deift and X. Zhou, A steepest descent method for oscillatory Riemann-Hilbert problems. Asymptotics for the MKdV equation, {\em Ann. Math.} {\bf 137} (1993), 295--368.


\bibitem{DF}
P. Di Francesco, Twenty vertex model and domino tilings of the Aztec triangle, Electron, \textit{ J. Combin.}
{\bf 28} (2021), Paper no. 4.38, 50pp.


\bibitem{DK2021} M. Duits and A.B.J. Kuijlaars, The two-periodic Aztec diamond and matrix valued orthogonal polynomials, \textit{J. Eur. Math. Soc. (JEMS)} \textbf{23} (2021), no.4, 1075--1131. Extended version  arXiv:1712.05636. 

\bibitem{DJM} E. Duse, K. Johansson, and A. Metcalfe, The cusp-Airy process. \textit{Electron. J. Probab.} \textbf{21} (2016)
Paper No. 57, 50. 


\bibitem{EKLP}N. Elkies, G. Kuperberg, M. Larsen, and J. Propp, 
Alternating-sign matrices and domino tilings. I,
{\textit{J. Algebraic Combin.} \textbf{1}} (1992), no.2, 111--132.

\bibitem{FokasItsKitaev}
A.S. Fokas, A.R. Its and A.V. Kitaev, The isomonodromy approach to matrix models in 2D quantum gravity, \emph{Comm. Math. Phys.} \textbf{147} (1992), 395--430.

\bibitem{FlaschkaNewell} H. Flaschka and A.C. Newell, Monodromy and spectrum-preserving deformations I, \emph{Comm. Math. Phys.} \textbf{76} (1980), 65--116.


\bibitem{FIKN} A. Fokas, A. Its, A.A. Kapaev and V.Y. Novokshenov, \textit{Painlevé transcendents. The Riemann-Hilbert approach}, Math. Surveys Monogr., 128
American Mathematical Society, Providence, RI, 2006.

\bibitem{GorinSurvey} V. Gorin, \textit{Lectures on random lozenge tilings}, Cambridge Stud. Adv. Math., 193, Cambridge University Press, Cambridge, 2021.

\bibitem{HYZ2024} J. Huang, F. Yang and L. Zhang, Pearcey universality at cusps of polygonal lozenge tilings, \textit{Comm. Pure Appl. Math.} \textbf{77} (2024), no. 9, 3708--3784.

\bibitem{IIKS} A. Its, A.G. Izergin, V.E. Korepin and N.A. Slavnov, Differential equations for quantum correlation functions, \emph{In proceedings of the Conference on Yang-Baxter Equations, Conformal Invariance and Integrability in Statistical Mechanics and Field Theory}, Volume \textbf{4}, (1990) 1003--1037. 

\bibitem{JPS} W. Jockusch, J. Propp and P. Shor, Random domino tilings and the arctic circle theorem,
 arXiv:math/9801068.

\bibitem{Jptrf} K. Johansson, Non-intersecting paths, random tilings and random matrices, \emph{Probab. Theory and Related Fields} {\bf 123} (2002), 225--280.

\bibitem{Joh2}K. Johansson, The arctic circle boundary and the Airy process, {\textit{Ann. Prob.} \textbf{33}}
(2005), no. 1, 1--3{0.}

\CC{\bibitem{KP2016} A.V. Kitaev and A.G. Pronko, Emptiness formation probability of the six-vertex model and the sixth Painlevé equation, \textit{Comm. Math. Phys.} \textbf{345} (2016), no. 1, 305--354.}

\bibitem{Nico} M. Nicoletti, Temperleyan Domino Tilings with Holes, arXiv:2503.12082.


\bibitem{Petrov}
L. Petrov, Asymptotics of uniformly random lozenge tilings of polygons. Gaussian free field, \textit{Ann.
Probab.} \textbf{43} (2015), 1--43.

\bibitem{Strebel} K. Strebel, \textit{Quadratic differentials}, Results in Mathematics and Related Areas (3), Springer-Verlag, Berlin, 1984.

\bibitem{TW} C. Tracy and H. Widom, Level-spacing distribution and the Airy kernel, \textit{Commun.
Math. Phys.} \textbf{159} (1994), 151--174.

\end{thebibliography}
\end{document}